\documentclass{amsart}
\usepackage[utf8]{inputenc}
\usepackage{amsmath, amssymb, amsfonts, amsthm}
\usepackage{enumerate}
\usepackage{bbold}
\usepackage{color}
\usepackage{graphicx}
\usepackage{fullpage}
\usepackage{tikz}
\usepackage{blkarray, bigstrut} %
\usepackage{tikz-cd}
\usepackage{hyperref}
\usepackage{lipsum}

\usepackage{bbm}

\allowdisplaybreaks

\DeclareMathOperator{\C}{\mathbb{C}}

\DeclareMathOperator{\Cc}{\mathcal{C}}

\DeclareMathOperator{\Vect}{\textrm{Vec}}
\DeclareMathOperator{\End}{\textrm{End}}

\DeclareMathOperator{\id}{\textrm{id}}

\DeclareMathOperator{\Hom}{\textrm{Hom}}

\DeclareMathOperator{\ev}{\operatorname{ev}}
\DeclareMathOperator{\coev}{\operatorname{coev}}

\newcommand{\q}[1]{[#1]_q}

\usepackage{array}   
\newcolumntype{L}{>{$}l<{$}}

\newcommand{\Pp}{\mathcal{P}}
\newcommand\scalemath[2]{\scalebox{#1}{\mbox{\ensuremath{\displaystyle #2}}}}

\newcommand{\nothing}[1]{#1}

\newcommand{\cC}{\mathcal{C}}

\theoremstyle{plain}
\newtheorem{thm}{Theorem}[section]
\newtheorem{lem}[thm]{Lemma}

\theoremstyle{definition}
\newtheorem{defn}[thm]{Definition}

\newtheorem{ex}[thm]{Example}
\newtheorem{remark}[thm]{Remark}

\title{Cell Systems for $\overline{\operatorname{Rep}(U_q(\mathfrak{sl}_N))}$ Module Categories}
\author{Daniel Copeland and Cain Edie-Michell}
\address{Cain Edie-Michell\\
University of New Hampshire\\
Durham, 
New Hampshire}
\email{cain.edie-michell@unh.edu}
\address{Daniel Copeland}
\email{daniel.copeland@gmail.com}
\date{}

\begin{document}

\maketitle
\begin{abstract}
In this paper, we define the \textit{KW cell system} on a graph $\Gamma$, depending on parameters $N\in \mathbb{N}$, $q$ a root of unity, and $\omega$ an $N$-th root of unity. This is a polynomial system of equations depending on $\Gamma$ and the parameters. Using the graph planar algebra embedding theorem, we prove that when $q = e^{2\pi i \frac{1}{2(N+k)}}$, solutions to the KW cell system on $\Gamma$ classify module categories over $\overline{\operatorname{Rep}(U_q(\mathfrak{sl}_N))^\omega}$ whose action graph for the object $\Lambda_1$ is $\Gamma$. The KW cell system is a generalisation of the Etingof-Ostrik and the De Commer-Yamashita classifying data for $\overline{\operatorname{Rep}(U_q(\mathfrak{sl}_2))}$ module categories, and Ocneanu's cell calculus for $\overline{\operatorname{Rep}(U_q(\mathfrak{sl}_3))}$ module categories.

To demonstrate the effectiveness of this cell calculus, we solve the KW cell systems corresponding to the exceptional module categories over $\overline{\operatorname{Rep}(U_q(\mathfrak{sl}_4))}$ when $q= e^{2\pi i \frac{1}{2(4+k)}}$, as well as for all three infinite families of charge conjugation modules. Building on the work of the second author, this explicitly constructs and classifies all irreducible module categories over $\mathcal{C}(\mathfrak{sl}_4, k)$ for all $k\in \mathbb{N}$. These results prove claims made by Ocneanu on the \textit{quantum subgroups} of $SU(4)$. We also construct exceptional module categories over $\overline{\operatorname{Rep}(U_q(\mathfrak{sl}_4))^\omega}$ where $\omega\in \{-1, \mathbf{i}, -\mathbf{i}\}$. Two of these module categories have no analogue when $\omega=1$.

The main technical contributions of this paper are a proof of the graph planar algebra embedding theorem for oriented planar algebras, and a refinement of Kazhdan and Wenzl's skein theory presentation of the category $\overline{\operatorname{Rep}(U_q(\mathfrak{sl}_N))^\omega}$. We also explicitly describe the subfactors coming from a solution to a KW cell system.
\end{abstract}
\section{Introduction}
One of the largest (and most interesting) classes of tensor categories comes from the representation theory of the quantum groups $U_q(\mathfrak{g})$ at roots of unity $q$. Namely, one looks at the category of tilting modules of these objects, and takes an appropriate quotient. Equivalently, these categories can also be described as the category of level-$k$ integrable representations of $\hat{\mathfrak{g}}$, with non-standard tensor product given by the level-preserving fusion \cite{MR1384612}. These categories are typically denoted by either $\overline{\operatorname{Rep}(U_q(\mathfrak{sl}_N))}$ or $\mathcal{C}(\mathfrak{g}, k)$, depending on the context. There are many appearances of these categories in various areas of mathematics \cite{Wasser}, as well as physics \cite{ADE}. Notably, these categories are the representation theory of the Wess-Zumino-Witten chiral conformal field theories $\mathcal{V}(\mathfrak{g}, k)$.

A module category over a tensor category $\mathcal{C}$ is a natural categorification of a module over a ring or group \cite{OstMod}. More specifically, it is a monoidal functor
\[   \mathcal{C} \to \operatorname{End}(\mathcal{M})   \]
where $\mathcal{M}$ is some abelian category. The module categories over $\mathcal{C}$ have various applications. In particular, when $\mathcal{C}$ is the representation theory of a chiral conformal field theory $\mathcal{V}$, the module categories over $\mathcal{C}$ classify full conformal field theories with a chiral half $\mathcal{V}$ \cite{Full}.

In the last several years, there has been a revitalisation in the program to classify module categories over the quantum group categories $\mathcal{C}(\mathfrak{g}, k)$ (building on older works e.g. \cite{ADE, Ocneanu}). This is mainly due to works of Schopieray \cite{LevelAndy}, and Gannon \cite{LevelTerry}. In particular, the latter work classifies and constructs all of the \textit{type I} module categories\footnote{These are module categories which have the additional compatible structure of a tensor category.} for the simple Lie algebras of rank $\leq 6$. 

Recent work of the second author and Gannon extended the results of Gannon to classify all module categories for the Lie algebras $\mathfrak{sl}_N$ for $N\leq 7$ for all $k$, as well as for all $N$ for sufficiently large $k$ \cite{ModulesPt1,ModulesPt2}. However, this classification result is non-constructive, as it uses the bijection between Lagrangian algebras in the centre, and indecomposible modules over a category \cite{LagrangeGerman, LagrangeUkraine}.

The purpose of this paper is to develop an efficient method of explicitly constructing module categories over $\mathcal{C}(\mathfrak{sl}_N, k)$ (and more generally, the twisted categories $\overline{\operatorname{Rep}(U_q(\mathfrak{sl}_N))^\omega}$). We achieve this in the following theorem. This gives a system of polynomial equations, the solutions of which (in the unitary setting) classify module categories over $\overline{\operatorname{Rep}(U_q(\mathfrak{sl}_N))^\omega}$. 
\begin{thm}\label{thm:main}
Let $N\in \mathbb{N}_{\geq 2}$, $q=e^{2\pi i \frac{1}{2(N+k)}}$ for some $k\in \mathbb{N}$, $\omega$ an $N$-th root of unity, and $\Gamma$ a finite graph with norm $[N]_q$. There is a bijective correspondence between
\begin{enumerate}
    \item pivotal $\overline{\operatorname{Rep}(U_q(\mathfrak{sl}_N))^\omega}$-modules $\mathcal{M}$ whose module fusion graph for action by $\Lambda_1$ is $\Gamma$, and
    \item solutions for the \textit{Kazhdan-Wenzl cell system} on $\Gamma$
\end{enumerate}
where the Kazhdan-Wenzl cell system on $\Gamma$ is defined in Definition~\ref{defn:KW}.

The equivalence relation on 1) is equivalence of module categories, and the equivalence relation on 2) can be found in Definition~\ref{defn:KWequiv}.
\end{thm}
The Kazhdan-Wenzl cell system on $\Gamma$ is a polynomial system of equations. These polynomial equations are fairly reasonable to solve, as demonstrated in Section~\ref{sec:examples} where we find many solutions. 

\begin{remark}
The reader may be interested in constructing module categories over $\overline{\operatorname{Rep}(U_q(\mathfrak{sl}_N))^\omega}$ in the non-unitary setting (i.e. when $q\neq e^{2 \pi i \frac{1}{2(N+k)}}$). We offer two remedies.

The first is Lemma~\ref{lem:Galois}, which shows that when $q$ is a root of unity, $\overline{\operatorname{Rep}(U_q(\mathfrak{sl}_N))^\omega}$ is Galois conjugate to $\overline{\operatorname{Rep}(U_{q'}(\mathfrak{sl}_N))^{\omega'}}$ where $q' = e^{2 \pi i \frac{1}{2(N+k)}}$ for some $k$, and $\omega'$ some $N$-th root of unity. For this Galois conjugate the full strength of Theorem~\ref{thm:main} applies, and the module categories are classified by solutions to KW cell systems on graphs. As Galois conjugate categories have the same representation theory, this allows the representation theory of the non-unitary categories to be determined with our theory.

The second approach is discussed in Remark~\ref{rmk:nonUni}. This remark explains how 
 the KW cell system makes sense in the non-unitary setting, and how an additional equation can be added to a KW cell system. A solution to this larger system of equations then guarantees the existence of a module category even in the non-unitary setting. This additional polynomial equation has degree the length of the longest word in the symmetric group $S_N$. In practice this additional equation can take weeks to verify on a computer.
\end{remark}

The result of Theorem~\ref{thm:main} reduces the construction of such a module category to a polynomial system of equations which we call a Kazhdan-Wenzl cell system. The Kazhdan-Wenzl cell system depends on the parameters $N$, $q$, $\omega$ and $\Gamma$, and is a degree 3 polynomial system. In the $N=2$ case, our polynomial system of equations is related to Etingof and Ostrik's classifying data for $\overline{\operatorname{Rep}(U_q(\mathfrak{sl}_2))}$ module categories \cite{OstReich} (see also \cite{MR3420332} for the case where $|q| \leq 1$). In the $N=3$ case, our polynomial system is related to Ocneanu's cell calculus for $\overline{\operatorname{Rep}(U_q(\mathfrak{sl}_3))}$ module categories. See \cite{SU3} for solutions in the $SU(3)$ case. Note that in \cite{Ocneanu}, Ocneanu claims a cell calculus for $\overline{\operatorname{Rep}(U_q(\mathfrak{sl}_4))}$ module categories, but no definition is given. Our definition holds for all $N$, and hence generalises the above definitions. See also \cite{EvansSO3} for a cell calculus for $SO(3)$ module categories.

Our definition of a KW cell system can be naturally broken into two pieces. The first is a path representation of the Hecke algebra, satisfying the Markov property, and the second is the solution to a linear system, along with a normalisation convention. Solutions to the first piece have appeared many times in the literature \cite{HansThesis,MR2021644,HansMod}, including in the physics literature \cite{Stat} where the connection to integrable lattice modules in explained. A solution to this first piece can be thought of as the data of a ``$\overline{\operatorname{Rep}(U_q(\mathfrak{gl}_N))}$" module. The second piece of data (to our best knowledge) is completely new, and is precisely the data to extend a ``$\overline{\operatorname{Rep}(U_q(\mathfrak{gl}_N))}$" module to a $\overline{\operatorname{Rep}(U_q(\mathfrak{sl}_N))^\omega}$ module.

In order to obtain our polynomial system, we follow the strategy of \cite{EH}, using the theory of graph planar algebra embeddings. Let us briefly describe the philosophy of this strategy.

Recall a module category for a tensor category $\mathcal{C}$ is equivalent to a monoidal functor
\[  \mathcal{C}\to \operatorname{End}(\mathcal{M})  \]
where $\mathcal{M}$ is a semi-simple category. This is directly analogous to a module over a group $G$, which is described by a homomorphism
\[G \to \operatorname{End}(V). \]
Given an explicit group, say $D_{n}$, the most efficient way to build a module is to use a nice presentation, say $\langle r , s \mid r^n = e, rs = sr^{-1}\rangle $. A module can then be built by given the images of $r$ and $s$ in $\operatorname{End}(V)$, and verifying these images satisfy the relations in the presentation.

As introduced in \cite{OstReich} and \cite{EH}, an analogous idea holds for modules over a tensor category. In particular for us, we use the Kazhdan-Wenzl presentation \cite{SovietHans} for the category $\overline{\operatorname{Rep}(U_q(\mathfrak{sl}_N))^\omega}$, which has a single object generator $\Lambda_1$, and two morphism generators. The image of $\Lambda_1$ in $\operatorname{End}(\mathcal{M})$ can be described as an oriented graph $\Gamma$ (whose vertices are the simple objects of $\mathcal{M}$, and edges determine the action of the endofunctor). The images of the two generating morphisms live in a distinguished subcategory of $\operatorname{End}(\mathcal{M})$ known as the graph planar algebra on $\Gamma$, which we denote\footnote{To distinguish it from the closely related, but distinct, non-oriented version $GPA(\Gamma)$ \cite{OGGPA}. The oriented version was known to Jones, and variants have been defined in \cite{Morrisey, Emily}} $oGPA(\Gamma)$. 

As seen in \cite{OGGPA,EH}, the distinguished subcategory $oGPA(\Gamma)$ has an incredibly explicit description in terms of loop on the graph $\Gamma$. This allows us to describe the images of the two generating morphisms as linear functionals of the space of certain loops in $\Gamma$. We can then use the (a refinement of) the relations of Kazhdan-Wenzl to obtain polynomial equations that these functionals must satisfy. Extracting all this data gives us our definition of a Kazhdan-Wenzl cell system.

There are several natural choices for a presentation for the category $\overline{\operatorname{Rep}(U_q(\mathfrak{sl}_N))^\omega}$. In order to obtain an efficient cell calculus, we desire several conditions on the presentation
\begin{itemize}
\item The presentation is uniform for $\overline{\operatorname{Rep}(U_q(\mathfrak{sl}_N))^\omega}$ as $N$ and $q$ vary,
\item The presentation contains as few generating objects and morphisms as possible,
\item The relations the generating morphisms satisfy must live in Hom spaces between objects of as small length as possible.
\end{itemize}

The most obvious presentation is the $6-j$ symbol presentation, where all simple objects are generating objects, and the collection of all trivalent vertices are the generating morphisms. This presentation only satisfies the third condition, and blows out on the first two. Further, to the authors knowledge, this presentation is only explicitly described for $\mathfrak{sl}_2$. This immediately rules out this choice. 

Another option is the Cautis-Kamnitzer-Morrison presentation \cite{SlnWebs}. Here the generating objects are the fundamental representations $\Lambda_i$, and the generating morphisms are trivalent vertices between them. This presentation satisfies the third point, and is uniform with respect to $q$. The practical issue occurs as the number of generating objects and morphisms grow with $N$. This makes determining the images of these generators in the graph planar algebra unfeasible in general (see \cite{Emily} for a specific example where this is achieved).

If we were to follow Ocneanu directly, then we can use a presentation of $\overline{\operatorname{Rep}(U_q(\mathfrak{sl}_N))^\omega}$ with generating object $\Lambda_1$, and single generating morphism the intertwiner $\Lambda_1^{\otimes N}\to \mathbf{1}$. For $\mathfrak{sl}_3$ this is exactly Kuperburgs presentation for the $\mathfrak{sl}_3$ spider \cite{Spider}. This seems ideal at first, however this presentation is not uniform with respect to $N$ at all. To the authors best knowledge, a presentation is only known for $N\in \{2,3,4\}$. We suspect that the cell system claimed to exist by Ocneanu in \cite{Ocneanu} was based on this $\mathfrak{sl}_4$ presentation.

Finally we have the Kazhdan-Wenzl presentation for $\overline{\operatorname{Rep}(U_q(\mathfrak{sl}_N))^\omega}$ from \cite{SovietHans} (see also \cite{Anup}). This has a single generating object which is $\Lambda_1$, and two generating morphisms; the projection onto $\Lambda_2$, and the intertwiner $\Lambda_1^{\otimes N}\to \mathbf{1}$. While this may seem more complicated than the Kuperburg style presentation, the additional generating morphism allows a presentation which is uniform across all $N$. For this reason, we use this presentation to describe the module categories.

The major downside of the Kazhdan-Wenzl presentation is that two of the relations occur in $\operatorname{End}(\Lambda_1^{\otimes N})$ and $\operatorname{Hom}(\Lambda_1^{\otimes N+1}\to \Lambda_1)$. As $N$ grows, these relations will be computationally infeasible to verify inside $\operatorname{End}(\mathcal{M})$. To rectify this, we show that these two relations can be replaced with three much simpler relations. This is our first technical result, and can be found in Section~\ref{sec:KW}.


One of the motivation behind this work was to improve on the second authors results of \cite{ModulesPt1,ModulesPt2}. These results abstractly classify module categories over $\mathcal{C}(\mathfrak{sl}_N, k)$ for small $N$. In particular for $N=4$ we have the following.
\begin{thm}\cite{ModulesPt2}\label{thm:absClassIntro}
Let $k\geq 0$, and $\mathcal{C}(\mathfrak{sl}_4, k)$ the category of level $k$ integrable representation of $\widehat{\mathfrak{sl}_4}$. Then there are exactly 
\begin{center}
\begin{tabular}{ |c|c c c c c c c| } 
 \hline
$k$ & 1 & 2 & 4 & 6 & 8& $k>1$ odd & $k >8$ even \\\hline
$\#$ of Modules & 2&3&7&8&9&4&6 \\ 
 \hline
\end{tabular}
\end{center}
irreducible module categories over $\mathcal{C}(\mathfrak{sl}_4, k)$ up to equivalence.
\end{thm}
The proof of this theorem is non-constructive, as it uses the correspondence between Lagrangian algebras in $\mathcal{Z}(\mathcal{C})$, and irreducible module categories over $\mathcal{C}$ \cite{LagrangeGerman,LagrangeUkraine}. With the results of this paper, we can explicitly construct all of these modules in the sense that we fully describe the functor $\mathcal{C}(\mathfrak{sl}_4, k) \to \operatorname{End}(\mathcal{M})$. Hence we upgrade the abstract classification to a concrete classification.

\begin{remark}
We would like to highlight some relevant work regarding the module categories of $\mathcal{C}(\mathfrak{sl}_4, k)$. We first note that in \cite{Ocneanu} a complete description and classification of $\mathcal{C}(\mathfrak{sl}_4, k)$ module categories was claimed. No proofs were supplied. 

The three type $I$ exceptional modules can be constructed via conformal inclusions \cite{Xu}. However this construction does not immediately give the full structure of the module category. Also note that in \cite{LiuYB} a planar algebra presentation for the exceptional type $I$ module when $k=6$ is given. The full structure of this module was determined in \cite{LiuRing}. Several of these graphs are discussed in \cite[Section 6]{ModInv1} and \cite[Section 8]{ModInv2}.

In \cite{HansMod} a family of module categories over $\mathcal{C}(\mathfrak{sl}_4, k)$ is constructed. We expect that this family corresponds to the second infinite family of graphs below. In this same paper two families of modules over $\mathcal{C}(\mathfrak{sl}_4, k)^{\text{ad}}$ are constructed. These families are the restrictions of the first and third families of modules below, from $\mathcal{C}(\mathfrak{sl}_4, k)$ down to $\mathcal{C}(\mathfrak{sl}_4, k)^{\text{ad}}$.

\end{remark}

In Section~\ref{sec:examples}, we construct KW cell systems on the following families of graphs:
\[\raisebox{-.5\height}{ \includegraphics[scale = .4]{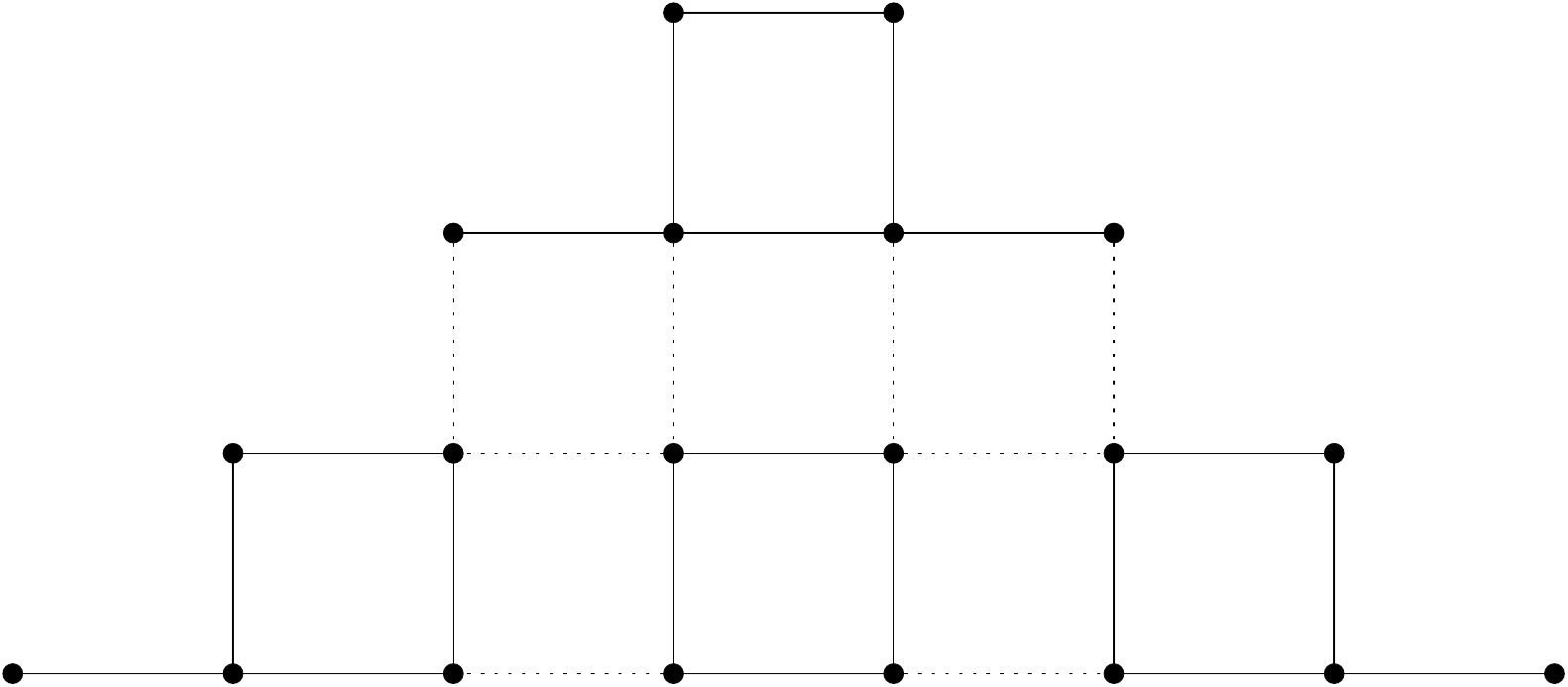}}\quad \raisebox{-.5\height}{ \includegraphics[scale = .5]{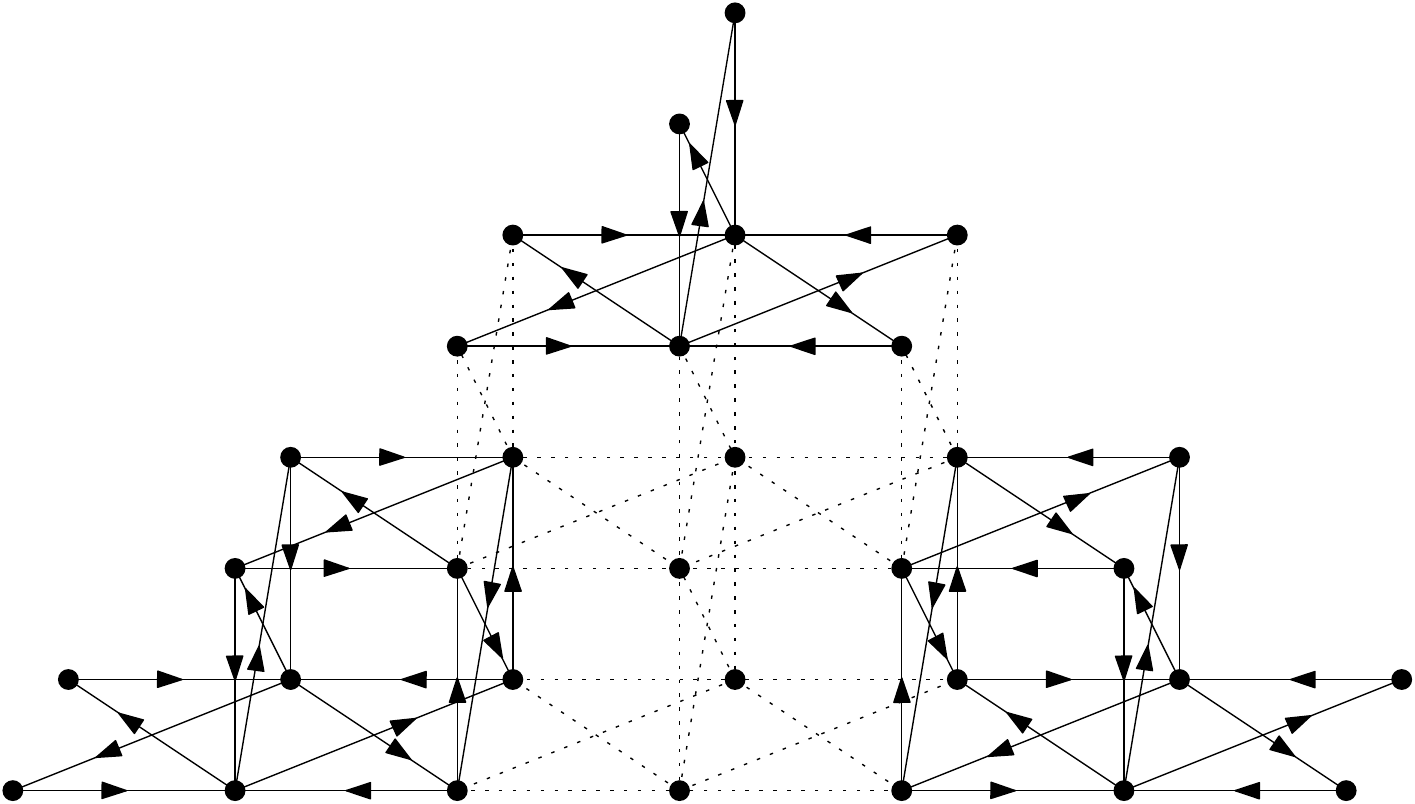}}\]
for all $k$ (constructing two families of charge conjugation modules), the family of graphs
\[ \raisebox{-.5\height}{ \includegraphics[scale = .5]{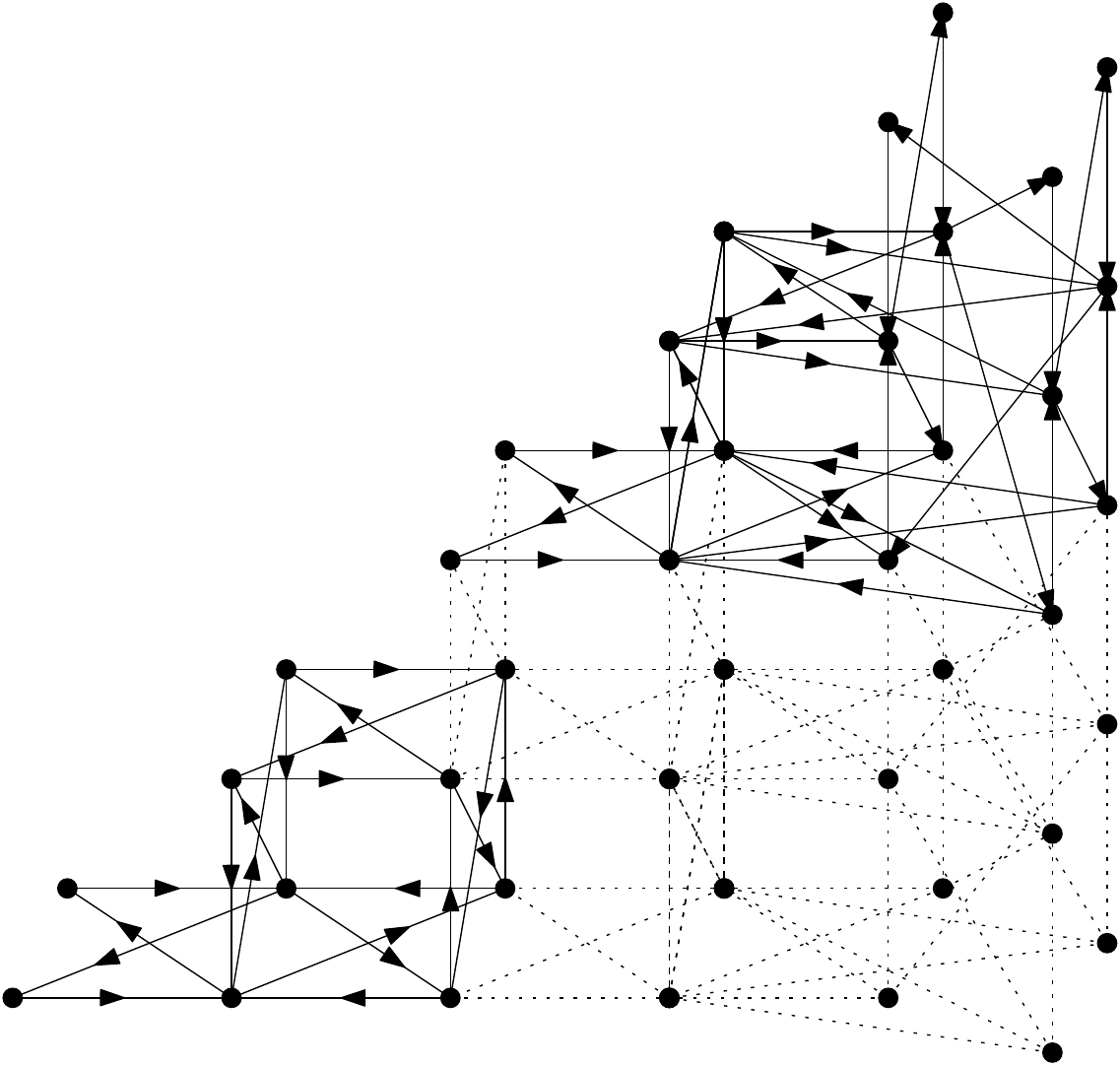}} \]
when $k$ is even (constructing a third family of charge conjugation modules), the graph
\[   \raisebox{-.5\height}{ \includegraphics[scale = .4]{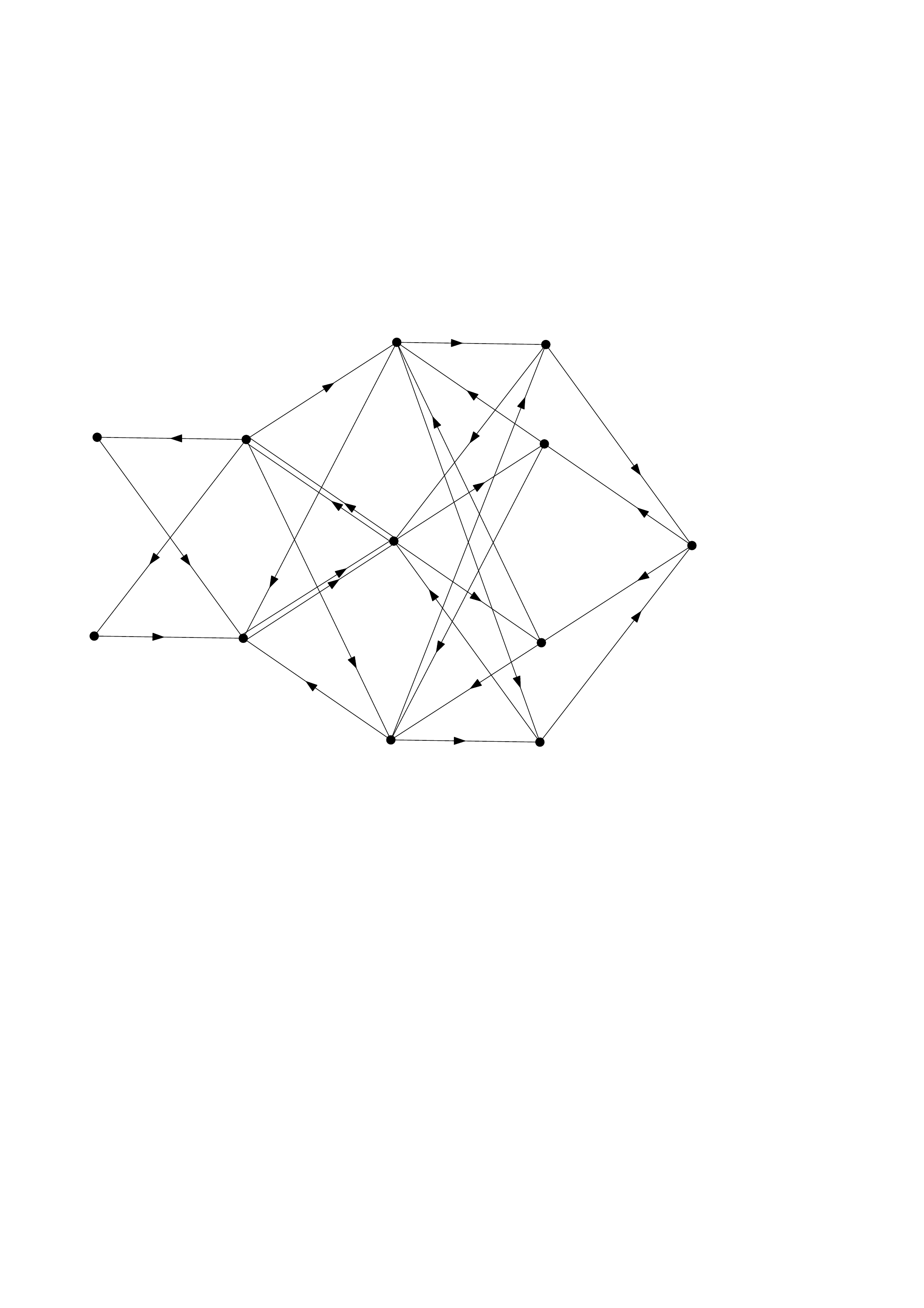}}  \]
when $k=4$ (constructing the sole exceptional module for $\mathcal{C}(\mathfrak{sl}_4, 4)$), the graphs
\[    \raisebox{-.5\height}{ \includegraphics[scale = .4]{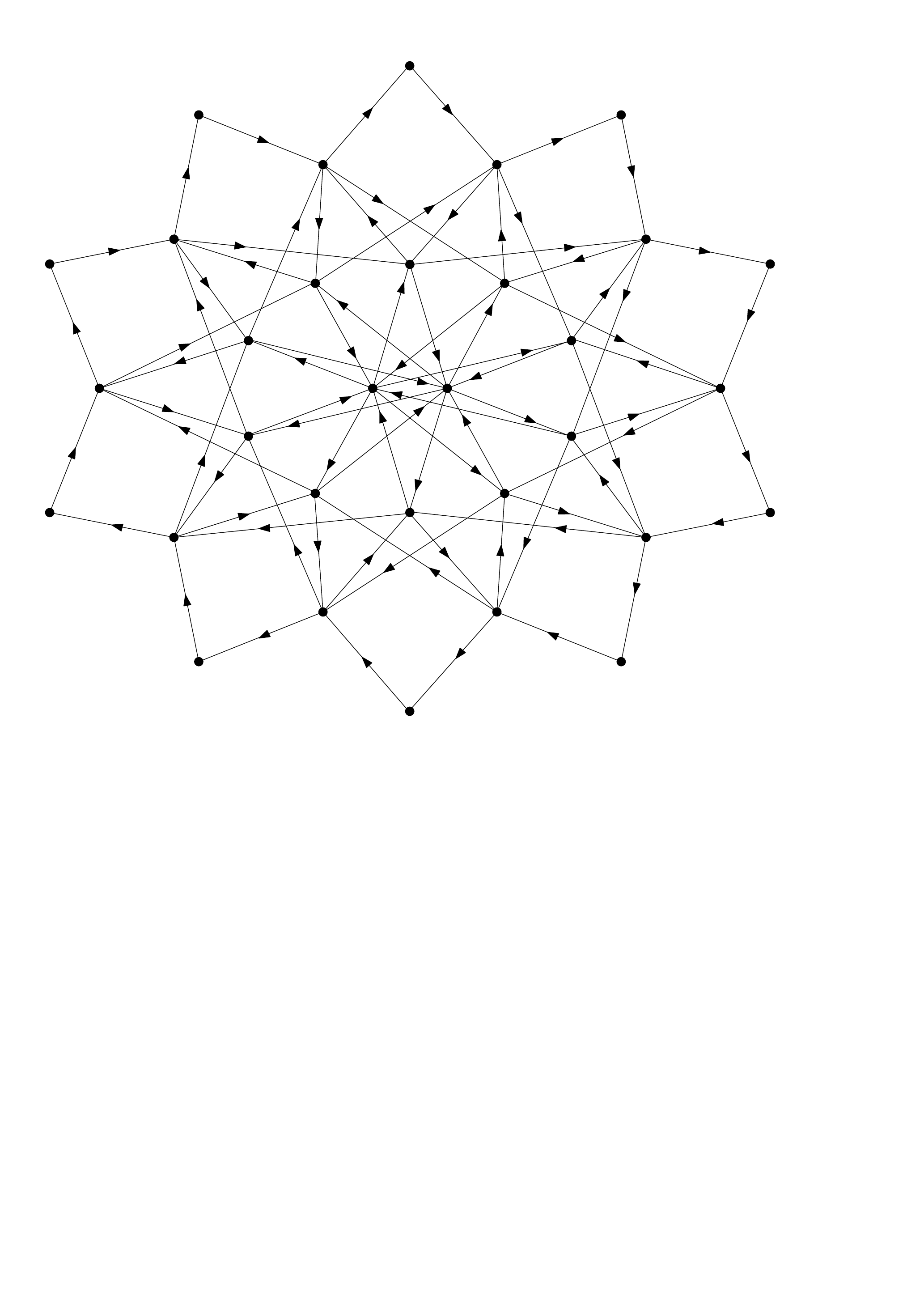}} \quad \raisebox{-.5\height}{ \includegraphics[scale = .4]{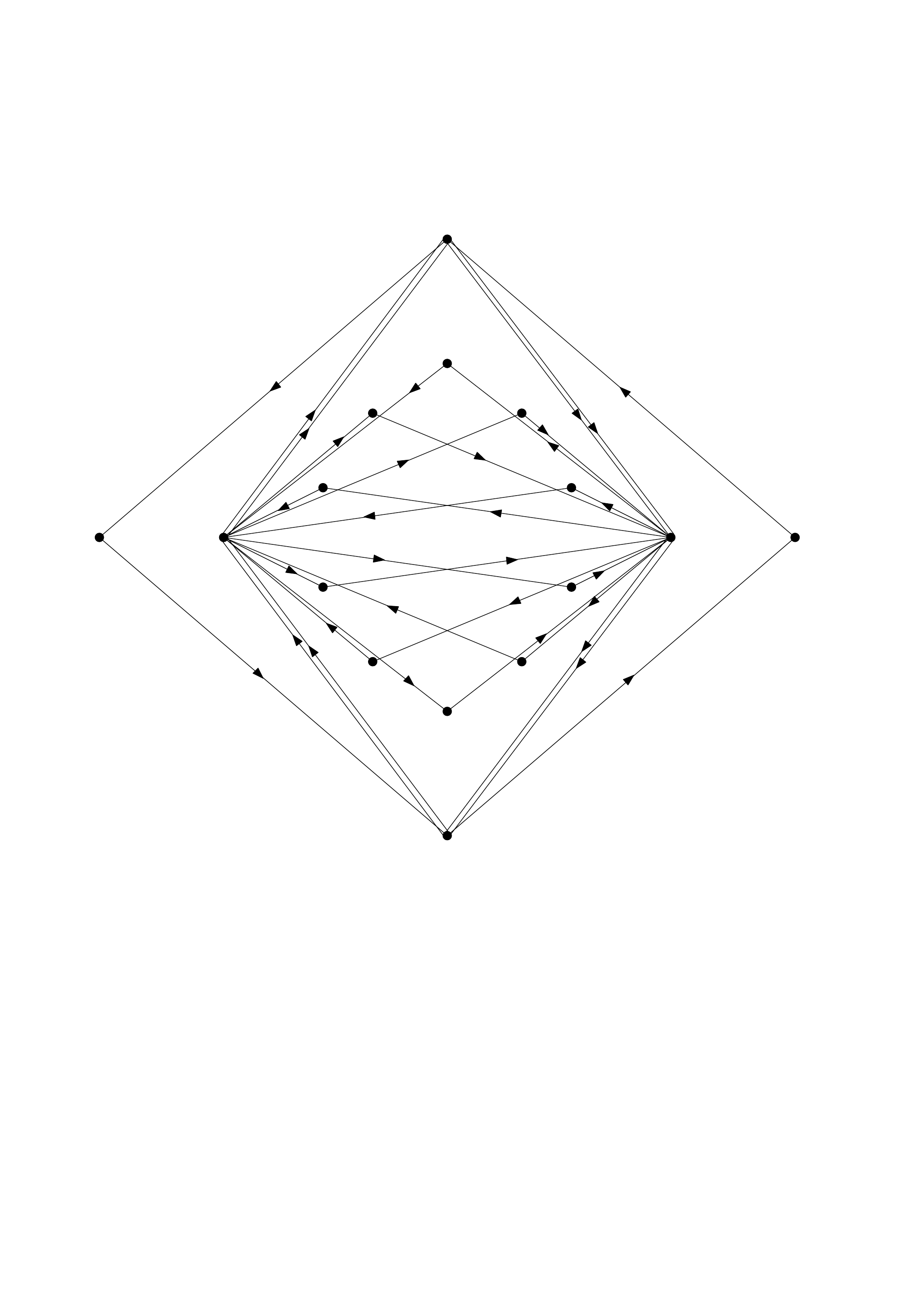}}   \]
when $k=6$ (constructing the two exceptional modules for $\mathcal{C}(\mathfrak{sl}_4, 6)$), and the graphs
\[  \raisebox{-.5\height}{ \includegraphics[scale = .25]{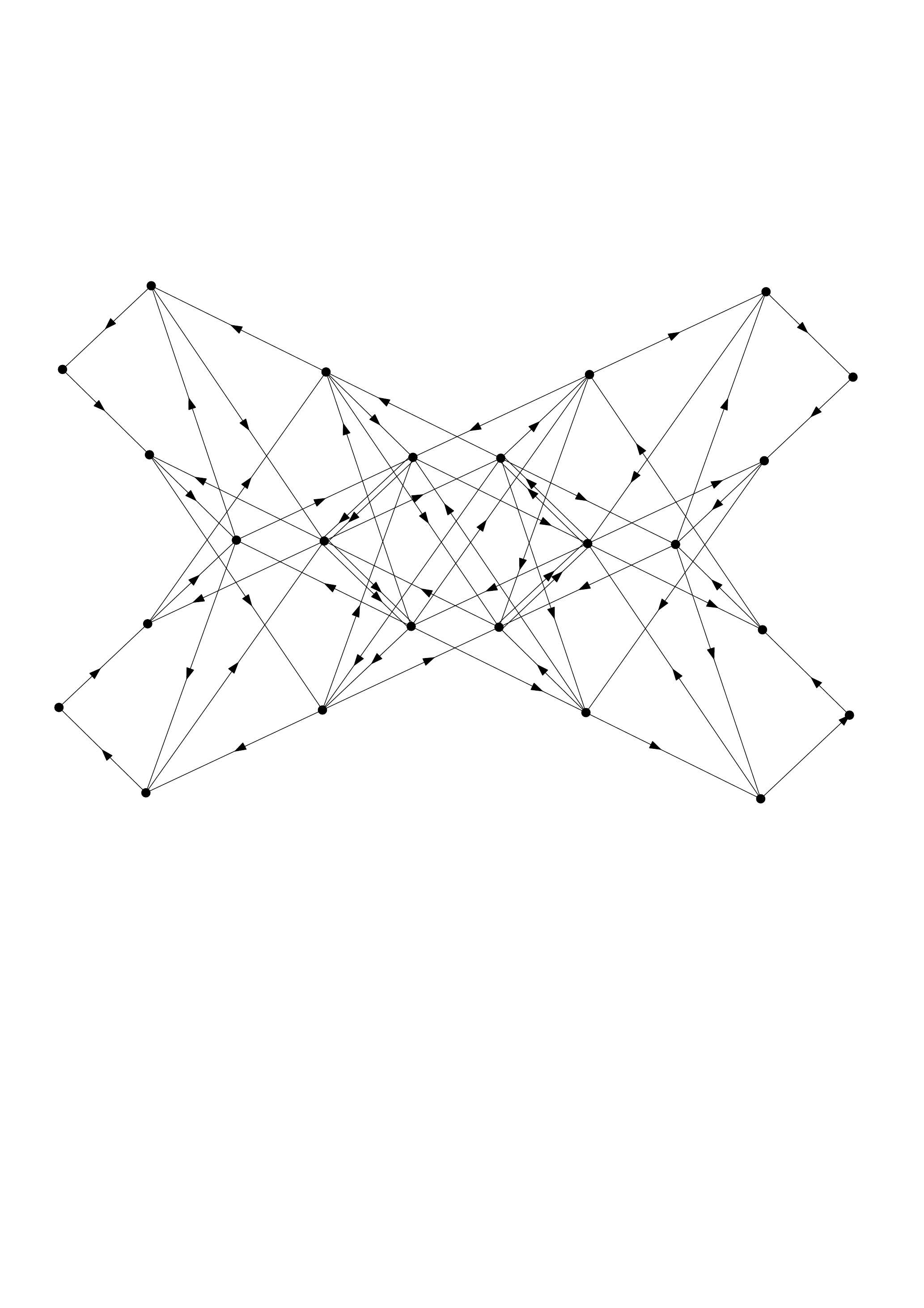}} \quad \raisebox{-.5\height}{ \includegraphics[scale = .25]{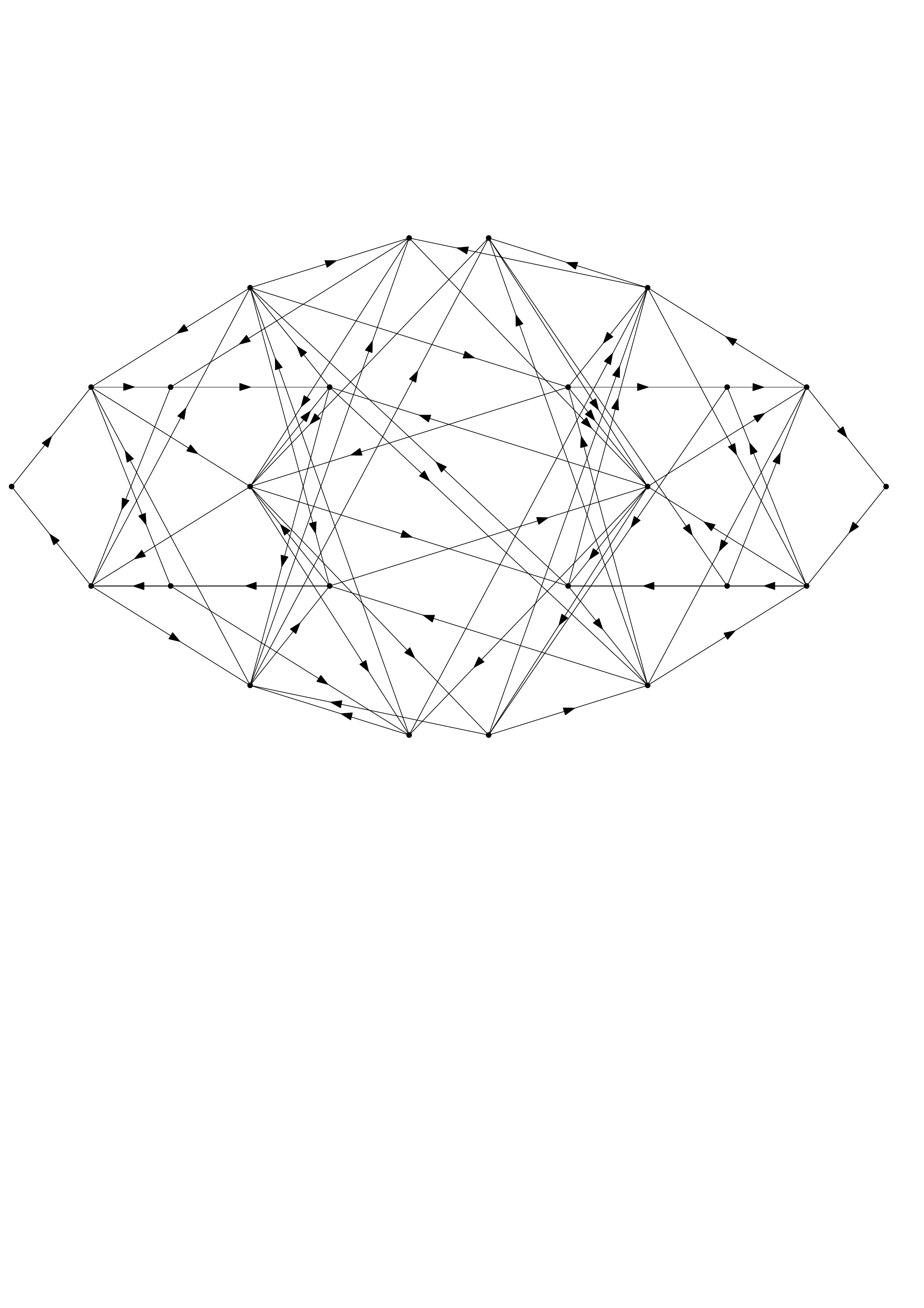}} \quad\raisebox{-.5\height}{ \includegraphics[scale = .25]{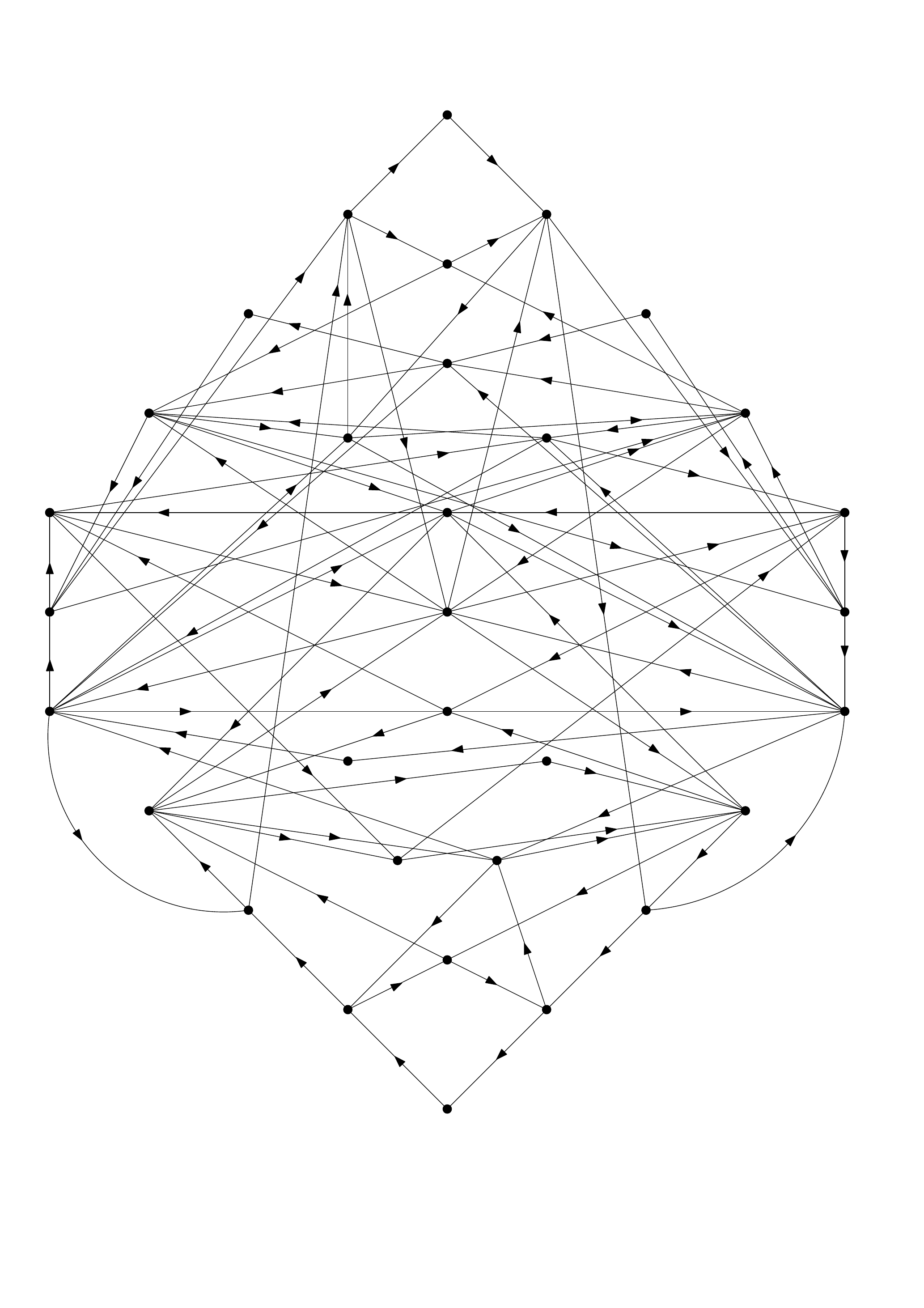}}   \]
when $k=8$ (constructing the three exceptional modules for $\mathcal{C}(\mathfrak{sl}_4, 8)$).

As the module structure of $\mathcal{C}(\mathfrak{sl}_4, k)$ acting on itself is well known (see Figure~\ref{cap:std}), we neglect to solve the KW cell system in these cases. We expect that such a computation should be routine. In fact, a solution for a path representation of the Hecke algebra on the fusion graph for $\Lambda_1$ is given for all $N$ and $k$ in \cite{HansThesis}. Hence solving the remainder of the KW cell system on this graph just requires solving a linear system.
\begin{figure}[h!]
\begin{center}\raisebox{-.5\height}{ \includegraphics[scale = .3]{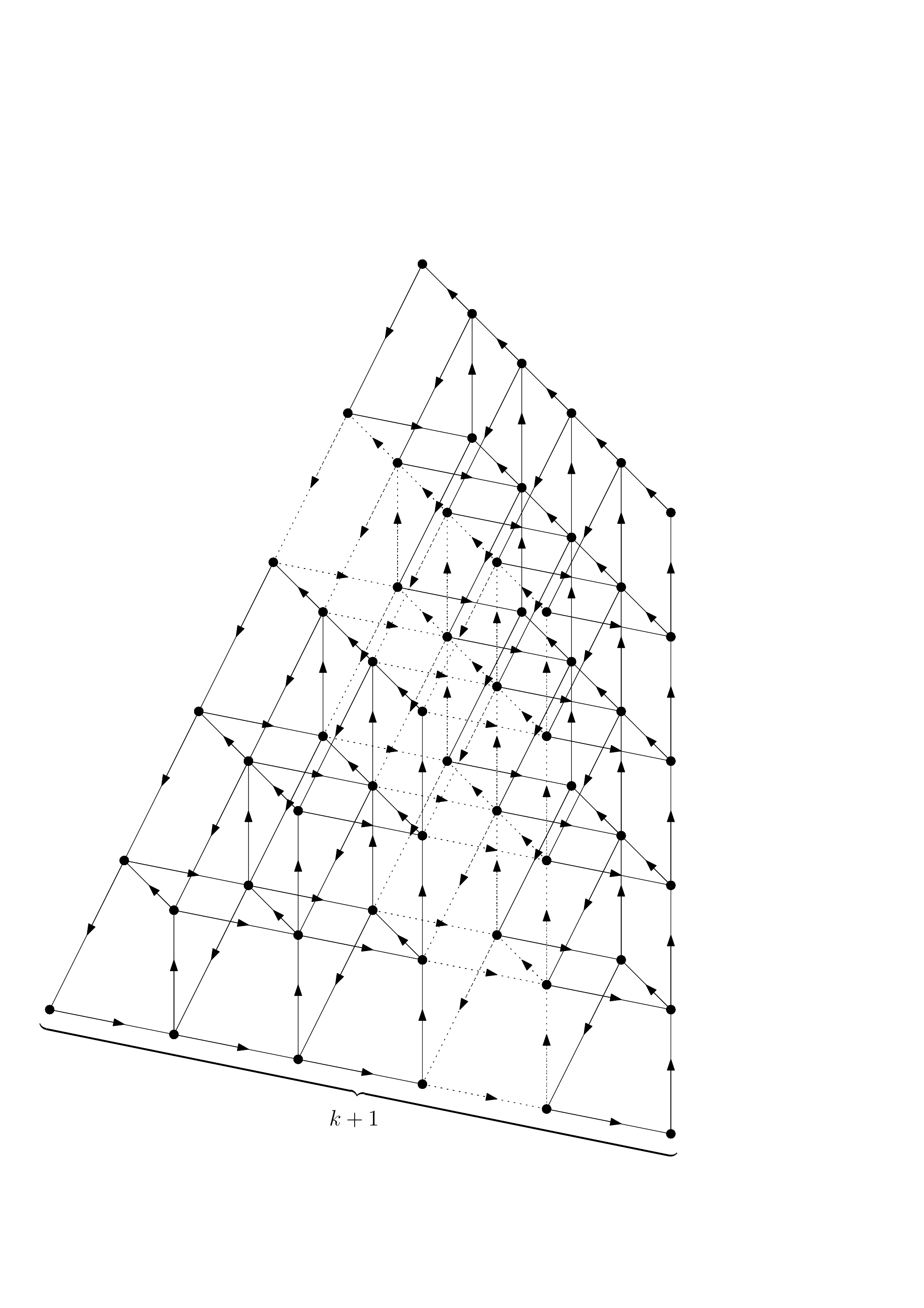}}\end{center}
\caption{The module fusion graph of $\mathcal{C}(\mathfrak{sl}_4, k)$ acting on itself for the object $\Lambda_1$.\label{cap:std}}
\end{figure}

The action of $\mathcal{C}(\mathfrak{sl}_4, k)$ on the de-equivariantisations $\mathcal{C}(\mathfrak{sl}_4, k)_{\operatorname{Rep}(\mathbb{Z}_m)}$ is also well understood. These exist for $m=4$ when $2 \mid k$, and for $m=2$ for all $k$. The structure of the category $\mathcal{C}(\mathfrak{sl}_4, k)_{\operatorname{Rep}(\mathbb{Z}_m)}$ is well known. In particular, the module fusion graph for action by $\Lambda_1$ is the orbifold of the graph in Figure~\ref{cap:std} by the canonical $\mathbb{Z}_m$ action.

A quick count-up shows that the number of modules we have constructed above is exactly the number of modules classified abstractly in Theorem~\ref{thm:absClassIntro}. Hence the modules appearing above (and explicitly constructed in Section~\ref{sec:examples}) provide a classification of semi-simple module categories over $\mathcal{C}(\mathfrak{sl}_4, k)$ for all $k$. This confirms claims made by Ocneanu regarding the ``quantum subgroups'' of $SU(4)$ \cite{Ocneanu}.

Furthermore, for the 6 exceptional graphs above, we also find solutions to the Kazhdan-Wenzl cell systems when $\omega\in \{-1,\mathbf{i},-\mathbf{i}\}$. This gives exceptional module categories over $\overline{\operatorname{Rep}(U_q(\mathfrak{sl}_N))^\omega}$ at the appropriate $q$ values. These modules cannot be constructed via conformal inclusions\footnote{In fact, when $\omega^2\neq 1$ these categories are not braided, and can not be the representation of a conformal field theory.}. To the best of our knowledge, this is the first construction of these module categories. We also find KW cell system solutions on the following graph when $q = e^{2\pi i \frac{1}{20}}$ and $\omega = \pm \mathbf{i}$.
 \[\raisebox{-.5\height}{ \includegraphics[scale = .4]{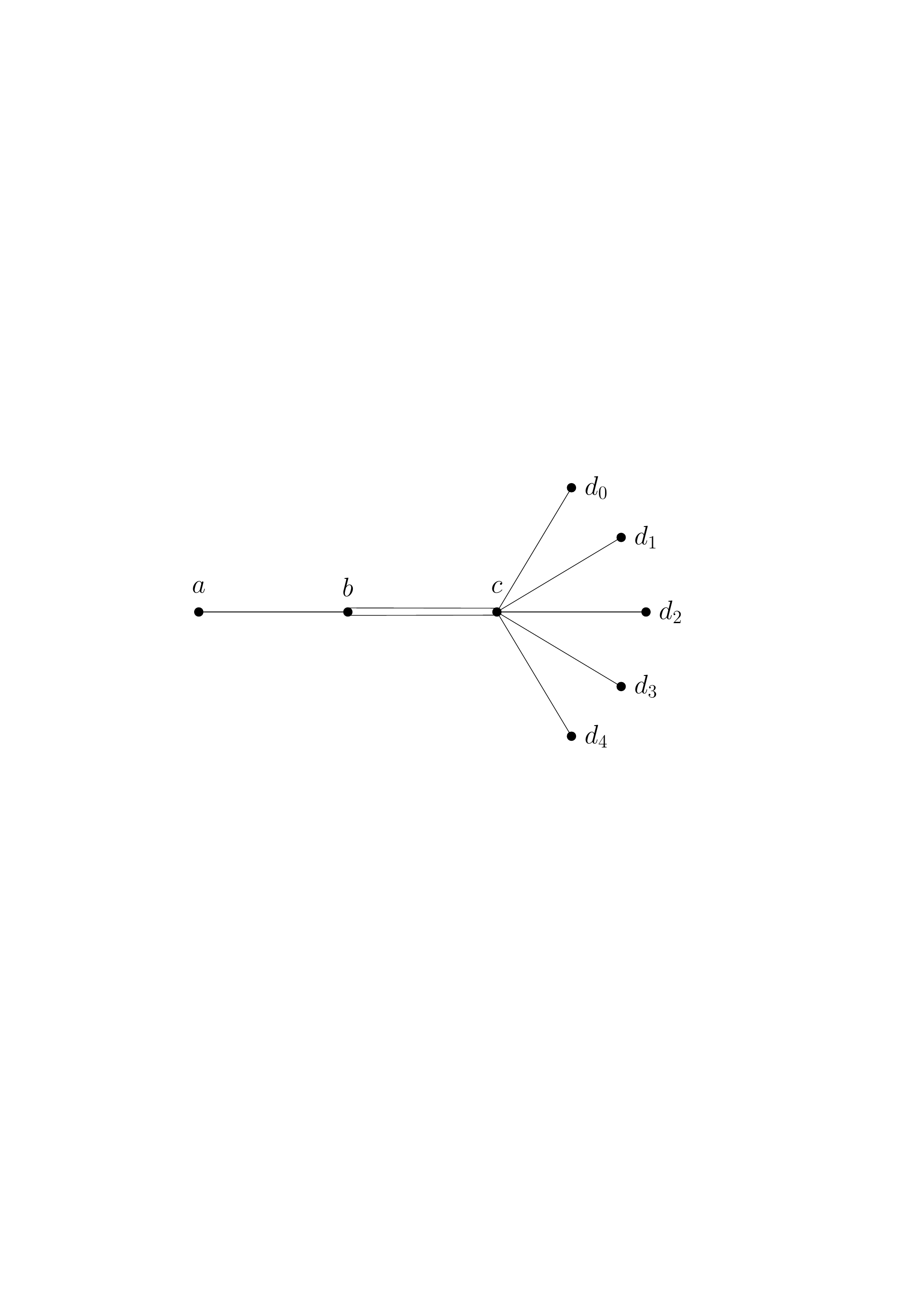}} \]

\subsection*{Acknowledgements}
The second author would like to thank Dietmar Bisch for suggesting this problem back in 2019, Dave Penneys for several useful comments on graph planar algebras, Gwen McKinley for advice on drawing graphs in LaTeX, David Evans for useful feedback on an earlier version of this manuscript, as well as BIRS for hosting them while part of this project was completed. The second author was supported by NSF grant DMS 2245935.

Both authors would like to thank Hans Wenzl for many illuminating conversations, as well as for comments on a preliminary draft of this paper.

\section{Preliminaries}\label{sec:prelims}

We refer the reader to \cite{Book} for the basics of tensor categories and module categories. In this paper, a multi-tensor category is a $\C$-linear, locally finite, rigid monoidal category. A tensor category, for us, is a multi-tensor category whose unit object is simple.
\subsection{Oriented Planar Algebras}
In this section we introduce oriented planar algebras following Jones \cite[Notes 3.12.9]{VaughanCourse}, and Morrison \cite{Morrisey}. Our definition is technically different, yet essentially identical to both of the cited definitions.

\begin{defn}
An {\bf oriented planar algebra} is a strict monoidal, strictly pivotal $\C$-linear category whose objects are parameterized by finite sequences $(\epsilon_1, \epsilon_2, \dots, \epsilon_r)$ with $\epsilon_i \in \{\pm 1\}$. The tensor product is given by concatenation of sequences:
$$
(\epsilon_1, \dots, \epsilon_r) \otimes (\delta_1, \dots, \delta_s) = (\epsilon_1, \dots, \epsilon_r, \delta_1, \dots, \delta_s).
$$
The unit is given by the empty sequence, and is denoted by $(\emptyset)$ or $\mathbbm{1}$.
The dual of an object $(\epsilon_1, \dots, \epsilon_k)$ is obtained by reversing signs and order:
$$
(\epsilon_1, \dots, \epsilon_r)^* = (-\epsilon_r, \dots, -\epsilon_1).
$$
\end{defn}
\begin{remark}
The strictly pivotal assumption means that there are fixed duality morphisms that are compatible with tensor product. 
\end{remark}


\begin{remark} The adjective ``oriented" comes from the fact that it is traditional to use oriented strands in the graphical calculus to specify objects. For instance, reading morphisms bottom to top, the following diagram represents a morphism $(1,1,-1,1) \to (-1)$:
\[\raisebox{-.5\height}{ \includegraphics[scale = .4]{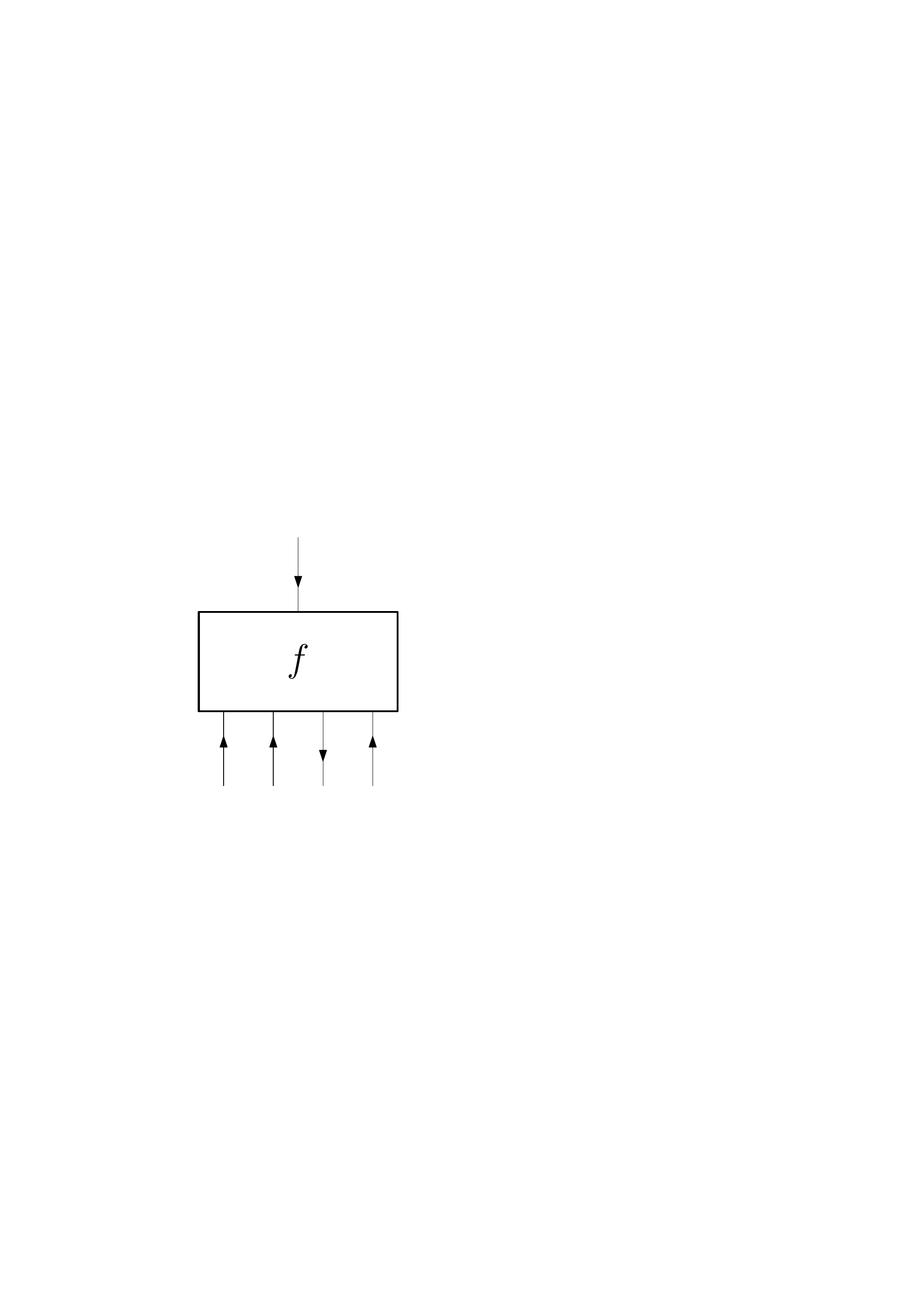}} .\]
\end{remark}

Oriented planar algebras are prevalent: they are strictifications of pivotal categories tensor generated by an object $X$ and its dual $X^*$. 

\begin{defn}\label{defn:oP}
Given any pivotal, monoidal $\C$-linear category $\Cc$, and an object $X$ in $\Cc$, we can form an oriented planar algebra generated by $X$ and $X^*$, denoted $\Pp_{\cC;X}$, as follows (this strictification construction is due to Ng and Schauenberg \cite[Theorem 2.2]{Richard}). The oriented planar algebra $\Pp_{\cC;X}$ is defined by
$$
\Hom_{\Pp_{\cC;X}}((\epsilon_1, \dots, \epsilon_r), (\delta_1, \dots, \delta_s)) :=
\Hom_{\Cc}\big(( \dots (X^{\epsilon_1} \otimes X^{\epsilon_2} ) \dots ) \otimes X^{\epsilon_r}, (\dots (X^{\delta_1} \otimes X^{\delta_2}) \otimes \dots ) \otimes X^{\delta_s}\big),
$$
where we set $X^1 := X$ and $X^{-1} := X^*$. The composition and tensor product of morphisms in $\Pp_{\cC;X}$ is obtained from the composition and tensor product of morphisms in $\Cc$\footnote{The definition of tensor product of morphisms requires using the associativity constraints in $\Cc$.}. A choice of duality maps in $\Cc$ for $X$, say $\coev_X: \mathbbm{1} \to X \otimes X^*$ and $\ev_X: X^* \otimes X \to \mathbbm{1}$ can be uniquely extended to duality maps for every object in $\Pp_X$ in a way that makes $\Pp_{\cC;X}$ strictly pivotal (see \cite[Theorem 2.2]{Richard} for details). Thus $\Pp_{\cC;X}$ is an oriented planar algebra. 
\end{defn}

If $\Cc$ is tensor generated by $X$ and $X^*$, then it is well-known that the Cauchy completion of $\Pp_{\cC;X}$ is equivalent to $\Cc$.

\begin{remark}
When the ambient category $\cC$ is clear and unambiguous, we will use the shorthand notation $\mathcal{P}_X$ instead of $\Pp_{\cC;X}$.
\end{remark}

Oriented planar algebras form a category, whose morphisms are strictly pivotal, strict monoidal functors \cite[Section 1.7.5]{Tur} which act as the identity on objects. In particular, strictly pivotal functors are required to preserve the choice of dualty morphisms (not just be compatible with duality functors). The construction described above extends to a ``strictification functor" from the category of pointed pivotal monoidal categories $(\Cc, X)$ to the category of oriented planar algebras. When restricted to the class of categories tensor-generated by $X$ and $X^*$, the functor is an equivalence, and this establishes the following folklore result (cf. \cite[Section 3]{EH} or \cite[Theorem 4.1]{David})
\begin{thm}\label{thm:folk}
The map $(\Cc, X) \mapsto \Pp_{\cC;X}$ extends to an equivalence of categories
$$
\begin{Bmatrix}
\textrm{Pairs  $(\Cc, X)$ with $\Cc$ a pivotal multi-tensor} \\
\textrm{category generated by $X$ and $X^*$}
\end{Bmatrix} \cong
\{ \textrm{Oriented planar algebras}\}.
$$
\end{thm}

\subsection{The Oriented Graph Planar Algebra}

The oriented graph planar algebra associated to a finite directed graph $\Gamma$, which we denote $oGPA(\Gamma)$, is an important example of a unitary oriented planar algebra. In Section~\ref{sec:GPA} we will explain the connection of this oriented planar algebra to the classification of module categories.

For the oriented GPA, it is important to consider paths on our graph which traverse edges backwards. To formalize this, we introduce new edges corresponding to the original edges, but with their directions reversed. This results in a signed graph (a graph where the edges are labeled by $\pm 1$), with the original edges labelled $+1$ and the new edges labeled $-1$. More precisely, we make the following definitions:

\begin{defn}
Let $\Gamma = (V, E)$ be a directed graph. Given $e \in E$, let $\overline{e}$ denote a new edge with the source and target of $e$ swapped. The {\bf signed graph associated to $\Gamma$} is given by
$$
\overline{\Gamma} = (V, E \cup \overline{E}),
$$
where $\overline{E} = \{ \overline{e}\ : e \in E\}$. Edges in $E$ are given the sign $+1$ while edges in $\overline{E}$ are given the sign $-1$.
\end{defn}

\begin{defn} Suppose $\epsilon = (\epsilon_1, \dots, \epsilon_r)$ is a sequence of $1$'s and $-1$'s. An {\bf $\epsilon$-path} is a path $(f_1, \dots, f_r)$ in $\overline{\Gamma}$ such that
$$
\operatorname{sign}(f_i) = \epsilon_i \quad \text{ for all } i.
$$
When $\epsilon = (\emptyset)$ then an $\epsilon$-path is a path of length zero, ie a vertex in $\Gamma$. We denote such paths by vertex labels, ie $v \in V$.
\end{defn}
Any path in $\overline{\Gamma}$ is an $\epsilon$-path for some $\epsilon$. If $p$ is a path, let $s(p)$ denote the first vertex of the path and $t(p)$ the final vertex. If $p = v$ is a path of length $0$ then $s(p)=t(p) = v$.
\begin{defn}
Let $\Gamma = (V, E)$ be a finite directed graph. The {\bf oriented graph planar algebra} associated to $\Gamma$, denoted $oGPA(\Gamma)$, is an oriented planar algebra defined as follows. The objects of $oGPA(\Gamma)$ are finite sequences $\epsilon = (\epsilon_1, \dots, \epsilon_r)$ of $+1$'s and $-1$'s. Given two objects $\epsilon$ and $\delta$, define a vector space
$$
\Hom_{oGPA(\Gamma)}(\epsilon, \delta) := \textrm{span} \{ (p,q): \text{$p$ is an $\epsilon$-path, $q$ is a $\delta$-path, $s(p) = s(q)$, and $t(p) = t(q)$}\}.
$$
Composition is defined as follows: given $(p,q) \in \Hom_{oGPA(\Gamma}(\epsilon, \delta)$ and $(p',q') \in \Hom_{oGPA(\Gamma}(\delta, \gamma)$, the composition $(p',q') \circ (p,q) \in \Hom_{oGPA(\Gamma)}(\epsilon, \gamma)$ is defined as
\begin{equation}
(p',q') \circ (p,q) = \delta_{q', p}(p', q).
\end{equation}
Extending this linearly makes $oGPA(\Gamma)$ into a $\C$-linear category.

The tensor structure is defined as follows: given $(p,q) \in \Hom_{oGPA(\Gamma}(\epsilon, \delta)$ and $(p',q') \in \Hom_{oGPA(\Gamma)}(\gamma, \beta)$, define
\begin{equation}
(p,q) \otimes (p',q') = \delta_{t(p), s(p')} \delta_{t(q), s(q')} (pp', qq').
\end{equation}
Here $pp'$ and $qq'$ denotes the concatenation of paths. Extending linearly, this definition makes $oGPA(\Gamma)$ into a strict monoidal category.

We define a dagger structure on $oGPA(\Gamma)$ as the anti-linear extension of 
\[  (p,q)^\dag = (q,p).  \]
As the morphisms $(p,q)$ are a full basis of matrix units for $\End_{oGPA(\Gamma)}(\delta)$, we have that $oGPA(\Gamma)$ is semisimple. We also immediately see that these algebras are $C^*$-algebras. 

To define a pivotal structure on $oGPA(\Gamma)$, let $\lambda=(\lambda_1, \dots, \lambda_k)$ be the positive Frobenius-Perron eigenvector of $\Gamma$. It is uniquely defined up to multiplication by a positive real number. As $oGPA(\Gamma)$ is an oriented planar algebra, we have that $\epsilon^* = (\epsilon_1, \cdots, \epsilon_n)^* = (-\epsilon_n, \cdots, -\epsilon_1)$. We define
\begin{align}\label{eq:oGPAcupcap}
\operatorname{ev}_{(+,-)}   :=   \sum_{\text{$(e,\overline{e})$ a $(1,-1)$-path}} \sqrt{\frac{\lambda_{t(e)}}{\lambda_{s(e)}}}((e,\overline{e}),s(e)):   (1,-1) \to \mathbbm{1}\\
\operatorname{coev}_{(-,+)}   :=   \sum_{\text{$(\overline{e},e)$ a $(-1,1)$-path}} \sqrt{  \frac{\lambda_{s(e)}}{\lambda_{t(e)}}}( t(e),(\overline{e},e)):  \mathbbm{1} \to  (-1,1)  \\
\operatorname{ev}_{(-,+)}   :=   \sum_{\text{$(\overline{e},e)$ a $(-1,1)$-path}}  \sqrt{\frac{\lambda_{s(e)}}{\lambda_{t(e)}}}((\overline{e},e), t(e)):   (-1,1) \to \mathbbm{1}\\
\operatorname{coev}_{(+,-)}   :=   \sum_{\text{$(e,\overline{e})$ a $(1,-1)$-path}} \sqrt{\frac{\lambda_{t(e)}}{\lambda_{s(e)}}}( s(e),(e,\overline{e})):  \mathbbm{1} \to  (1,-1) 
\end{align}
\end{defn}
Clearly these definitions do not change if $\lambda$ is rescaled by a positive real number. A simple computation shows that these maps satisfy the zig-zag relations. 

A direct computation shows that the identity map is a monoidal natural isomorphism $**\to \id_{oGPA(\Gamma)}$. Hence we choose this as our pivotal structure. Note that $\operatorname{ev}_{(-,+)}^\dag= \operatorname{coev}_{(-,+)}$, and thus our chosen pivotal structure is a unitary pivotal structure in the sense of \cite[Definition 3.11]{David}.

Finally, we verify that $oGPA(\Gamma)$ is a unitary category. From the explicit basis of the hom spaces, the inner product coming from the $\dag$-structure is easily seen to be positive definite. This then implies unitarity by \cite[Lemma 3.51.]{EH}.

\subsection{The multi-tensor category $M_k(\Vect)$}\label{subsec:Mk}

In this subsection we introduce the multi-tensor category $M_k(\Vect)$. As we will see in Section~\ref{sec:GPA} (following ideas of \cite{EH}), there is a close connection between $M_k(\Vect)$ and the graph planar algebra for $\Gamma$.

The category $M_k(\Vect)$ is a semisimple multi-tensor category which is a categorification of the ring $M_k(\mathbb{N})$. Informally, we replace natural numbers by vector spaces, addition of natural numbers by direct sum, and multiplication of natural numbers by tensor product. The category is recognizable as the category of endomorphisms in the 2-category $2\Vect$ (specifically, the 2-category $2\Vect_c$ in \cite{Kap, elG}). Equivalently, $M_k(\mathbb{N})$ is monoidally equivalent to $\operatorname{End}(\mathcal{M})$ where $M$ is the unique semisimple category of rank $k$. More formally, the category is defined as follows.

The objects are $k \times k$ matrices whose entries are (finite-dimensional) Hilbert spaces. The morphisms are $k \times k$ matrices of linear transformations. The composition of morphisms is given by entry-wise composition of linear transformations. This category is $\C$-linear and semisimple, with the direct sum of objects given by entry-wise direct sum of vector spaces. Every simple object is isomorphic to an object with a copy of $\C$ in one entry of the matrix, and the $0$ vector space in all other entries. The simple object whose non-zero entry occurs in the $(i,j)$-th entry is denoted $E_{ij}$.

The tensor structure on $M_k(\Vect)$ is defined as follows. Given two objects, say
$$
A = \begin{bmatrix}A_{11} & \dots& A_{1k} \\
\vdots & \ddots & \vdots \\
A_{k1} & \dots & A_{kk}
\end{bmatrix}, \quad
B = \begin{bmatrix}B_{11} & \dots& B_{1k} \\
\vdots & \ddots & \vdots \\
B_{k1} & \dots & B_{kk}
\end{bmatrix}
$$
then the object $A \otimes B$ is defined by
$$
A \otimes B = [(A \otimes B)_{ij}] := \left[ \bigoplus_{l=1}^k A_{il} \otimes B_{lj}\right]_{ij}.
$$
Similarly, given two morphisms, say $f = [f_{ij}]_{i,j}$ and $g = [g_{ij}]_{i,j}$ (where $f_{ij}$ and $g_{ij}$ denote linear transformations), define
\begin{equation*}
f \otimes g = [(f \otimes g)_{ij}]_{i,j} := \left[\bigoplus_{l=1}^k f_{i,l} \otimes g_{l,j}\right]_{i,j}.
\end{equation*}
The unit for the category is given by $\mathbbm{1} = E_{11} \oplus \dots \oplus E_{kk}$ (i.e. the identity matrix, with a copy of $\mathbb{C}$ in each diagonal entry). The tensor structure is not strict, but has standard associators and unitors coming from the standard associativity and distributivity isomorphisms in $\Vect$.


The category is rigid. If $A = [A_{ij}]_{ij}$ is an object, then the dual object is obtained by transposing the matrix for $A$ and applying the duality functor in $\Vect$ to every entry:
$$
A^* := [A_{ji}^*]_{ij}.
$$

 The category $M_k(\Vect)$ has a dagger structure which makes it a unitary category. It is defined by
 $$
 \begin{bmatrix}
 f_{11} & \dots & f_{1k} \\
 \vdots & \ddots & \vdots \\
 f_{k1} & \dots & f_{kk}
 \end{bmatrix}^{\dag} = \begin{bmatrix}
 f_{11}^{\dag} & \dots & f_{1k}^{\dag} \\
 \vdots & \ddots & \vdots \\
 f_{k1}^{\dag} & \dots & f_{kk}^{\dag}
 \end{bmatrix}
 $$
 where $f_{ij}^{\dag}$ denotes the usual complex conjugate of a matrix. It is easily checked this gives $M_k(\Vect)$ the structure of a unitary category.

The category admits a pivotal structure. We fix explicit standard (left) duality morphisms. Given an object $A = [A_{ij}]_{ij}$, define
\begin{align*}
\ev^{std}_A &:= \begin{bmatrix}
\bigoplus_{l=1}^k \ev_{A_{l1}} & & \\
& \ddots & \\
& & \bigoplus_{l=1}^k \ev_{A_{lk}}
\end{bmatrix}: A^* \otimes A \to \mathbbm{1}, \\
\coev^{std}_A &:= \begin{bmatrix}
\bigoplus_{l=1}^k \coev_{A_{1l}} & & \\
& \ddots & \\
& & \bigoplus_{l=1}^k \coev_{A_{kl}}
\end{bmatrix}: \mathbbm{1} \to A \otimes A^*,
\end{align*}
where $\ev_{A_{ij}}: A_{ij}^* \otimes A_{ij} \to \mathbb{C}$ and $\coev_{A_{ij}}: \C \to A_{ij} \otimes A_{ij}^*$ denote the standard left duality morphisms in $\Vect$. The pivotal structure $A \to A^{**}$ is inherited from the usual natural isomorphism between a vector space and its double dual. It is straightforward to check this choice of pivotal structure is spherical, and every simple object has dimension $\id_{\mathbbm{1}} \in \End_{M_k(\Vect)}(\mathbbm{1})$. The right duality maps corresponding to this pivotal structure are given by
\begin{align*}
\widetilde{\ev}^{std}_A &:= \begin{bmatrix}
\bigoplus_{l=1}^k \widetilde{\ev}_{A_{1l}} & & \\
& \ddots & \\
& & \bigoplus_{l=1}^k \widetilde{\ev}_{A_{kl}}
\end{bmatrix}: A \otimes A^* \to \mathbbm{1}, \\
\widetilde{\coev}^{std}_A &:= \begin{bmatrix}
\bigoplus_{l=1}^k \widetilde{\coev}_{A_{l1}} & & \\
& \ddots & \\
& & \bigoplus_{l=1}^k \widetilde{\coev}_{A_{lk}}
\end{bmatrix}: \mathbbm{1} \to A^* \otimes A^*,
\end{align*}
where $\widetilde{\ev}_{A_{ij}}: A_{ij} \otimes A_{ij}^* \to \C$ and $\widetilde{\coev}_{A_{ij}}: \C \to A_{ij}^* \otimes A_{ij}$ are the standard right duality morphisms in $\Vect$.

The pivotal structure chosen above is not unique. Given any vector $\lambda = (\lambda_1, \dots, \lambda_k)$ of non-zero complex numbers, we may define new left and right duality morphisms by
\begin{align}\label{eq:MkVeccupcap}
\ev^{\lambda}_A &:= \begin{bmatrix}
\bigoplus_{l=1}^k \sqrt{\frac{\lambda_1}{\lambda_l}}\ev_{A_{l1}} & & \\
& \ddots & \\
& & \bigoplus_{l=1}^k \sqrt{\frac{\lambda_k}{\lambda_l}}\ev_{A_{lk}}
\end{bmatrix}: A^* \otimes A \to \mathbbm{1}, \\
\widetilde{\ev}^{\lambda}_A &:= \begin{bmatrix}
\bigoplus_{l=1}^k \sqrt{\frac{\lambda_l}{\lambda_1}}\widetilde{\ev}_{A_{1l}} & & \\
& \ddots & \\
& & \bigoplus_{l=1}^k \sqrt{\frac{\lambda_l}{\lambda_k}}\widetilde{\ev}_{A_{kl}}
\end{bmatrix}: A \otimes A^* \to \mathbbm{1}
\end{align}
The modified coevaluation maps are fixed by requiring the zig-zag relations hold. The choices for square roots are required to satisfy $\sqrt{\frac{\lambda_i}{\lambda_j}}\sqrt{\frac{\lambda_j}{\lambda_i}} =1$. If $\lambda$ consists of all positive real numbers, we pick the positive square roots. These duality maps only depend on $\lambda$ up to multiplication by a scalar.

In general, this pivotal structure is not spherical. We denote the pivotal category with this choice of pivotal structure $(M_k(\Vect), \lambda)$.

\subsection{Kazhdan-Wenzl Skein Theory}\label{sec:preKW}

In this subsection we describe (a slight modification of\footnote{The results of \cite{SovietHans} gives a presentation only allowing upwards pointing strands. We present a slight generalisation here which allows for strands in any orientation. As such, we have to give slight extensions to the results of \cite{SovietHans} throughout this section. These extensions are all routine.}) the Kazhdan-Wenzl presentation \cite{SovietHans} for the tensor category $\overline{\operatorname{Rep}(U_q(\mathfrak{sl}_N))^\omega}$.

We begin by presenting the pre-semisimplified version of this category. Let $N\in \mathbb{N}_{\geq 2}$, $q\in \mathbb{C}$, and $\omega$ an $N$-th root of unity, and let $\Lambda_1\in \operatorname{Rep}(U_q(\mathfrak{sl}_N))^\omega$ be the ``vector representation''. Our first goal is to describe the oriented $\dag$-planar algebra $\mathcal{P}_{\operatorname{Rep}(U_q(\mathfrak{sl}_N))^\omega;\Lambda_1}$. The results of \cite{SovietHans} give generators for this planar algebra.

\begin{lem}\cite{SovietHans}
The oriented $\dag$-planar algebra $\mathcal{P}_{\operatorname{Rep}(U_q(\mathfrak{sl}_N))^\omega;\Lambda_1}$ is generated by the projection \[\raisebox{-.5\height}{ \includegraphics[scale = .3]{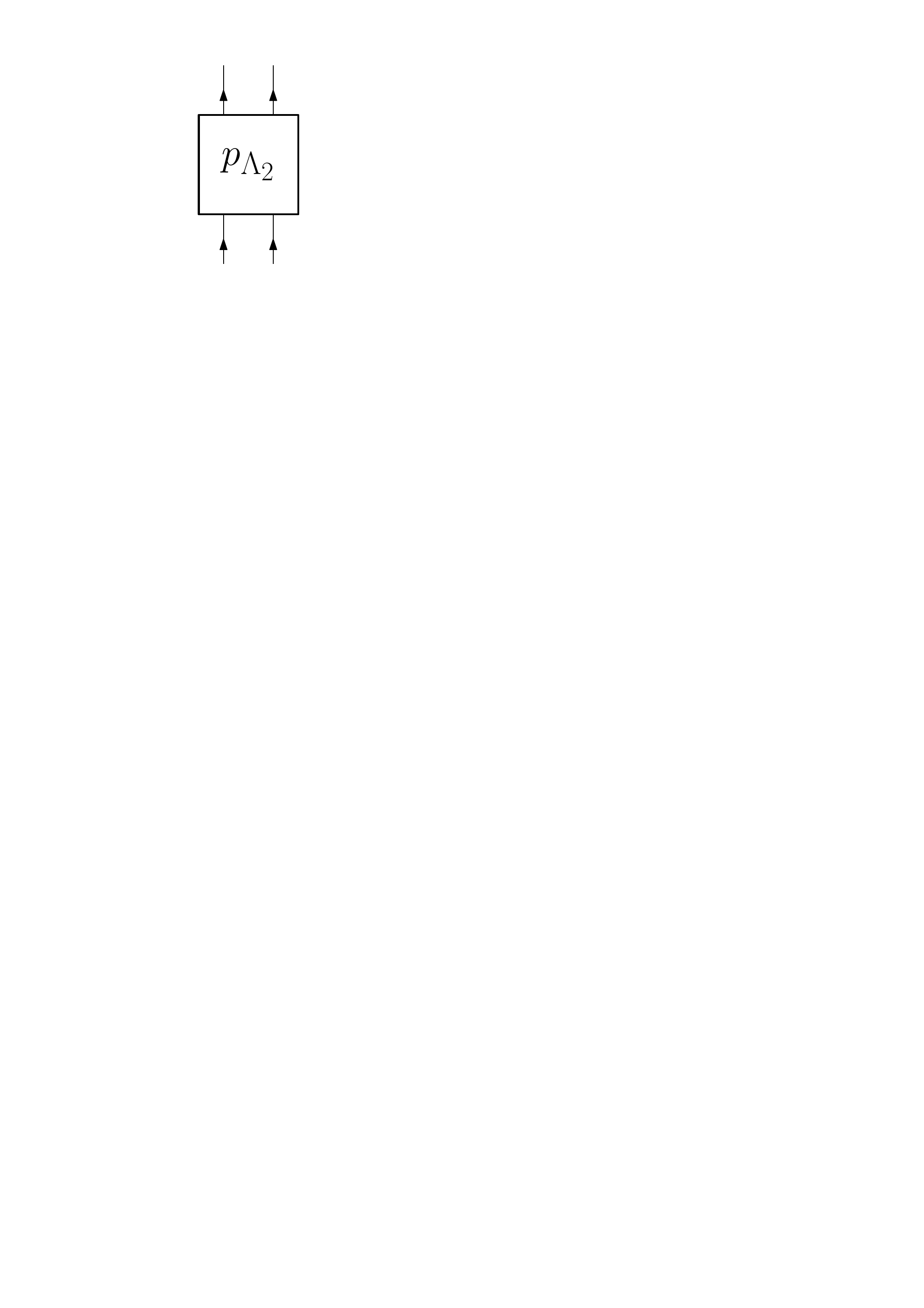}}\in \Hom_{\operatorname{Rep}(U_q(\mathfrak{sl}_N))^\omega}(\Lambda_1^{\otimes 2} \to\Lambda_1^{\otimes 2})\] onto $\Lambda_2$, along with an element (unique up to scalar)
\[\raisebox{-.5\height}{ \includegraphics[scale = .25]{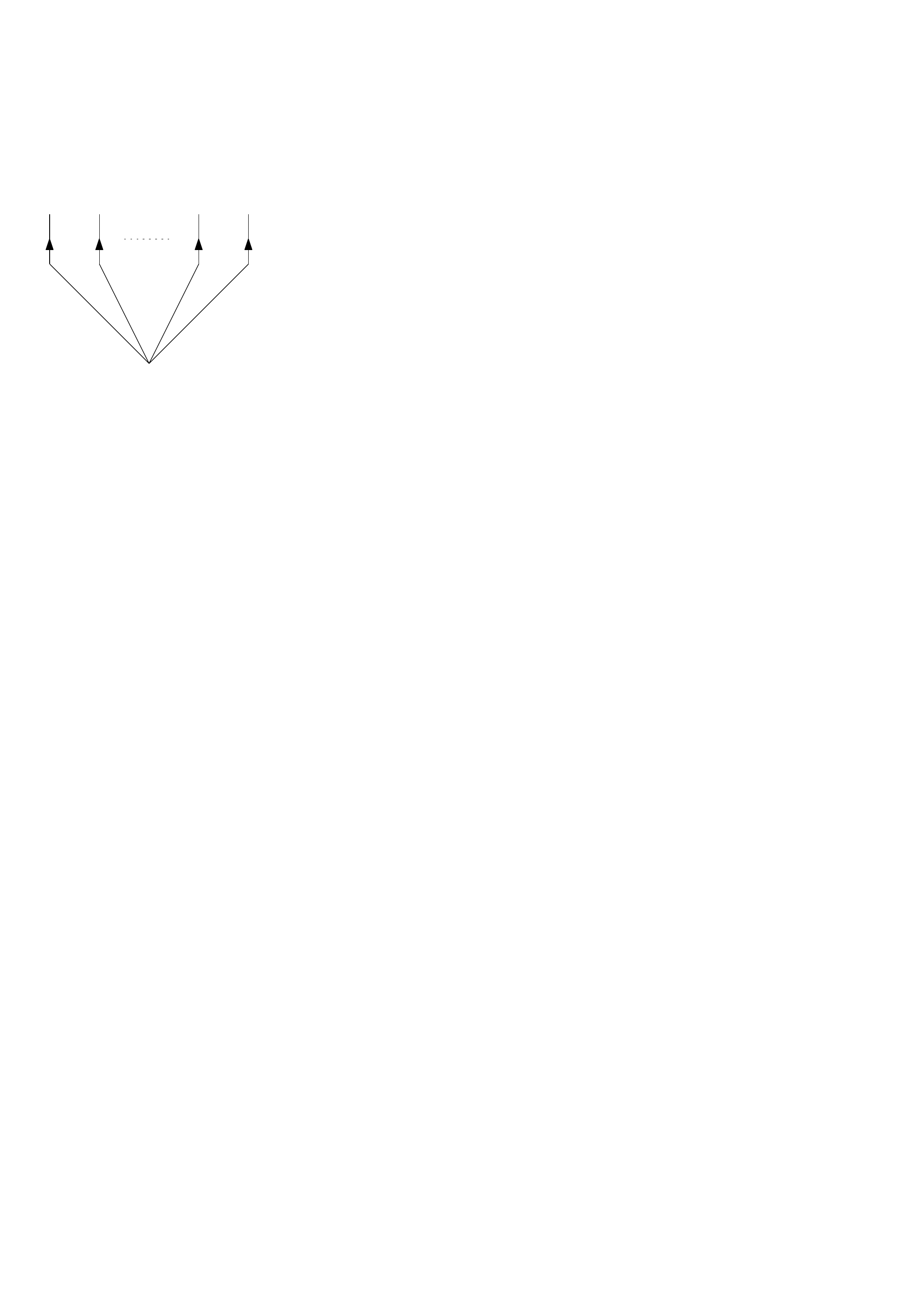}} \in \Hom_{\operatorname{Rep}(U_q(\mathfrak{sl}_N))^\omega}(\mathbbm{1} \to \Lambda_1^{\otimes N}).\]
\end{lem}
\begin{proof}
It is shown in \cite[Theorem 4.1 and Proposition 2.2]{SovietHans} that these morphisms generate all the spaces $\Hom_{\operatorname{Rep}(U_q(\mathfrak{sl}_N))^\omega}(\Lambda_1^{\otimes n} \to\Lambda_1^{\otimes m})$ with $n,m\geq 0$. This result then extends to all spaces in $\mathcal{P}_{\operatorname{Rep}(U_q(\mathfrak{sl}_N))^\omega;\Lambda_1}$ (which recall allows homs from arbitrary strings of $\Lambda_1$ and $\Lambda_1^*$) using the rigidity maps, and the element 
\[ \left(1 + q^{-2}\right)\raisebox{-.5\height}{ \includegraphics[scale = .3]{pl2.pdf}} -\raisebox{-.5\height}{ \includegraphics[scale = .3]{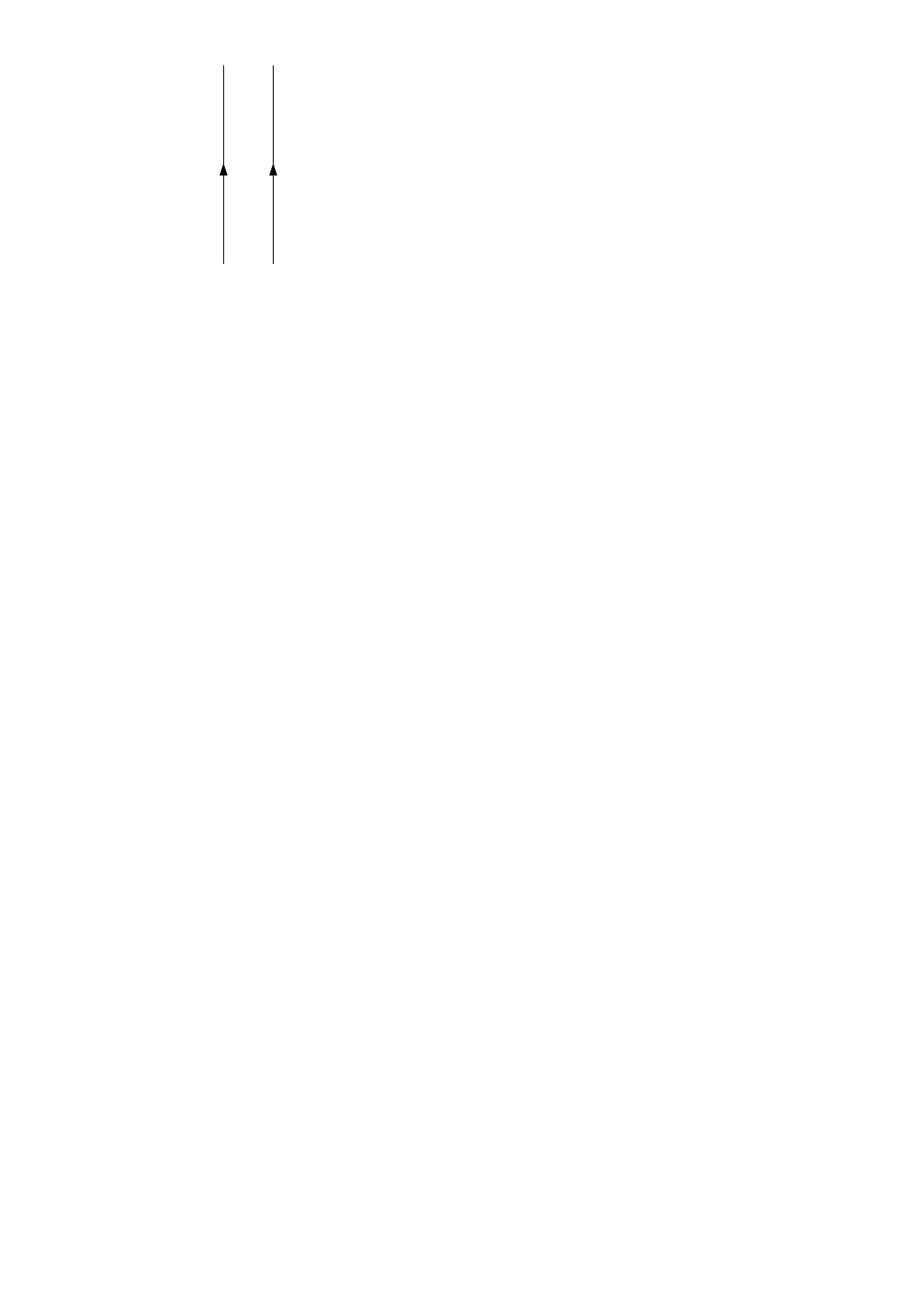}}    \]
which is a braiding on the subcategory generated by $\raisebox{-.5\height}{ \includegraphics[scale = .3]{pl2.pdf}}$.
\end{proof}
We will write $ \raisebox{-.5\height}{ \includegraphics[scale = .25]{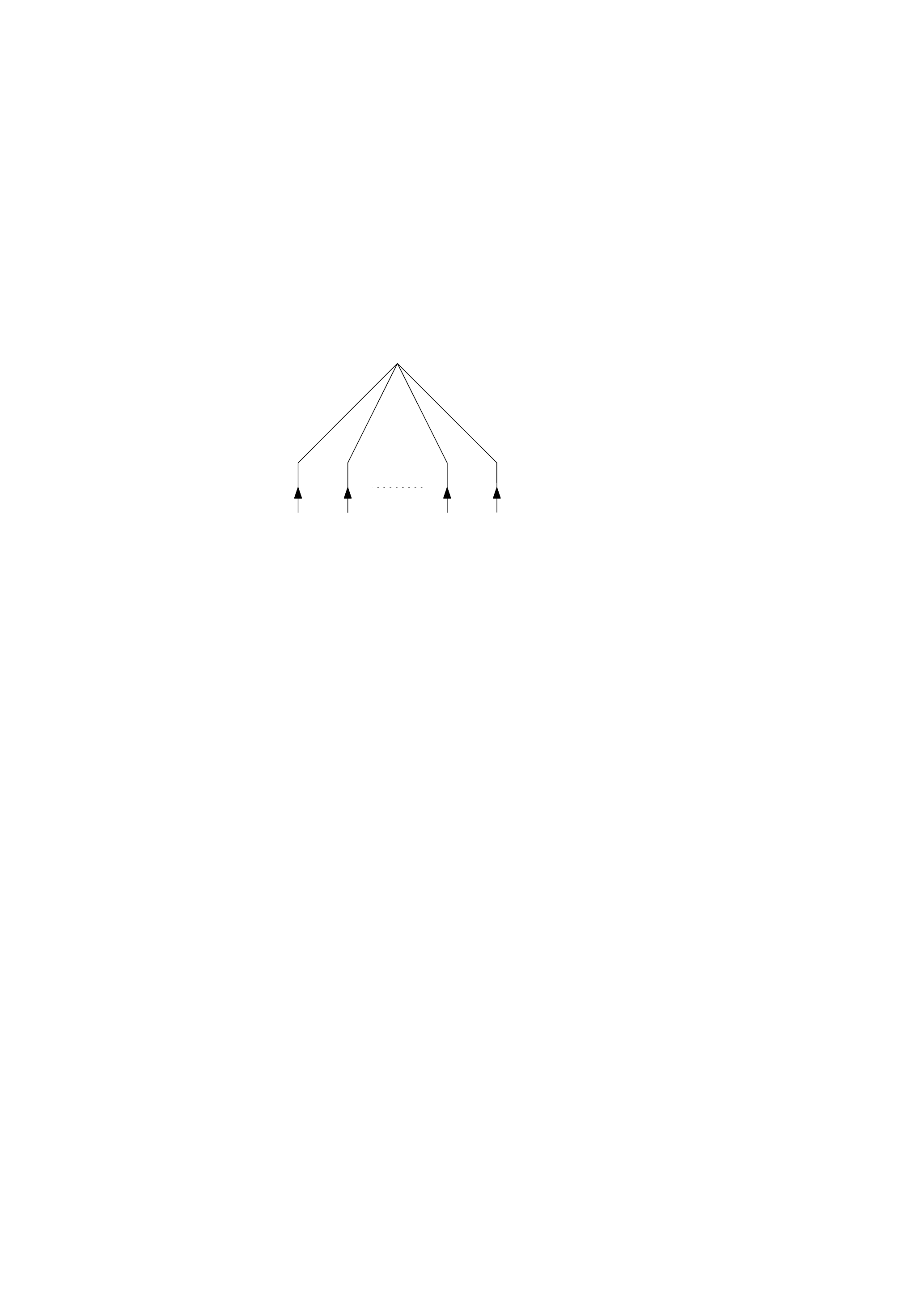}}$ for $\left( \raisebox{-.5\height}{ \includegraphics[scale = .25]{triv.pdf}}\right)^\dag$. To simplify our relations, we use the rescaled generator
\[ \raisebox{-.5\height}{ \includegraphics[scale = .3]{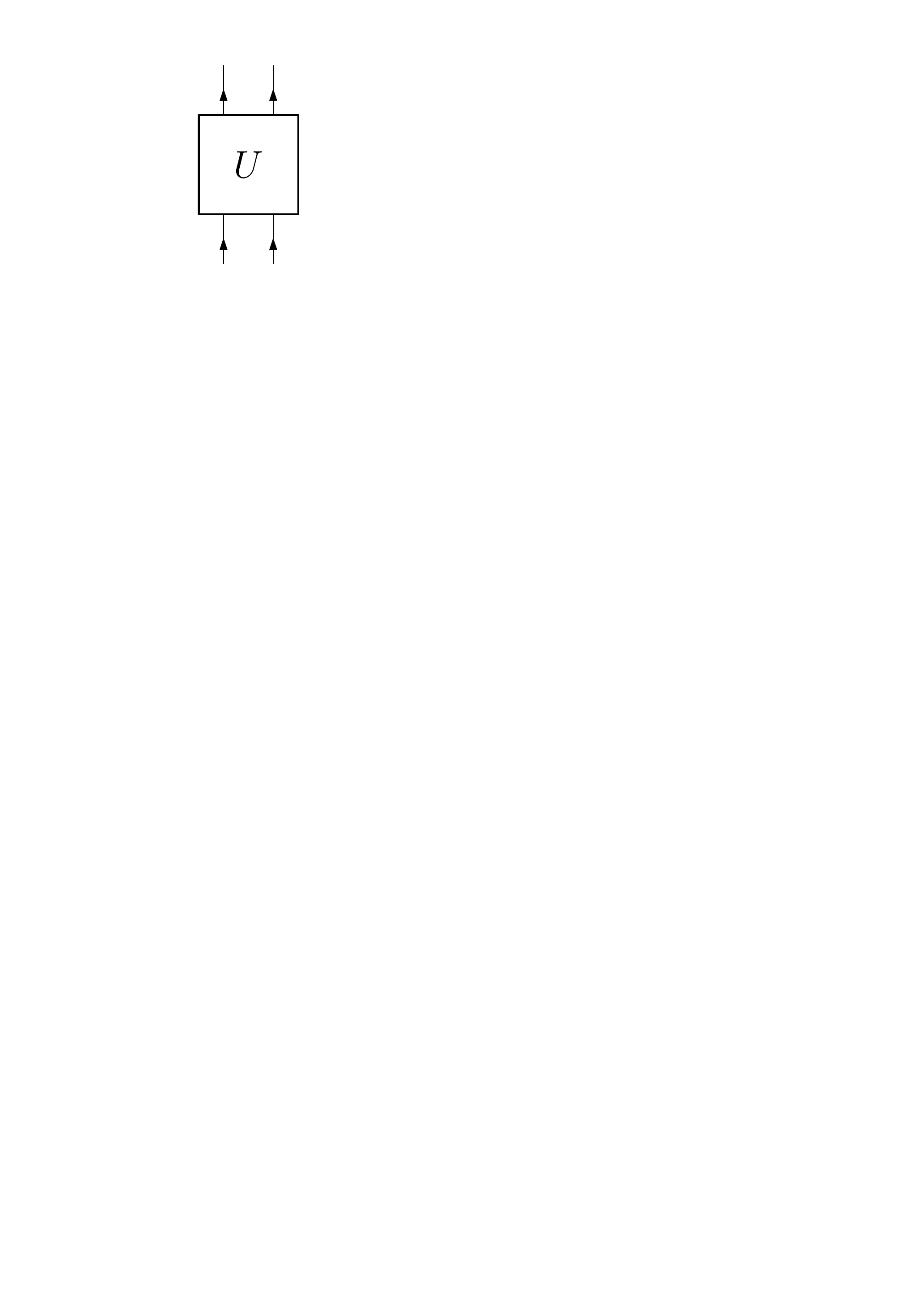}}:=[2]_q\raisebox{-.5\height}{ \includegraphics[scale = .3]{pl2.pdf}}    \]
in our presentation instead of the projection. These generators satisfy the following relations:
\begin{align*}
(\textrm{R1}) :  &\quad\raisebox{-.5\height}{ \includegraphics[scale = .3]{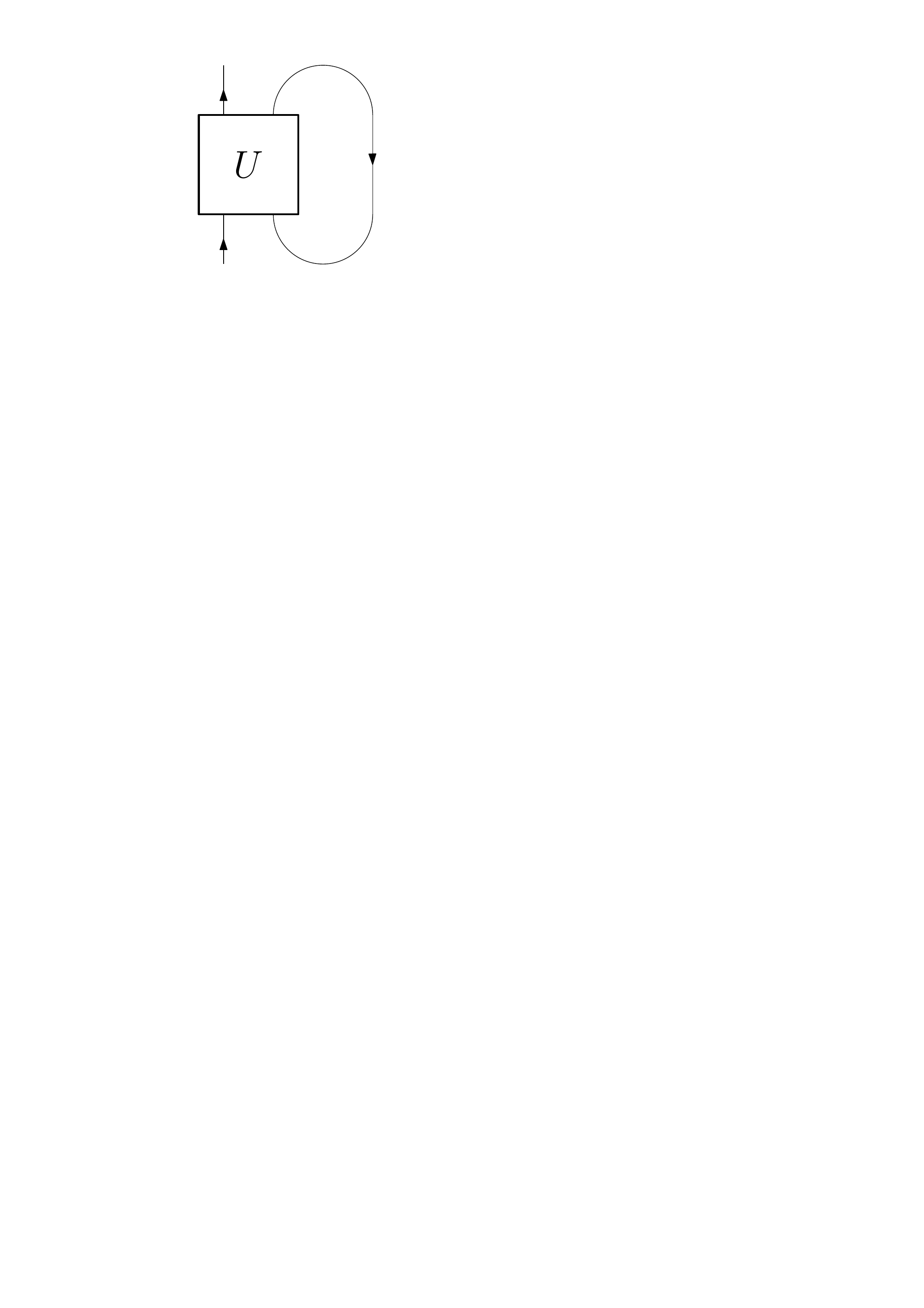}} = \raisebox{-.5\height}{ \includegraphics[scale = .3]{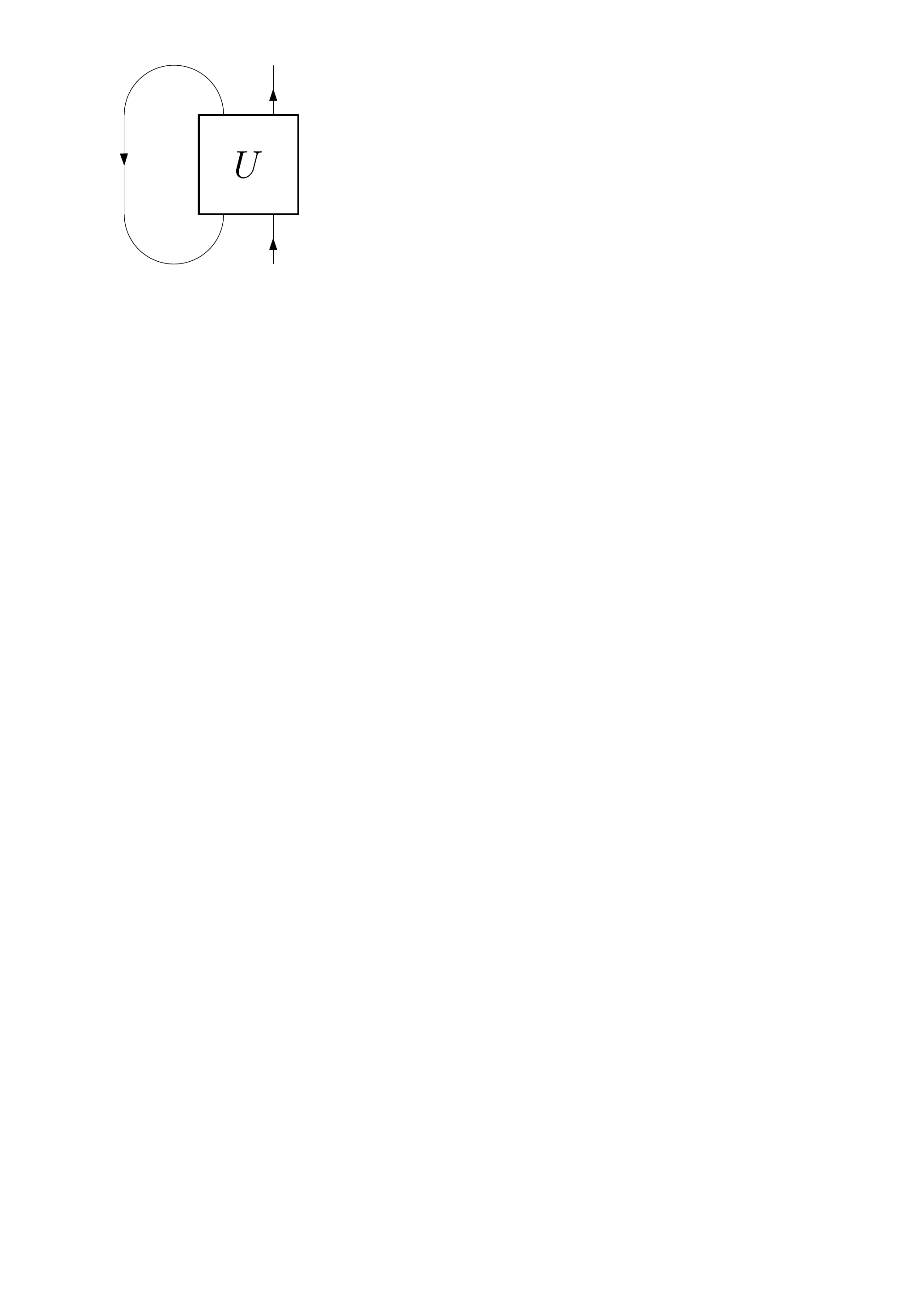}}= [N-1]_q\raisebox{-.5\height}{ \includegraphics[scale = .3]{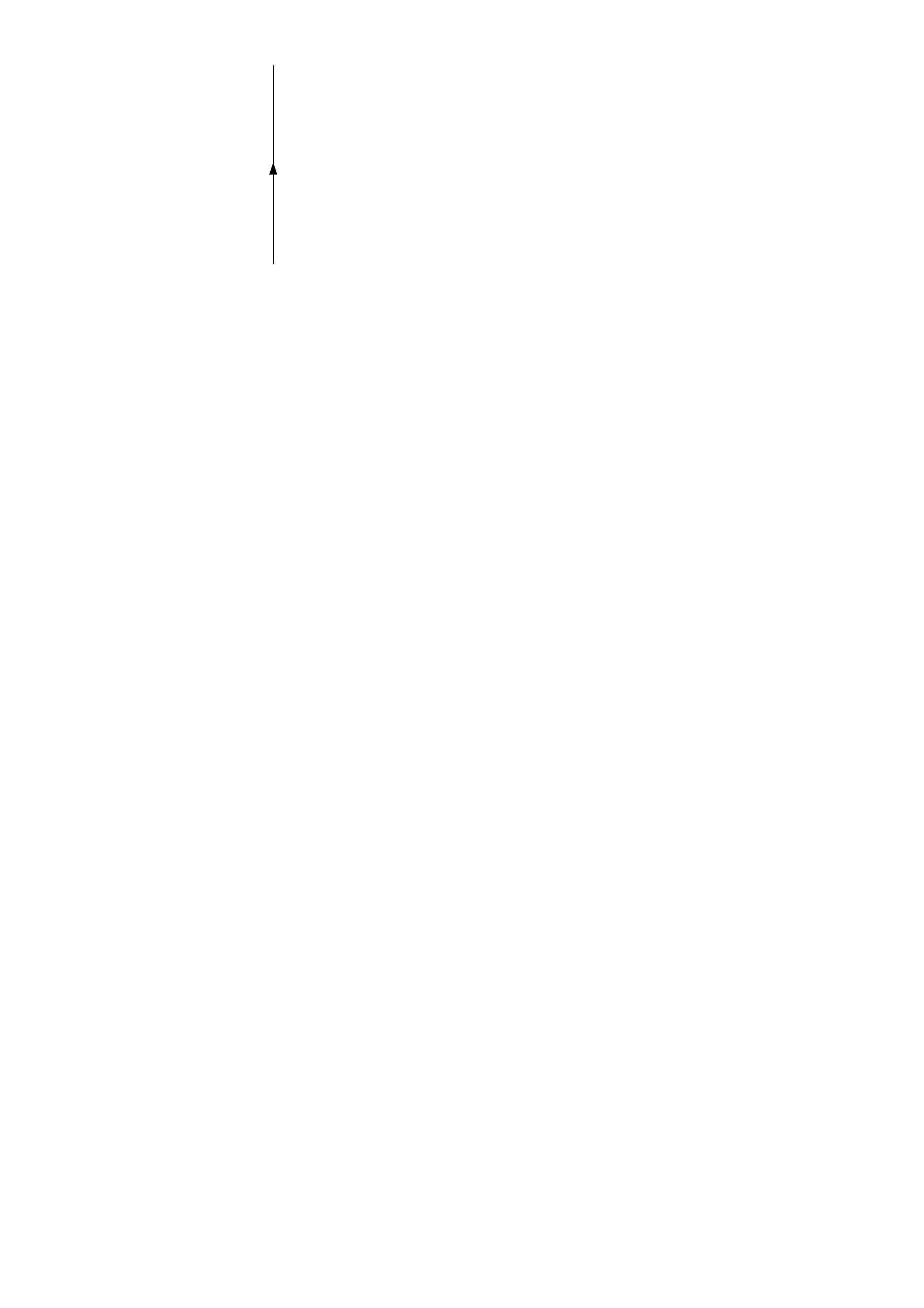}}  & (\textrm{R2}) :  &\quad \left(\raisebox{-.5\height}{ \includegraphics[scale = .3]{U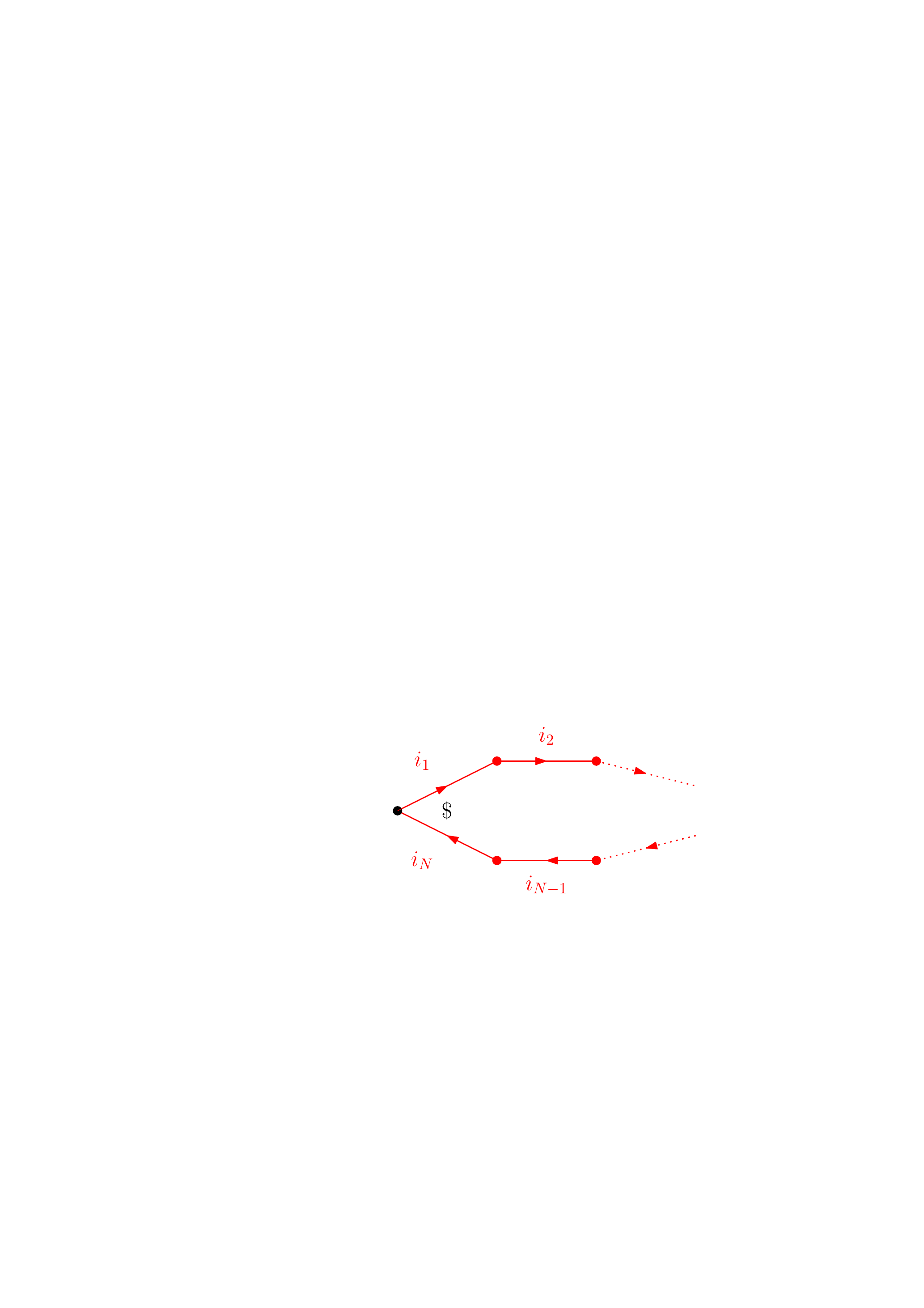}}\right)^\dag = \raisebox{-.5\height}{ \includegraphics[scale = .3]{UU.pdf}}\\
(\textrm{R3}) :  &\quad\raisebox{-.5\height}{ \includegraphics[scale = .3]{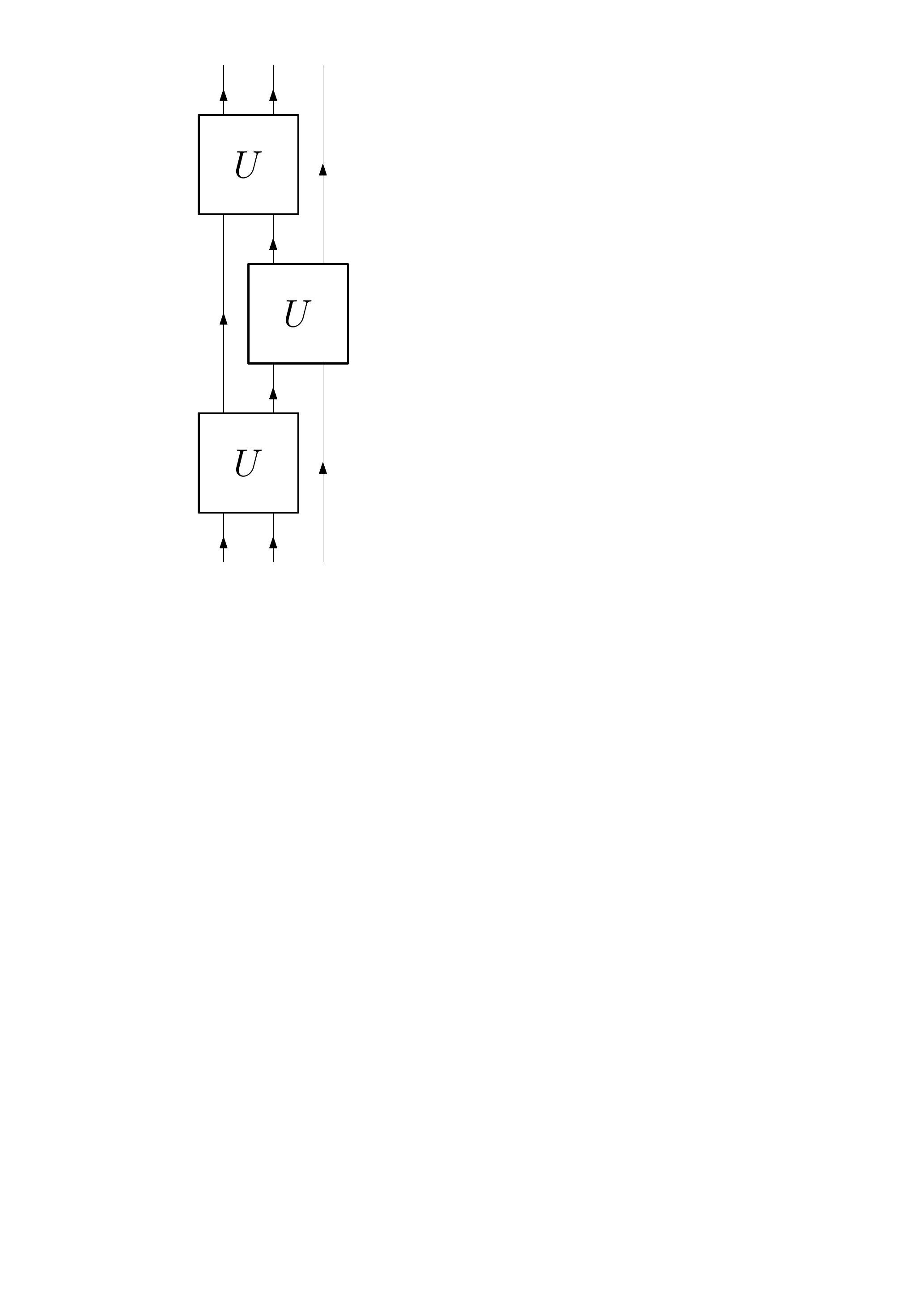}}  -\raisebox{-.5\height}{ \includegraphics[scale = .3]{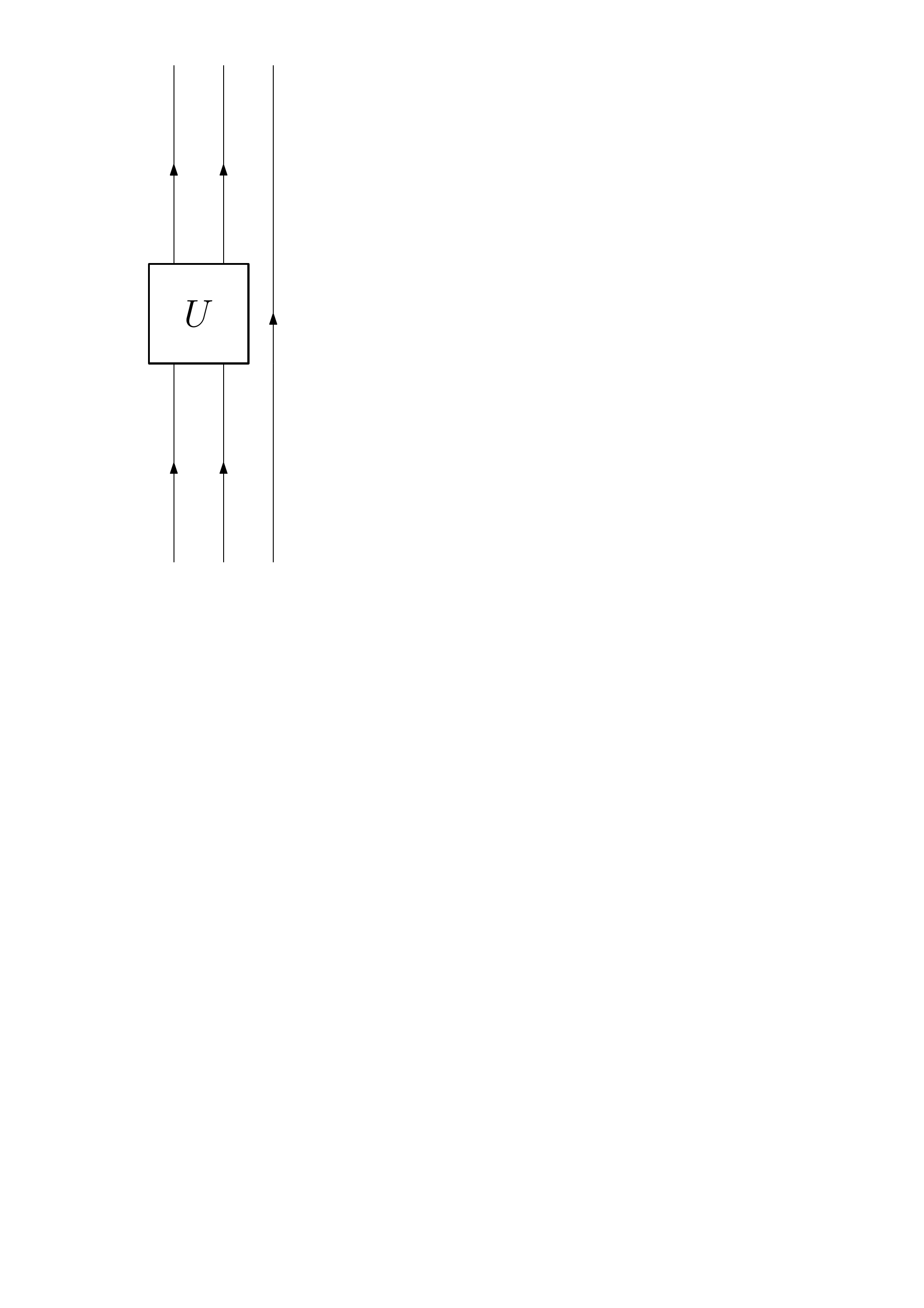}}  =\raisebox{-.5\height}{ \includegraphics[scale = .3]{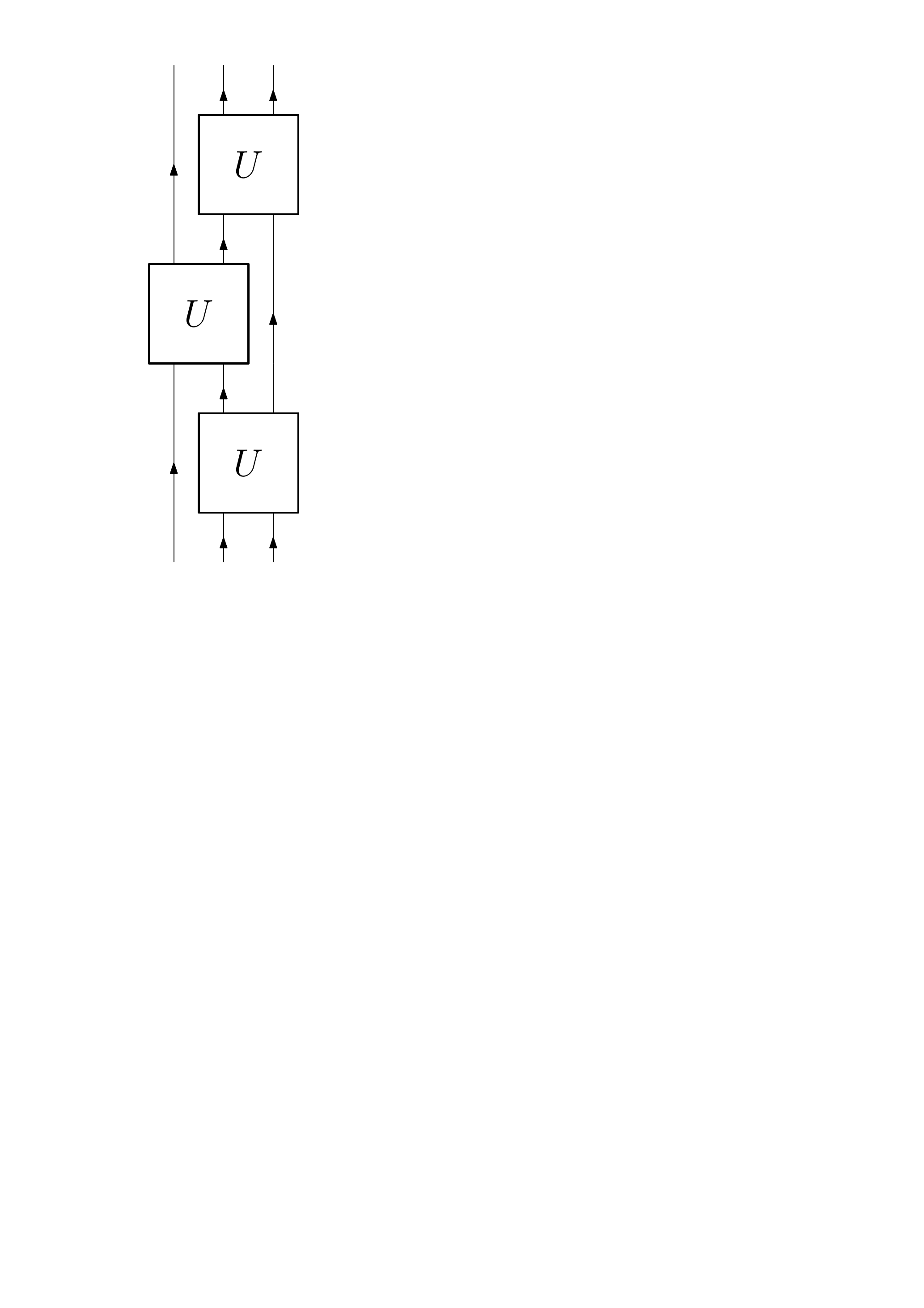}}  -\raisebox{-.5\height}{ \includegraphics[scale = .3]{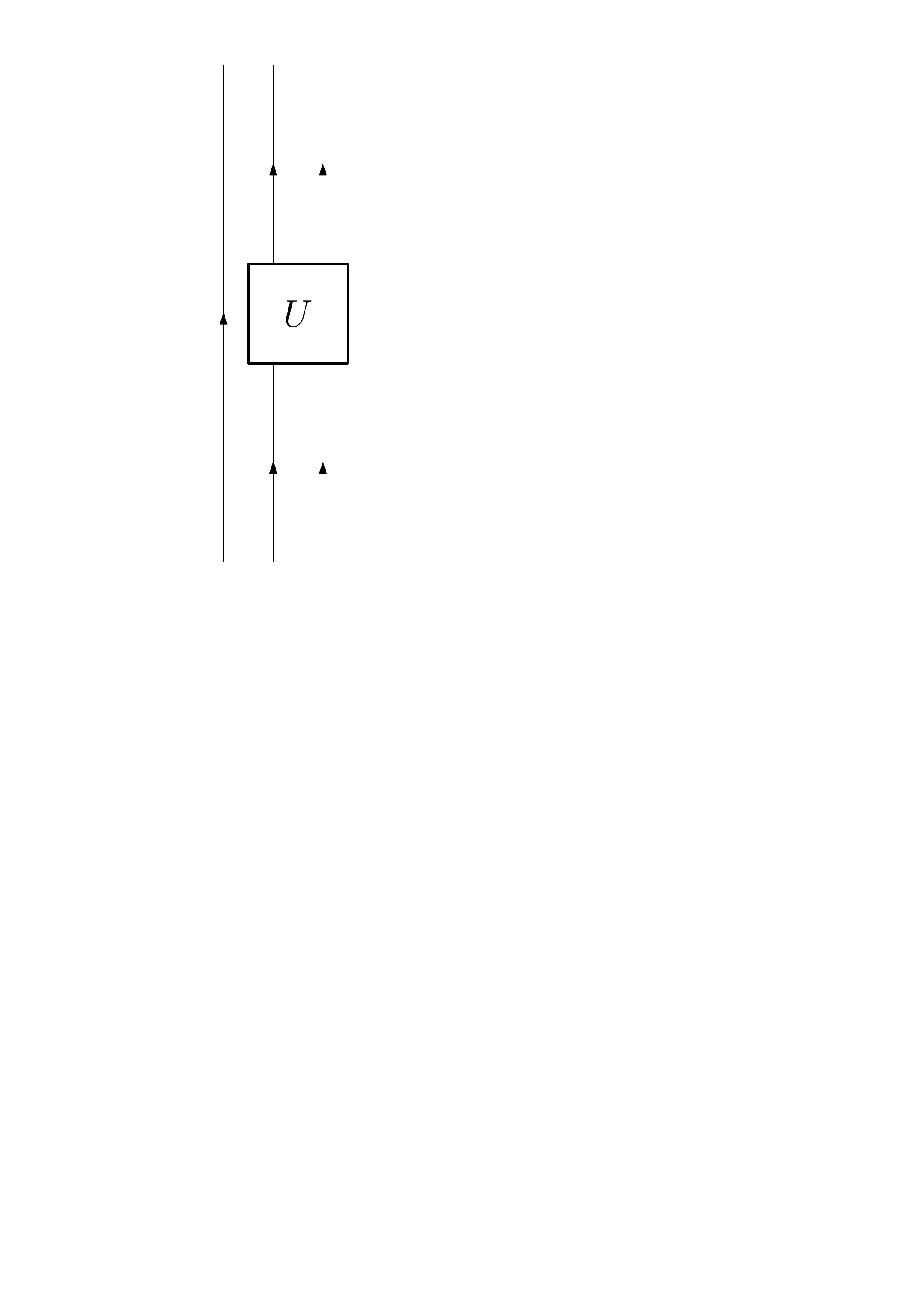}} &(\textrm{Hecke}) :  &\quad\raisebox{-.5\height}{ \includegraphics[scale = .3]{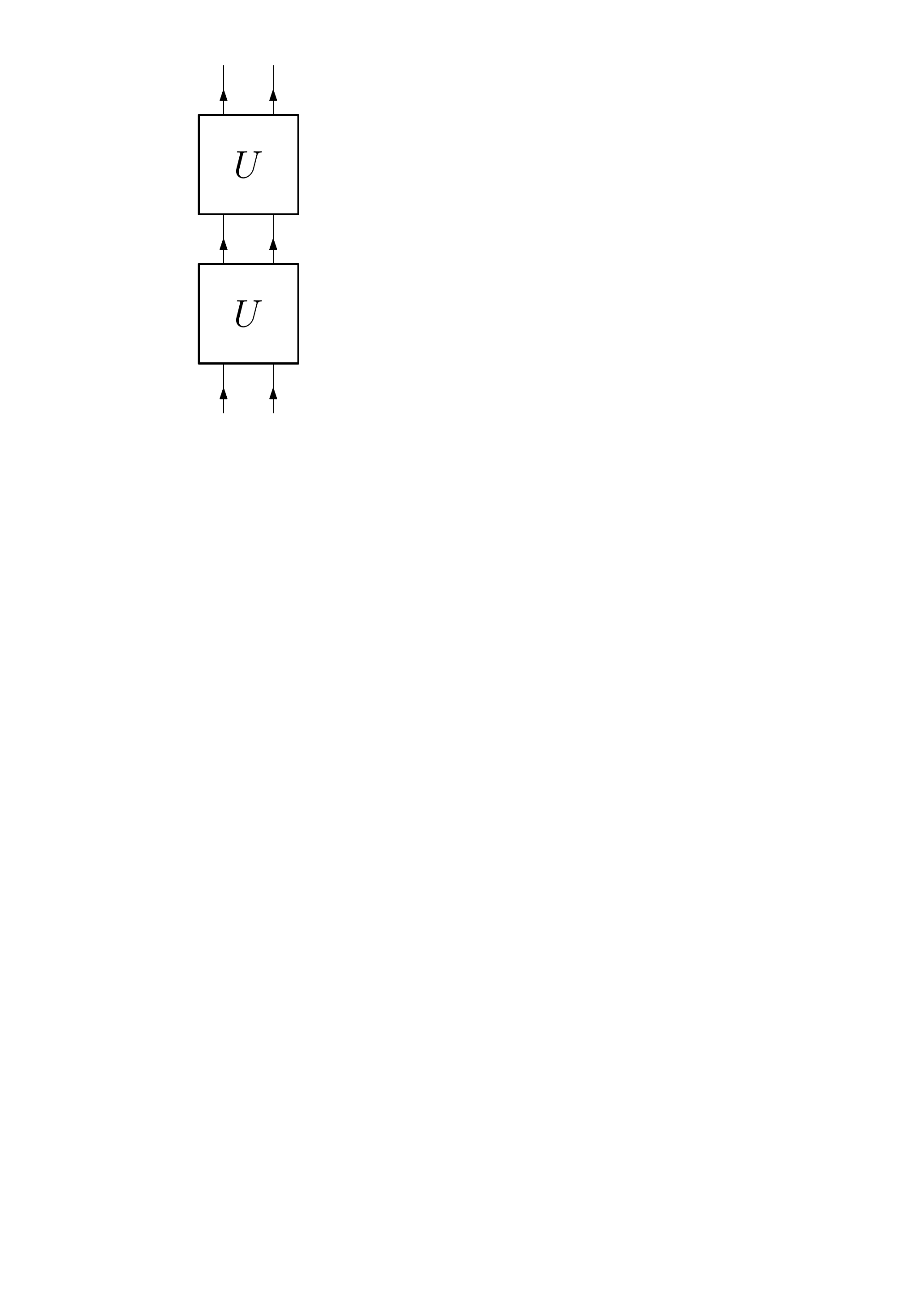}}= [2]_q\raisebox{-.5\height}{ \includegraphics[scale = .3]{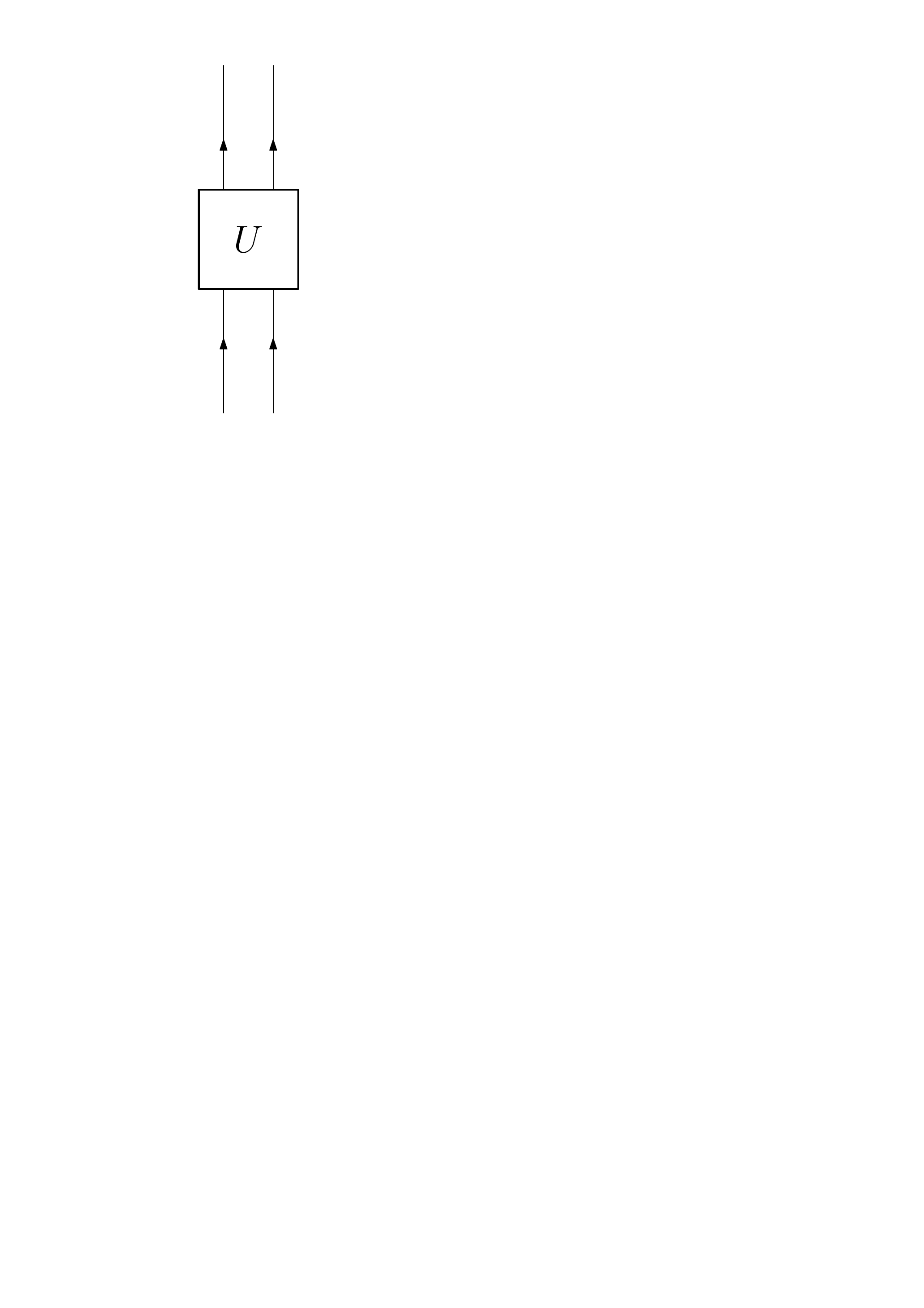}}\\
(\textrm{Over Braid}) :  &\quad\omega\raisebox{-.5\height}{ \includegraphics[scale = .3]{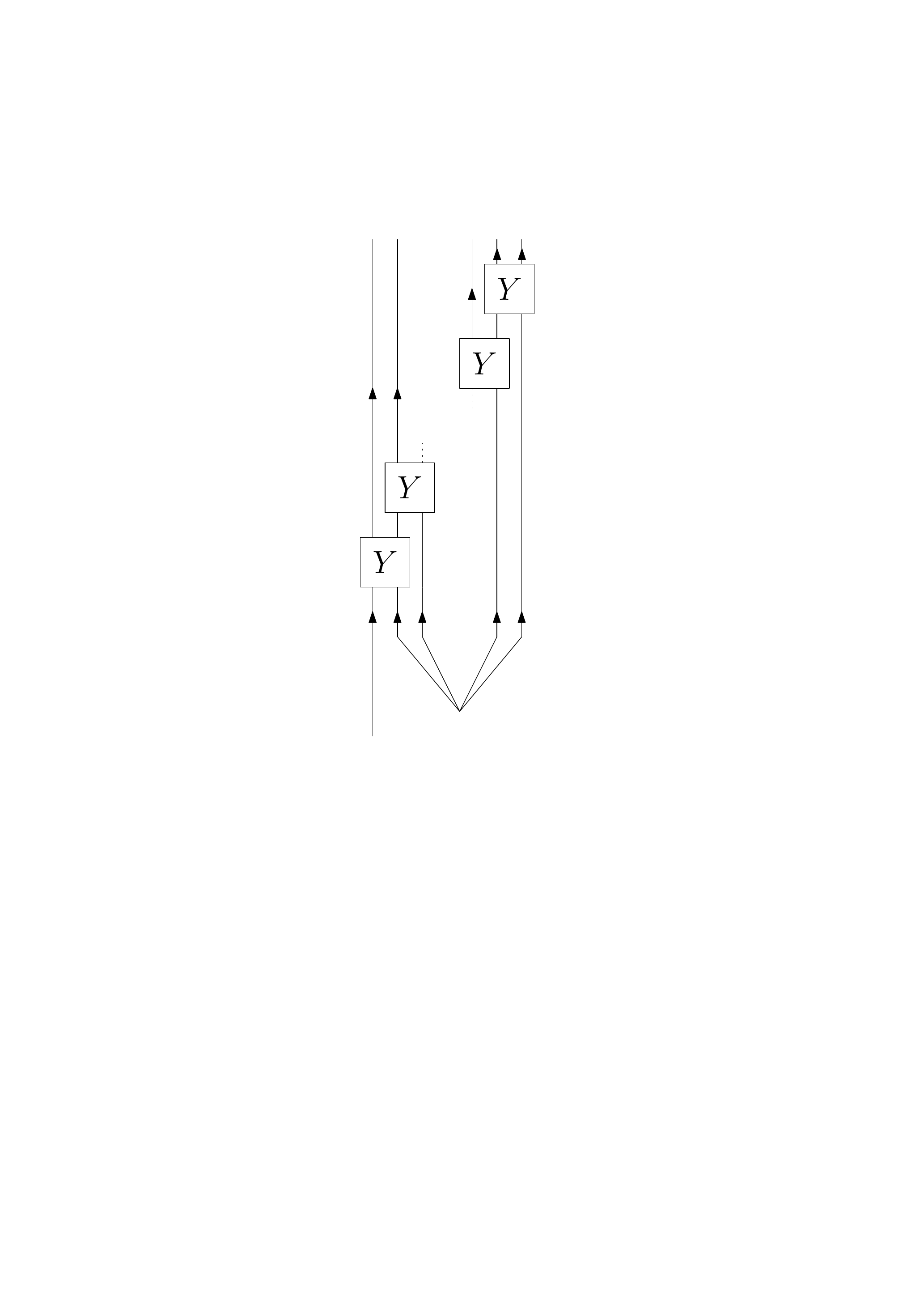}}= (-1)^N q^{1-N}\raisebox{-.5\height}{ \includegraphics[scale = .3]{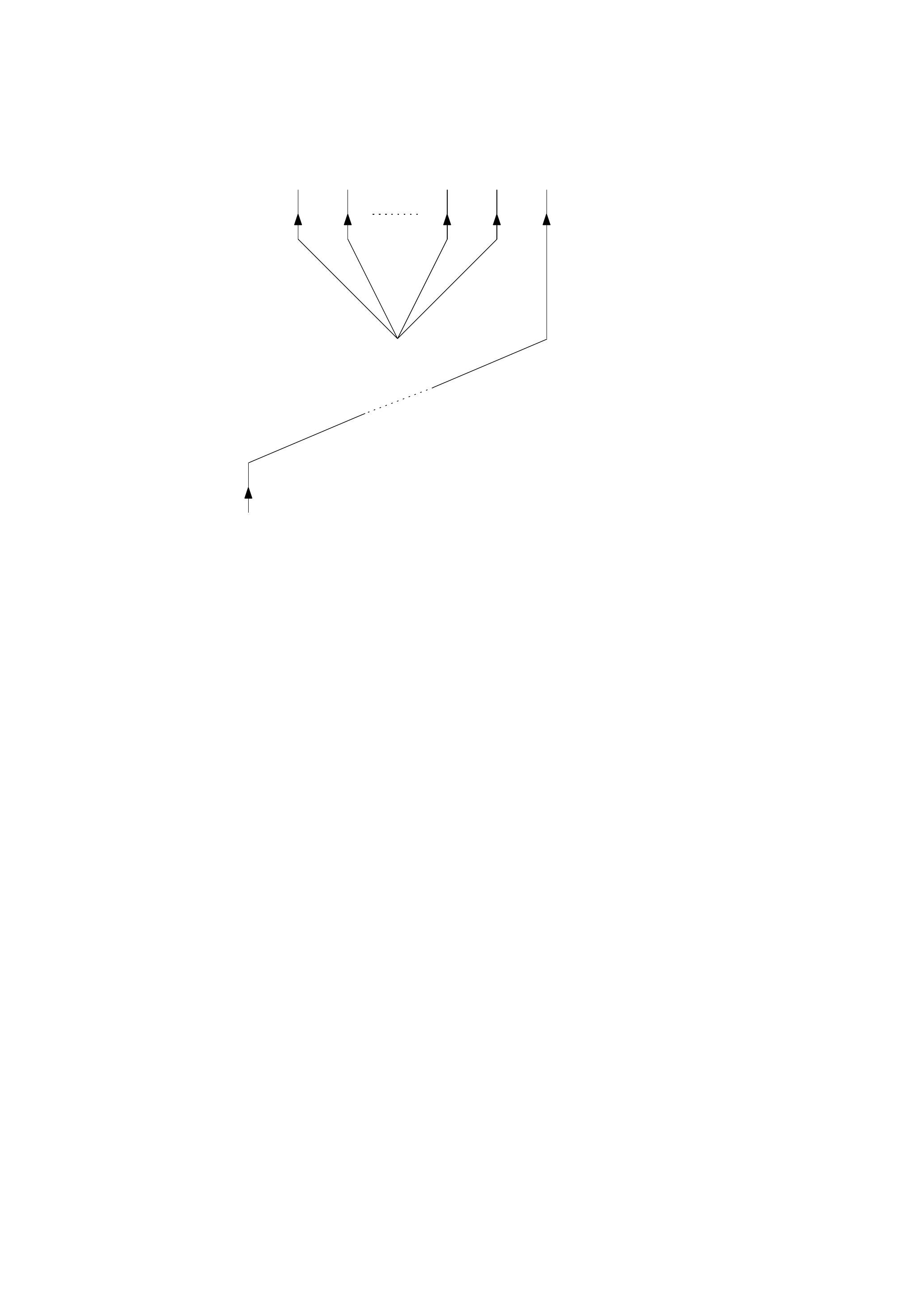}}&(\textrm{Anti-Sym 1}) : &\quad \raisebox{-.5\height}{ \includegraphics[scale = .3]{antiSymL.pdf}}= \raisebox{-.5\height}{ \includegraphics[scale = .3]{antisymR.pdf}}\\
(\textrm{Anti-Sym 2}) : &\quad \raisebox{-.5\height}{ \includegraphics[scale = .3]{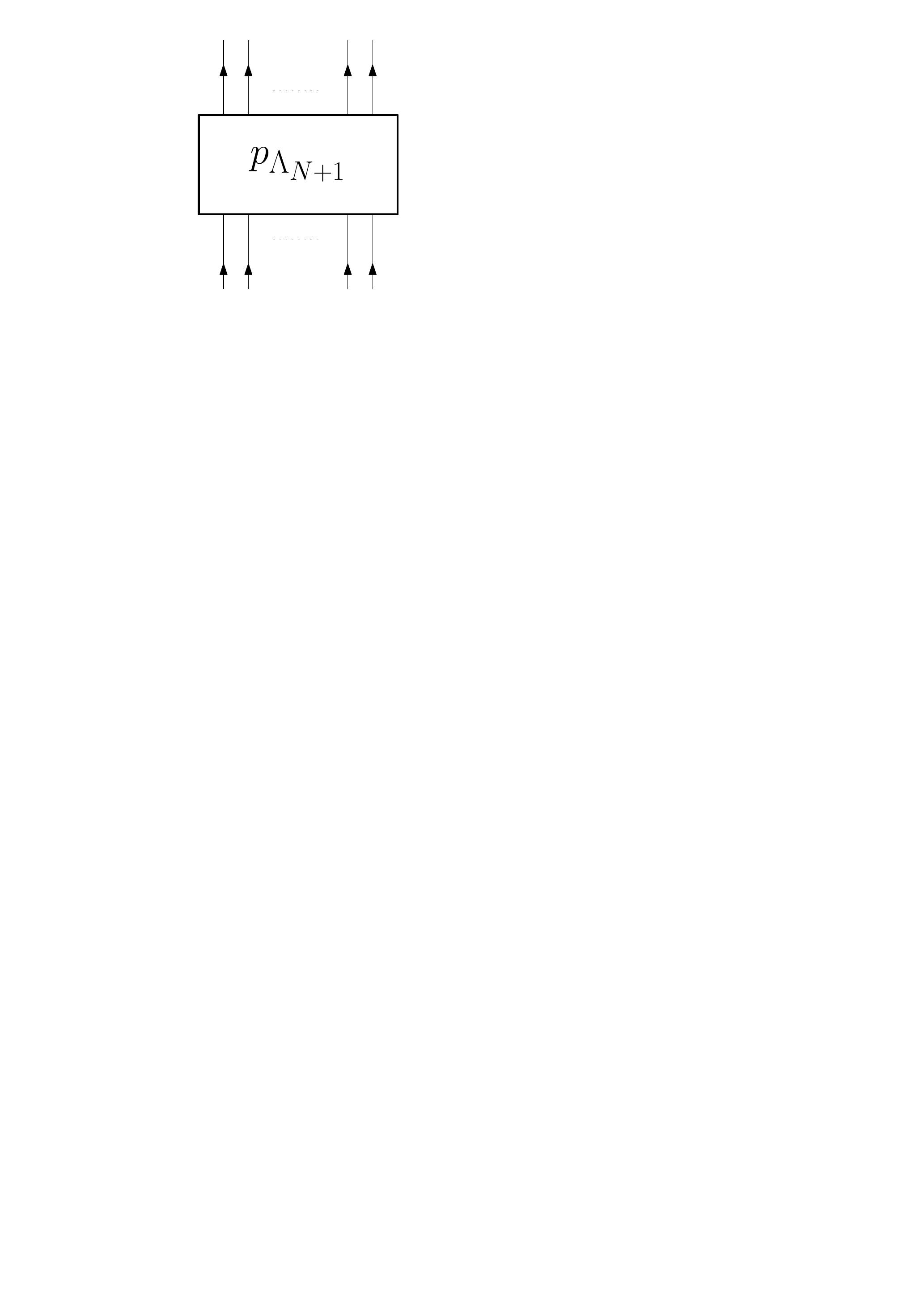}}=0&(\textrm{oR2})\footnotemark : &\quad \raisebox{-.5\height}{ \includegraphics[scale = .3]{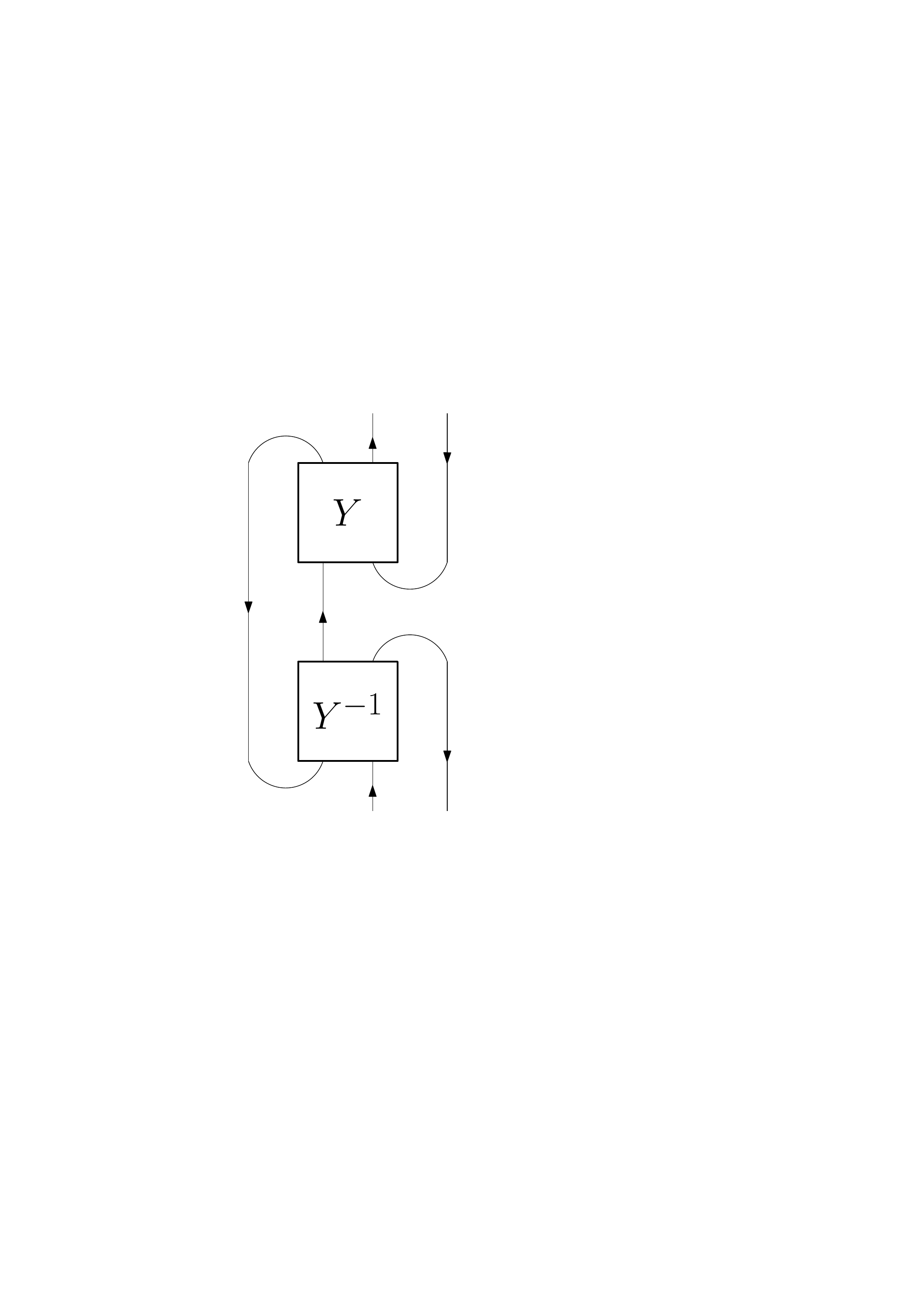}}= \raisebox{-.5\height}{ \includegraphics[scale = .3]{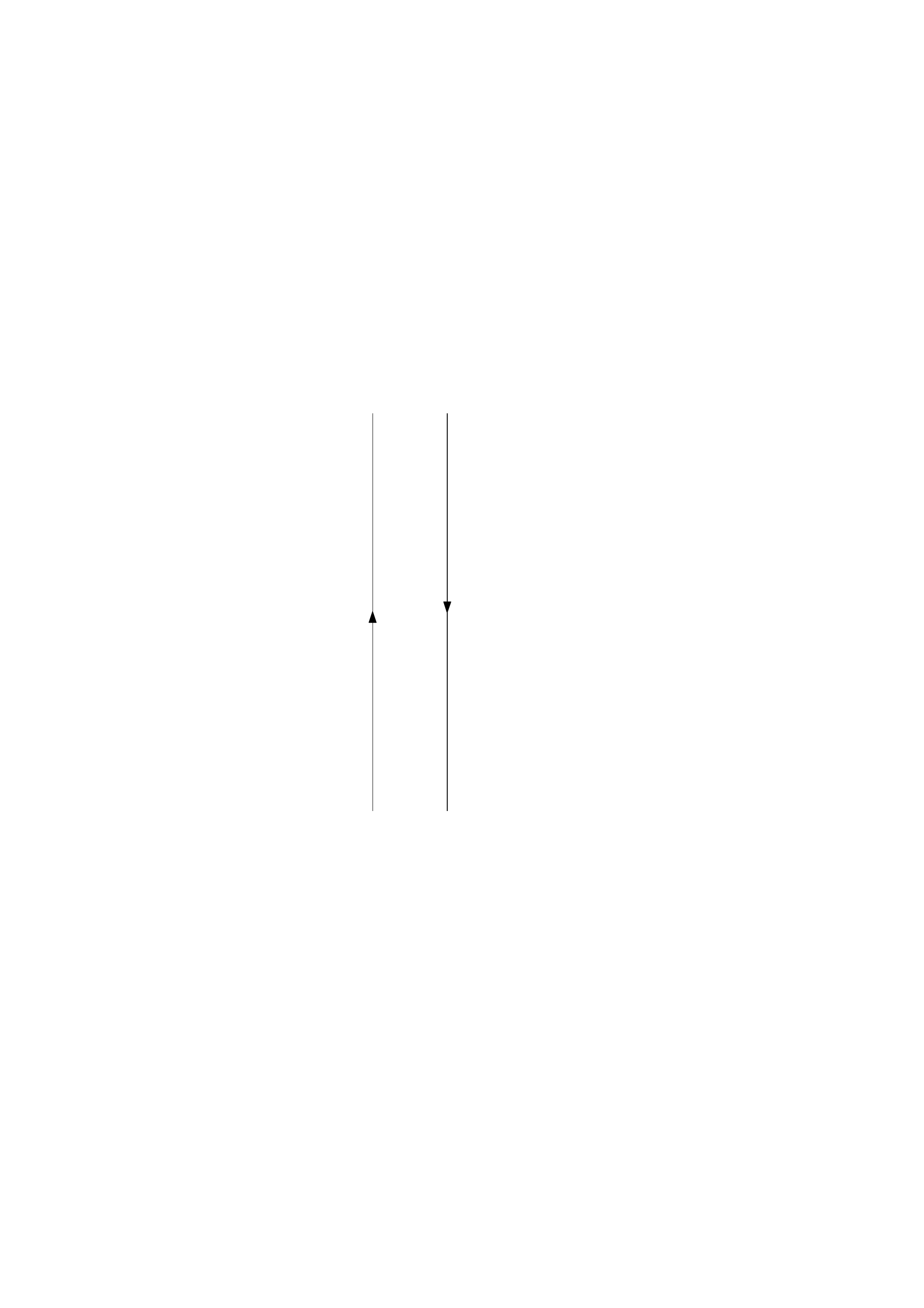}}\\
\end{align*}
\footnotetext{This relation actually follows from the other 7 relations. We include it to make evaluation arguement simpler. Surprisingly, this relation doesn't follow from the first four relations.}
Here $Y$ is the invertible element
\[     \raisebox{-.5\height}{ \includegraphics[scale = .3]{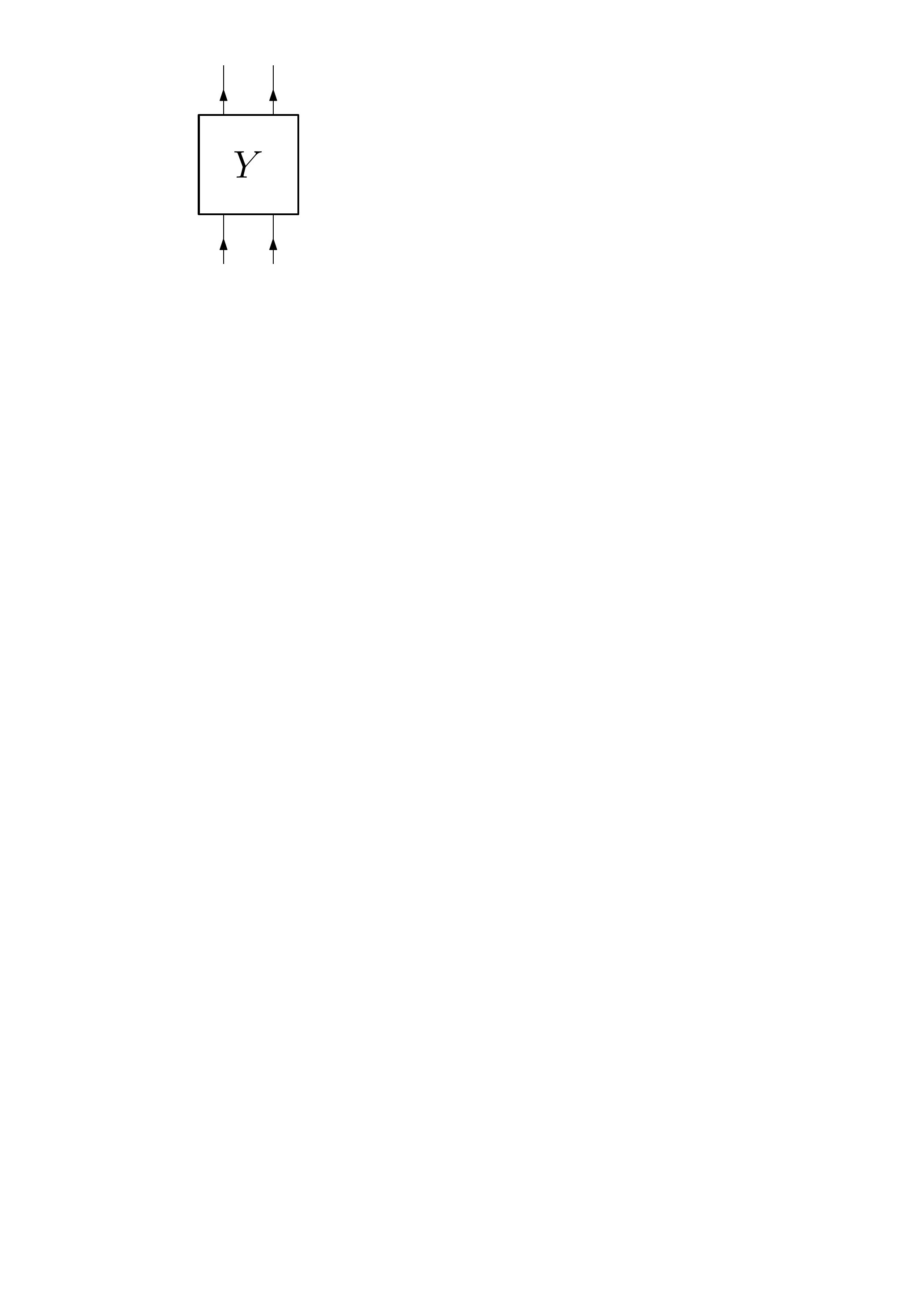}} :=  \frac{1}{q} \raisebox{-.5\height}{ \includegraphics[scale = .3]{UU.pdf}}-\raisebox{-.5\height}{ \includegraphics[scale = .3]{id.pdf}}. \]

\begin{remark}\label{rmk:braid}
Note that if $\omega=1$, and a choice of $N$-th root of $q$ is picked (i.e. a choice of braiding on $\operatorname{Rep}(U_q(\mathfrak{sl}_N))$), then $Y$ is a scalar multiple of the braid $\sigma_{V,V}$. This scalar is $-q^{\frac{ N-1}{N}}$, and the twist of $V$ with respect to this braiding is $q^{\frac{N^2-1}{N}}$. In particular this element is a braiding on the subcategory generated by $U$ for all $\omega$.
\end{remark}
The elements $p_{\Lambda_i}$ are the projections in $\operatorname{End}_{\operatorname{Rep}(U_q(\mathfrak{sl}_N))^\omega}\left(\Lambda_1^{\otimes i}\right)$ onto the simple objects $\Lambda_i$. These projections can be defined recursively \cite[Theorem 6]{Paggo} by
\[   \raisebox{-.5\height}{ \includegraphics[scale = .3]{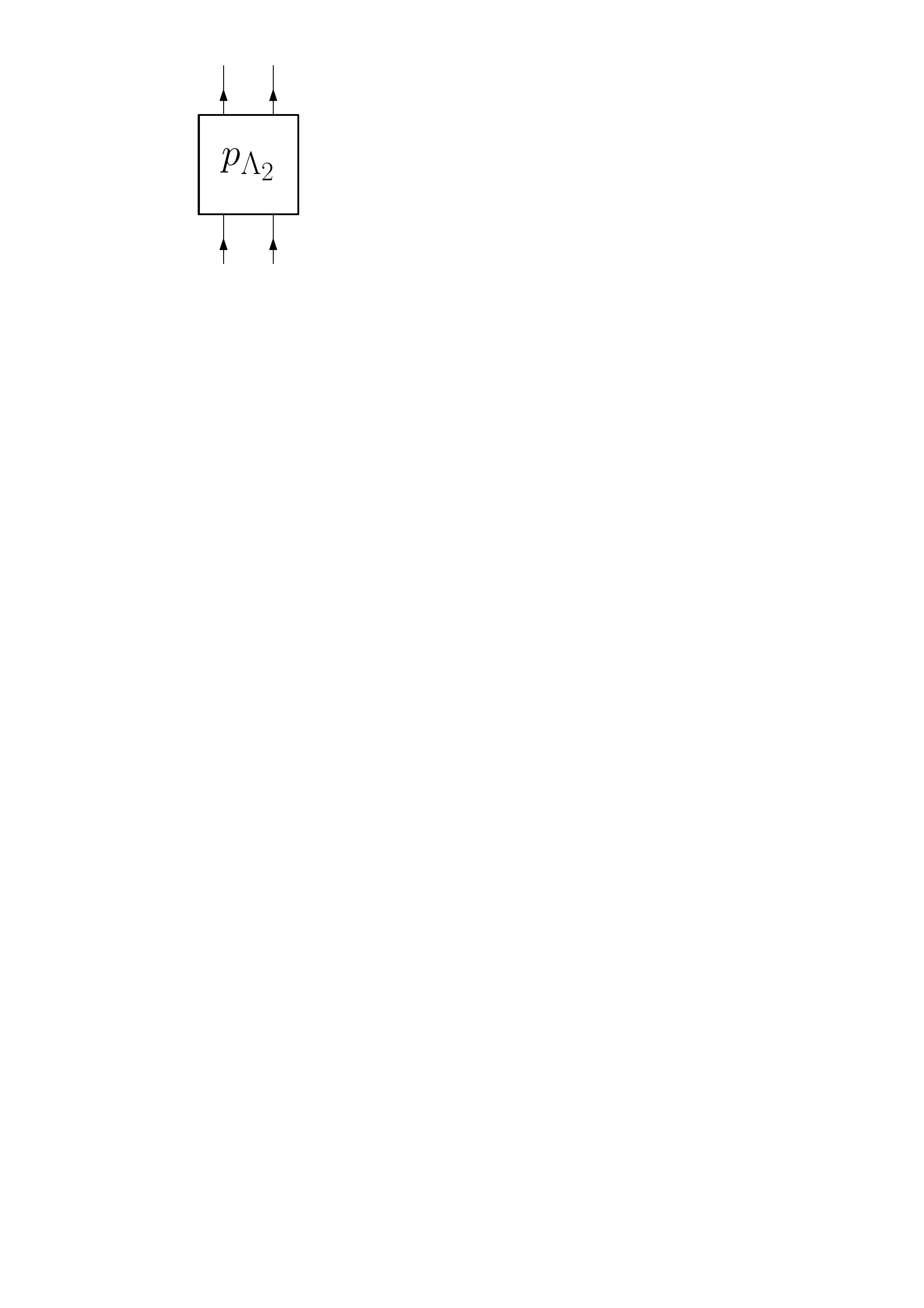}} := \frac{1}{[2]_q} \raisebox{-.5\height}{ \includegraphics[scale = .3]{UU.pdf}}, \quad\text{and} \quad\raisebox{-.5\height}{ \includegraphics[scale = .3]{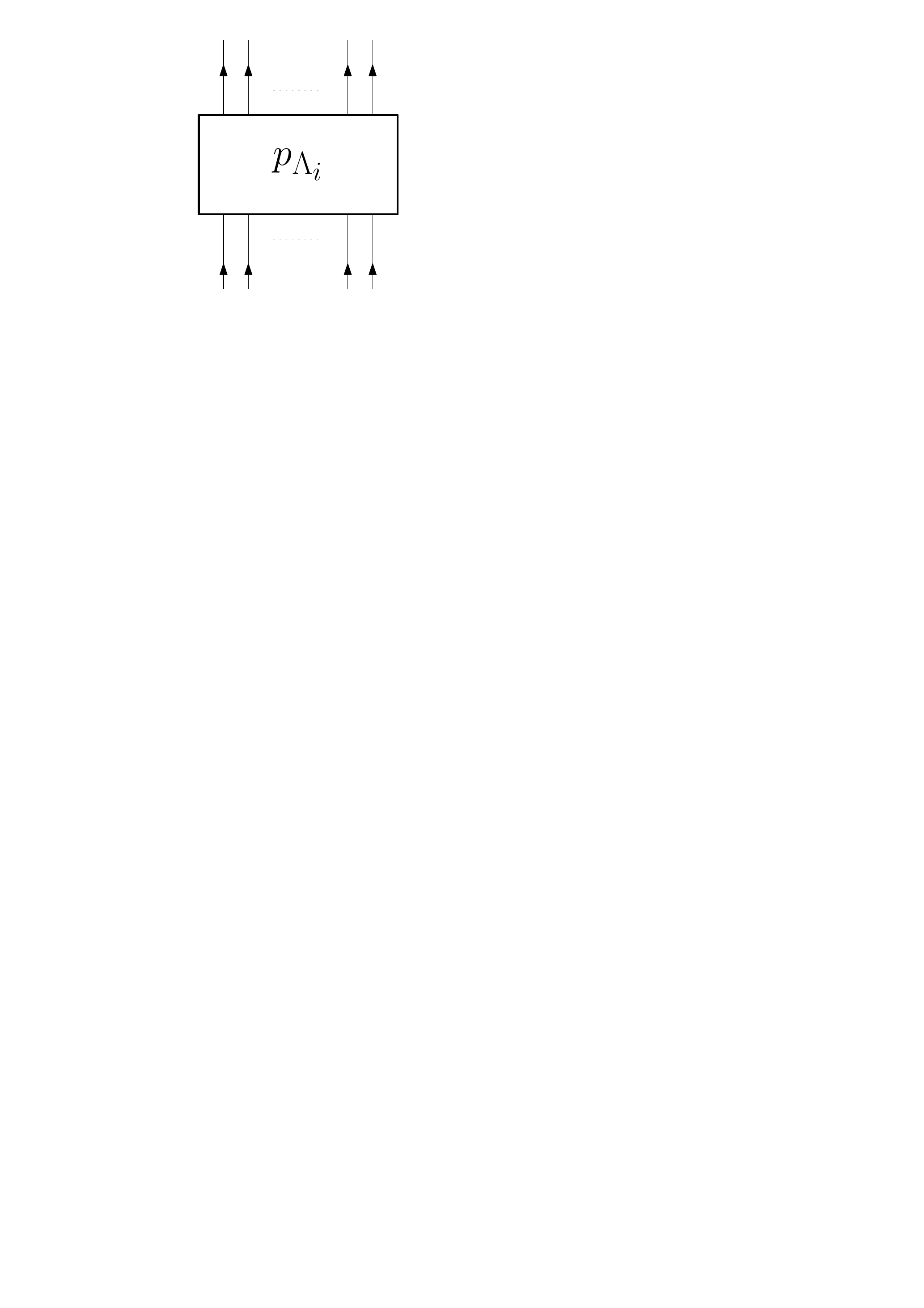}} :=\frac{1}{\sum_{j=0}^{i-1} q^{-2j}} \left( \raisebox{-.5\height}{ \includegraphics[scale = .3]{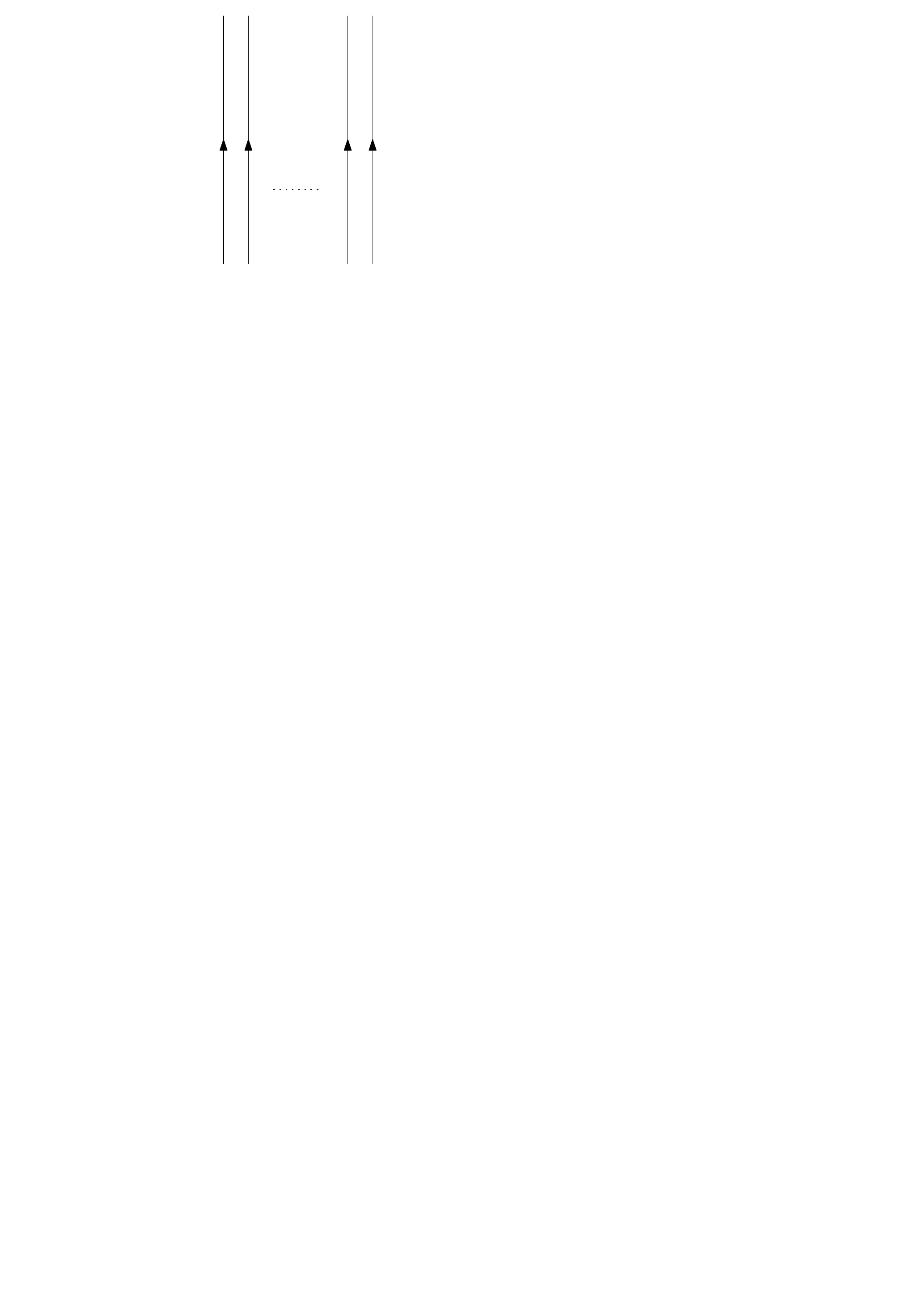}} +\raisebox{-.5\height}{ \includegraphics[scale = .3]{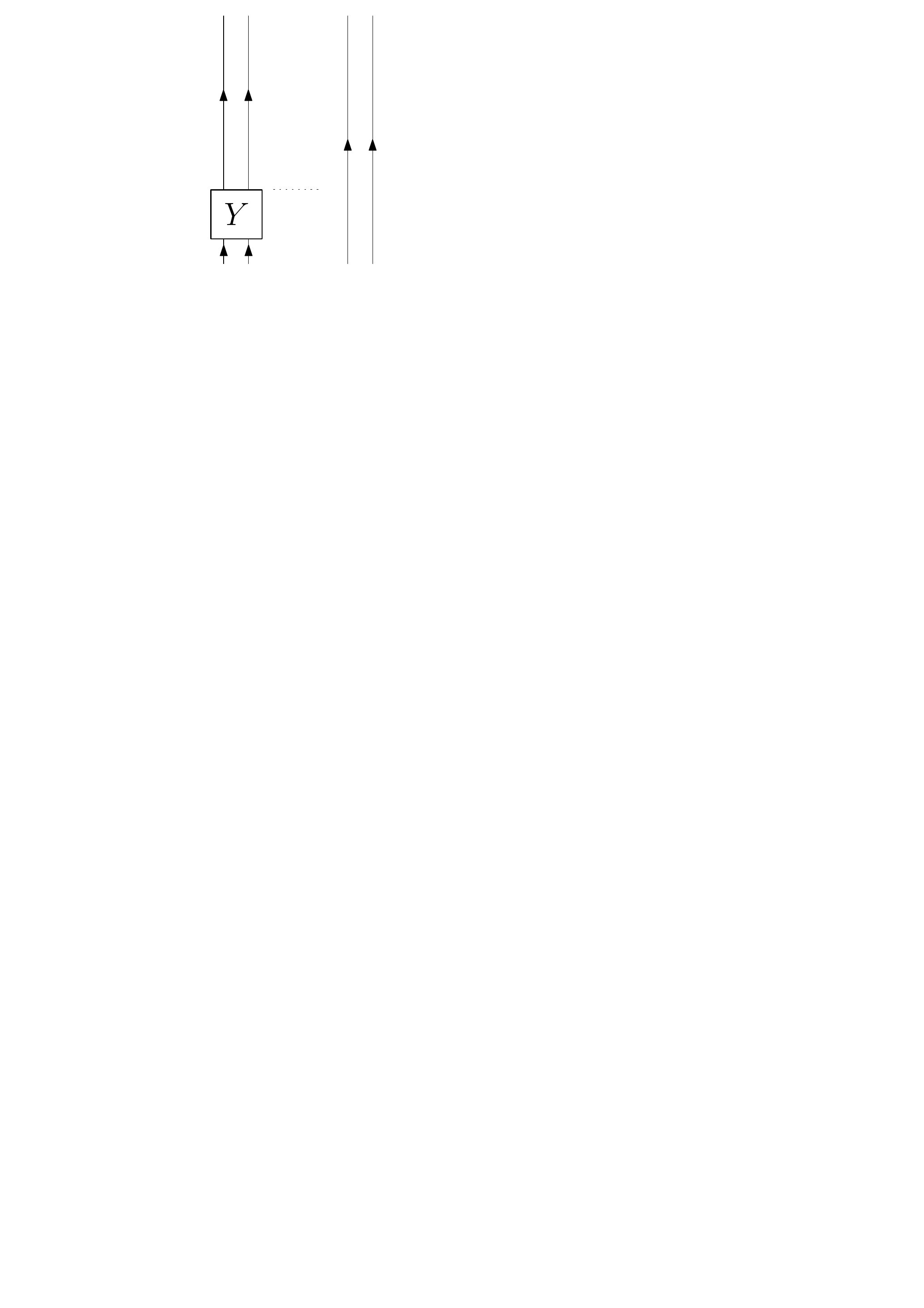}}+\raisebox{-.5\height}{ \includegraphics[scale = .3]{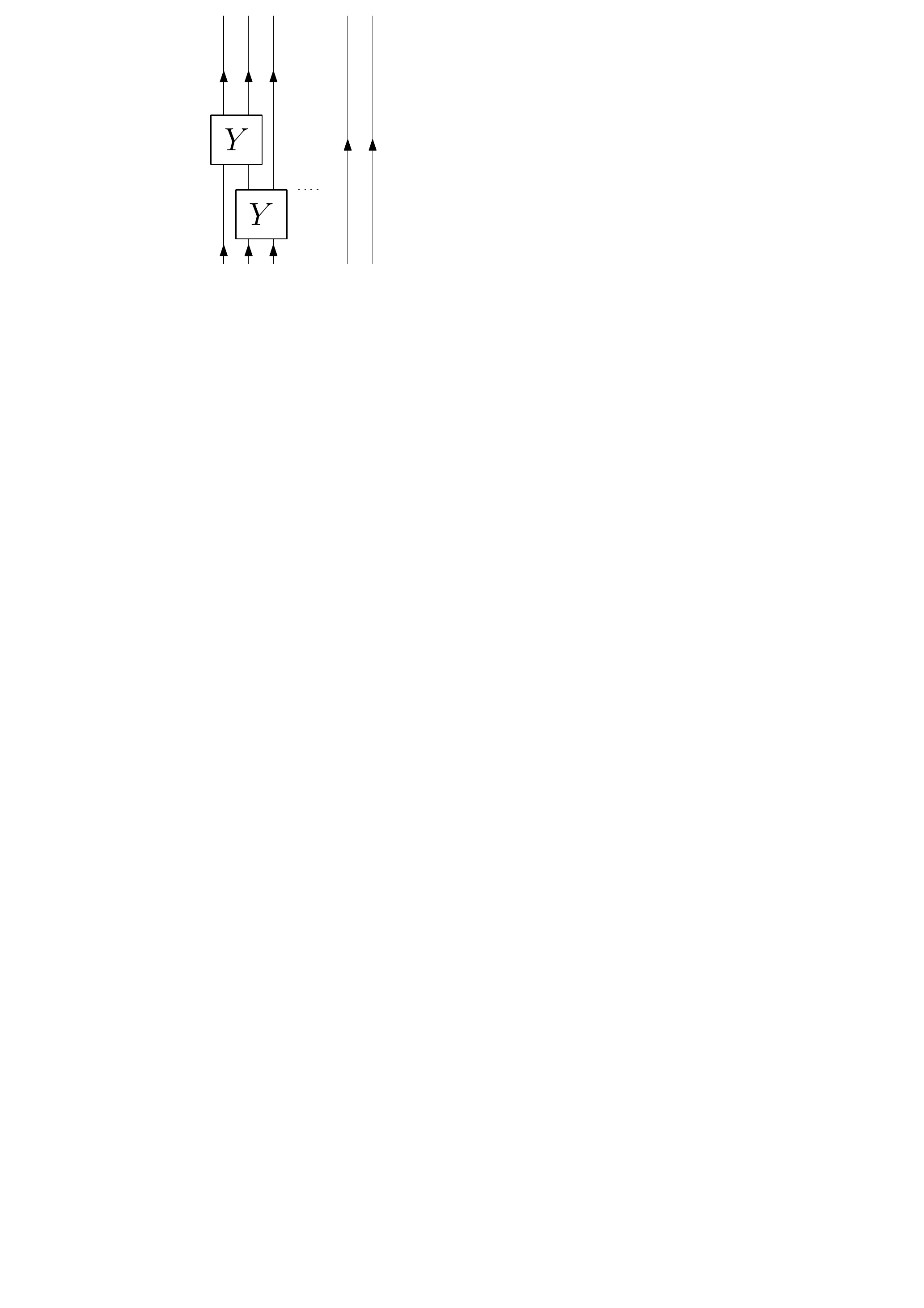}}+\cdots + \raisebox{-.5\height}{ \includegraphics[scale = .3]{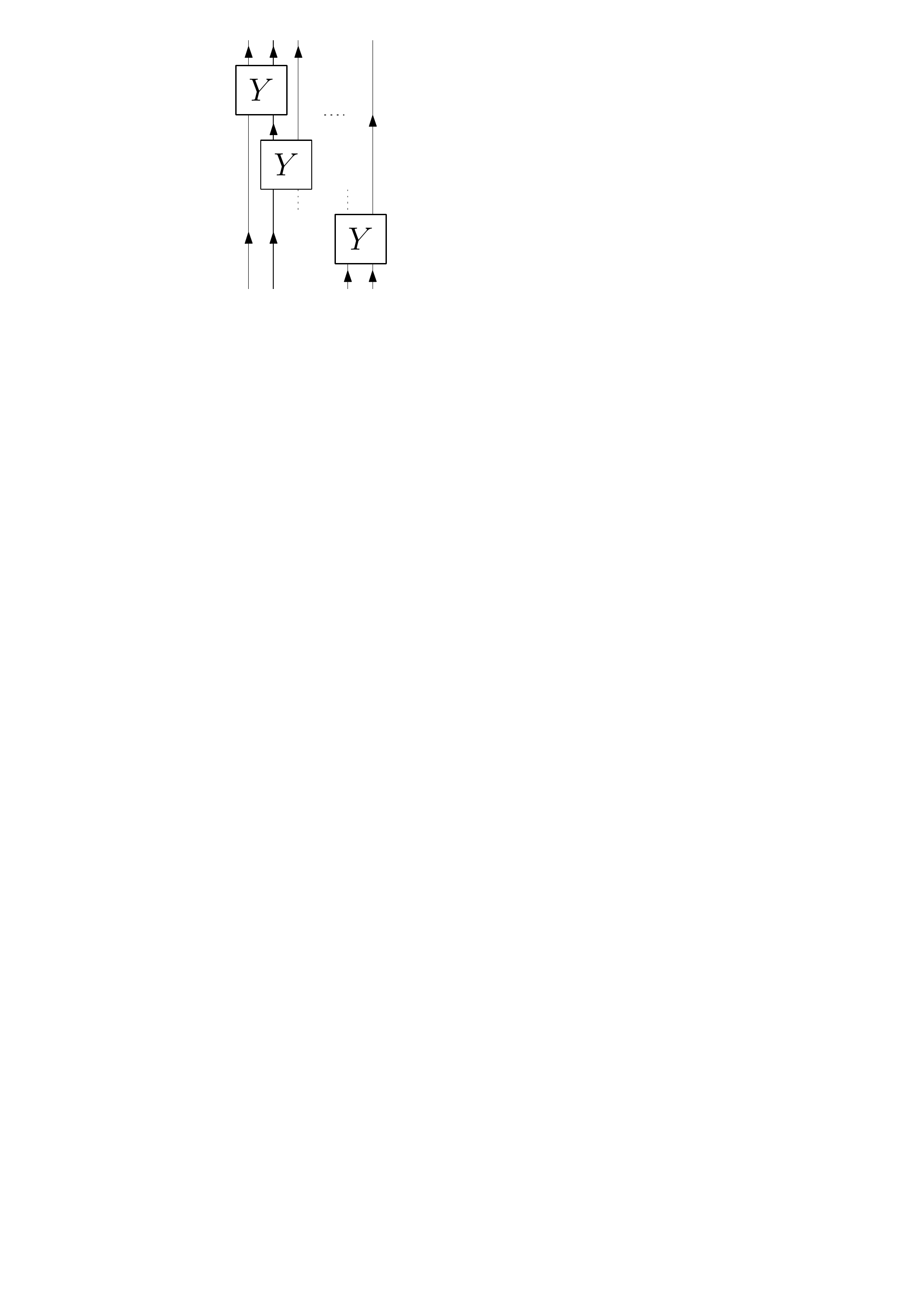}}   \right)\circ \raisebox{-.5\height}{ \includegraphics[scale = .3]{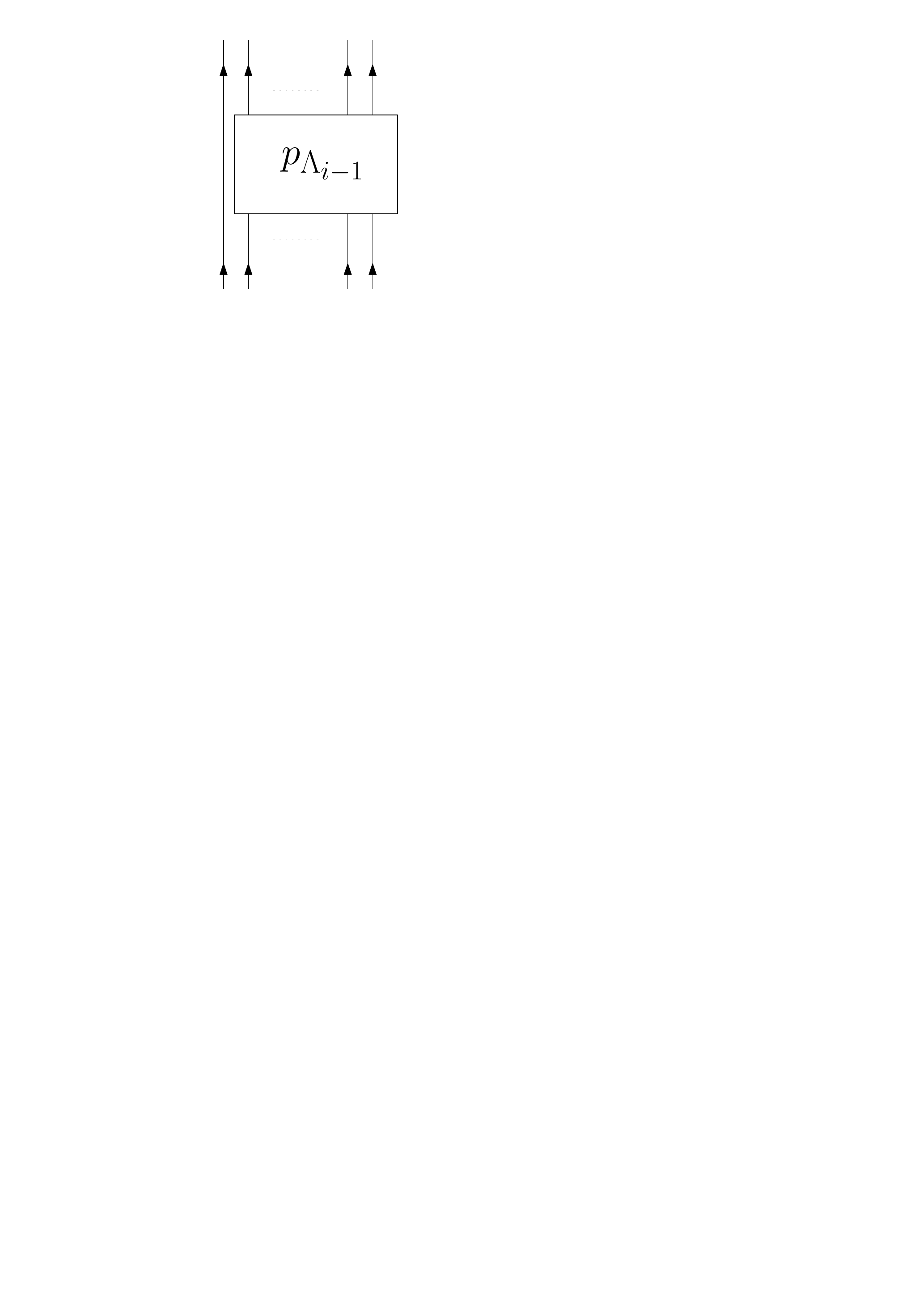}}.  \]
\begin{remark}\label{rem:ASdag}
    From the above recursive definition, along with relations (R1), (R2), (R3), and (Hecke), we can inductively compute that $p_{\Lambda_i}^\dag = p_{\Lambda_i}$, $\operatorname{tr}(p_{\Lambda_N}) = 1$, $\operatorname{tr}(p_{\Lambda_{N+1}}) = 0$, and $p_{\Lambda_i}\circ p_{\Lambda_i} = p_{\Lambda_i}$. We leave these computations to a bored reader.
\end{remark}

\begin{remark}
The decision to describe the Hecke algebra portion of the skein theory in $U$ basis, as opposed to $Y$ basis (as in \cite{SovietHans}) is entirely a practical one. We have found in concrete examples that the image of $U$ embedded inside a graph planar algebra has nicer coefficients than the embedding of $Y$. This is because the eigenvalues of $U$ are $0$ and $[2]_q$, while the eigenvalues of $Y$ are $-1$ and $q^2$. Note that we have kept the relation names corresponding to the $Y$ basis.
\end{remark}

Apart from (R1) and (oR2), these relations can all be found in \cite{SovietHans}. The relation (R1) follows from the quantum dimension of $\Lambda_2$ being $\frac{\q{N}\q{N-1}}{\q{2}}$. The relation (oR2) can be deduced from the other relations. As this presentation contains more hom spaces than the presentation originally given in \cite{SovietHans}, we have to prove that we have given sufficient relations. This is a fairly routine evaluation argument which we neglect to write down.

In this paper, we are interested in the semi-simple quotient of $\operatorname{Rep}(U_q(\mathfrak{sl}_N))^\omega$. That is, the category $\overline{
\operatorname{Rep}(U_q(\mathfrak{sl}_N))^\omega}$ in the notation of \cite{Simp}. This category is obtained by quotienting out by the negligible ideal \cite[Definition 2.1]{Simp} of $\operatorname{Rep}(U_q(\mathfrak{sl}_N))^\omega$. The planar algebra $\mathcal{P}_{\overline{
\operatorname{Rep}(U_q(\mathfrak{sl}_N))^\omega},\Lambda_1} 
$ is equal (by definition) to $\overline{\mathcal{P}_{
\operatorname{Rep}(U_q(\mathfrak{sl}_N))^\omega,\Lambda_1} }
$
where $\overline{\mathcal{P}}$ is the planar quotient by the planar ideal of negligible elements. In this paper we do not have to worry about explicitly describing the negligible ideal of $\operatorname{Rep}(U_q(\mathfrak{sl}_N))^\omega$. This is because we will be working with unitary planar algebras, whose canonical inner-product is positive definite. In this setting, negligible elements are necessarily $0$.

It is shown in \cite{JamsHans} that if $q = e^{2\pi i \frac{1}{2(N+k)}}$ then $\overline{\operatorname{Rep}(U_q(\mathfrak{sl}_N))}$ is unitary. This immediately implies that $\overline{\operatorname{Rep}(U_q(\mathfrak{sl}_N))^\omega}$ is unitary for any choice of $\omega$. This is because the twisting is equivalent to twisting the associator by an element of $H^3(\mathbb{Z}_N, U(1))$. The reader may be interested in the case when $q$ is not of the above form. For this, we provide the following result which shows that any such category is always Galois conjugate to a unitary one (and in particular has the same representation theory as the unitary one).

\begin{lem}\label{lem:Galois}
Let $N\geq 2$, $\omega$ an $N$-th root of unity, and $q$ a root of unity of order $2(N+k)$. Then $\overline{
\operatorname{Rep}(U_q(\mathfrak{sl}_N))^\omega}$ is Galois conjugate to $\overline{
\operatorname{Rep}(U_{q'}(\mathfrak{sl}_N))^{\omega'}}$ where $q' = e^{2\pi i \frac{1}{2(N+k)}}$ and $\omega'$ is some $N$-root of unity.
\end{lem}
\begin{proof}
From \cite{SovietHans}, the category $\overline{\operatorname{Rep}(U_q(\mathfrak{sl}_N))^\omega}$ can be defined over $\mathbb{Q}[q, \omega]$, and hence over $\mathbb{Q}[e^{2\pi i \frac{1}{\operatorname{LCM}(2(N+k), N)}}]$. By the Chinese remainder theorem, we have that $\mathbb{Z}_{  \operatorname{LCM}(2(N+k), N)   } \cong \mathbb{Z}_{ 2(N+k)  }\times \mathbb{Z}_{\frac{\operatorname{LCM}(2(N+k), N)}{2(N+k)}  }$. As $q$ is a primitive $2(N+k)$-th root of unity, there exists an element of $\ell \in \mathbb{Z}_{ 2(N+k)  }^\times$ such that $q^\ell = e^{2\pi i \frac{1}{2(N+k)}}$. From the above isomorphism of groups, we get an element $\ell' \in \mathbb{Z}_{\operatorname{LCM}(2(N+k), N)}^\times$ such that $q^{\ell'} = e^{2\pi i \frac{1}{2(N+k)}}$. Hence we can Galois conjugate $\overline{\operatorname{Rep}(U_q(\mathfrak{sl}_N))^\omega}$ by $\ell'$ to obtain the category $\overline{\operatorname{Rep}(U_{ e^{2\pi i \frac{1}{2(N+k)}}}(\mathfrak{sl}_N))^{\omega'}}$ where $\omega' = \omega^{\ell'}$.
\end{proof}
\begin{remark}
Note that the above lemma implies that the inner product coming from the $\dag$ structure on $\overline{
\operatorname{Rep}(U_q(\mathfrak{sl}_N))^\omega}$ is non-degenerate for all $q$.
\end{remark}

In the unitary setting (and more generally when the inner product in non-degenerate), we obtain the relation (Anti-Sym 2) for free, and also that all negligibles are 0. This gives the following result, which is essentially shown in \cite{SovietHans}.
\begin{lem}\cite{SovietHans}\label{lem:KW}
    Let $\mathcal{P}$ be an oriented unitary planar algebra generated by morphisms 
    \[  \raisebox{-.5\height}{ \includegraphics[scale = .3]{UU.pdf}} \in \mathcal{P}_{++\to ++},\quad \text{and}\quad \raisebox{-.5\height}{ \includegraphics[scale = .3]{triv.pdf}} \in \mathcal{P}_{\emptyset\to (+)^N}    \]
    satisfying relations (R1), (R2), (R3), (Hecke), (Over Braid), and (Anti-Sym 1). Then 
    \[   \mathcal{P} \cong \mathcal{P}_{\overline{
\operatorname{Rep}(U_q(\mathfrak{sl}_N))^\omega},\Lambda_1}.  \]
\end{lem}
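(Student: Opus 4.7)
The plan is to reduce to the Kazhdan--Wenzl presentation, which differs from the hypothesis only by the additional relation (Anti-Sym 2), and then to identify the quotient by negligibles with $\mathcal{P}$ using unitarity. First I would argue that (Anti-Sym 2) is automatic: even without it, the recursive formula defines the projection $p_{\Lambda_{N+1}}$ in $\mathcal{P}$, and by Remark \ref{rem:ASdag} the relations (R1), (R2), (R3), (Hecke) alone already force $p_{\Lambda_{N+1}}^\dag = p_{\Lambda_{N+1}}$, $p_{\Lambda_{N+1}}^2 = p_{\Lambda_{N+1}}$, and $\operatorname{tr}(p_{\Lambda_{N+1}}) = 0$. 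Then
\[
\operatorname{tr}(p_{\Lambda_{N+1}}^\dag p_{\Lambda_{N+1}}) = \operatorname{tr}(p_{\Lambda_{N+1}}^2) = \operatorname{tr}(p_{\Lambda_{N+1}}) = 0,
\]
and positive-definiteness of the unitary inner product $\langle x, y\rangle := \operatorname{tr}(x^\dag y)$ forces $p_{\Lambda_{N+1}} = 0$, which is precisely (Anti-Sym 2).

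With all of the Kazhdan--Wenzl relations now holding in $\mathcal{P}$, the universal property of the presentation in \cite{KW} yields a surjective morphism of oriented $\dag$-planar algebras $\pi: \mathcal{P}_{\operatorname{Rep}(U_q(\mathfrak{sl}_N))^\omega;\Lambda_1} \twoheadrightarrow \mathcal{P}$. Next I would show that $\pi$ annihilates the negligible ideal: if $x$ is negligible in the source then $\operatorname{tr}(\pi(x)\pi(y)) = \operatorname{tr}(xy) = 0$ for all $y$, and surjectivity of $\pi$ makes $\pi(x)$ negligible in $\mathcal{P}$. Since in a unitary planar algebra the form $\operatorname{tr}(z^\dag z)$ is positive definite, this forces $\pi(x) = 0$. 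Therefore $\pi$ descends to a surjection $\bar\pi: \mathcal{P}_{\overline{\operatorname{Rep}(U_q(\mathfrak{sl}_N))^\omega};\Lambda_1} \twoheadrightarrow \mathcal{P}$.

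Finally, for injectivity I would observe that $\ker\bar\pi$ is a planar ideal in the planar algebra of a semisimple fusion category in which $\Lambda_1$ tensor-generates. Planar ideals in such a planar algebra correspond to tensor ideals in $\overline{\operatorname{Rep}(U_q(\mathfrak{sl}_N))^\omega}$, and since $\Lambda_1$ tensor-generates and the category is semisimple, the only tensor ideals are $0$ and the whole category. The latter is excluded because $\operatorname{tr}(p_{\Lambda_N}) = 1$ in $\mathcal{P}$ by Remark \ref{rem:ASdag}, forcing $\mathcal{P} \neq 0$. Hence $\ker\bar\pi = 0$ and $\bar\pi$ is an isomorphism. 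The main technical obstacle is the self-adjointness, idempotence, and trace computations for $p_{\Lambda_{N+1}}$ and $p_{\Lambda_N}$ quoted from Remark \ref{rem:ASdag}; these are the ``TODO'' items flagged there, and everything else in the argument is formal once they are in hand.
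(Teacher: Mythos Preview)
Your proposal is correct and follows essentially the same strategy as the paper. Both arguments first derive (Anti-Sym 2) from unitarity via $\langle p_{\Lambda_{N+1}},p_{\Lambda_{N+1}}\rangle=\operatorname{tr}(p_{\Lambda_{N+1}})=0$, then produce a surjection from the Kazhdan--Wenzl planar algebra onto $\mathcal{P}$ and identify its kernel with the negligible ideal. The only cosmetic difference is in the injectivity step: you argue that $\ker\bar\pi$ is a tensor ideal in a semisimple tensor-generated fusion category and hence trivial, whereas the paper quotes the \cite{BMPS} fact that any planar-algebra surjection onto something with one-dimensional empty box has negligible kernel, and then uses unitarity of $\mathcal{P}$ again; these are two standard phrasings of the same principle.
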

\begin{proof}
This is a standard technique. First note that (Anti-Sym 2) holds in $\mathcal{P}$ as $p_{\Lambda_{N+1}}^\dag =p_{\Lambda_{N+1}}$, and so $\langle p_{\Lambda_{N+1}}, p_{\Lambda_{N+1}}\rangle = \operatorname{tr}(p_{\Lambda_{N+1}}) = 0$ as the inner product is positive definite. Similarly any negligible element $f\in\mathcal{P}$ has $\langle f, f\rangle = 0$, and hence is 0. We thus have a surjective map
\[  \mathcal{P} \to \mathcal{P}_{\overline{
\operatorname{Rep}(U_q(\mathfrak{sl}_N))^\omega},\Lambda_1}. \]
Applying \cite[Proposition 3.5]{EHOld}, the kernel of this map is a sub-ideal of the negligible ideal of $\mathcal{P}$ as $\mathcal{P}_{\emptyset}$ is 1-dimensional. Hence the kernel is trivial. Thus the map is an isomorphism.
\end{proof}

\section{A Refinement of Kazhdan-Wenzl}\label{sec:KW}

There are two scary relations (from a GPA point of view) in the Kazhdan-Wenzl presentation for $\mathcal{P}_{\overline{
\operatorname{Rep}(U_q(\mathfrak{sl}_N))^\omega},\Lambda_1}$ described in Subsection~\ref{sec:preKW}. These are (Anti-Sym 1) and (Over Braid). This is due to the fact that to solve for (Anti-Sym 1) in the graph planar algebra of $\Gamma$, we have to consider loops of length $2N$ in $\Gamma$, as well as summing over all internal configurations of the morphism $p_{\Lambda_N}$. From a computational point of view this is impractical for large $N$. The relation (Over Braid) is not as bad, but still requires solving a degree $N$ polynomial. Again this will scale badly with $N$. 

The goal of this section is to replace the two relations (Anti-Sym 1) and (Over Braid) with simpler relations (from a GPA point of view). In our setting this means relations with fewer external boundary edges, and fewer internal faces. The main result of this section shows that we can achieve this with the three relations below.

\begin{lem}\label{lem:anti-sym}
We have the following relations in $\mathcal{P}_{\overline{\operatorname{Rep}(U_q(\mathfrak{sl}_N))^\omega};\Lambda_1}$ for all $q$:
\begin{align*}
&\raisebox{-.5\height}{ \includegraphics[scale = .3]{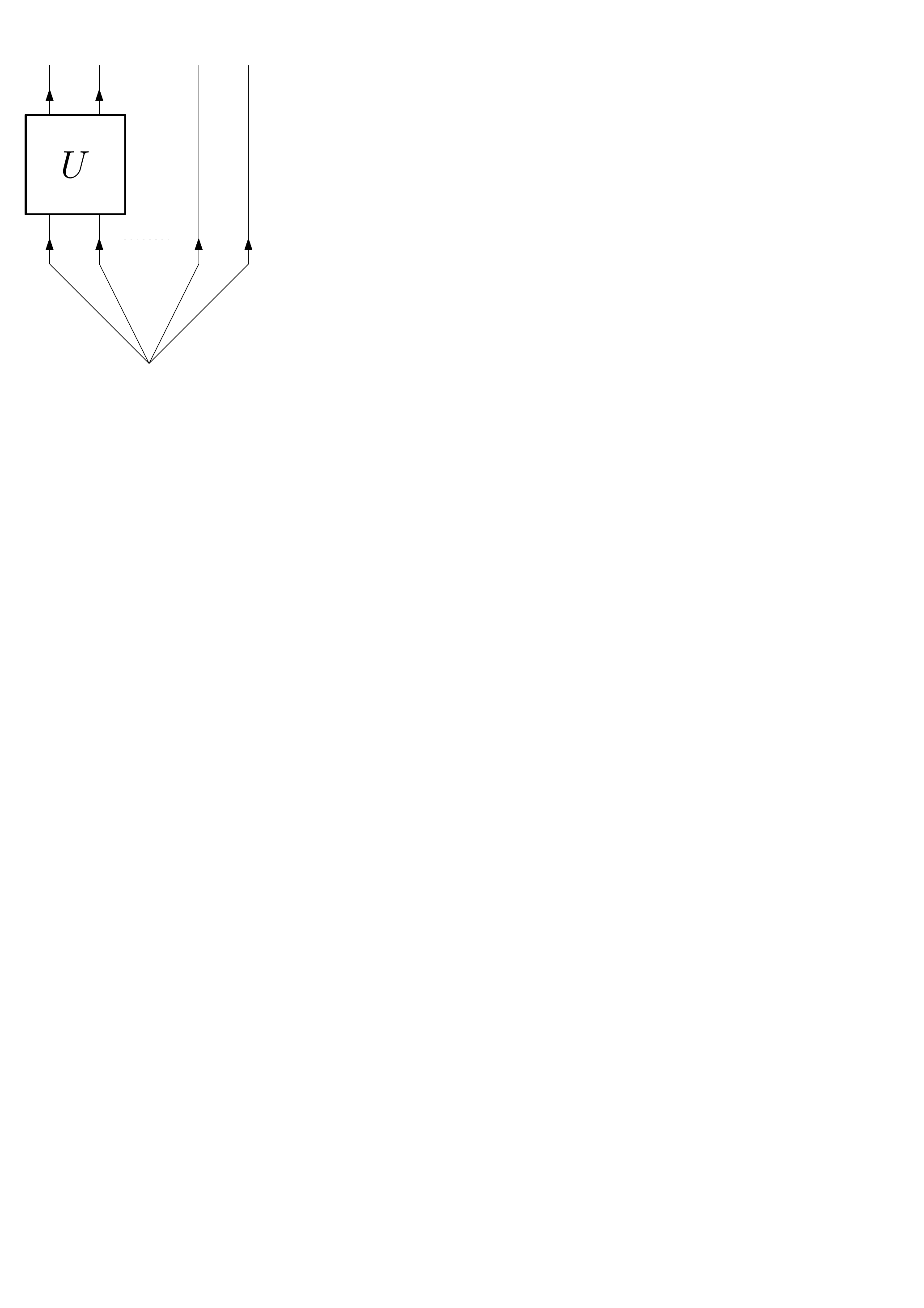}} = [2]_q \raisebox{-.5\height}{ \includegraphics[scale = .3]{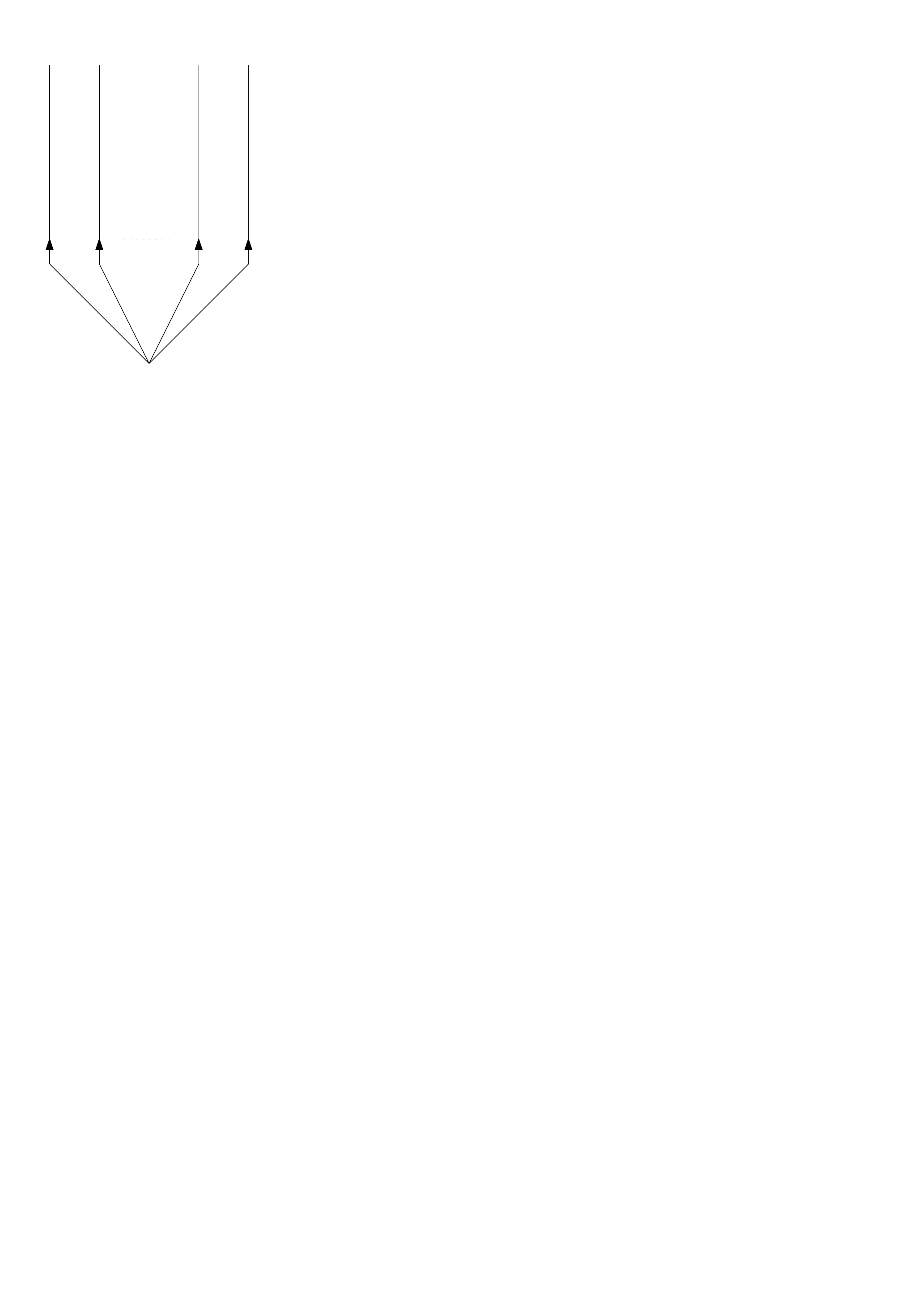}},  &\raisebox{-.5\height}{ \includegraphics[scale = .3]{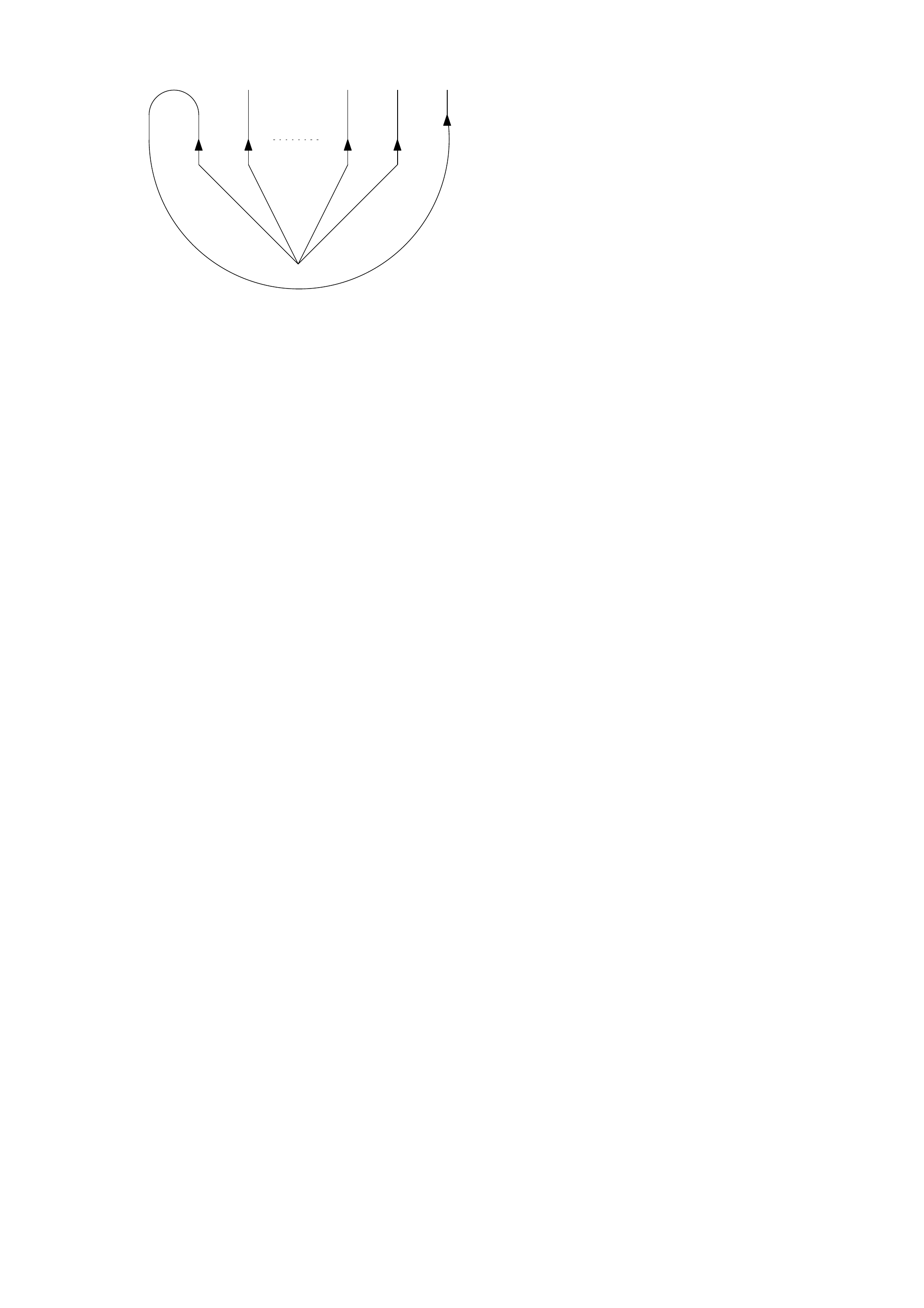}} =  (-1)^{N+1}\omega\raisebox{-.5\height}{ \includegraphics[scale = .3]{triv.pdf}}, &\qquad \raisebox{-.5\height}{ \includegraphics[scale = .3]{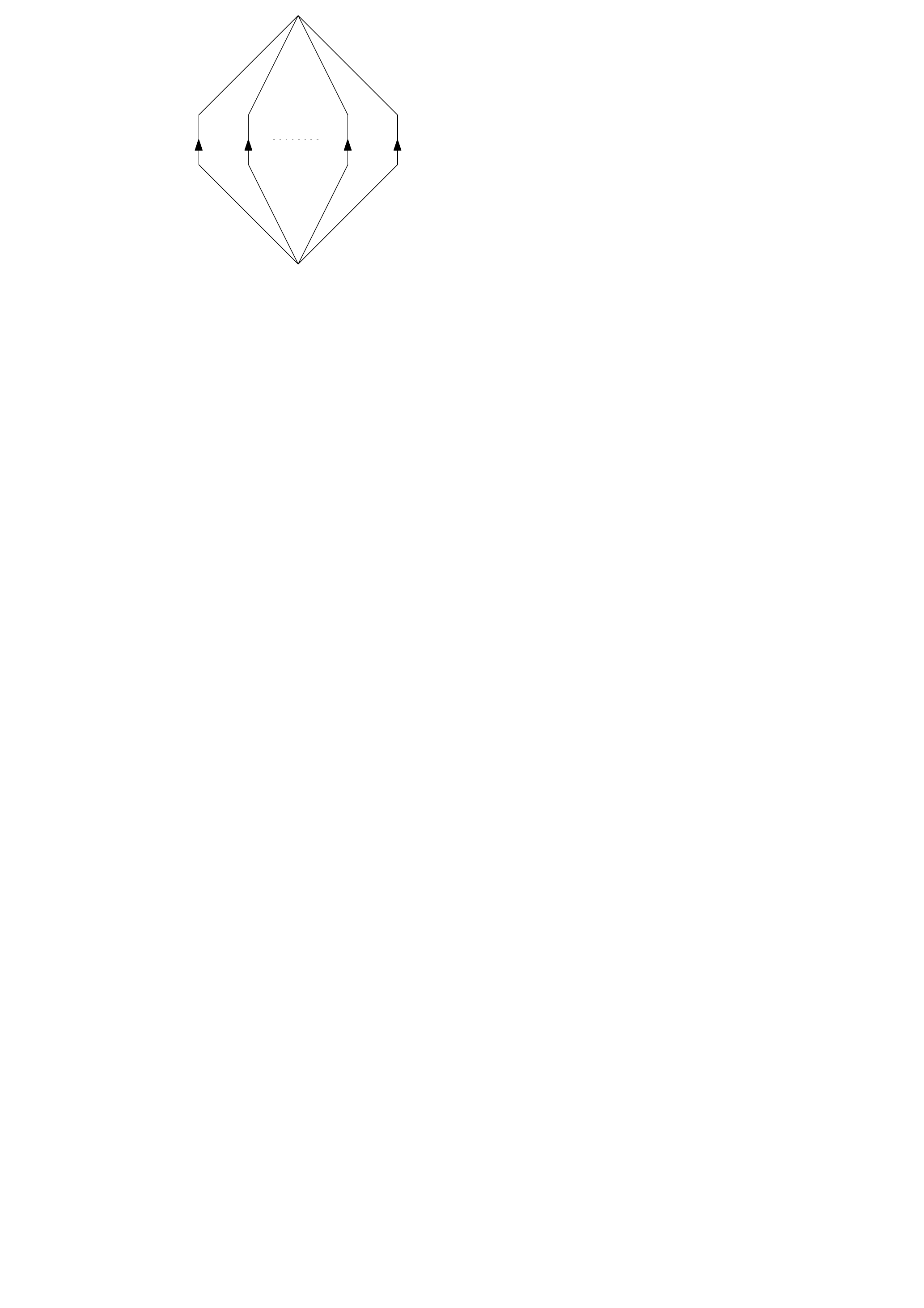}}=1\\
&(\textrm{Braid Absorption}) & (\textrm{Rotational Invariance}) & \qquad(\textrm{Norm})
\end{align*}
\end{lem}
\begin{proof}
The relation (Norm) comes from taking the categorical trace of relation (Anti-Sym 1). Then by glueing a $\raisebox{-.5\height}{ \includegraphics[scale = .3]{triv.pdf}}$ to the bottom of relation (Anti-Sym 1), we obtain
\[\raisebox{-.5\height}{ \includegraphics[scale = .3]{triv.pdf}} = \raisebox{-.5\height}{ \includegraphics[scale = .3]{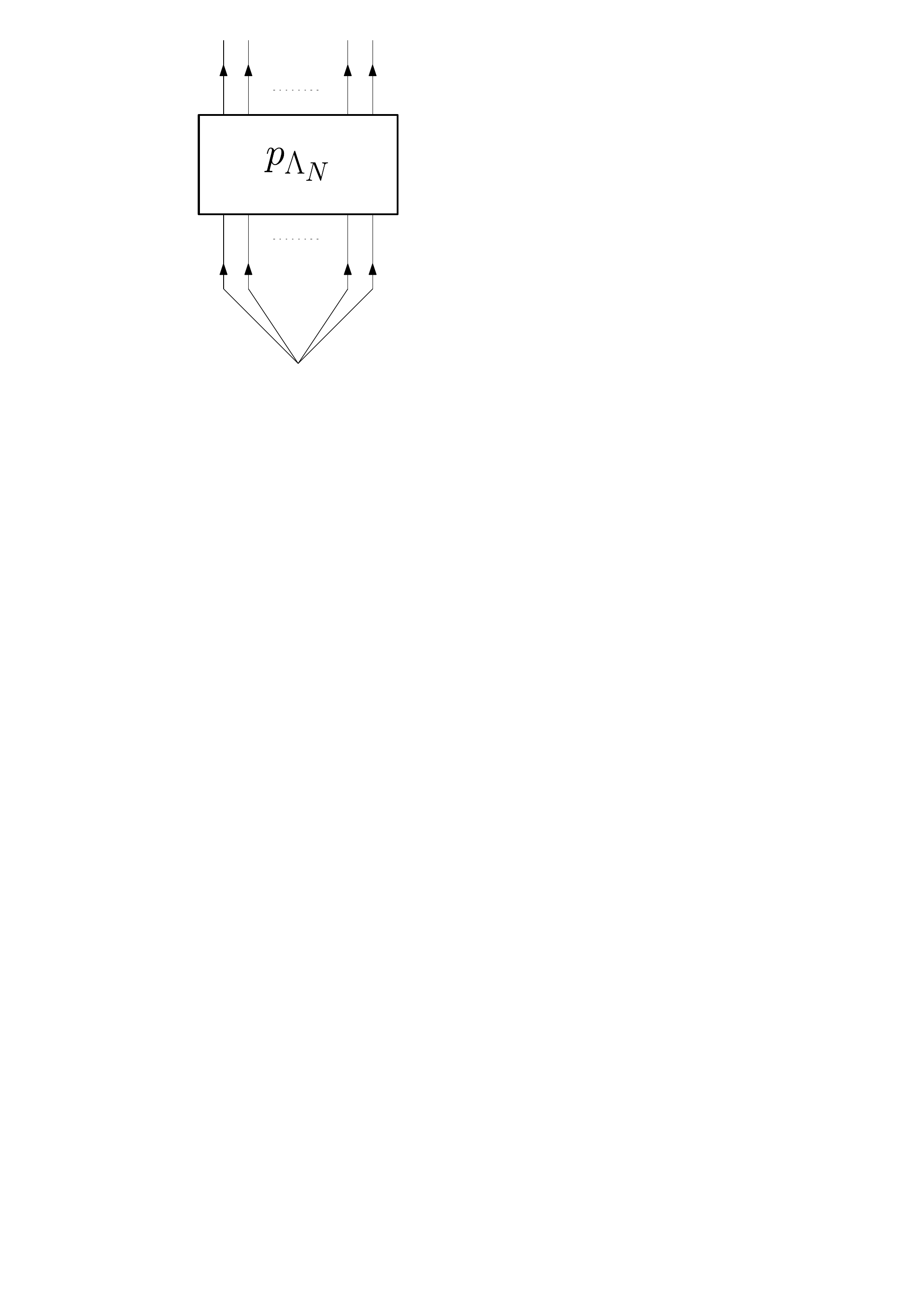}}.   \]
As sticking a $\raisebox{-.5\height}{ \includegraphics[scale = .3]{UU.pdf}}$ on top of $p_{\Lambda_N}$ gives $[2]_q p_{\Lambda_N}$ \cite[Theorem 4]{Paggo} (their crossing is $-q^2 Y$ in our basis), we get (Braid Absorption).

To get (Rotational Invariance), we take the left partial trace of (Over Braid) to get
\[ \omega \raisebox{-.5\height}{ \includegraphics[scale = .3]{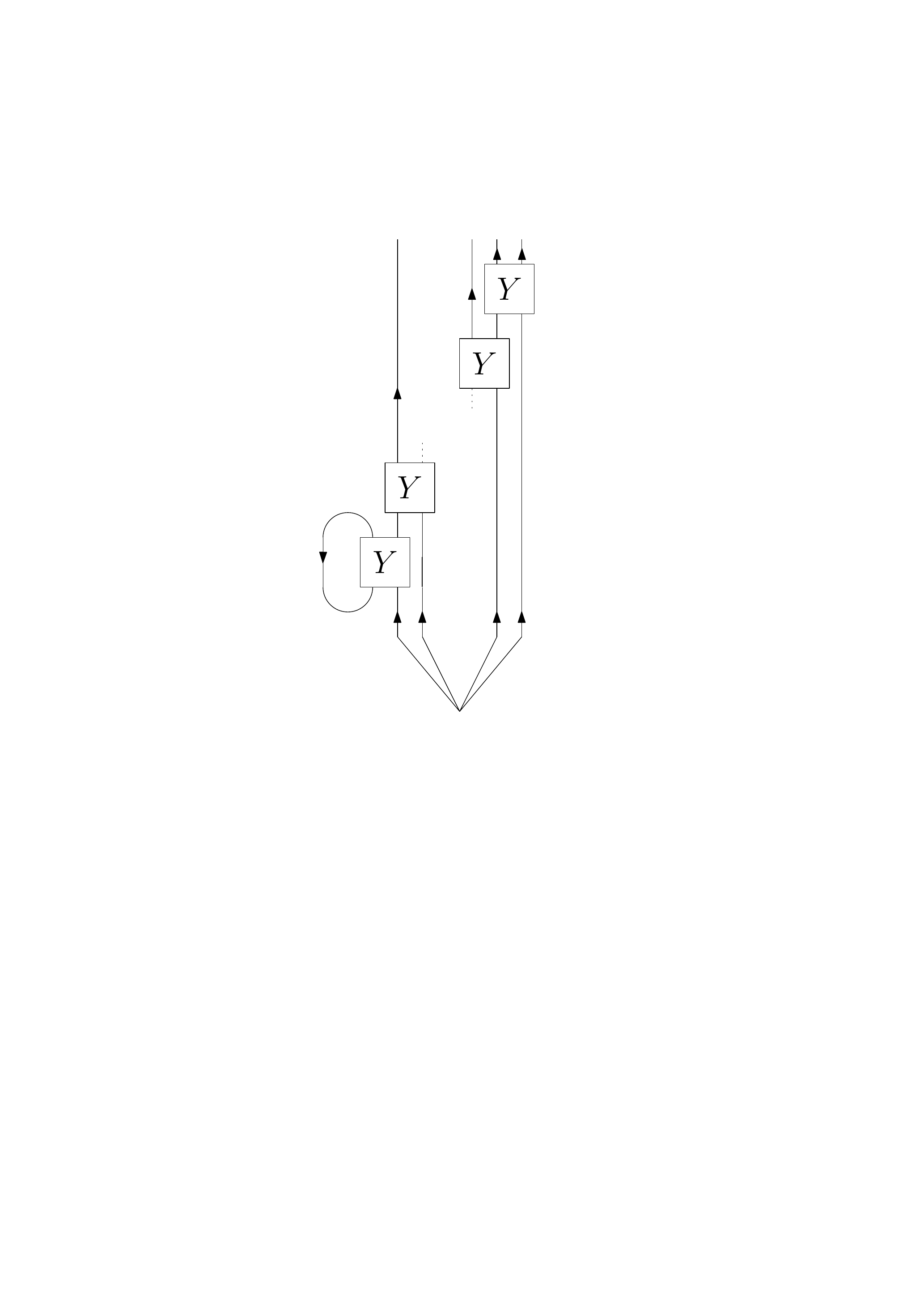}} = (-1)^Nq^{1-N}   \raisebox{-.5\height}{ \includegraphics[scale = .3]{exRel2.pdf}} . \]
The left hand side simplifies to $- q^{1-N}\raisebox{-.5\height}{ \includegraphics[scale = .3]{triv.pdf}}$ using relations (R1) and (Braid Absorption).
\end{proof}

With some work, we can show that in the non-classical case, (Over Braid) follows from these new relations.

\begin{lem}
Let $\mathcal{P}$ be an oriented planar algebra satisfying relations (R1), (R2), (R3), (Hecke), (Anti-Sym 1), (Anti-Sym 2), (Braid Absorption), (Rotational Invariance), and (Norm). Suppose $q^2\neq 1$, then $\mathcal{P}$ satisfies (Over Braid).
\end{lem}
\begin{proof}
From (Hecke) we get that $Y$ is invertible, with inverse
\[     \raisebox{-.5\height}{ \includegraphics[scale = .3]{YU.pdf}}^{-1} =  q \raisebox{-.5\height}{ \includegraphics[scale = .3]{UU.pdf}}-\raisebox{-.5\height}{ \includegraphics[scale = .3]{id.pdf}}. \]
We then have the equation
\[   q\raisebox{-.5\height}{ \includegraphics[scale = .3]{YU.pdf}}  -q^{-1}\raisebox{-.5\height}{ \includegraphics[scale = .3]{YU.pdf}}^{-1}  = (q^{-1} - q)\raisebox{-.5\height}{ \includegraphics[scale = .3]{id.pdf}}. \]

From \cite[Corollary 2.1]{Clam} \footnote{We have to be extremely careful here, as \cite{Clam} assumes the oriented Reidemeister move (oR2) that we do not have without the assumption of (Over Braid). However carefully working through the details of their proof show that only the relations we have listed are required. Morally the reason we don't require (oR2) is because this relation takes place in the Kazhdan-Wenzl subcategory where all strands are upwards pointing.
}that
\[ \raisebox{-.5\height}{ \includegraphics[scale = .3]{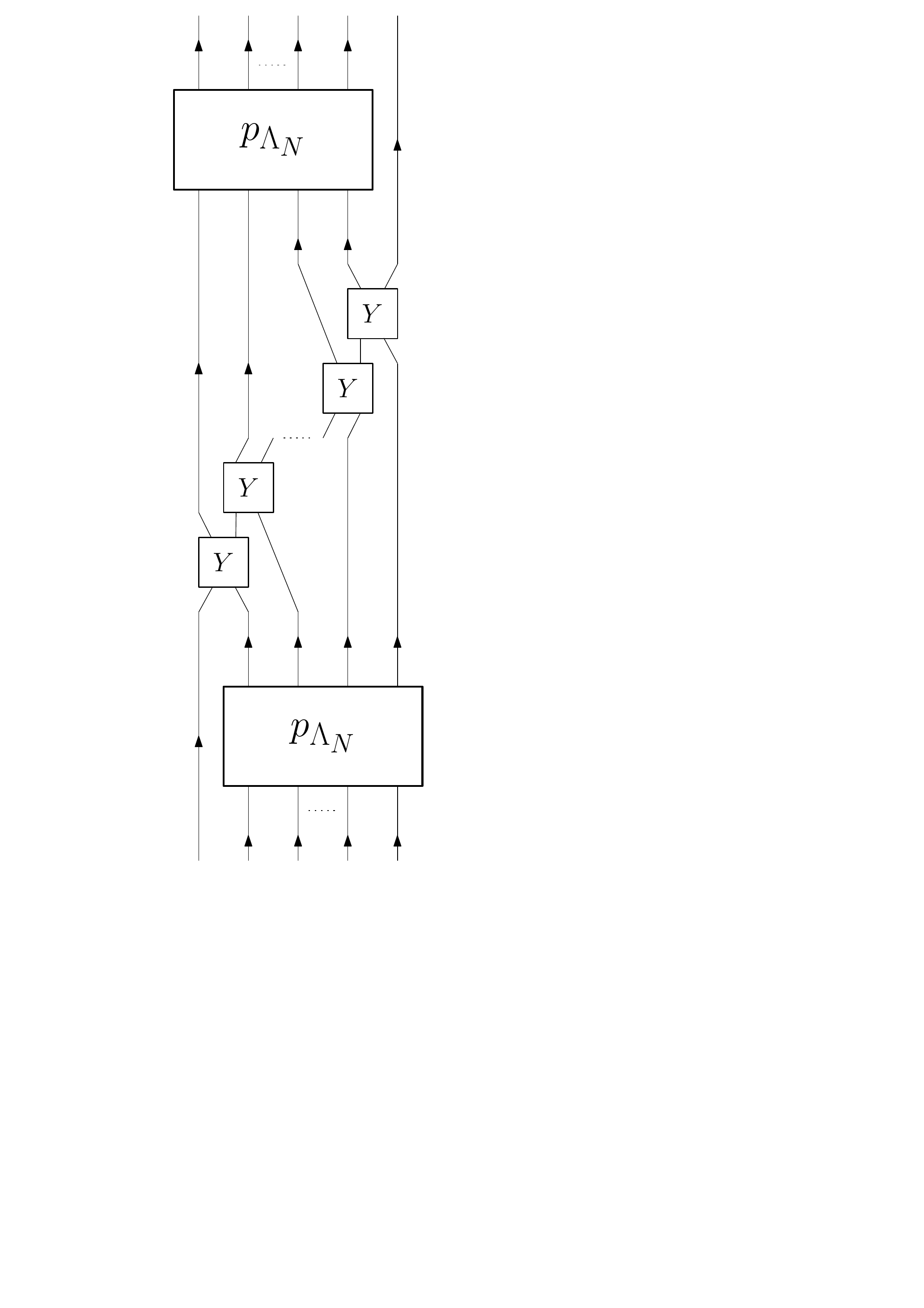}}\quad=\quad q^{2-2N}\raisebox{-.5\height}{ \includegraphics[scale = .3]{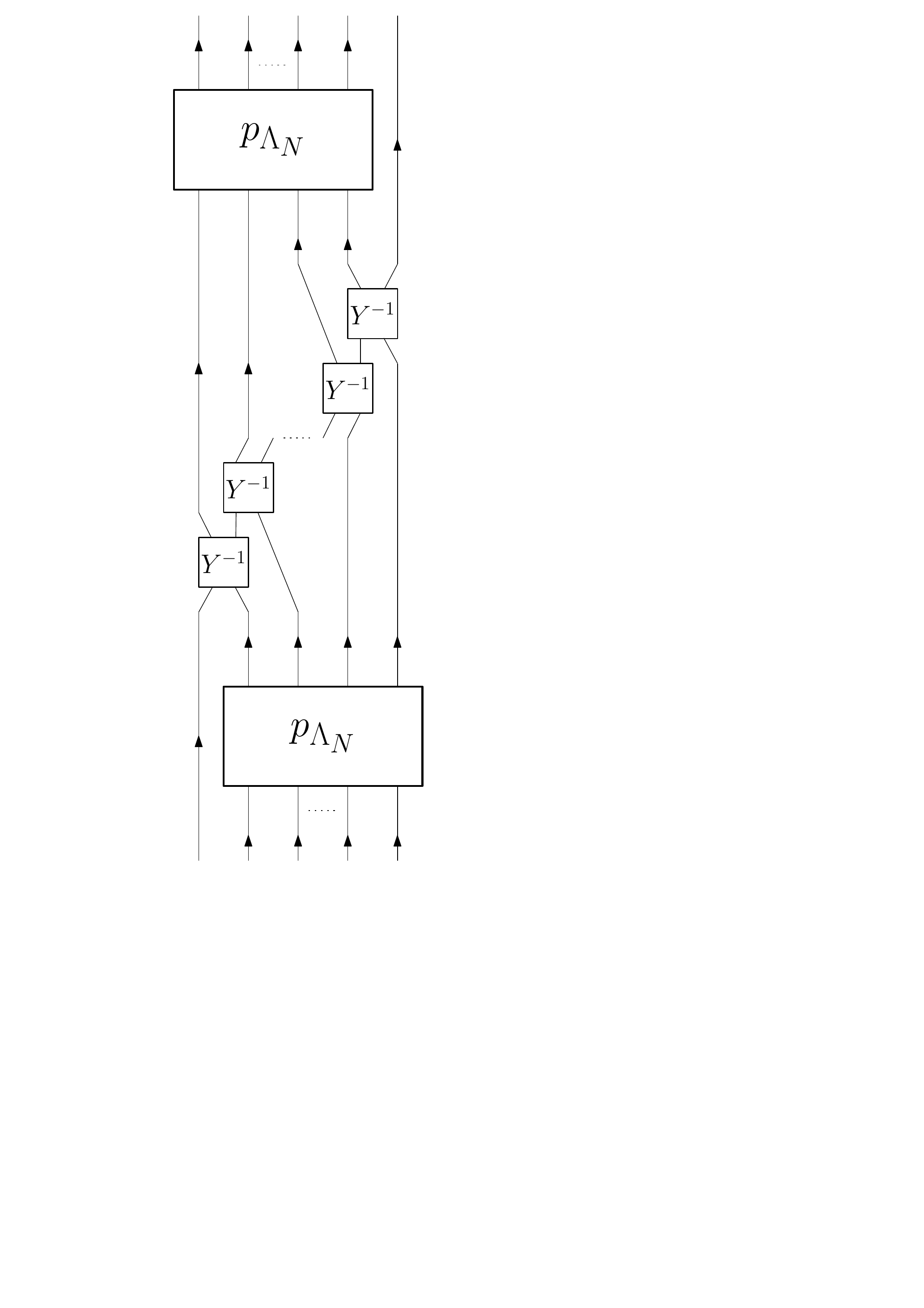}}  . \]
It follows from (Rotational Invariance) and (Braid Absorption) that $\raisebox{-.5\height}{ \includegraphics[scale = .3]{triv.pdf}}$ absorbs a $Y$ in any position at the cost of $\frac{1}{q^2}$. Using the recursive formula for $p_{\Lambda_N}$ this implies the relations
\[\raisebox{-.5\height}{ \includegraphics[scale = .3]{triv.pdf}} = \raisebox{-.5\height}{ \includegraphics[scale = .3]{newform.pdf}}\qquad \text{and}\qquad \raisebox{-.5\height}{ \includegraphics[scale = .3]{trivdag.pdf}} = \raisebox{-.5\height}{ \includegraphics[scale = .3]{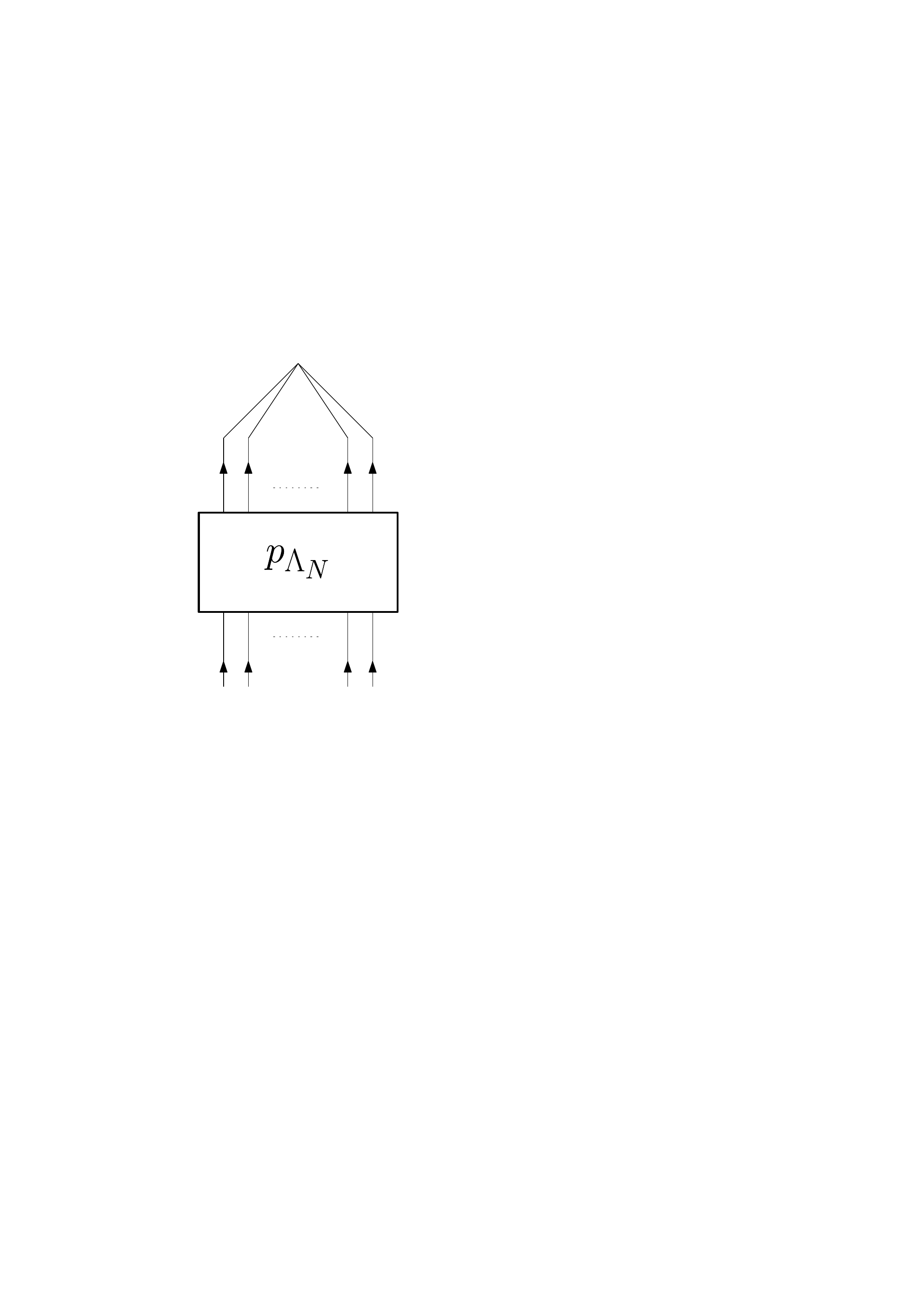}} .  \]
We can now compute
\[ \raisebox{-.5\height}{ \includegraphics[scale = .3]{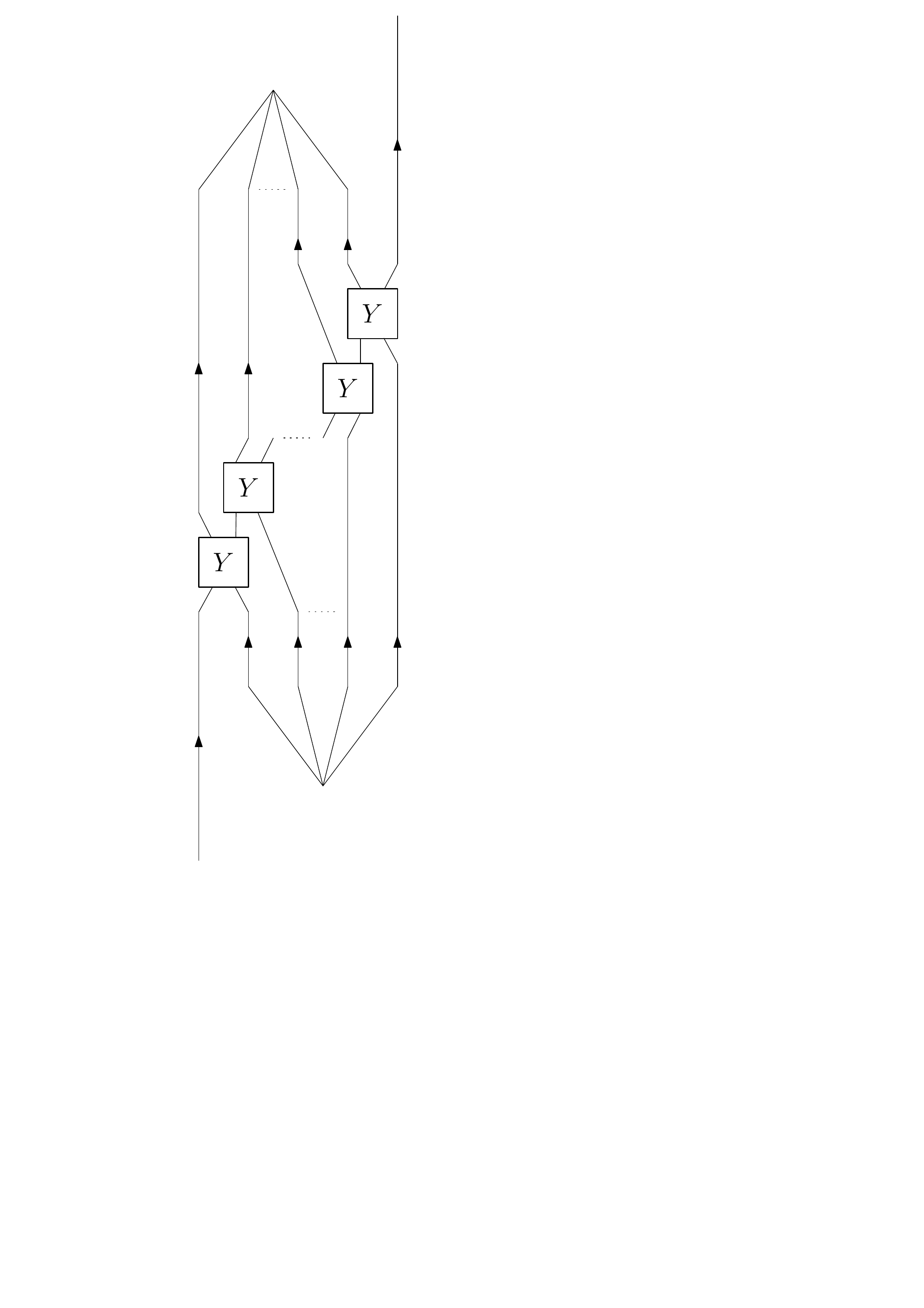}}  \quad =\quad \raisebox{-.5\height}{ \includegraphics[scale = .3]{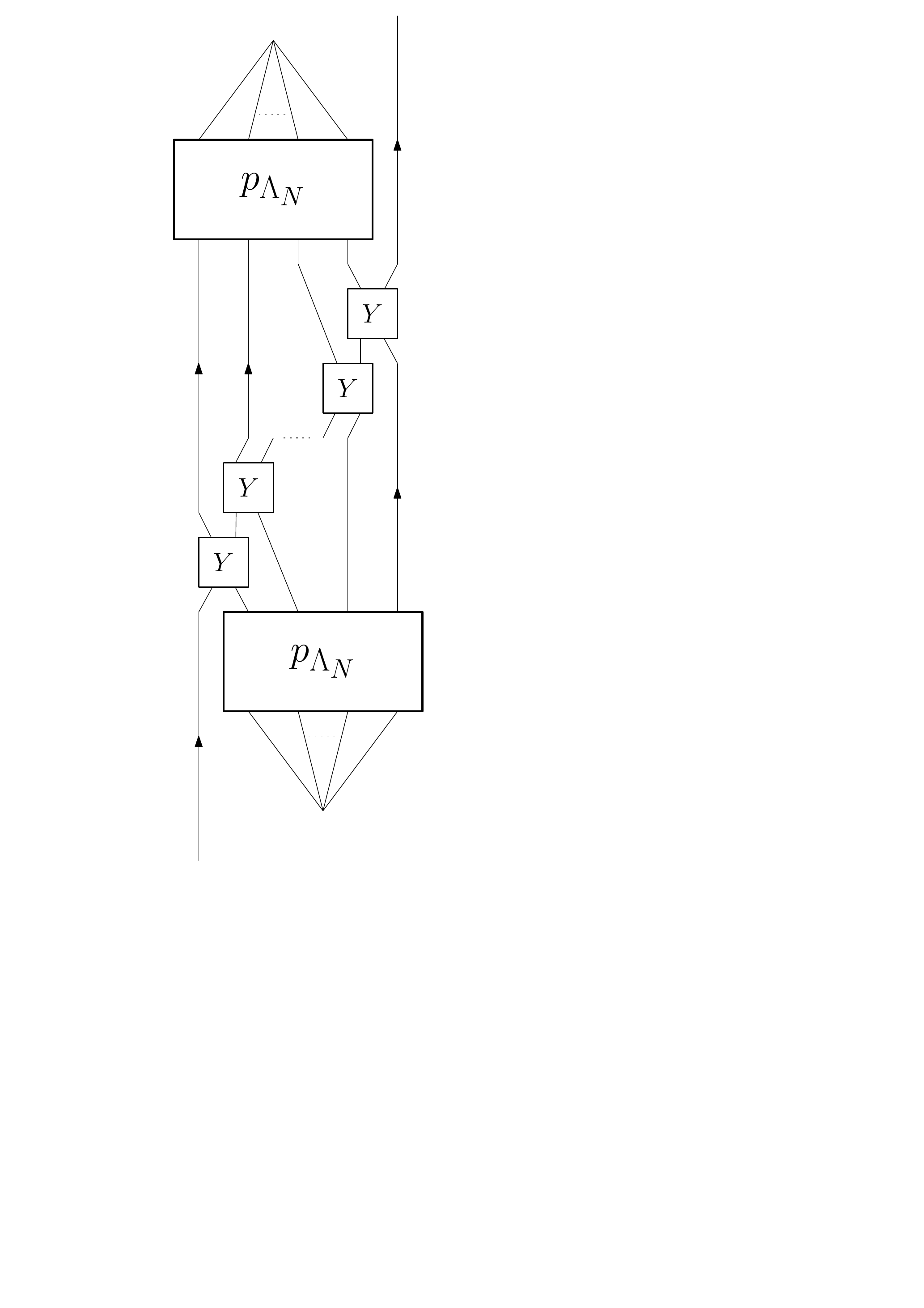}}\quad = \quad q^{2-2N}\raisebox{-.5\height}{ \includegraphics[scale = .3]{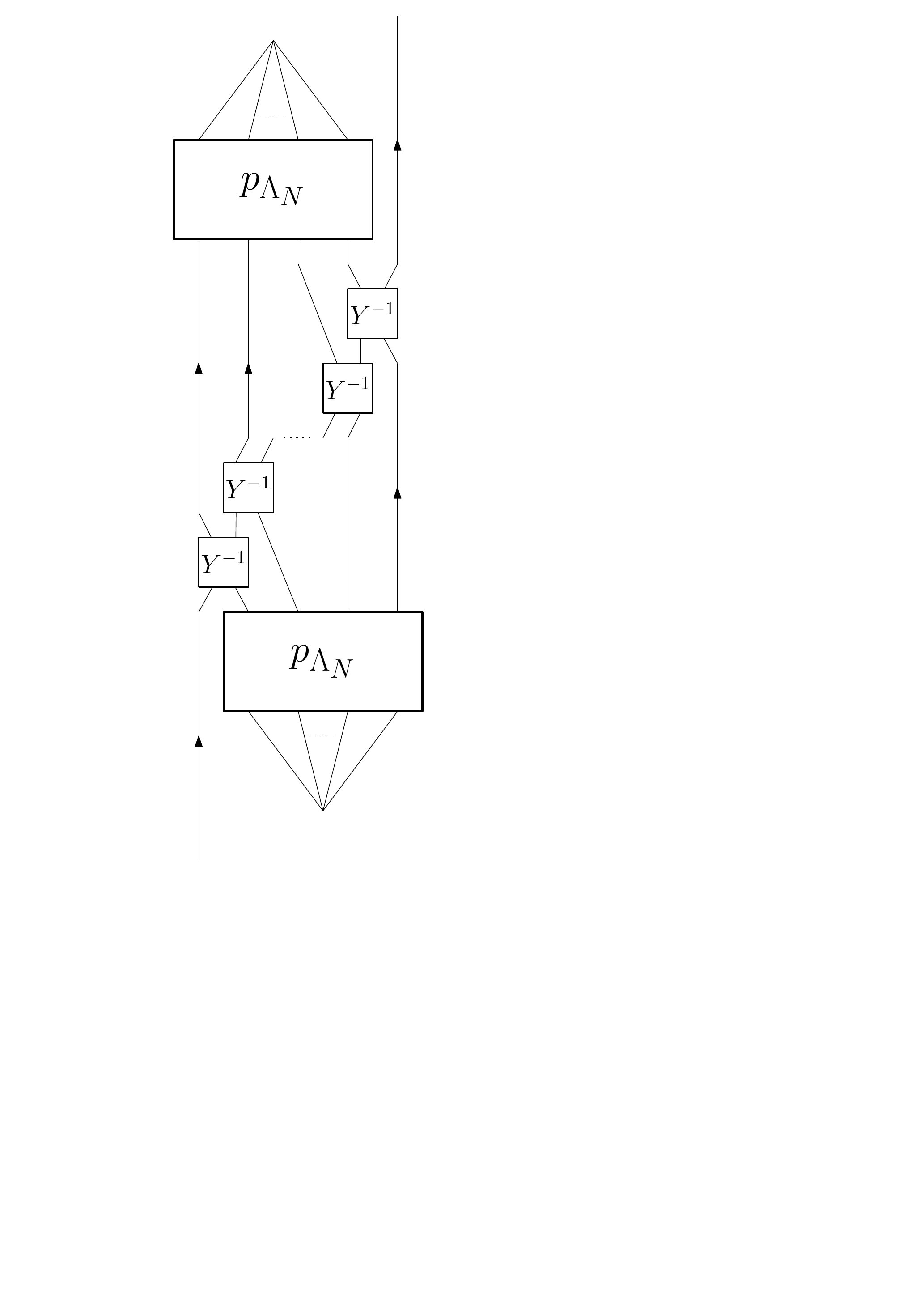}} \quad = \quad q^{2-2N} \raisebox{-.5\height}{ \includegraphics[scale = .3]{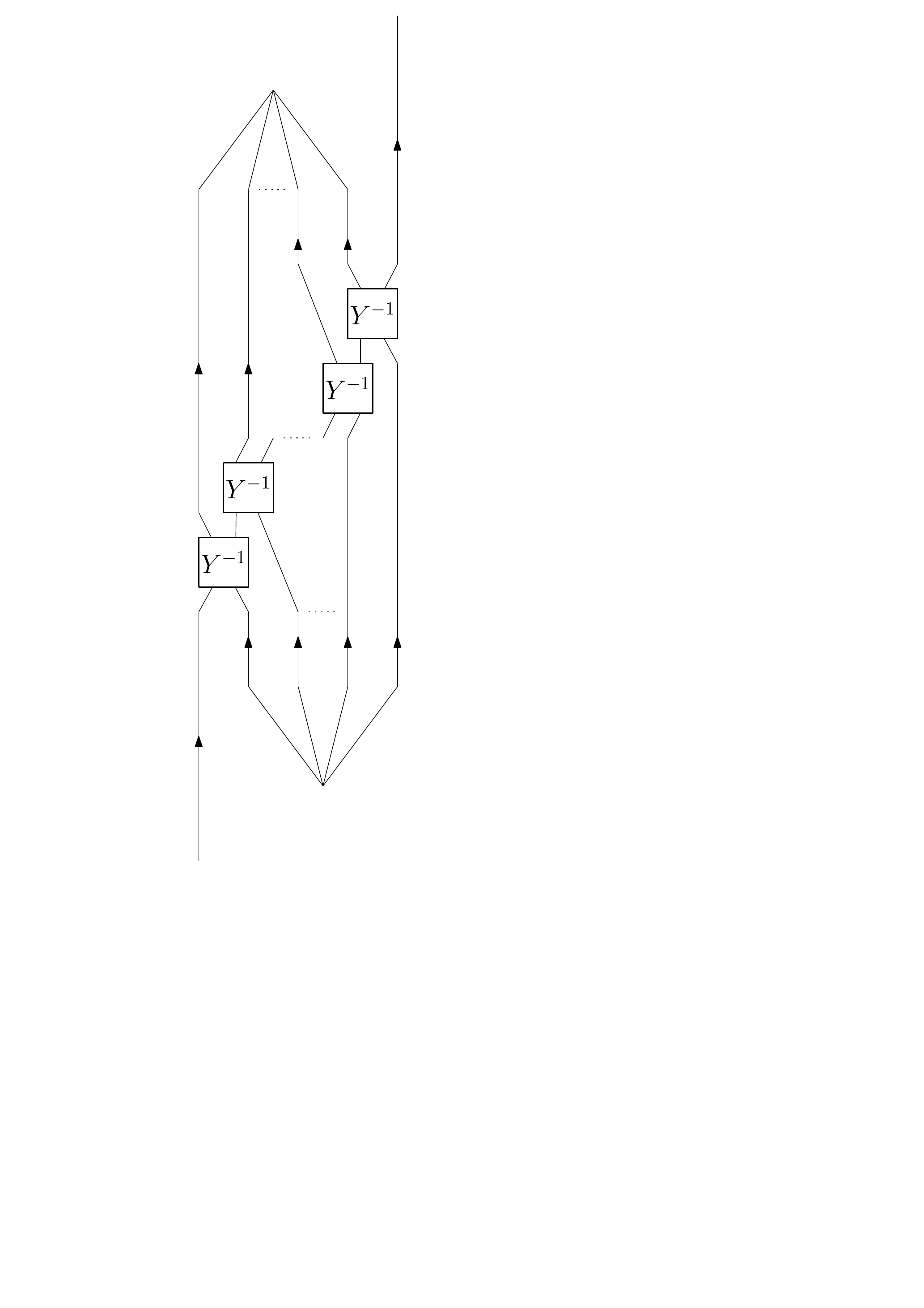}}.     \]

We also compute 
\[  \raisebox{-.5\height}{ \includegraphics[scale = .25]{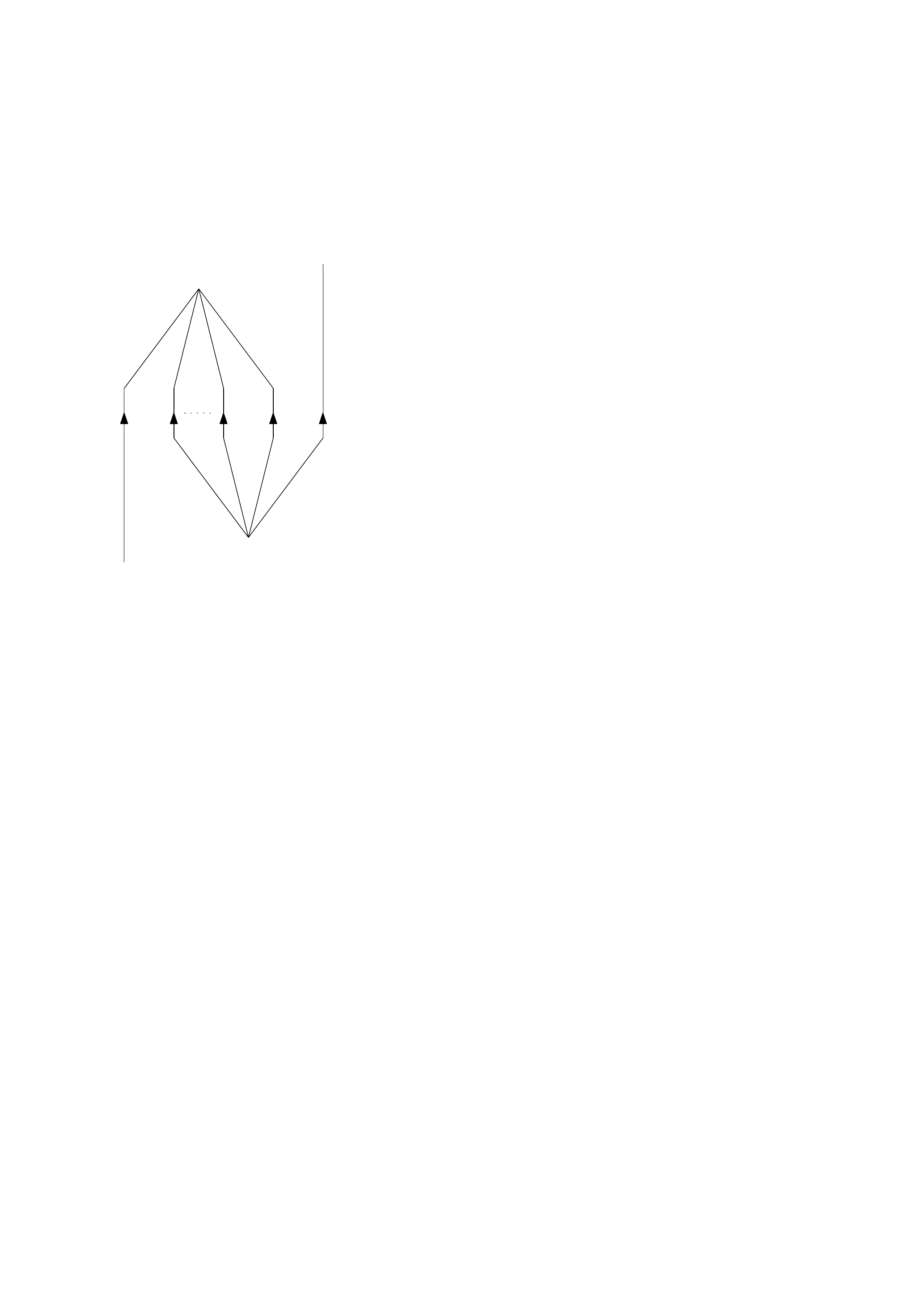}} = (-1)^{N+1}\overline{\omega}\raisebox{-.5\height}{ \includegraphics[scale = .25]{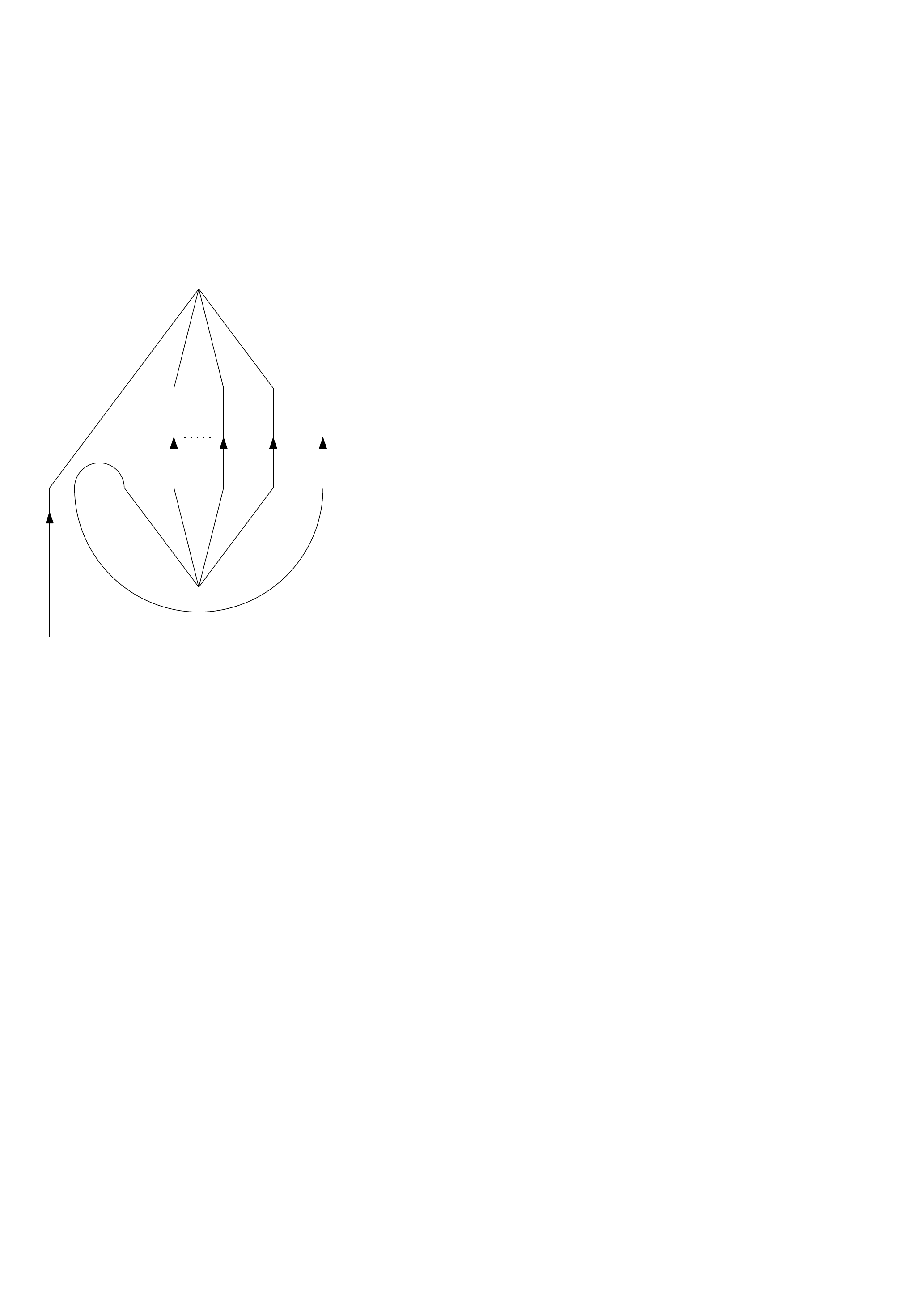}} =  (-1)^{N+1}\overline{\omega} \raisebox{-.5\height}{ \includegraphics[scale = .25]{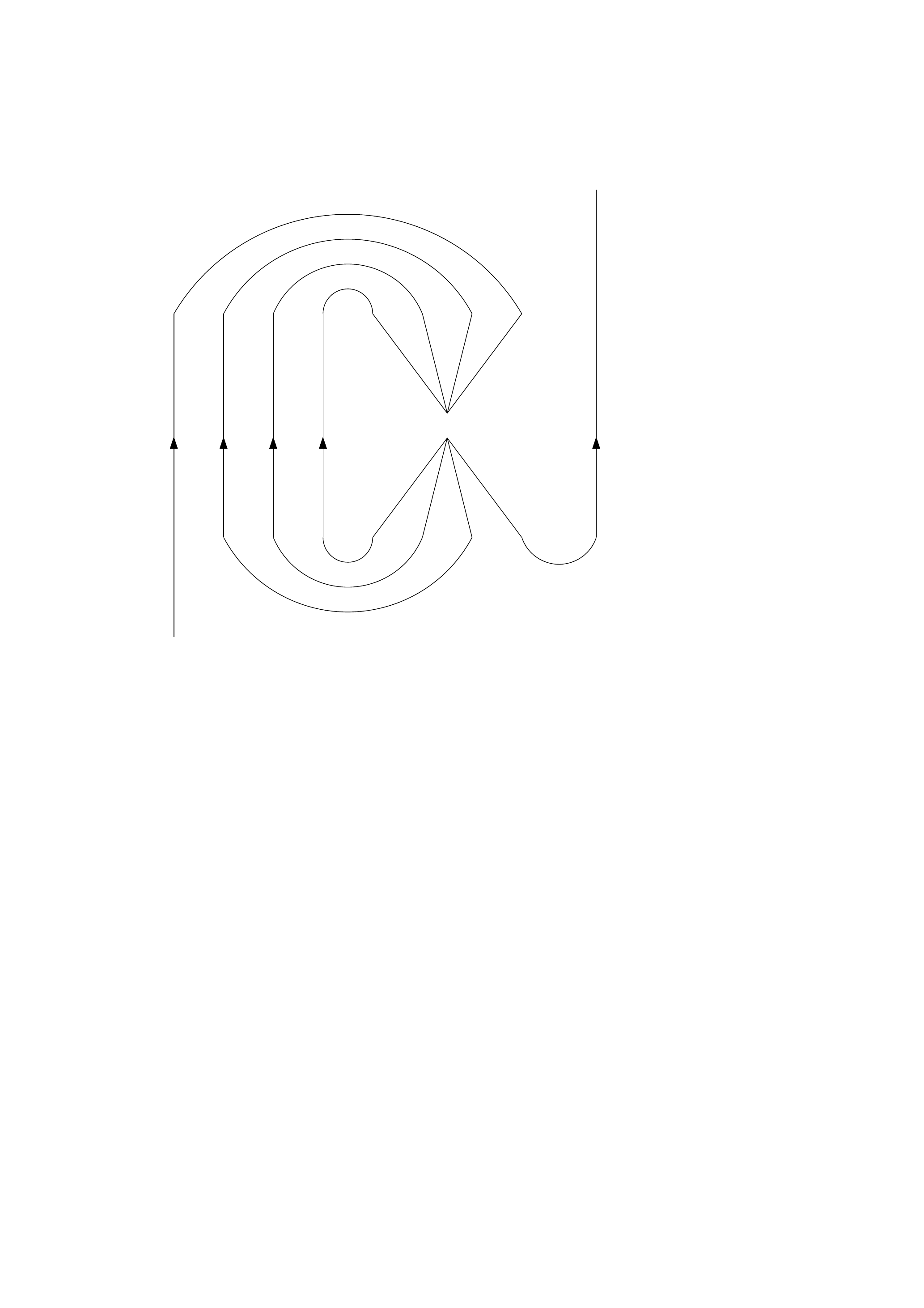}} =  (-1)^{N+1}\overline{\omega}\raisebox{-.5\height}{ \includegraphics[scale = .25]{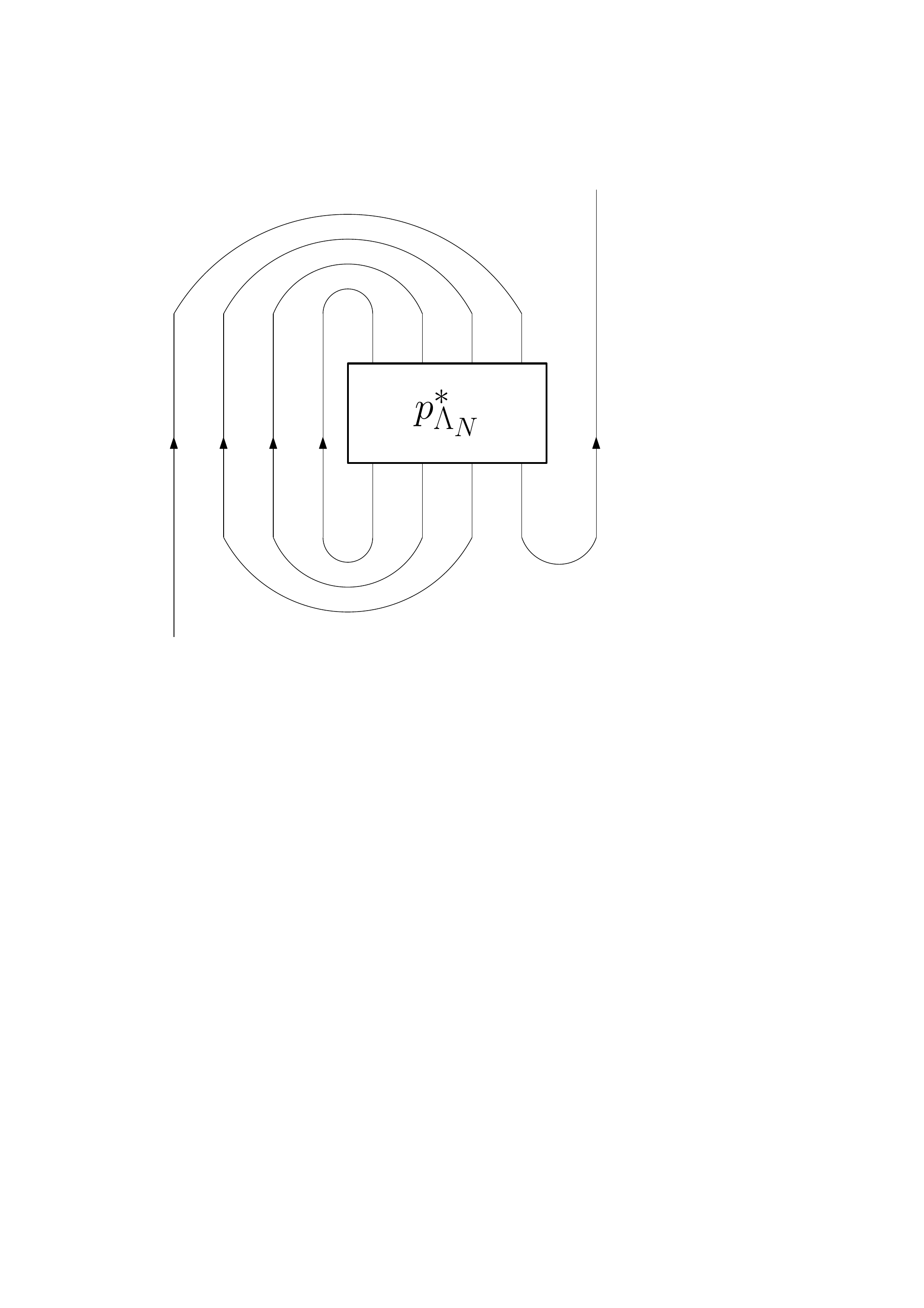}}=\frac{(-1)^{N+1}\overline{\omega}}{\q{N}}\raisebox{-.5\height}{ \includegraphics[scale = .25]{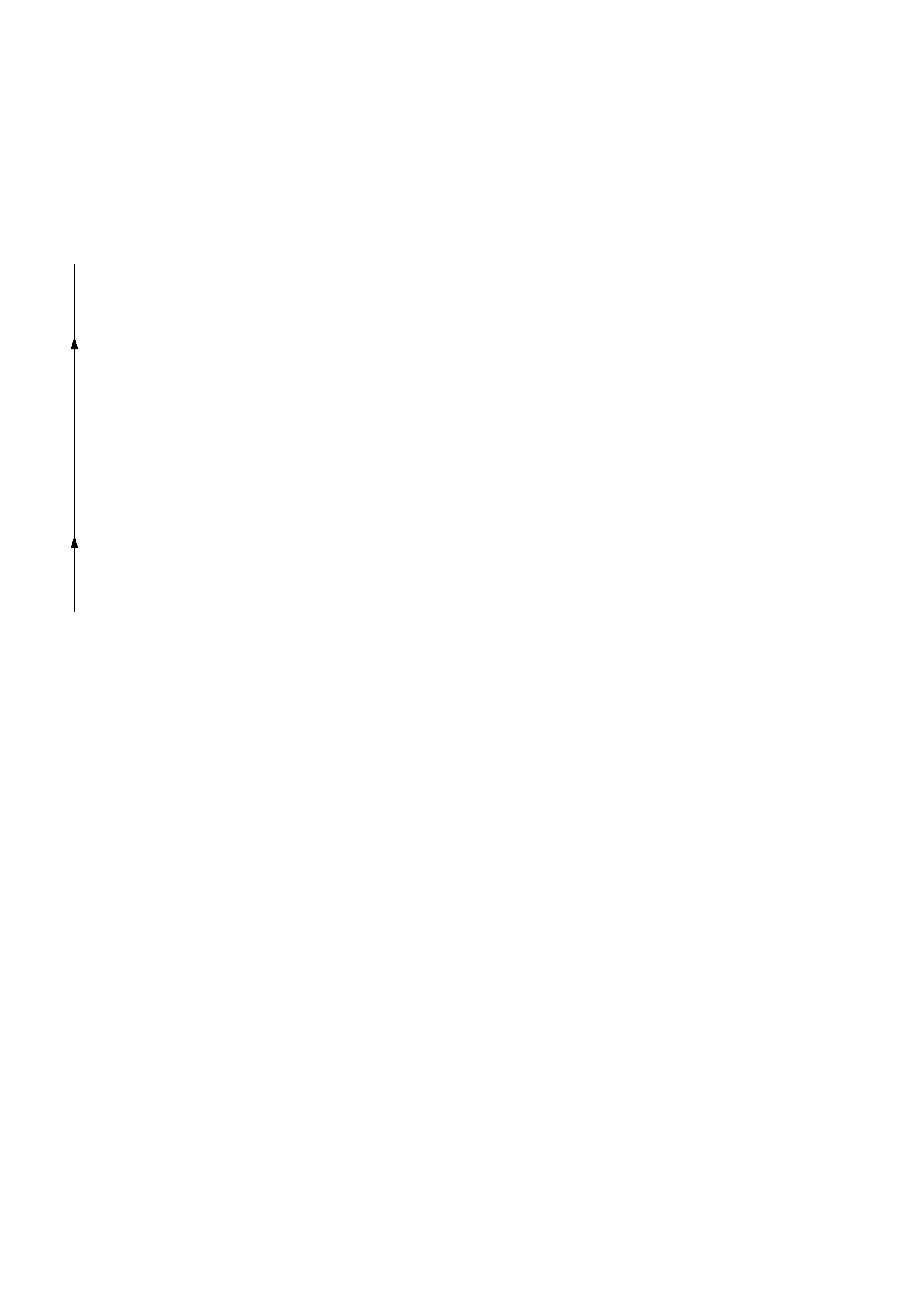}}.\] 
Here the first equality follows from (Rotational Invariance), the second is rigid isotopy, the third is from (Anti-Sym 1), and the fourth follows from (R1) and the recursive definition of $p_{\Lambda_N}$.

We now use the relation
\[   \raisebox{-.5\height}{ \includegraphics[scale = .3]{YU.pdf}}    = q^{-2}\raisebox{-.5\height}{ \includegraphics[scale = .3]{YU.pdf}}^{-1}+( q^{-2}-1)\raisebox{-.5\height}{ \includegraphics[scale = .3]{id.pdf}} \]
to compute
\[  \raisebox{-.5\height}{ \includegraphics[scale = .3]{blancheteq1.pdf}} = q^{-2N} \raisebox{-.5\height}{ \includegraphics[scale = .3]{blancheteq4.pdf}} + \sum  \raisebox{-.5\height}{\includegraphics[scale = .3]{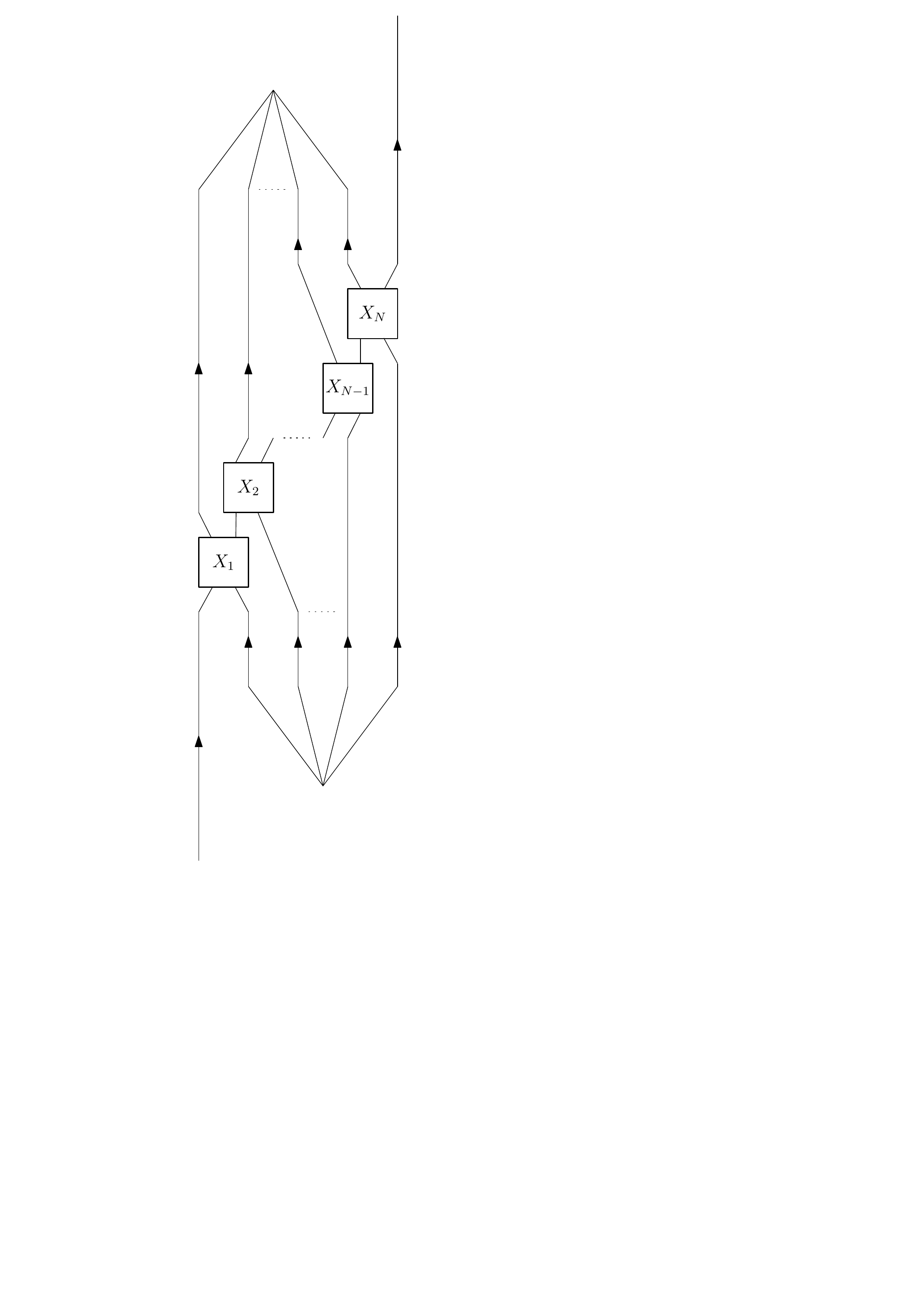}}\]
where 
\[X_i \in \left\{ q^{-2}\raisebox{-.5\height}{ \includegraphics[scale = .2]{YU.pdf}}^{-1},( q^{-2}-1)\raisebox{-.5\height}{ \includegraphics[scale = .2]{id.pdf}}   \right\}.\]
and the sum is taken over the $2^N - 1$ possibilities for $X_i$ where not all $X_i = q^{-2}Y^{-1}$. As each term in the above sum contains as least one $X_i$ with identity strands, we can use (Braid Absorption), along with the earlier relations to simplify the right hand side to obtain
\[ q^{-2} \raisebox{-.5\height}{ \includegraphics[scale = .3]{blancheteq1.pdf}} + \sum_{i=0}^{N-1} (q^{-2}-1)^{N-i}q^{-2i}q^{2i}\binom{N}{i} \raisebox{-.5\height}{\includegraphics[scale = .3]{shift1.pdf}}.\]
We can simplify the last term as follows
\begin{align*}  
  \sum_{i=0}^{N-1} (q^{-2}-1)^{N-i}q^{-2i}q^{2i}\binom{N}{i} \raisebox{-.5\height}{\includegraphics[scale = .3]{shift1.pdf}}
&=\frac{(-1)^{N+1}\overline{\omega}}{\q{N}}(q^{-2}-1)^{N}\sum_{i=0}^{N-1} \left(\frac{1}{q^{-2}-1}\right)^{i}\binom{N}{i}  \raisebox{-.5\height}{\includegraphics[scale = .3]{wrap7.pdf}}\\
&=\frac{(-1)^{N+1}\overline{\omega}}{\q{N}}(q^{-2}-1)^{N}\left(\left(1 +\frac{1}{q^{-2}-1}\right)^N - \left(\frac{1}{q^{-2}-1}\right)^N\right)  \raisebox{-.5\height}{\includegraphics[scale = .3]{wrap7.pdf}}\\
&=(-1)^{N+1}\overline{\omega}\frac{\left(q^{-N-1}-q^{N-1}  \right) \left(q^{2} -1 \right)}{q^{2N}-1}  \raisebox{-.5\height}{\includegraphics[scale = .3]{wrap7.pdf}}.
\end{align*}
With this simplification, we can simplify and rearrange the original equation to obtain
\[   \left(1-q^{-2}\right)\raisebox{-.5\height}{ \includegraphics[scale = .3]{blancheteq1.pdf}}=(-1)^{N+1}\overline{\omega}\frac{\left(q^{-N-1}-q^{N-1}  \right) \left(q^{2} -1 \right)}{q^{2N}-1}  \raisebox{-.5\height}{\includegraphics[scale = .3]{wrap7.pdf}}.   \]
As $q^2\neq 1$, we can divide to obtain
\[  \raisebox{-.5\height}{ \includegraphics[scale = .3]{blancheteq1.pdf}}=(-1)^{N+1}\overline{\omega}\frac{\left(q^{-N-1}-q^{N-1}  \right) \left(q^{2} -1 \right)}{\left(q^{2N}-1\right)\left( 1 - q^{-2} \right)}  \raisebox{-.5\height}{\includegraphics[scale = .3]{wrap7.pdf}}= (-1)^{N}\overline{\omega}q^{1-N}\raisebox{-.5\height}{\includegraphics[scale = .3]{wrap7.pdf}}. \]

Finally we get the desired
\[  (-1)^{N}\overline{\omega} q^{1-N}\raisebox{-.5\height}{\includegraphics[scale = .3]{OBR.pdf}}\quad=\quad  \raisebox{-.5\height}{\includegraphics[scale = .3]{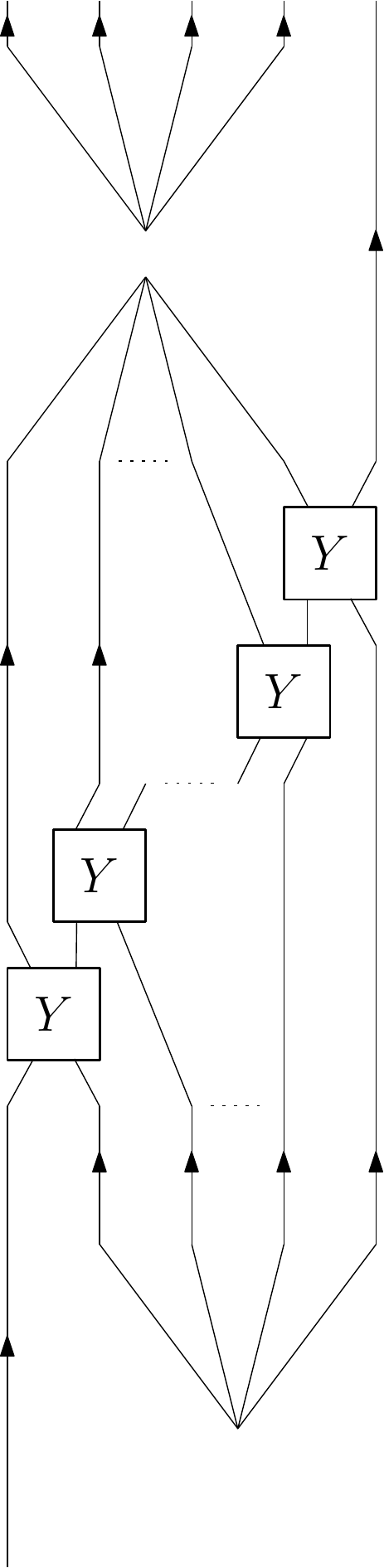}}\quad=\quad  \raisebox{-.5\height}{\includegraphics[scale = .3]{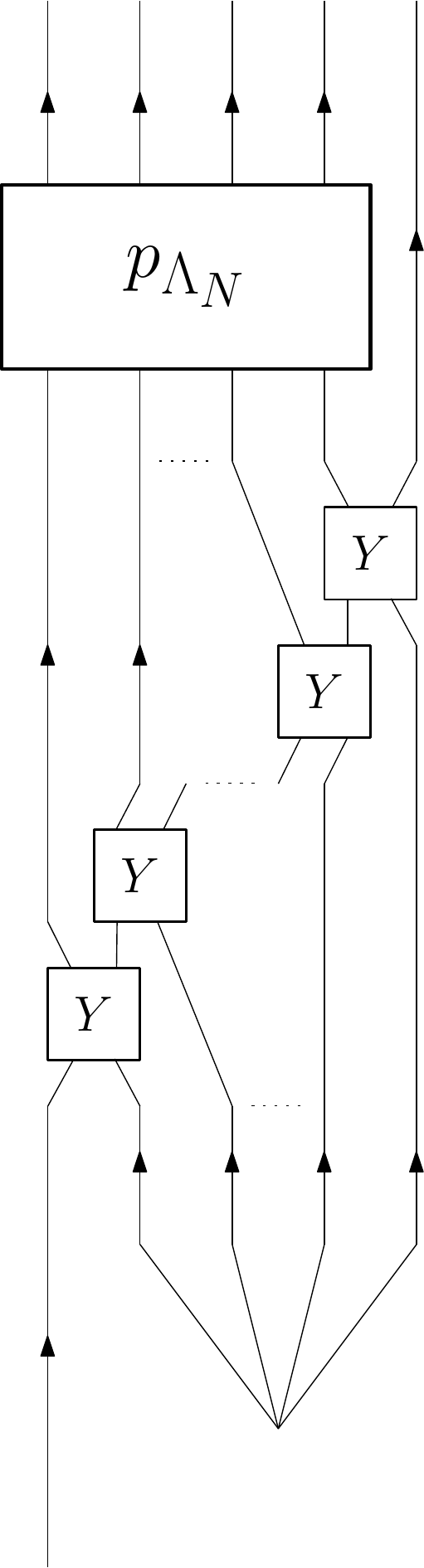}}\quad=\quad  \raisebox{-.5\height}{\includegraphics[scale = .3]{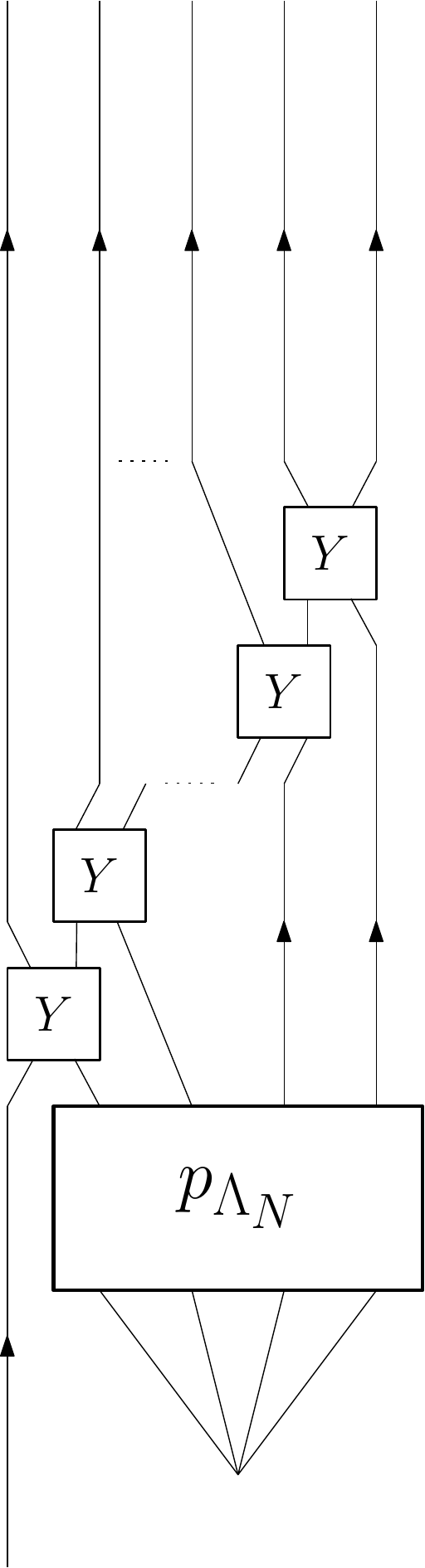}}\quad=\quad  \raisebox{-.5\height}{\includegraphics[scale = .3]{OBL.pdf}}\]
which is exactly (Over-Braid) up to rearranging.
\end{proof}

If we assume that our planar algebra is unitary (which will come for free in our setting where we have our planar algebra realised as $\dag$-planar algebra of the unitary $oGPA(\Gamma)$), it is fairly easy to show that (Anti-Sym 1) is a consequence of these three new relations.
\begin{lem}
Let $\mathcal{P}$ be an oriented unitary planar algebra satisfying relations (R1), (R2), (R3), (Hecke), (Braid Absorption), (Rotational Invariance), and (Norm). Then $\mathcal{P}$ satisfies (Anti-Sym 1).
\end{lem}
\begin{proof}
To show that (Anti-Sym 1) holds, we use the standard inner product trick (see \cite{EHOld} for an example). Let \[f:= \raisebox{-.5\height}{ \includegraphics[scale = .3]{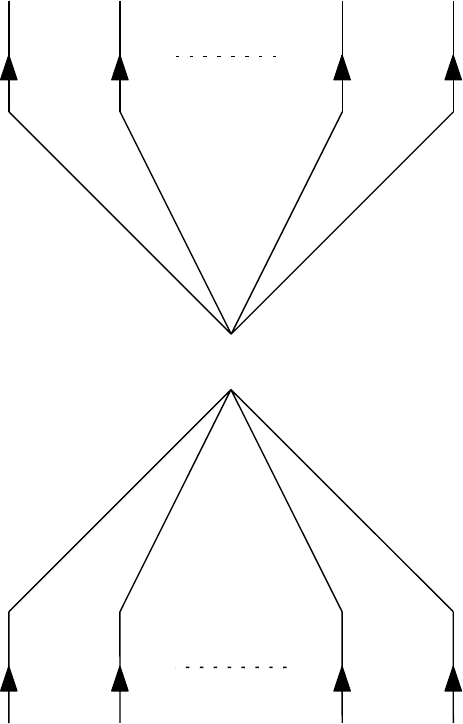}}    -   \raisebox{-.5\height}{ \includegraphics[scale = .3]{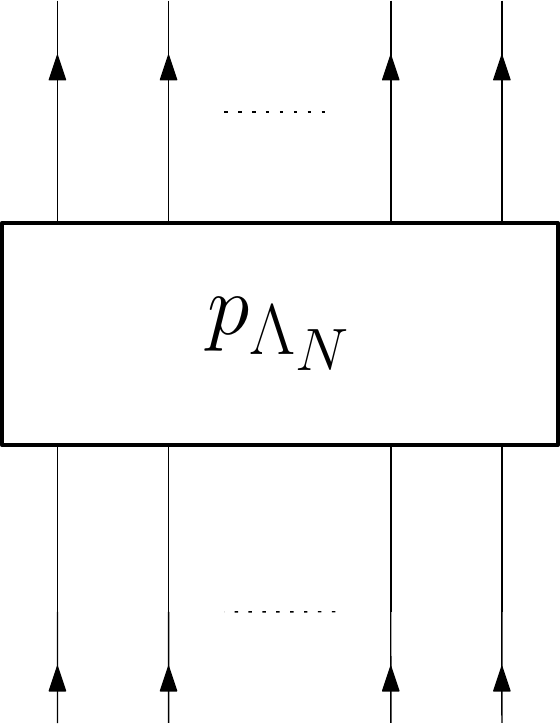}}\]
From Remark~\ref{rem:ASdag} we know $p_{\Lambda_N}^\dag = p_{\Lambda_N}$, and so $f^\dag = f$. Not assuming (Anti-Sym 1) (nor (Over Braid)), we compute
\[
\left\langle f,f\right  \rangle = \raisebox{-.5\height}{ \includegraphics[scale = .3]{exRel3.pdf}}+\raisebox{-.5\height}{ \includegraphics[scale = .3]{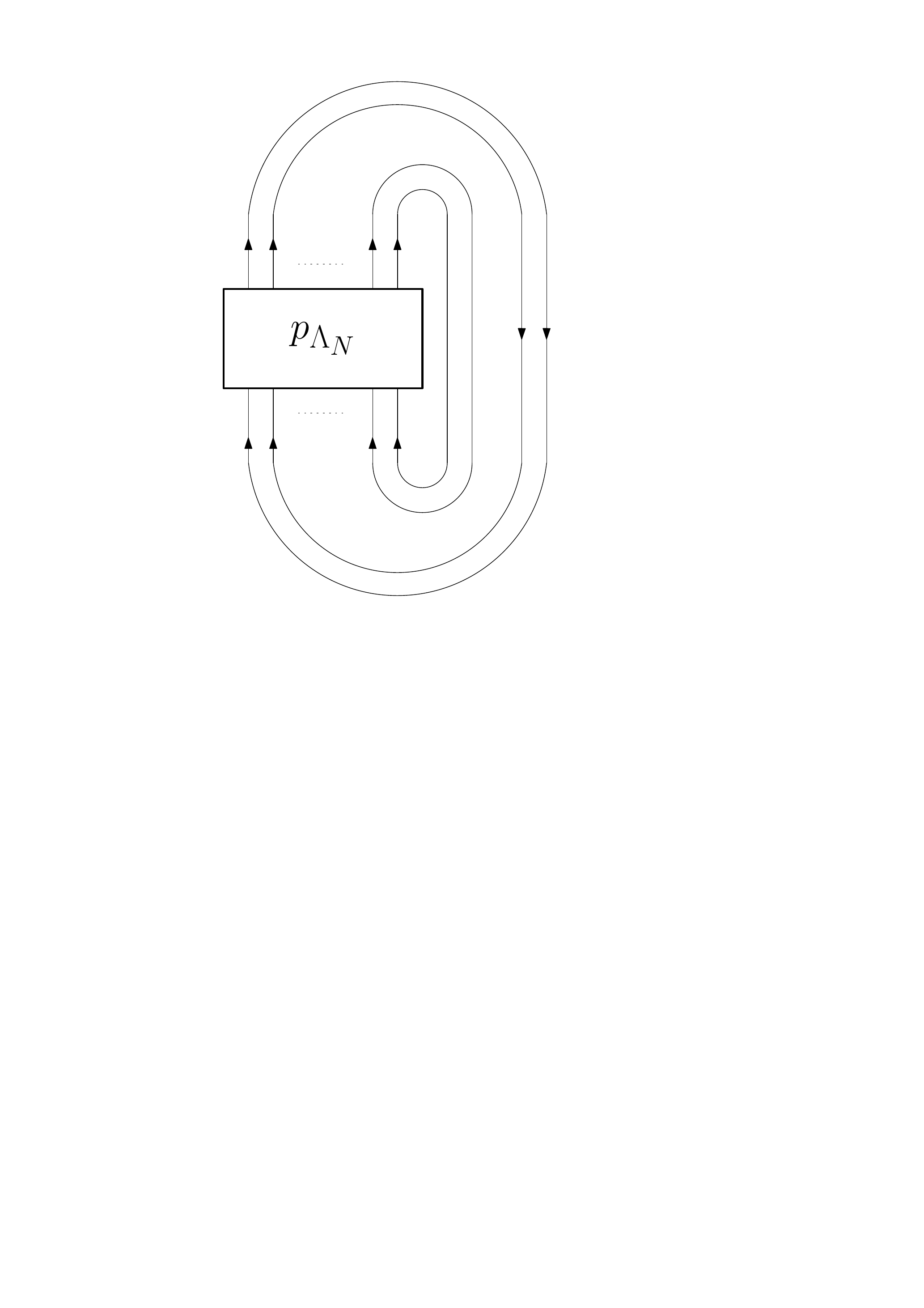}}-2\raisebox{-.5\height}{ \includegraphics[scale = .3]{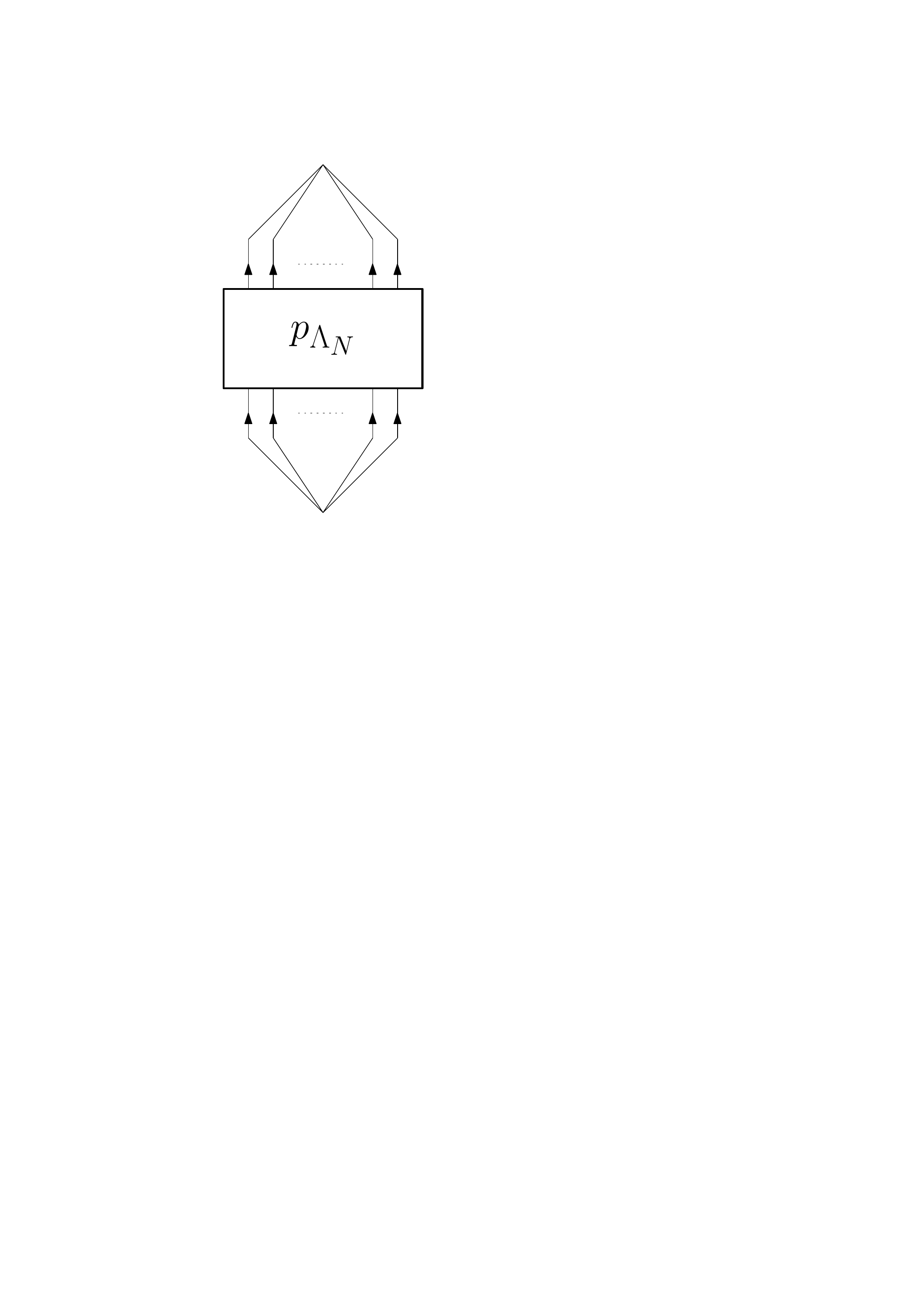}}=2-2\raisebox{-.5\height}{ \includegraphics[scale = .3]{trpnx.pdf}}.\]
Note that (Braid Absorption) with (Rotational Invariance) imply that $\raisebox{-.5\height}{ \includegraphics[scale = .3]{triv.pdf}}$ absorbs a $Y$ in any position at the cost of $q^{-2}$. With this we compute  
\[  \raisebox{-.5\height}{ \includegraphics[scale = .3]{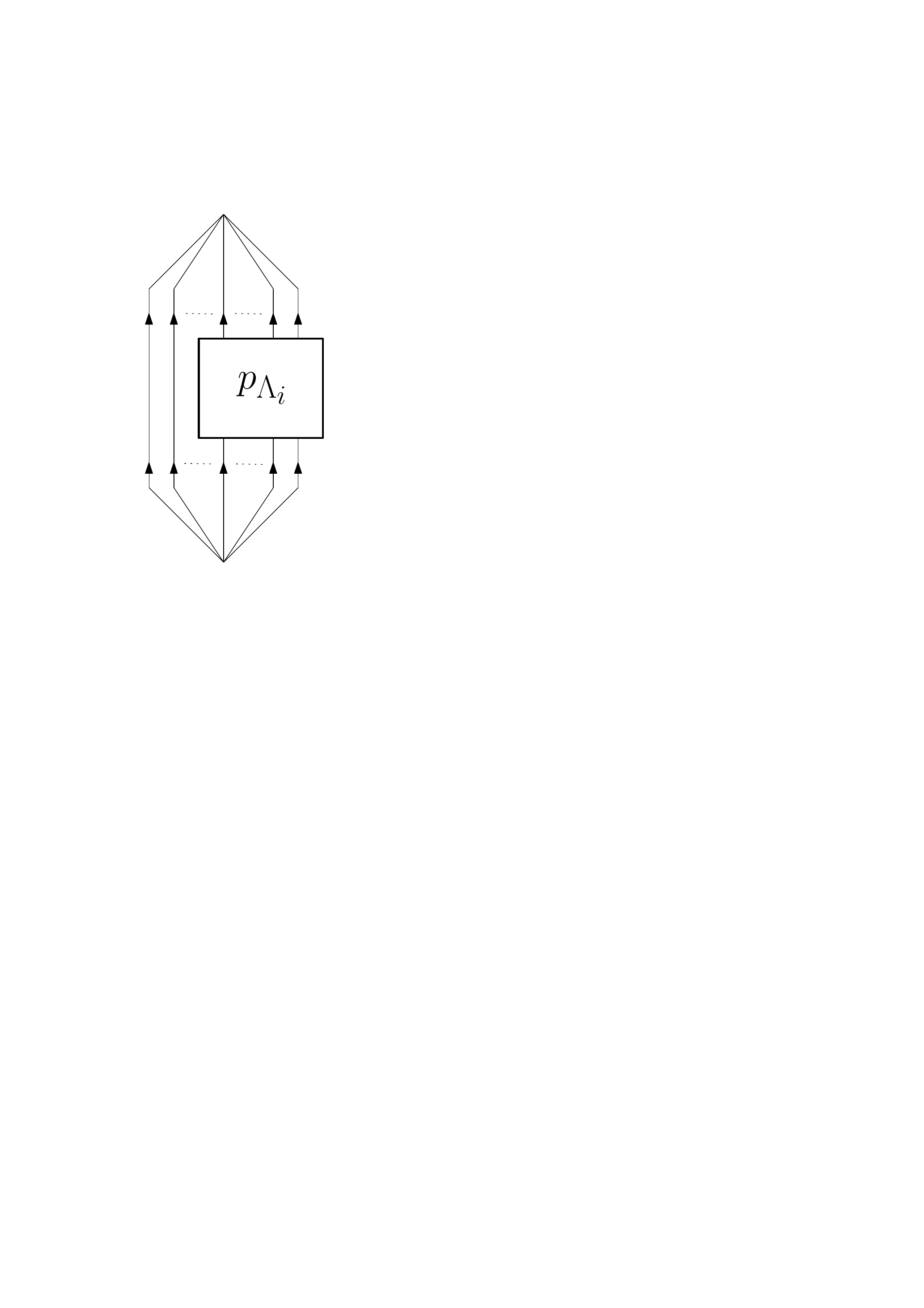}}  = \frac{1}{\sum_{j=0}^{i-1}q^{-2j}}\left( \raisebox{-.5\height}{ \includegraphics[scale = .3]{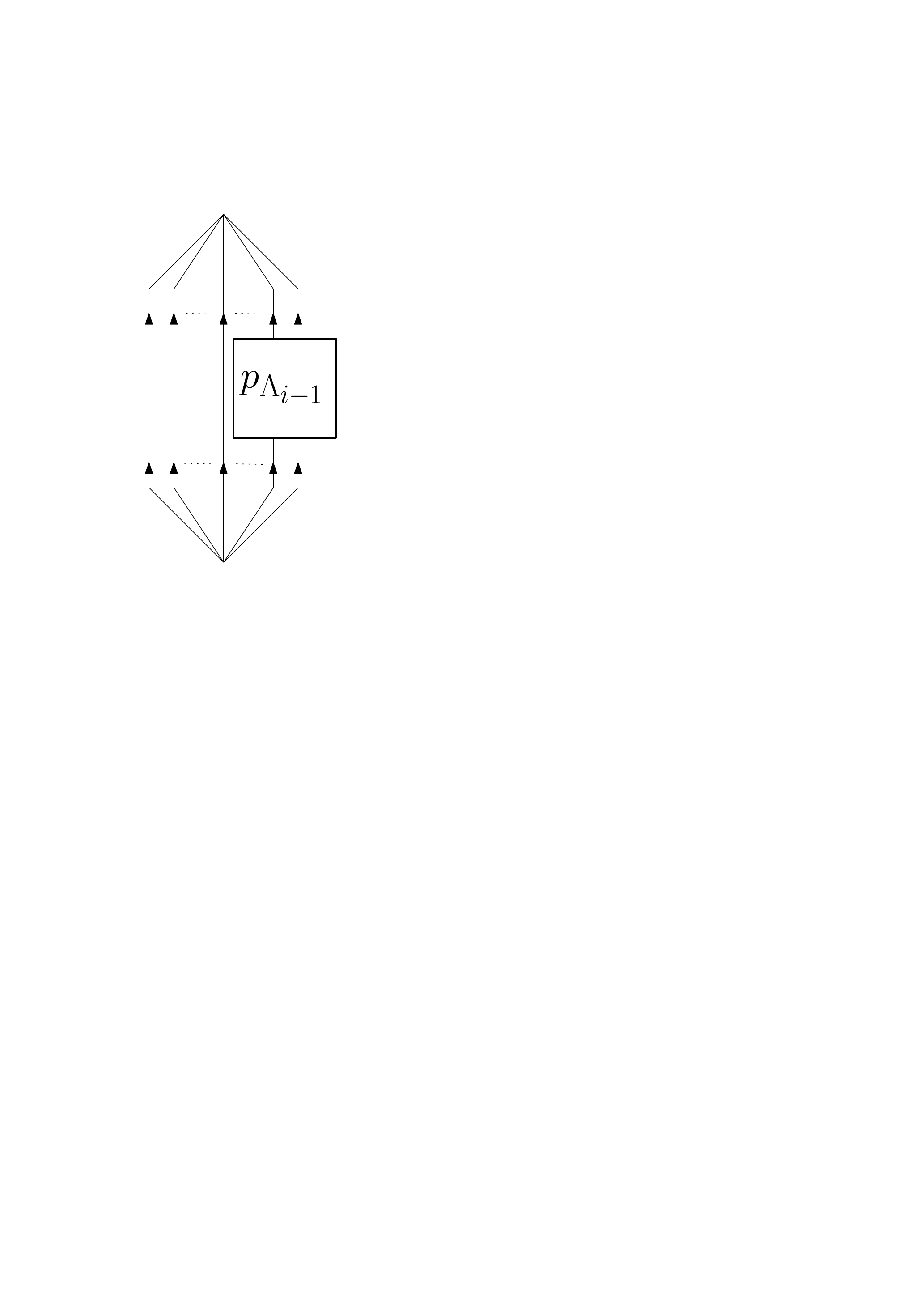}}  + \cdots + \raisebox{-.5\height}{ \includegraphics[scale = .3]{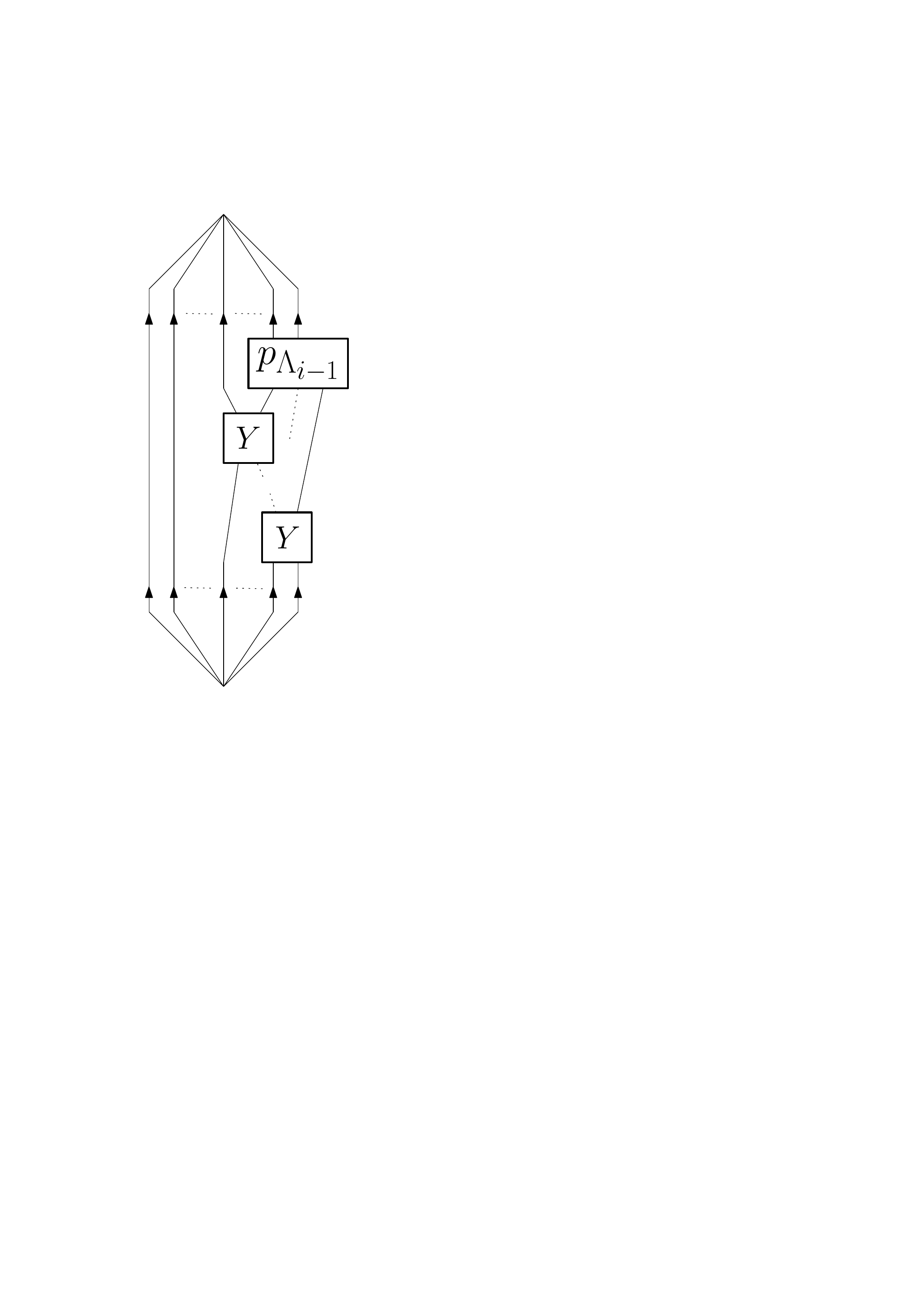}}\right)= \frac{1}{\sum_{j=0}^{i-1}q^{-2j}}\left(\sum_{j=0}^{i-1} q^{-2j} \raisebox{-.5\height}{ \includegraphics[scale = .3]{trpnx3.pdf}}\right)=\raisebox{-.5\height}{ \includegraphics[scale = .3]{trpnx3.pdf}}.
           \]
This recursively shows that 
\[   \raisebox{-.5\height}{ \includegraphics[scale = .3]{trpnx.pdf}} = \raisebox{-.5\height}{ \includegraphics[scale = .3]{exRel3.pdf}} = 1.    \]
As $\mathcal{P}$ is unitary, we have that $f = 0$, and thus
\[\raisebox{-.5\height}{ \includegraphics[scale = .3]{AntiSymL.pdf}}    =   \raisebox{-.5\height}{ \includegraphics[scale = .3]{AntiSymR.pdf}}.     \]
\end{proof}

Summarising the results of this section, we have the following:
\begin{thm}\label{thm:pres}
    Let $\mathcal{P}$ be an oriented unitary planar algebra, generated by morphisms 
    \[  \raisebox{-.5\height}{ \includegraphics[scale = .3]{UU.pdf}} \in \mathcal{P}_{++\to ++},\quad \text{and}\quad \raisebox{-.5\height}{ \includegraphics[scale = .3]{triv.pdf}} \in \mathcal{P}_{\emptyset\to (+)^N}    \]
   satisfying relations (R1), (R2), (R3), (Hecke), (Braid Absorption), (Rotational Invariance), and (Norm). If $q^2 \neq 1$ then 
    \[   \mathcal{P} \cong \mathcal{P}_{\overline{\operatorname{Rep}(U_q(\mathfrak{sl}_N))^\omega};\Lambda_1}.\]
\end{thm}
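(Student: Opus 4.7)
The plan is to reduce the statement to the earlier Lemma that characterized $\mathcal{P}_{\overline{\operatorname{Rep}(U_q(\mathfrak{sl}_N))^\omega};\Lambda_1}$ in terms of (R1), (R2), (R3), (Hecke), (Over Braid), and (Anti-Sym 1). Since the current hypotheses coincide with that list except for the replacement of (Anti-Sym 1) by the three relations (Braid Absorption), (Rotational Invariance), and (Unitary), it suffices to verify that (Anti-Sym 1) holds in $\mathcal{P}$.

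The key observation is that the argument given in the proof of the previous Lemma never used the fact that one was working inside $\mathcal{P}_{\overline{\operatorname{Rep}(U_q(\mathfrak{sl}_N))^\omega};\Lambda_1}$; it only used (i) the existence and self-adjointness of the idempotents $p_{\Lambda_i}$, (ii) relations (R1), (R2), (R3), (Hecke), and (iii) the three replacement relations, together with non-degeneracy of the inner product afforded by unitarity. Thus I would first note that the recursive definition of the $p_{\Lambda_i}$, together with Remark~\ref{rem:ASdag}, produces a self-adjoint idempotent $p_{\Lambda_N} \in \mathcal{P}$, and that the element
\[
f := \raisebox{-.5\height}{ \includegraphics[scale = .3]{AntiSymL.pdf}} - \raisebox{-.5\height}{ \includegraphics[scale = .3]{AntiSymR.pdf}}
\]
satisfies $f^\dagger = f$. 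Running the inner product computation from the previous Lemma verbatim inside $\mathcal{P}$ gives $\langle f,f\rangle = 0$, and unitarity then forces $f = 0$, i.e.\ (Anti-Sym 1) holds in $\mathcal{P}$.

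With (Anti-Sym 1) now in hand, $\mathcal{P}$ is a unitary oriented planar algebra generated by $U$ and the trivalent vertex satisfying (R1), (R2), (R3), (Hecke), (Over Braid), and (Anti-Sym 1). Applying the earlier Lemma directly yields $\mathcal{P} \cong \mathcal{P}_{\overline{\operatorname{Rep}(U_q(\mathfrak{sl}_N))^\omega};\Lambda_1}$.

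There is no real obstacle here; the theorem is essentially a bookkeeping consequence of the two preceding Lemmas. The only mild subtlety to double-check is that the facts needed from Remark~\ref{rem:ASdag} ($p_{\Lambda_N}^\dagger = p_{\Lambda_N}$ and the idempotency/traces) are derived purely from (R1), (R2), (R3), and (Hecke), so they are available in $\mathcal{P}$ without circularity; this is exactly what is asserted in that remark.
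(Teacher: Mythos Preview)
Your proposal is correct and matches the paper's approach exactly: the theorem is stated in the paper without proof, introduced by ``To summarise, we have the following,'' since it is precisely the combination of the two preceding Lemmas. Your observation that the inner-product argument in the second Lemma uses only (R1)--(Hecke), the three replacement relations, and unitarity---and therefore transfers verbatim to an arbitrary unitary $\mathcal{P}$---is the only point one needs to check, and you have identified it correctly.
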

\begin{proof}
The results of this section show that (Over Braid) and (Anti-Sym 1) hold in this setting. The result then follows from Lemma~\ref{lem:KW}.
\end{proof}

\section{The oriented module embedding theorem}\label{sec:GPA}
The goal of this section is the prove the equivalence between $\mathcal{C}$-module categories, and embeddings of a planar algebra of $\mathcal{C}$ in $oGPA(\Gamma)$. We do this by following the methods of \cite{EH}. Let us briefly outline this strategy.

A $\mathcal{C}$-module category $\mathcal{M}$ with $k$ distinct simples is described by a monoidal functor (not necessarily strict)
$$
F: \mathcal{C} \to \End(\mathcal{M}).
$$
Recall from Subsection~\ref{subsec:Mk} that $\End(\mathcal{M})$ is equivalent to $M_k(\Vect)$ as a multi-tensor category. Suppose $X$ is a $\otimes$-generator for $\cC$, and $\mathcal{P}_{X}$ the associated oriented planar algebra. Then $X$ will map to $F(X) := \Gamma \in \End(\mathcal{M})$, and $\mathcal{P}_X$ will embed into $\mathcal{P}_{\Gamma}$. The main substance of this section of then showing that $\mathcal{P}_{\Gamma}$ is equivalent to $oGPA(\Gamma)$. This shows that embeddings $\mathcal{P}_X \to oGPA(\Gamma)$ give $\cC$-module categories, and \textit{vice versa}. With the high level argument in mind, let us now proceed with the details.

Let $\Gamma$ be a directed graph and let $d_{ij}$ denote the number of edges from $i$ to $j$. Abusing notation, we also let $\Gamma$ denote the following object of $M_k(\Vect)$:
$$
\Gamma = \begin{bmatrix}
V_{11} & \dots & V_{1k} \\
\vdots & \ddots & \vdots \\
V_{k1} & \dots & V_{kk},
\end{bmatrix}
$$
where each $V_{ij}$ is an arbitrary, but fixed, vector space of dimension $d_{ij}$. 

Let $\lambda = (\lambda_1, \dots, \lambda_k)$ be the positive eigenvector for $\Gamma$. Using the construction in Definition~\ref{defn:oP} we have the oriented planar algebra $\Pp_{\Gamma}$ in $(M_k(\Vect), \lambda)$.
\begin{thm}\label{thm:GPAiso}
There is a $\dag$-isomorphism of $\dag$-planar algebras
$$
\Pp_\Gamma\cong oGPA(\Gamma).
$$
\end{thm}
\begin{proof}
We construct an isomorphism explicitly, following the same strategy as \cite{EH} which covered the unoriented case. This is essentially identical to their proof, with minor adjustments to account for non self-dual objects.

Suppose $\epsilon = (\epsilon_1, \dots, \epsilon_r)$ and $\delta = (\delta_1, \dots, \delta_s)$ are two sequences of $\pm 1$'s (possibly with $r$ or $s$ equal to $0$). We describe linear bijections
$$
\Hom_{oGPA(\Gamma)}(\epsilon, \delta) \to \Hom_{\Pp_\Gamma}(\epsilon, \delta)
$$
and check these give an oriented planar algebra isomorphism. First, we establish notation.

Let $(V_{ij})^{-1} := V_{ji}^*$ and $\Gamma^{-1} = \Gamma^*$. In this notation,
$$
\Gamma^{-1} = \begin{bmatrix}
V_{11}^{-1} & \dots & V_{1k}^{-1} \\
\vdots & \ddots & \vdots \\
V_{k1}^{-1} & \dots & V_{kk}^{-1}.
\end{bmatrix}.
$$
Given $\epsilon = (\epsilon_1, \dots, \epsilon_r)$, we have $\Gamma^{\epsilon} = \Gamma^{\epsilon_1} \otimes \dots \otimes \Gamma^{\epsilon_r}$. Hence the $(i,j)$-th entry of the object $\Gamma^{\epsilon}$ may be written
\begin{equation}\label{eq:ijGamma}
(\Gamma^{\epsilon})_{ij} = \bigoplus_{l_1, \dots, l_{r-1} = 1}^k V_{il_1}^{\epsilon_1} \otimes V_{l_1 l_2}^{\epsilon_2} \otimes \dots \otimes V_{l_{r-1}j}^{\epsilon_r}.
\end{equation}
If $r = 0$, then $\Gamma^{\epsilon}$ is the unit in $M_k(\Vect)$, so $(\Gamma^{\epsilon})_{ij} = (\mathbbm{1})_{ij} = \delta_{ij} \C$. For each $V_{ij}$, let
$$
\{v^l_{i,j} \in V_{ij}\ |\ l = 1, \dots, k\}
$$
denote an arbitrary, but fixed, basis of $V_{ij}$. We assume the edges in $\Gamma$ from $i$ to $j$ are labeled by $\{1, \dots, d_{ij}\}$, so there is a natural bijection between these edges and the basis of $V_{ij}$ chosen above. With these bases chosen let
$$
\{\overline{v}^l_{i,j} \in V^*_{ij} \ |\ l = 1, \dots, k\}
$$
denote the corresponding dual bases of $V_{ij}^*$. The vectors we have chosen index the edges in $\Gamma \cup \overline{\Gamma}$. Explicitly, there is a bijection
$$
\psi: \{\text{edges in $\Gamma \cup \overline{\Gamma}$}\} \to \bigsqcup_{i,j,l} \{v_{i,j}^l, \overline{v}_{i,j}^l\}.
$$
This map $\psi$ extends to all paths in $\Gamma \cup \overline{\Gamma}$: given a path $p = (p_1, \dots, p_r)$ of type $\epsilon$ from $i$ to $j$, define
$$
\psi(p) := \begin{cases} \psi(p_1) \otimes \dots \otimes \psi(p_r) \in (\Gamma^{\epsilon})_{ij} & r > 0 \\
1 \in (\mathbbm{1})_{ii} & r = 0.
\end{cases}
$$
By Eq. (\ref{eq:ijGamma}), the set
$$
\{\psi(p)\ |\ p \text{ is an $\epsilon$-path from $i$ to $j$} \}
$$
is a basis of the vector space $(\Gamma^{\epsilon})_{ij}$.

We are ready to define an isomorphism
$$
oGPA(\Gamma) \xrightarrow{F} \Pp_\Gamma.
$$
Given $(p,q) \in \Hom_{oGPA(\Gamma)}(\epsilon, \delta)$ with $s(p) = s(q) = i$ and $t(p) = t(q) = j$, define the linear map $F_{pq}: (\Gamma^{\epsilon})_{ij} \to (\Gamma^{\delta})_{ij}$ which acts on basis elements by
$$
F_{p,q}(\psi(p')) = \begin{cases} \psi(q) & \text{ if } p' = p \\
0 & \text{ otherwise.}
\end{cases}
$$
Finally, define $F((p,q))$ by
$$
F((p,q)) := \begin{blockarray}{cccccc}
 & & j & & & \\
\begin{block}{[ccccc]c}
   &  &  &  &  &  \\
  &  &  & &  &  \\
  &  & F_{p,q} & & & i \\
  &  &  &  &  &  \\
  &  &  &  &  &  \\
\end{block}
\end{blockarray} \in \Hom_{\Pp_\Gamma}(\epsilon, \delta).
$$
Extending this definition linearly defines $F$ on all of $oGPA(\Gamma)$. It is clearly a linear bijection on hom-spaces. We must check that it provides an oriented planar algebra isomorphism, or in other words a pivotal strict monoidal functor. The proof that $F$ is a strict monoidal functor is identical to \cite[Section 3]{EH} and omitted.

We check that $F$ is strictly pivotal. It suffices to check $F$ preserves cups, i.e. $F(\ev_{(+,-)}) = \ev^{\lambda}_{\Gamma} \in \Hom_{\Pp_\Gamma}((+,-), \mathbbm{1})$ and similarly for the right evaluation morphisms. Comparing Eqs. (\ref{eq:oGPAcupcap}) and (\ref{eq:MkVeccupcap})), it suffices to prove
$$
F\left(\sum_{e:\ s(e) = i,\ t(e) = j}((\overline{e}, e), s(e))\right) = \bigoplus_{l=1}^k \ev_{V_{ij}}.
$$
By the definition of $F_{p,q}$, if $e$ is an edge in $\Gamma$, then $F_{(e, \overline{e}), s(e)}$ is the linear map $V_{s(e), t(e)}^* \otimes V_{s(e), t(e)} \to \C$ which sends $\psi(\overline{e}) \otimes \psi(e)$ to $\psi(s(e)) = 1$. Therefore
\begin{equation}
\sum_{e:\ s(e) = i,\ t(e) = j} F_{(e, \overline{e}), s(e)} = \ev_{V_{ij}}.
\end{equation}
This proves that $F$ preserves the left duality caps. The argument for the right duality caps is similar.

Finally from the explicit description of the $\dag$ structure on both $oGPA(\Gamma)$ and $M_k(\Vect)$, we see this isomorphism preserves these dagger structures.
\end{proof}
With the above theorem in hand, we now obtain the oriented version of the graph planar algebra embedding theorem.
\begin{thm}\label{thm:oGPA}
Let $\mathcal{C}$ be a (unitary) pivotal fusion category, and $X\in \mathcal{C}$ a $\otimes$-generator. Then there is a bijective correspondence between
\begin{enumerate}
    \item semisimple pivotal ($C^*$-)module categories $\mathcal{M}$ over $\mathcal{C}$ whose module fusion graph for $X$ is $\Gamma$, and
    \item embeddings of oriented (unitary) planar algebras $\mathcal{P}_X \to oGPA(\Gamma)$.
\end{enumerate}
The equivalence relation on 1) is (unitary) equivalence of module categories, and the equivalence relation on 2) is (unitary) natural isomorphism of planar algebra morphisms.
\end{thm}
\begin{proof}
By Theorem~\ref{thm:folk} and \cite[Corollary 3.53]{EH}, a ($C^*$)- module category of 1) is equivalent to a ($\dag$) planar algebra morphism
\[\mathcal{P}_X \to \mathcal{P}_\Gamma.\]
We then have from Theorem~\ref{thm:GPAiso} the ($\dag$) isomorphism
\[\mathcal{P}_\Gamma\cong oGPA(\Gamma).\]
Hence the module category of 1) is equivalent to a ($\dag$) morphism 
\[  \mathcal{P}_X\to oGPA(\Gamma). \]
As $\mathcal{C}$ is semisimple and fusion this morphism must be injective. Hence we have the data of 2).

The equivalence relation is obtained by pushing through the definition of module category equivalence through the above chain of isomorphisms and equivalences.
\end{proof}

Before we end this section, we would like to prove one more general result regarding GPA embeddings. This result is well-known to experts\footnote{We thank Hans Wenzl for informing us of the following lemma.}, however we could not find a proof in the literature. This lemma is useful, as it allows categorical data to be deduced from combinatorial data. In the reverse direction, this lemma allows the module fusion graphs for every object of $\cC$ to be determined from the GPA embedding.
\begin{lem}\label{lem:Hans}
Let $X\in \cC$, and $p_Z\in \End_\cC(X^{\otimes n})$ a minimal projection onto $Z\in \cC$. Let $\mathcal{M}$ be a $\mathcal{C}$-module, and $\Gamma_Y$ the module fusion graph for action by $Y\in \cC$. Let
\[\phi : \mathcal{P}_X \to oGPA( \Gamma_X) \]
be an embedding corresponding to the $\cC$-module $\mathcal{M}$ under the bijection of Theorem~\ref{thm:oGPA}. Let $M_1,M_2$ simple objects of $\mathcal{M}$, then we have
\[  \sum_{(q,q) :\quad  q \text{ is a $+^n$-path}, s(q) =  M_1, t(q)=M_2}   \phi(p_Z)[(q,q)] = \left(\Gamma_Z \right)_{M_1\to M_2}.  \]
\end{lem}
\begin{proof}
Consider the commutative diagram of semi-simple algebras
\[\begin{tikzcd}
& & & oGPA(\Gamma)_{+^n\to +^n}\arrow[d]\\
\left(\mathcal{P}_X\right)_{+^n\to +^n} \arrow[r,"="] \arrow[rrru,bend left = 5]&\End_\cC(X^{\otimes n}) \arrow[r,hookrightarrow]  & \End_{\mathcal{M}}(X^{\otimes n}\otimes M_1)\arrow[r,"="] & \End_{\End(\mathcal{M})}(\Gamma_X^{\otimes n}[M_1])
\end{tikzcd}\]
The top most arrow is exactly the map $\phi$. The downward arrow is the restriction of a functional to basis elements $(p,q)$ where both $p$ and $q$ begin at the vertex $M_1$ (and implicitly using the isomorphism from Theorem~\ref{thm:GPAiso}). The bottom inclusion is the natural embedding $f\mapsto f\otimes \operatorname{id}_{M_1}$. This diagram commutes due to the bijection between $\mathcal{C}$-module categories and monoidal functors $\cC\to \End(\mathcal{M})$.

Consider the projection $p_Z\in \End_{\mathcal{C}}(X^{\otimes n})$ from the statement of the lemma. By definition of the module fusion rules, we have that $p_Z$ maps to $\sum_{M_j \in \mathcal{M}}\sum_{1\leq \ell \leq \left(\Gamma_Z\right)_{M_1\to M_j}}p_{M_j}^\ell$ where the $p_{M_j}^\ell$ is some set of minimal projections onto $M_j$ in $\End_{\mathcal{M}}(X^{\otimes n}\otimes M_1)$. Restricting to the block corresponding to the subobject $M_2$ hence gives $\sum_{1\leq \ell \leq \left(\Gamma_Z\right)_{M_1\to M_2}}p_{M_j}^\ell$. The trace of this morphism (calculated w.r.t the fixed basis of matrix units) is thus $\left(\Gamma_Z\right)_{M_1\to M_2}$.

On the other hand, the projection $p_Z$ maps to $\phi(p_Z) \in oGPA(\Gamma)$, and restricting this morphism to the block of $\End_{\End(\mathcal{M})}(\Gamma_X^{\otimes n}[M_1])$ corresponding to the summand $M_2$ is (again by the equivalence between $\mathcal{C}$-module categories and monoidal functors $\cC\to \End(\mathcal{M})$) gives the restriction of $\phi(p_Z)$ to the space of loops $(p,q)$ with source $M_1$ and target $M_2$. Taking the trace of this restriction (w.r.t. the basis of matrix units $(p,q)$) gives
\[  \sum_{(q,q) :\quad  q \text{ is a $+^n$-path}, s(q) =  M_1, t(q)=M_2}   \phi(p_Z)[(q,q)].  \]
From the commutativity of the diagram at the start of the proof, our two expressions for the trace are equal. This gives the statement of the lemma.
\end{proof}

\section{Kazhdan-Wenzl Cells}
In this section we introduce the definition of a KW cell system on a graph $\Gamma$. This is a polynomial system of equations depending on parameters $N\geq 2$ a natural number, $q$ a root of unity, and $\omega$ an $N-th$ root of unity. 

When $q = e^{2 \pi i \frac{1}{2(N+k)}}$ for some $k\geq 0$, the data of a KW cell system is (by definition, and from the results of the previous section) an embedding 
\[\mathcal{P}_{\overline{
\operatorname{Rep}(U_q(\mathfrak{sl}_N))^\omega},\Lambda_1}\to  oGPA(\Gamma).  \]
Hence by Theorem~\ref{thm:oGPA} a KW cell system on $\Gamma$ give a module category over $\overline{
\operatorname{Rep}(U_q(\mathfrak{sl}_N))^\omega}$ whose module fusion graph for $\Lambda_1$ is $\Gamma$. We also define the notion of equivalence of KW cell systems, which is defined to be the pull-back of equivalence of module categories.  Hence solutions to KW cell systems (up to equivalence) with $q = e^{2 \pi i \frac{1}{2(N+k)}}$ classify module categories (up to equivalence) over $\overline{\operatorname{Rep}(U_q(\mathfrak{sl}_N))^\omega}$. This is all expanded on in the proof of Theorem~\ref{thm:main} at the end of this section.

\begin{remark}\label{rmk:nonUni}
    Our definition of KW cell system also makes sense for roots of unity $q$ which are not of the form $e^{2 \pi i \frac{1}{2(N+k)}}$. However, 
    there are two issues which stop us from obtaining module categories for these $q$ values. The first is that the implicit image of the cups and caps in $oGPA(\Gamma)$ no longer satisfy the correct loop parameter to take a homomorphism from $\mathcal{P}_{\overline{
\operatorname{Rep}(U_q(\mathfrak{sl}_N))^\omega},\Lambda_1}$. This can be fixed by choosing a different pivotal structure on $oGPA(\Gamma)$. The more serious issue is that if we change the pivotal structure on $oGPA(\Gamma)$, then it is no longer a unitary pivotal structure. In particular, we can't assume the image of the elements specified by a KW cell system form a unitary subcategory. Hence we do not have that (Anti-Sym 1) holds for free. To show existence of module categories over these non-unitary $q$'s, one would have to verify that (Anti-Sym 1) holds in the graph planar algebra manually. We have done this computation for several examples, and unfortunately the relation (Anti-Sym 1) can takes weeks of computer time to verify.
\end{remark}

Our definition of a KW cell system is as follows.

\begin{defn}\label{defn:KW}
Let, $N\in \mathbb{N}_{\geq 2}$, $q$ a root of unity, and $\omega$ an $N$-th root of unity. Further, let $\Gamma$ be a graph with norm $\q{N}$, and let $\lambda$ be the positive Frobenius-Perron eigenvector of $\Gamma$.

A \textit{Kazhdan-Wenzl cell system} with parameters $(N,q,\omega)$ on the graph $\Gamma$ is a map $KW$ which assigns to every loop $\raisebox{-.5\height}{ \includegraphics[scale = .4]{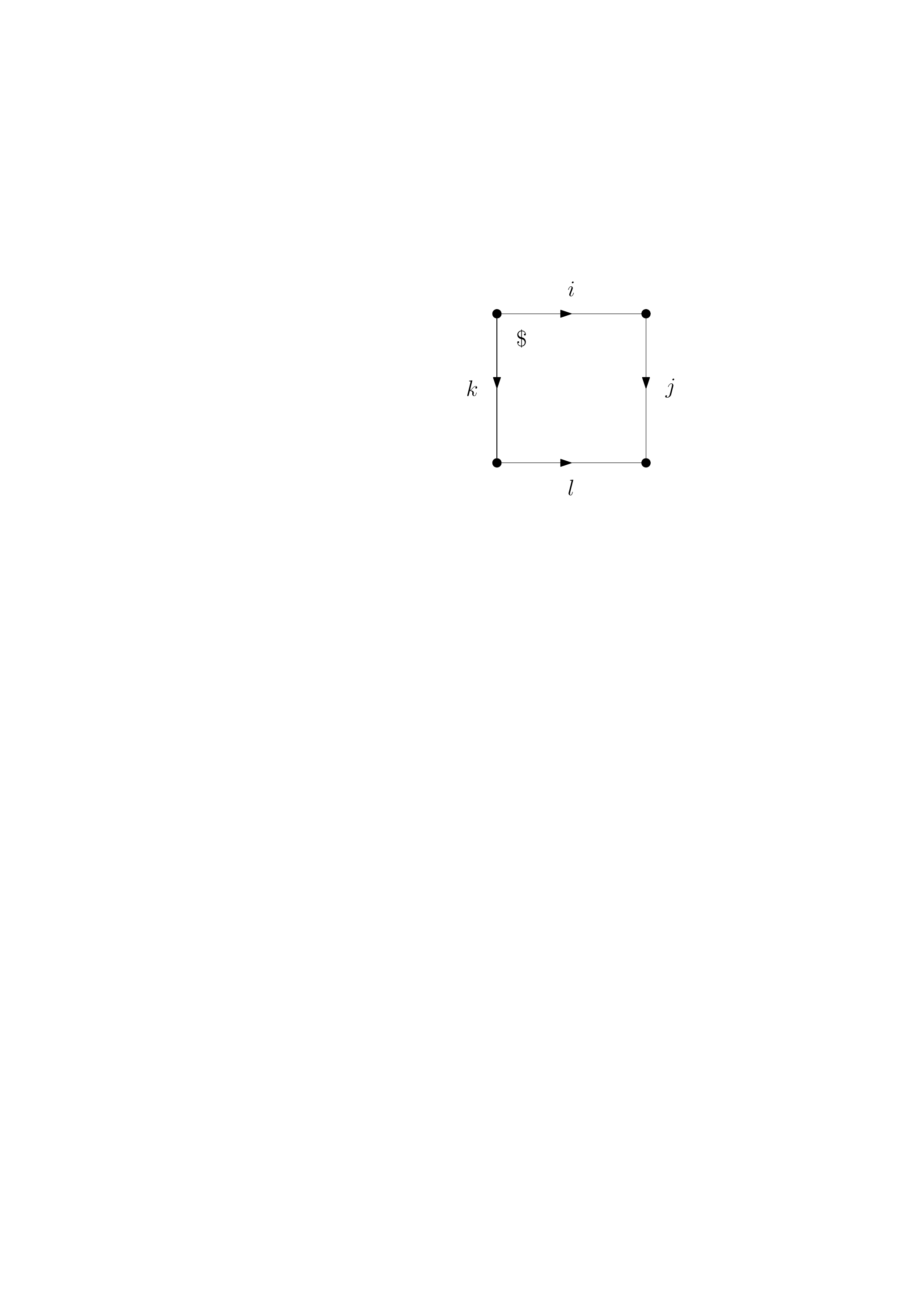}}$ in $\Gamma$, a complex scalar
\[  KW\left( \raisebox{-.5\height}{ \includegraphics[scale = .4]{KWcell4.pdf}}\right)\in \mathbb{C} , \]
and to every loop of length $N$ $\raisebox{-.5\height}{ \includegraphics[scale = .4]{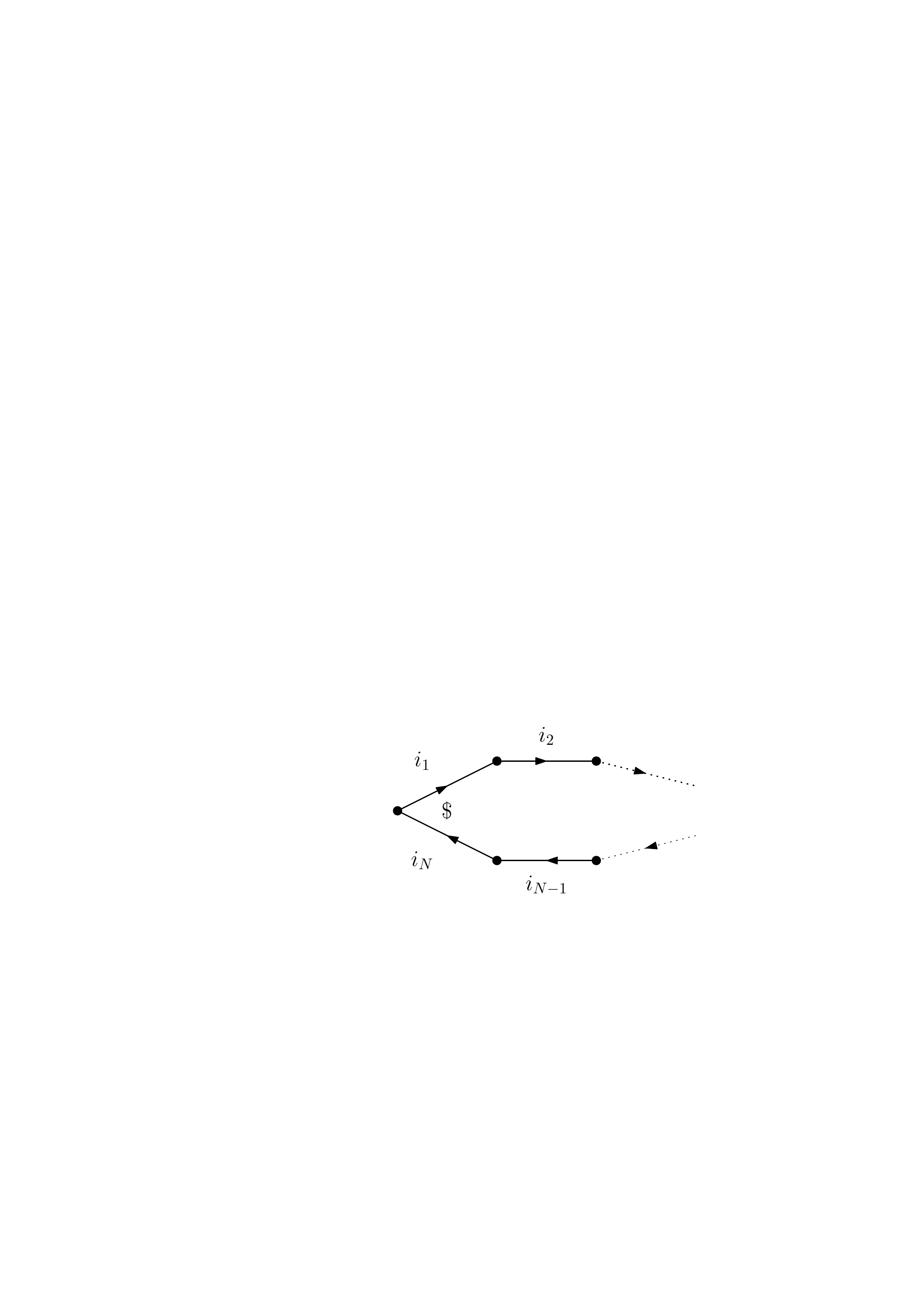}}$ in $\Gamma$, a complex scalar
\[  KW\left( \raisebox{-.5\height}{ \includegraphics[scale = .4]{nbox.pdf}}\right)\in \mathbb{C} . \]
These scalars satisfy the following conditions:
\begin{align*}
(R1)&:\quad  \sum_{p} \frac{\lambda_{{\text{source}(p)}}}{\lambda_{{\text{target}(p)}}} KW\left(\raisebox{-.5\height}{ \includegraphics[scale = .4]{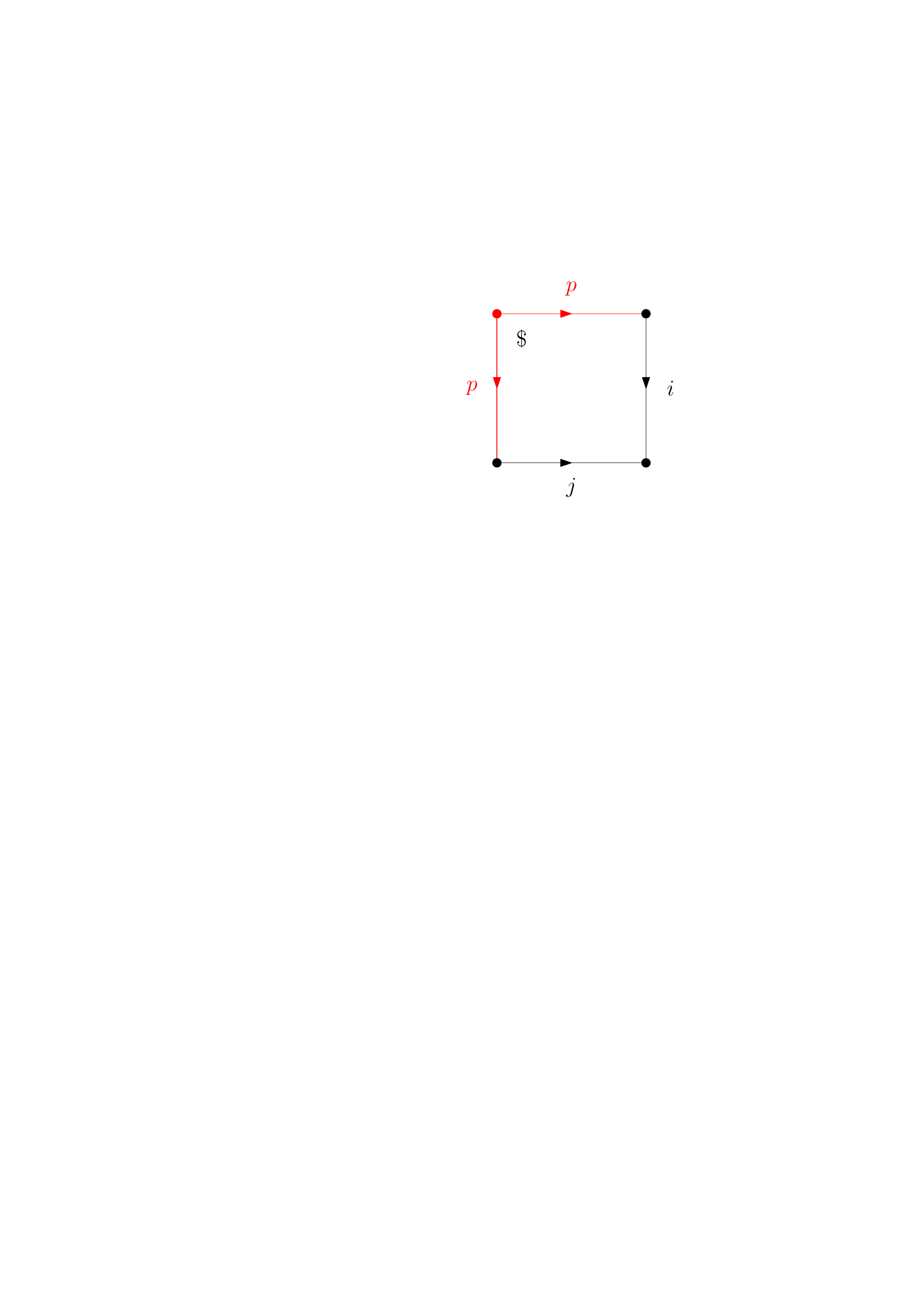}}\right)\quad = \sum_{p}\frac{\lambda_{{\text{target}(p)}}}{\lambda_{{\text{source}(p)}}}KW\left(\raisebox{-.5\height}{ \includegraphics[scale = .4]{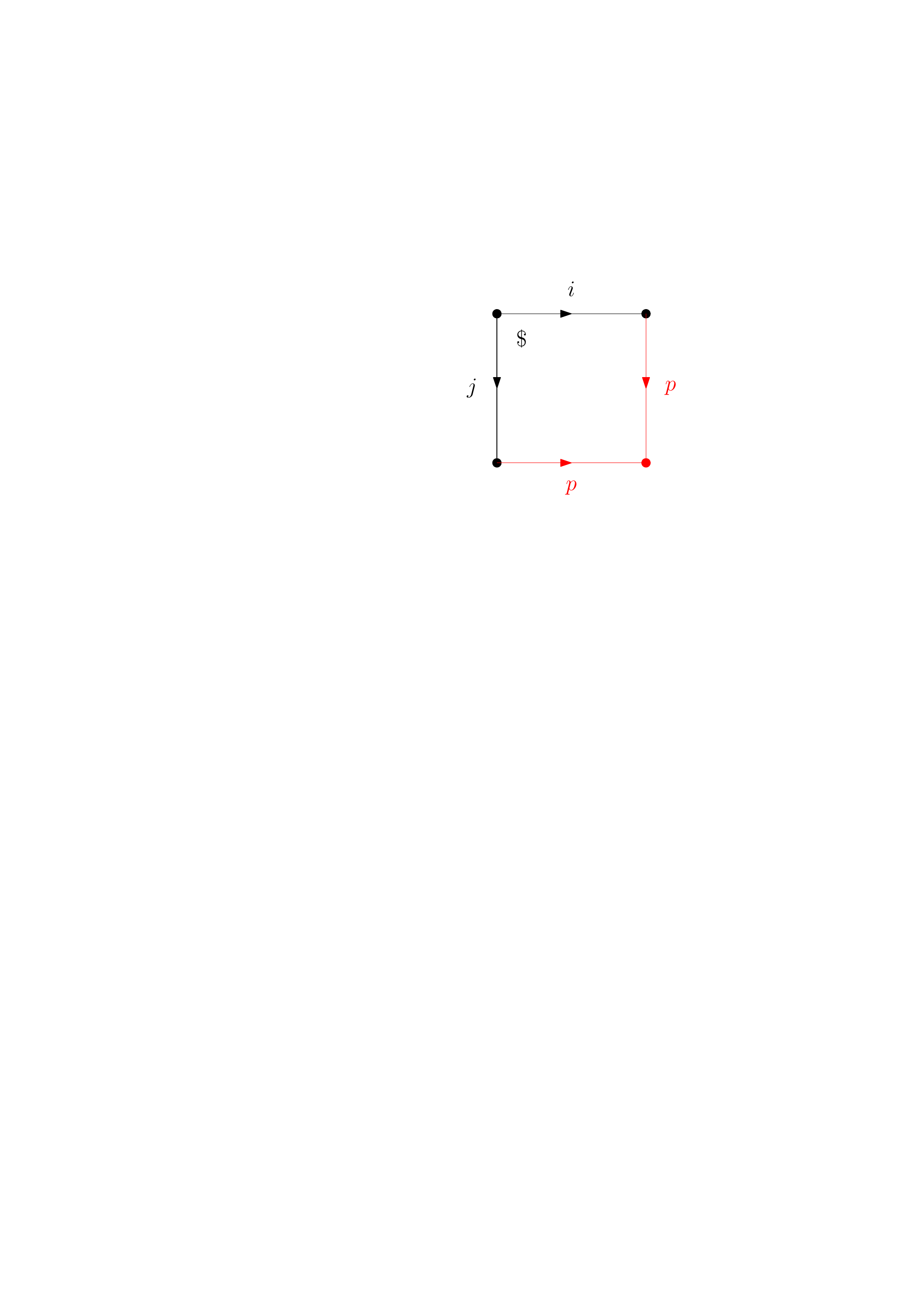}}\right)\quad=\quad [N-1]_q\delta_{i,j}\\
(R2)&:\quad   KW\left( \raisebox{-.5\height}{ \includegraphics[scale = .4]{KWcell4.pdf}}\right)\quad =\quad \overline{KW\left( \raisebox{-.5\height}{ \includegraphics[scale = .4]{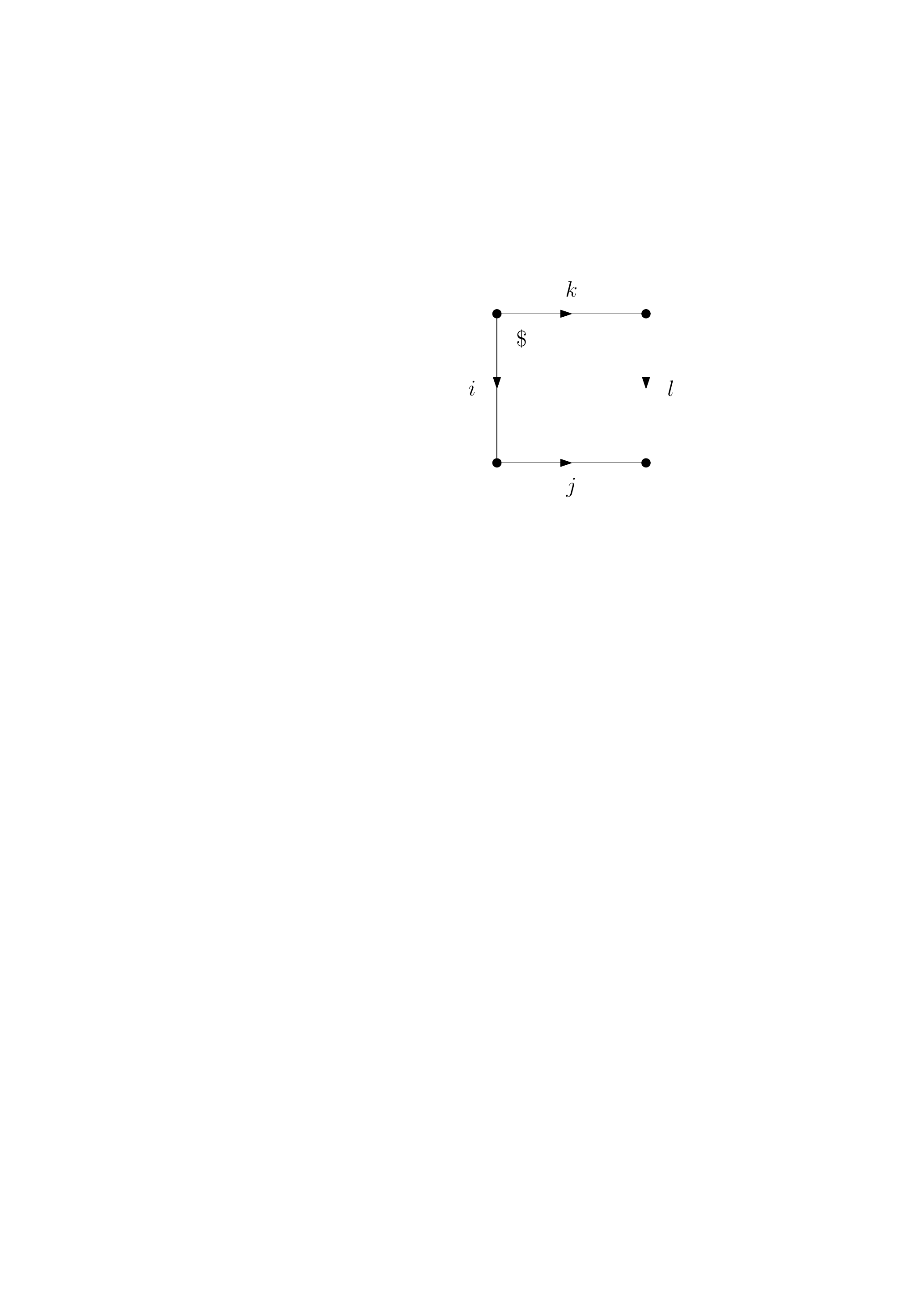}}\right)}\\
(R3)&:\quad  \sum_{p,q,r}KW\left(\raisebox{-.5\height}{ \includegraphics[scale = .4]{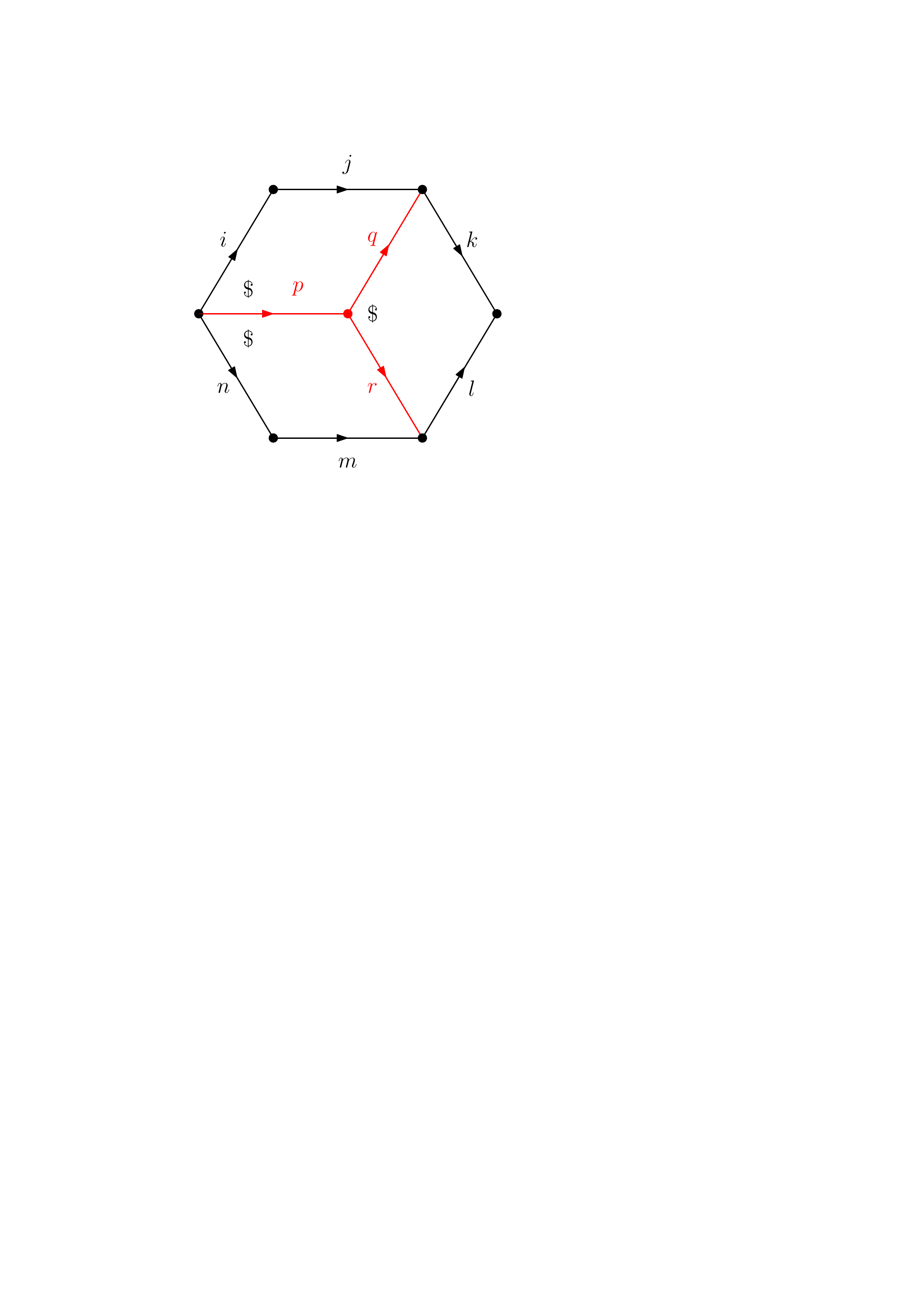}}\right)- \delta_{k,l}  KW\left( \raisebox{-.5\height}{ \includegraphics[scale = .4]{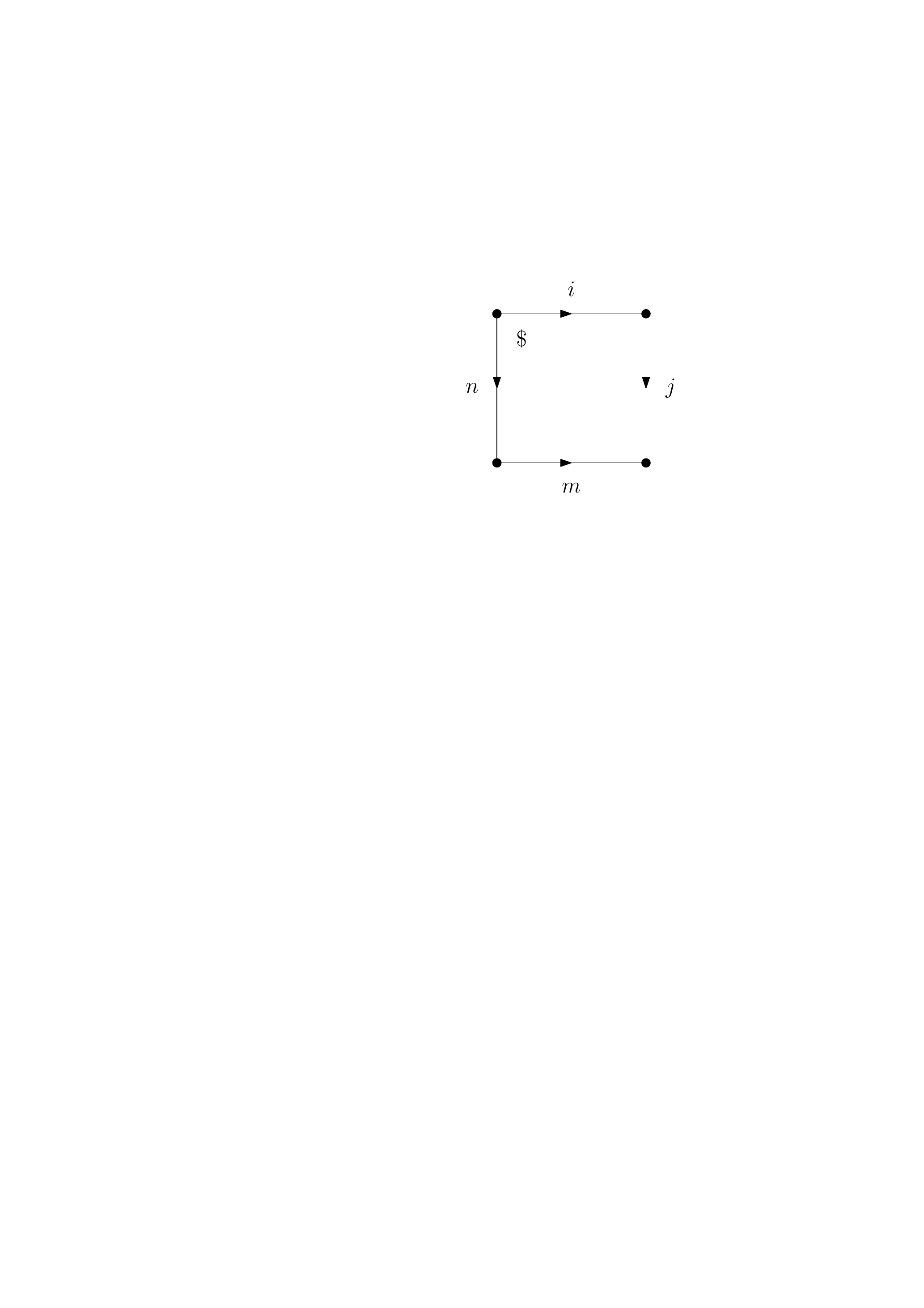}}\right)\quad =\\&\quad \sum_{p',q',r'} KW\left(\raisebox{-.5\height}{ \includegraphics[scale = .4]{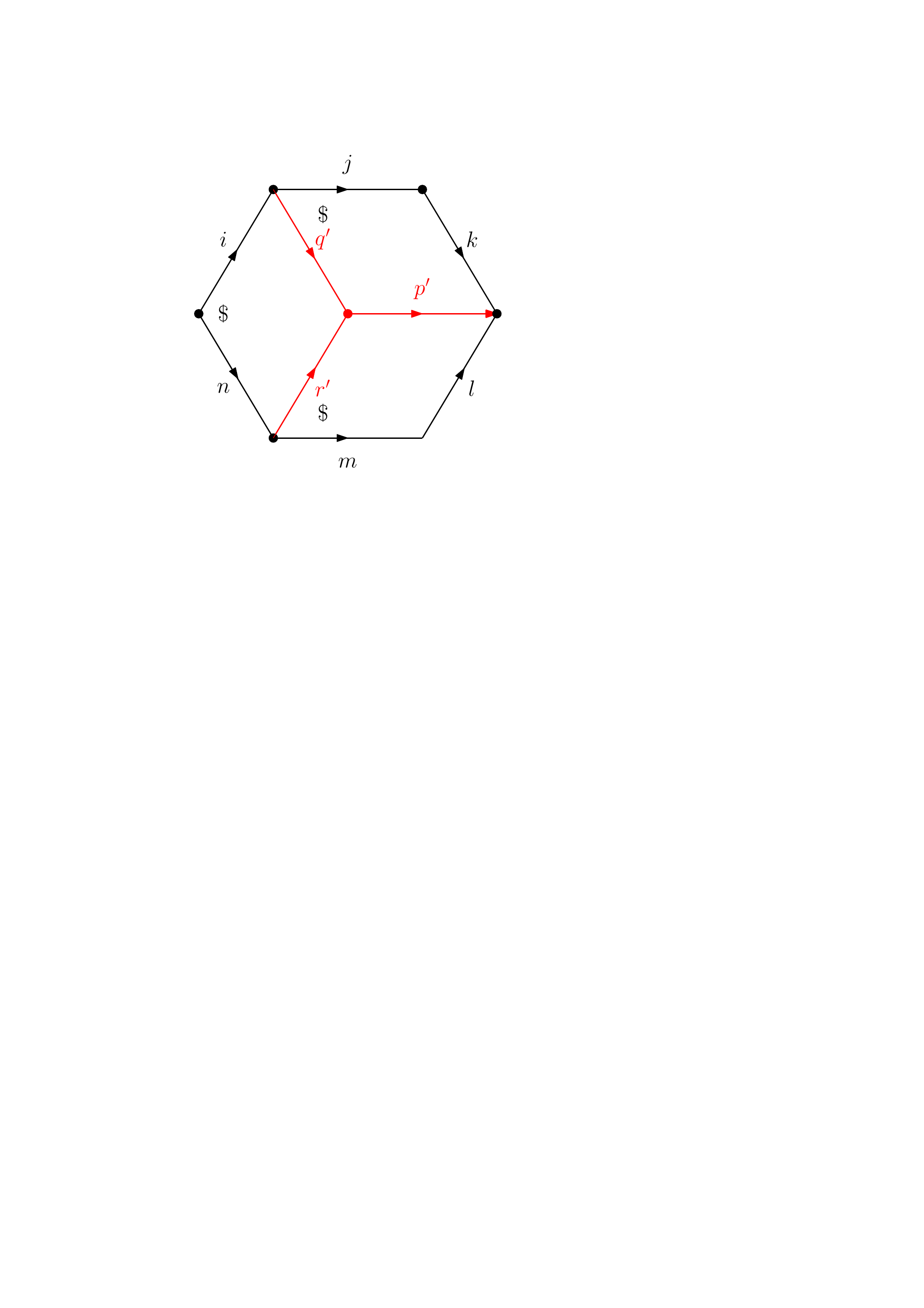}}\right) - \delta_{i,n} KW\left( \raisebox{-.5\height}{ \includegraphics[scale = .4]{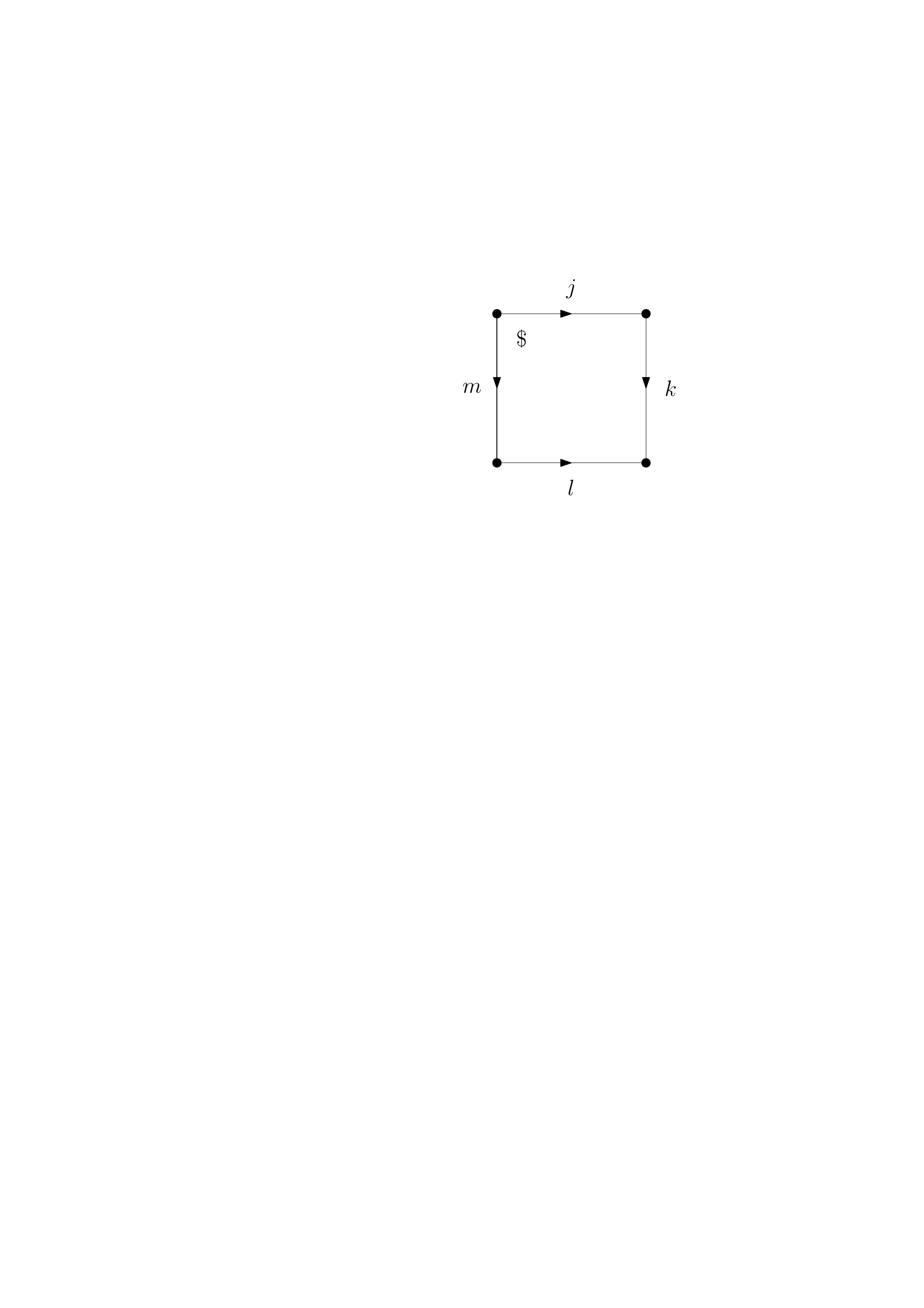}}\right)\\
(\text{Hecke})&: \quad  \sum_{p,q}KW\left(\raisebox{-.5\height}{ \includegraphics[scale = .4]{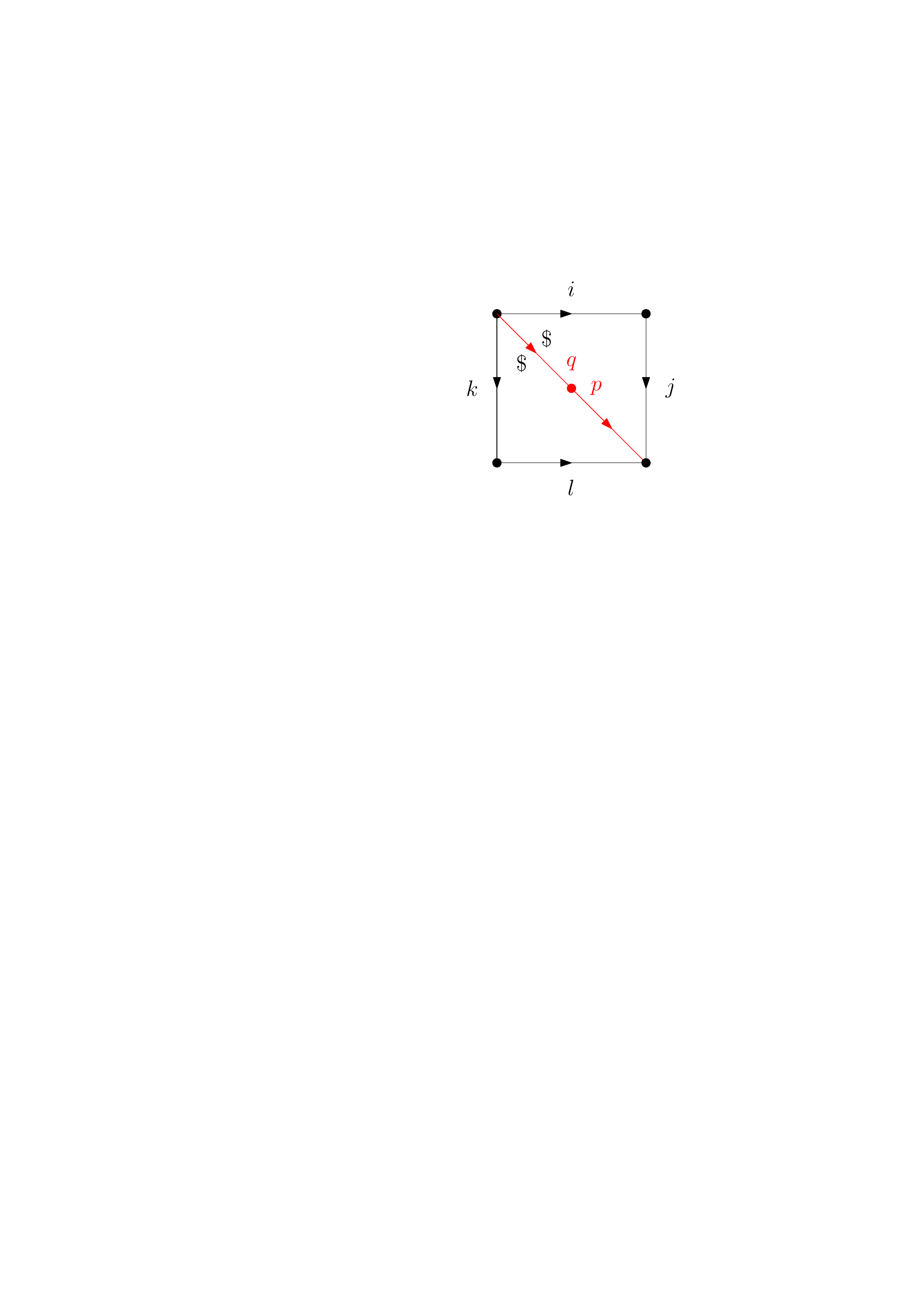}}\right)\quad =\quad [2]_qKW\left(\raisebox{-.5\height}{ \includegraphics[scale = .4]{KWcell4.pdf}}\right)\\
\text{(RI)}&:\quad (-1)^{N+1}\omega \cdot KW\left(\raisebox{-.5\height}{ \includegraphics[scale = .4]{nbox.pdf}}\right)=\frac{\lambda_{{\text{source}(i_N)}}}{\lambda_{{\text{source}(i_1)}}}KW\left(\raisebox{-.5\height}{ \includegraphics[scale = .4]{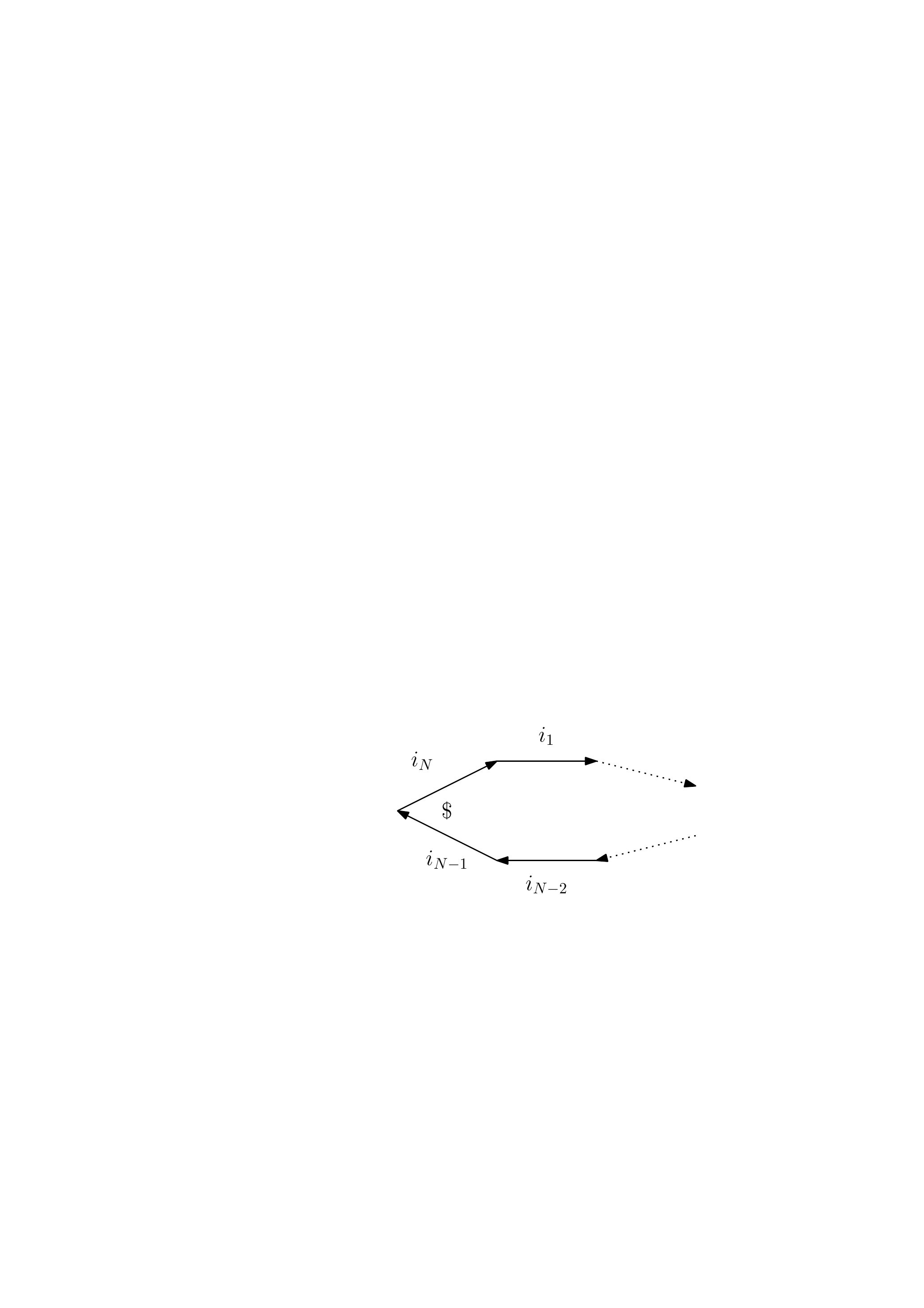}} \right)\\
\text{(BA)}&:\sum_{p,q}KW\left(\raisebox{-.5\height}{ \includegraphics[scale = .4]{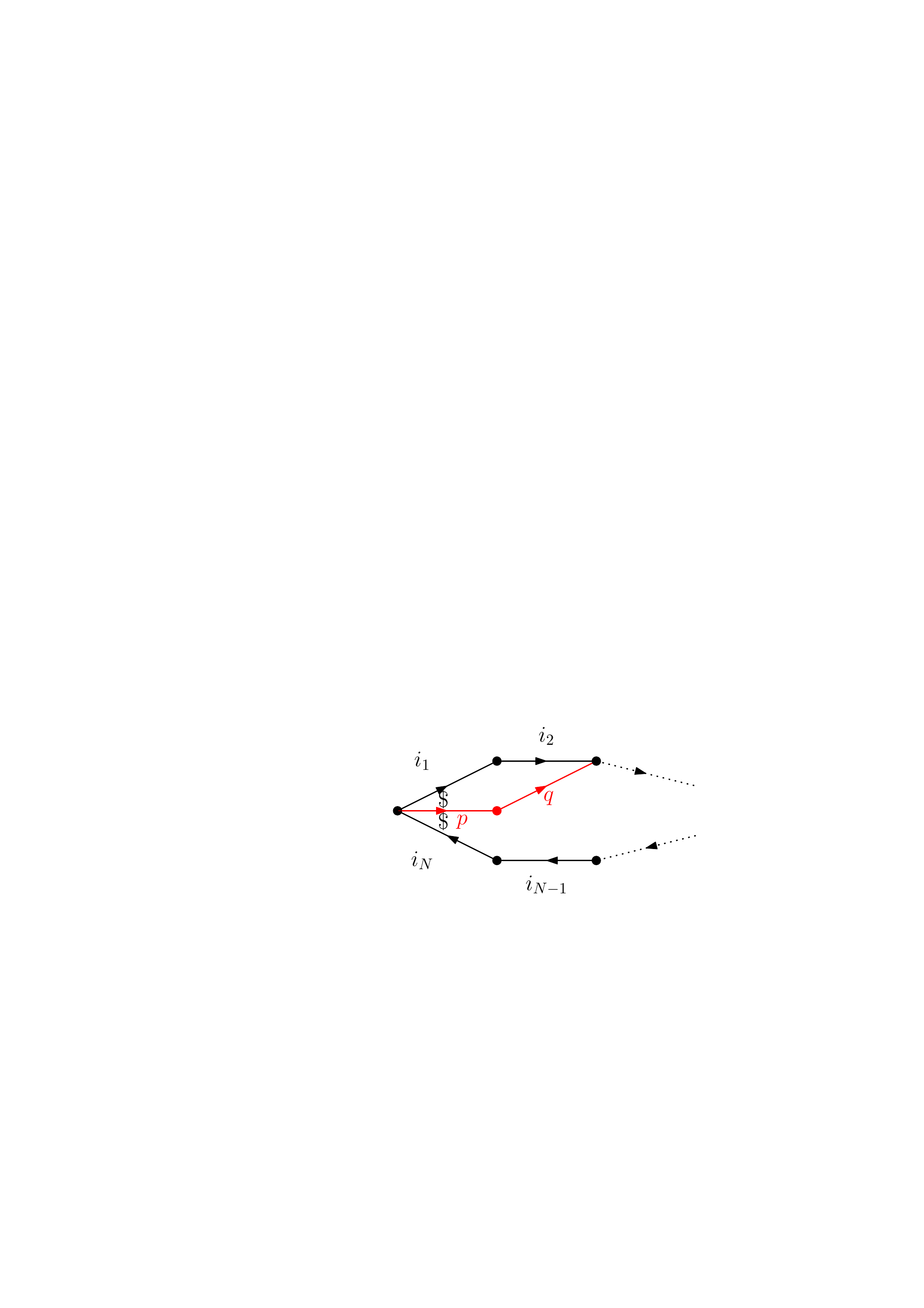}} \right)=[2]_q\quad KW\left(\raisebox{-.5\height}{ \includegraphics[scale = .4]{nbox.pdf}}\right)\\
\text{(N)}&:\sum_{i_j : 1\leq j \leq N}\quad \left|KW\left(\raisebox{-.5\height}{ \includegraphics[scale = .4]{U.pdf}}\right)\right|^2=1\\
\end{align*}

\end{defn}

\begin{remark}
    The above definition uses the statistical mechanics ``Boltzmann weight'' notation (see \cite{Stat,SU3} for examples). Typically this notation is just used for (R2) and (R3), where a solution is exactly solution to the quantum Yang-Baxter equation (see \cite{Kap}), however the extension we use is well defined, and compact.
    
    Let us briefly explain how to interpret this notation. The value of a graph containing multiple $\raisebox{-.5\height}{ \includegraphics[scale = .3]{KWcell4.pdf}}$ and $\raisebox{-.5\height}{ \includegraphics[scale = .4]{nbox.pdf}}$ is defined to be the product of the KW cells on these individual loops. For example
    \[KW\left(\raisebox{-.5\height}{ \includegraphics[scale = .4]{nboxsquare1.pdf}} \right):= KW\left(\raisebox{-.5\height}{ \includegraphics[scale = .4]{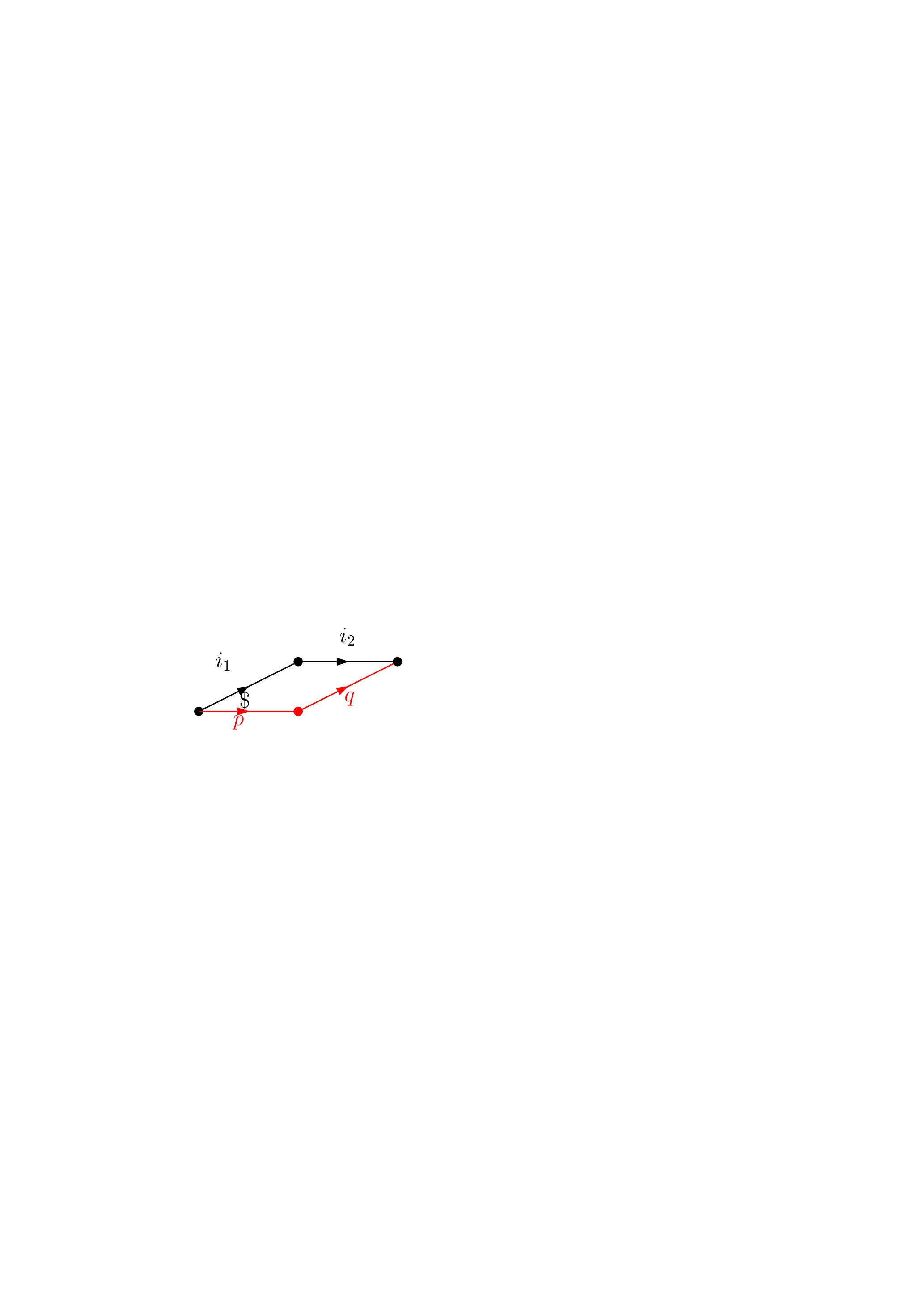}} \right)\cdot KW\left(\raisebox{-.5\height}{ \includegraphics[scale = .4]{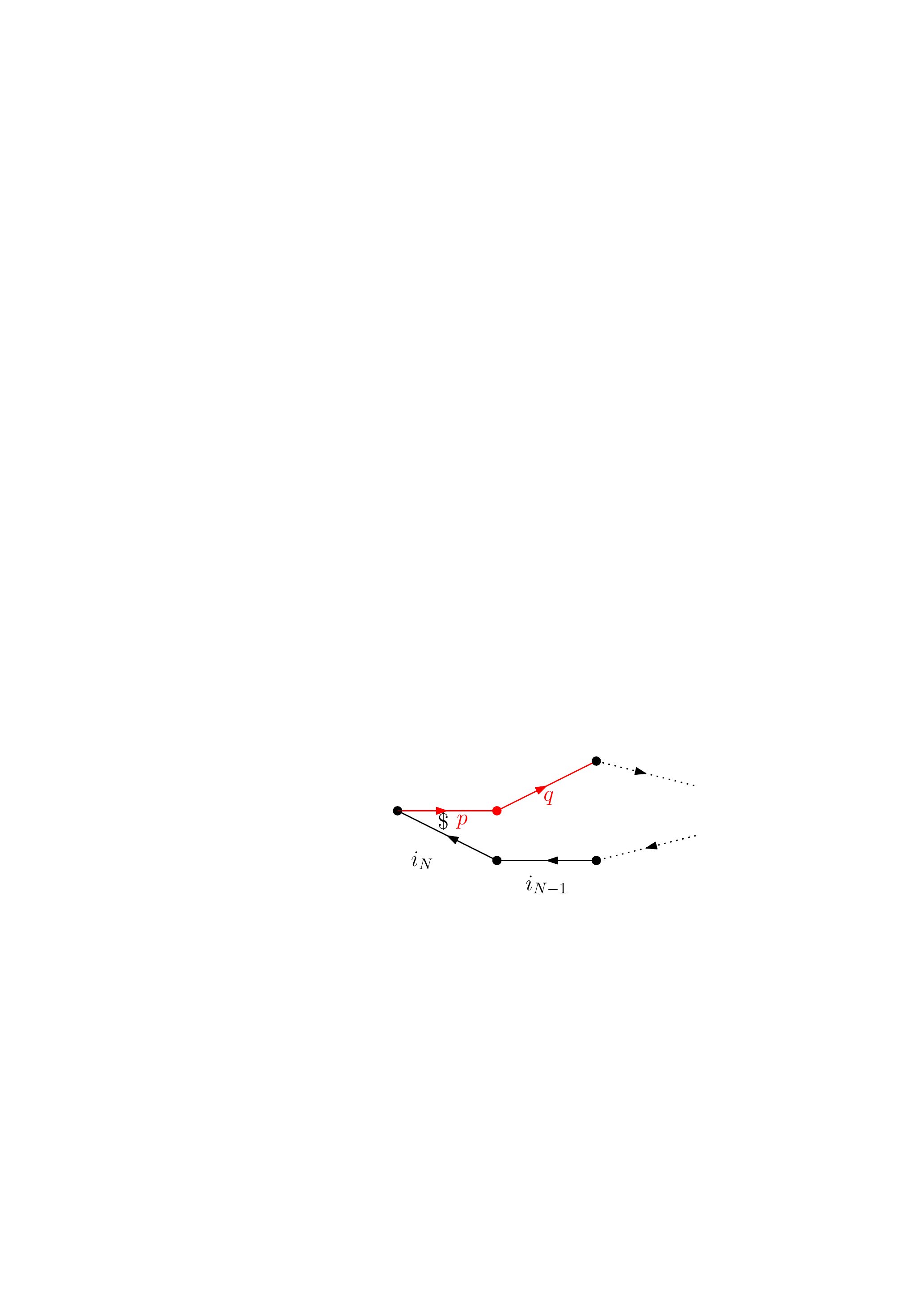}} \right).\]
    Due to the loops we consider in this cell calculus, this definition is unambiguous, and well defined.

    Typically the equations of Definition~\ref{defn:KW} involve sums of KW cells. The edges and vertices in a graph which are fixed for a given equation are coloured black, while the vertices and edges which are summed over in the equation are coloured red. The sums are taken over all red edges and vertices, such that there is a graph homomorphism from the graph into $\Gamma$ which agrees on the vertex and edge labels.
\end{remark}
This definition at first glance appears to give an incredibly difficult system of polynomial equations to solve. However, notice that the maximum degree of this polynomial system is 3 (from (R3)). Further, a large number of these equations are linear ((R1), (R2), and (RI)). Further, the system can be solved in two steps. First the 4-path cells can be solved with relations (R1), (R2), (R3), and (Hecke). Once these cells are determined, the equation (BA) is also linear. The solution for the $N$-path cells is then obtained as a solution to the linear system to (RI) and (BA) (with (N) used to normalise the solution). We refer to the 4-path cells as the \textit{U-cells}, and the $N$-path cells as the \textit{B-cells}. We refer the reader to Section~\ref{sec:examples} where several solutions are determined for examples of this 2-step procedure.

\begin{remark}\label{rmk:matrix}
To present a solution to a KW cell system, it can be convenient to use matrix notation. For the cells corresponding to loops of the form $\raisebox{-.5\height}{ \includegraphics[scale = .4]{KWcell4.pdf}}$ this notation wraps the solution of the cells system into a family of square matrices indexed by the upper left, and bottom right vertex. The rows and columns of this matrix are indexed by paths of length two between the two labelled vertices. The entries of the matrix are then exactly the values $KW\left( \raisebox{-.5\height}{ \includegraphics[scale = .4]{KWcell4.pdf}}\right)$. Typically we denote these matrices $U^{a}_{\quad b}$ where $a$ and $b$ are vertices in the graph.

Several of the relations (but not all) can be fully formulated in terms of standard linear algebra. The relation (R2) is equivalent to each matrix $U^{a}_{\quad b}$ being Hermitian, and (Hecke) is equivalent to each matrix satisfying $U^{a}_{\quad b}\cdot U^{a}_{\quad b} = \q{2}U^{a}_{\quad b}$.

For cells corresponding to loops of the form $\raisebox{-.5\height}{ \includegraphics[scale = .4]{nbox.pdf}}$ this notation wraps the solution into a family of matrices indexed by the $\$$ vertex, and the third vertex from the $\$$ vertex. The rows are indexed by paths of length 2 between these two vertices, and the columns are indexed by paths of length $N-2$ between these two vertices. The entries of the matrix are then the cells $ KW\left( \raisebox{-.5\height}{ \includegraphics[scale = .4]{nbox.pdf}}\right)$. We denote these matrices $B_{a,\underline{\hspace{.5em}}, b, \underline{\hspace{1em}}}$. The relation (BA) is then equivalent to the matrix $B_{a,\underline{\hspace{.5em}}, b, \underline{\hspace{1em}}}$ being an eigenmatrix with eigenvalue $\q{2}$ for the matrix $U^{a}_{\quad b}$ for all pairs of vertices $a,b$.

This matrix notation is a compact way to present a solution to a KW cell system. For the examples in Section~\ref{sec:examples} we use this notation. 
\end{remark}

We also introduce the following notion of equivalence of KW cell systems. 

\begin{defn}\label{defn:KWequiv}
Let $KW^1$ and $KW^2$ be two KW cell systems on $\Gamma$ with parameters $(N, q, \omega)$. An equivalence $KW^1 \to KW^2$ is a map $V$ which assigns to every loop of the form $\raisebox{-.5\height}{ \includegraphics[scale = .4]{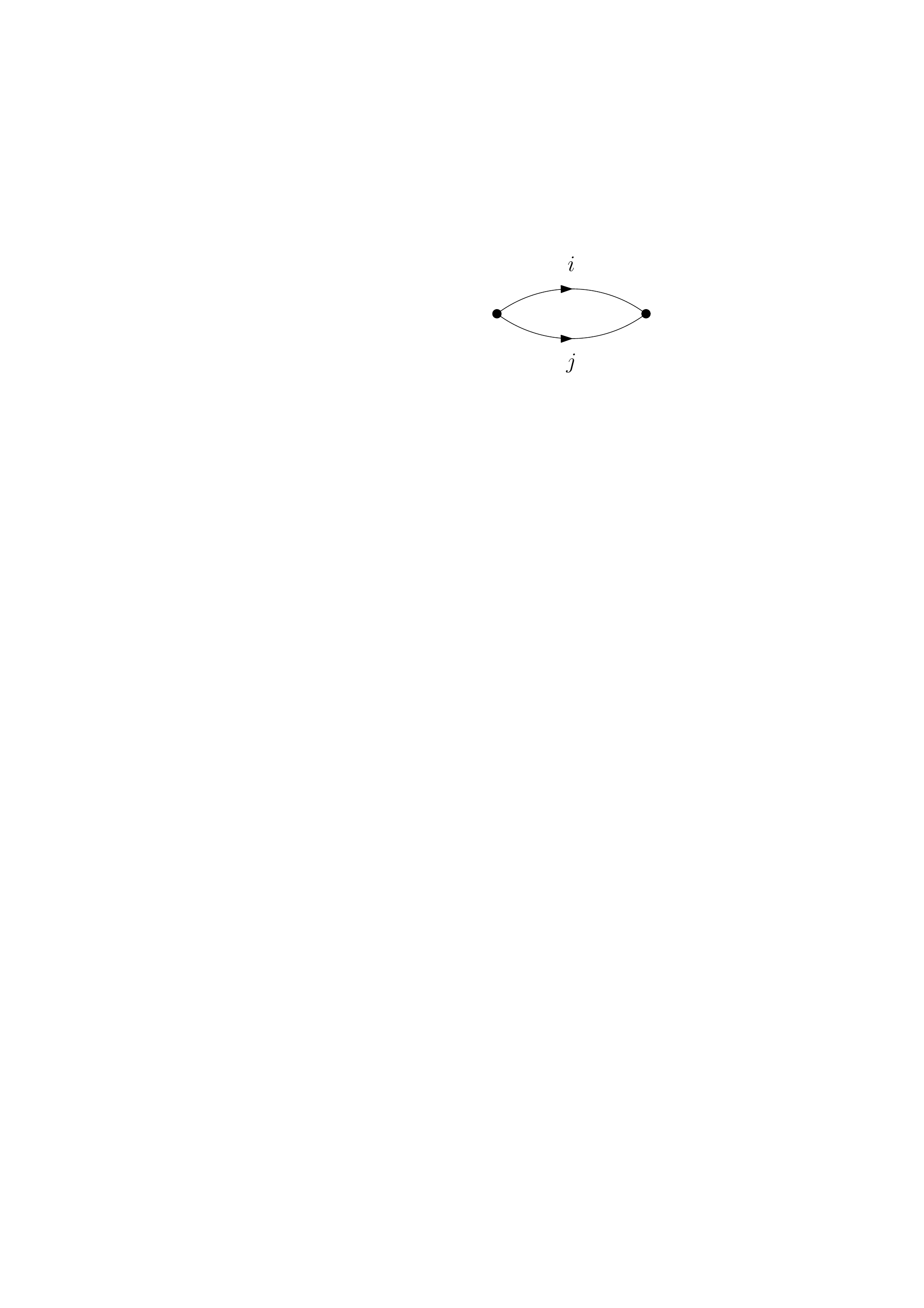}}$ in $\Gamma$ a complex scalar:
\[ V\left( \raisebox{-.5\height}{ \includegraphics[scale = .4]{V1.pdf}}  \right) \in \mathbb{C} \]

These scalars must satisfy
\[  \sum_k V\left( \raisebox{-.5\height}{ \includegraphics[scale = .3]{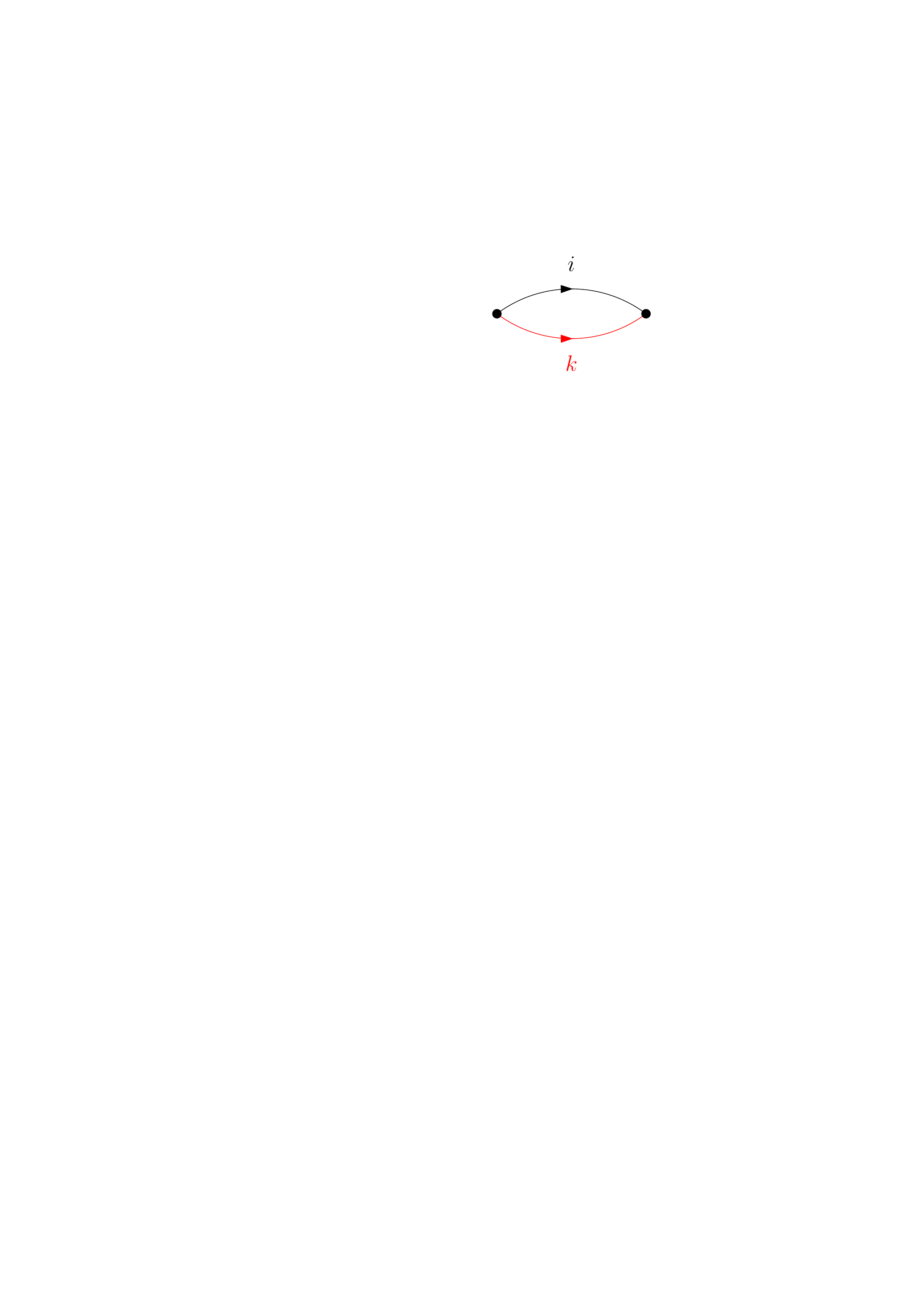}} \right)\overline{V\left( \raisebox{-.5\height}{ \includegraphics[scale = .3]{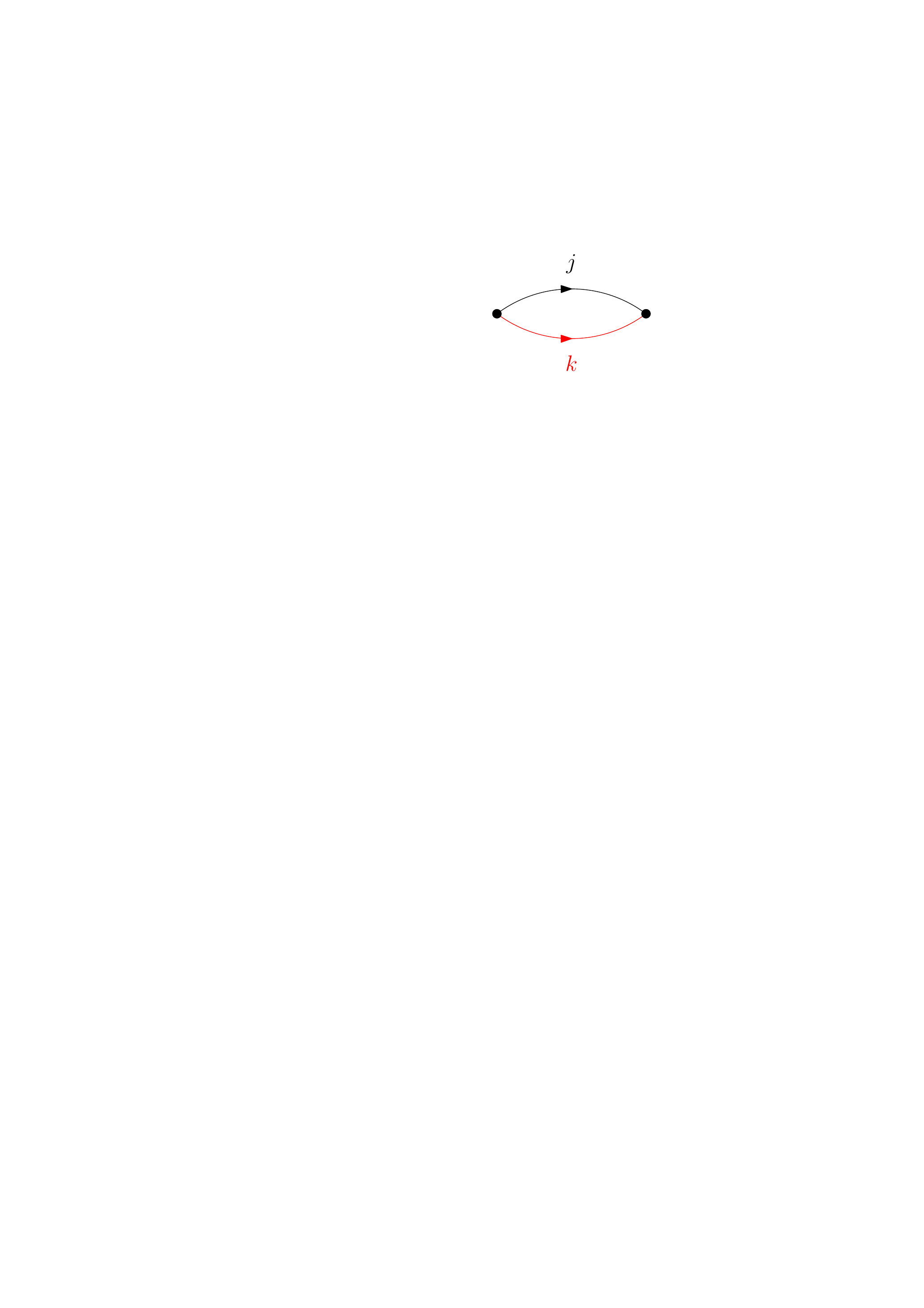}}  \right)}   = \delta_{i,j}=\sum_k V\left( \raisebox{-.5\height}{ \includegraphics[scale = .3]{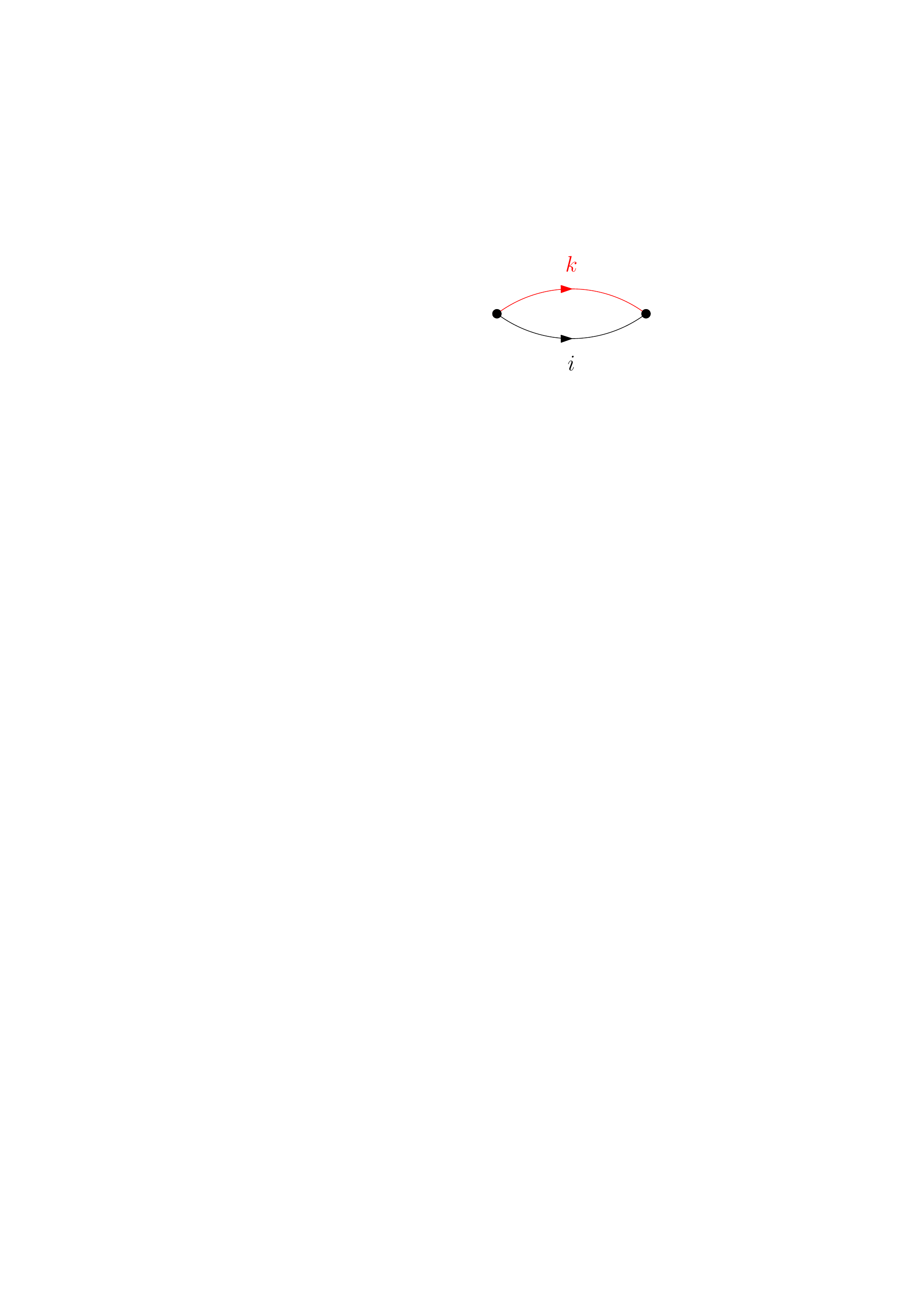}} \right)\overline{V\left( \raisebox{-.5\height}{ \includegraphics[scale = .3]{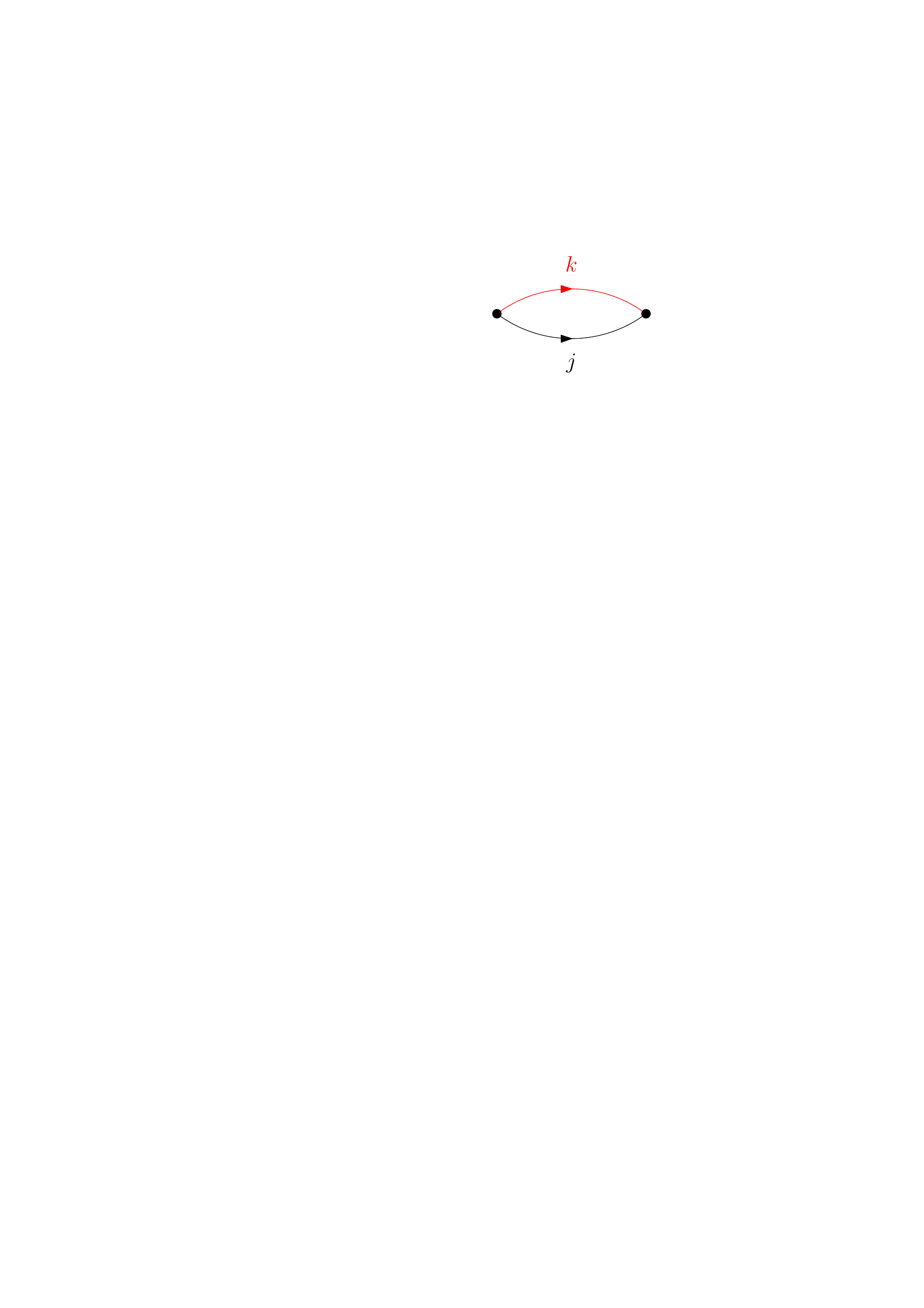}}  \right)}   \]

\[  KW^1\left( \raisebox{-.5\height}{ \includegraphics[scale = .3]{KWcell4.pdf}}   \right) = \sum_{i',j',k',l'}  KW^2\left( \raisebox{-.5\height}{ \includegraphics[scale = .3]{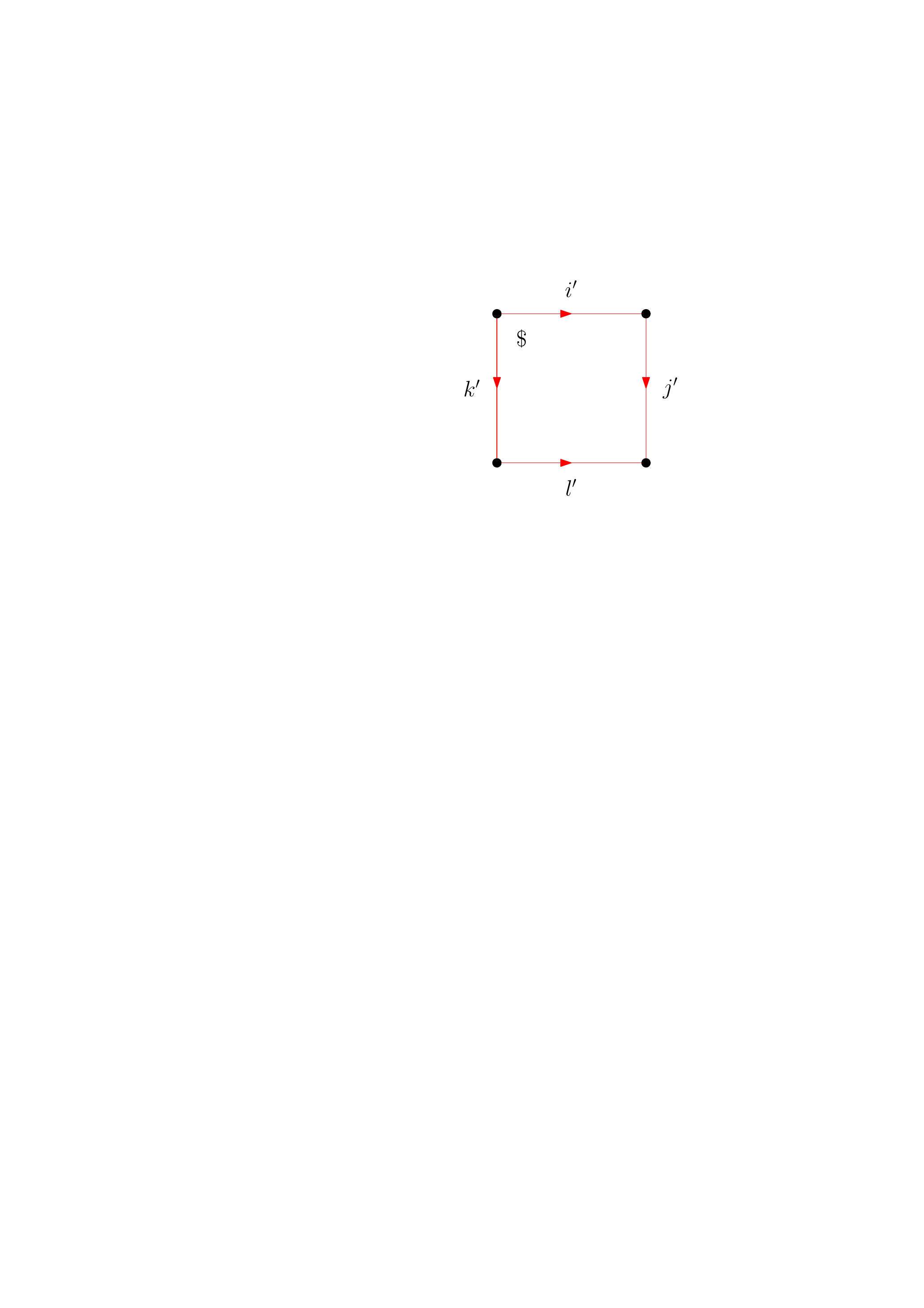}}\right) \overline{V\left( \raisebox{-.5\height}{ \includegraphics[scale = .3]{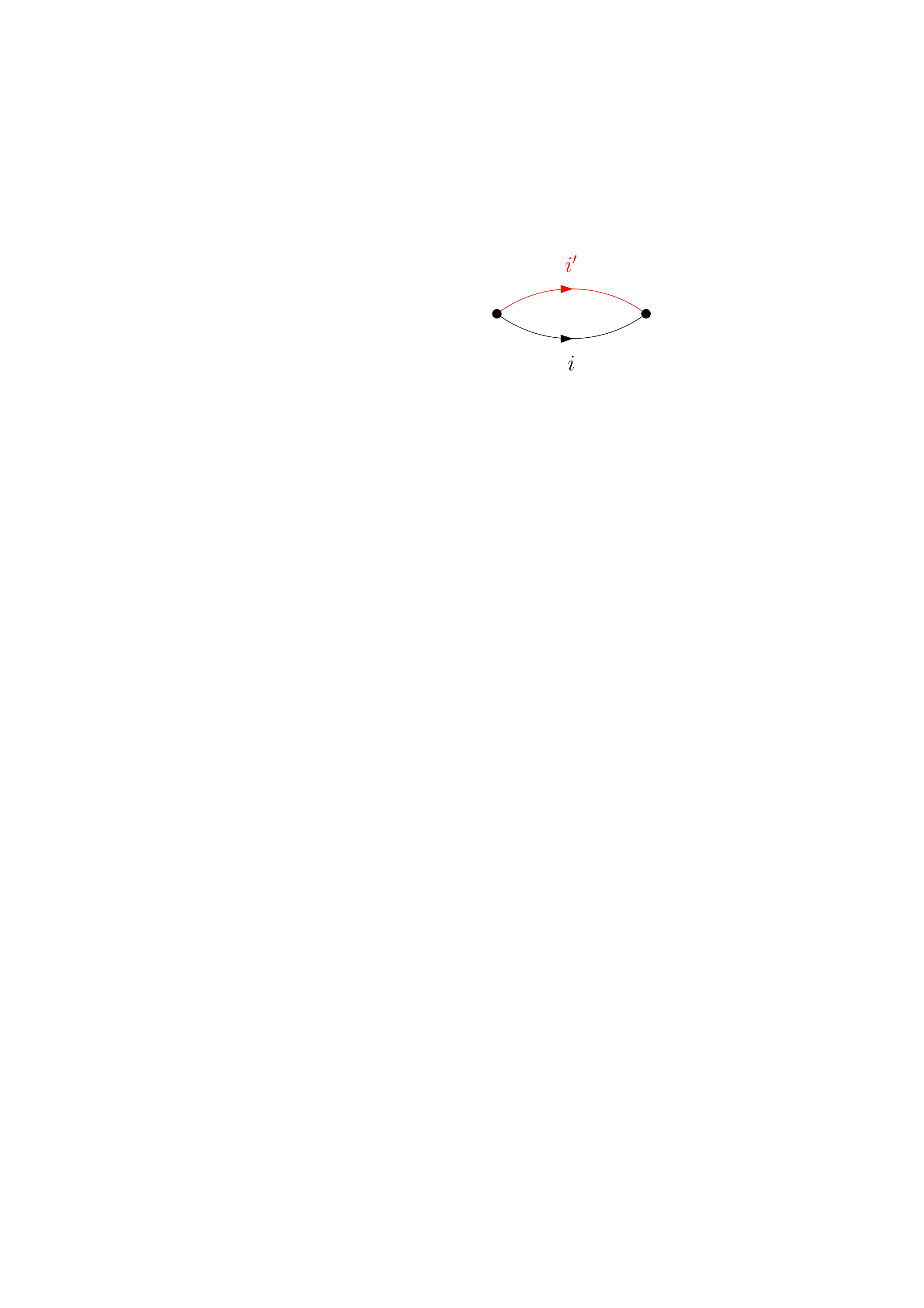}} \right)V\left( \raisebox{-.5\height}{ \includegraphics[scale = .3]{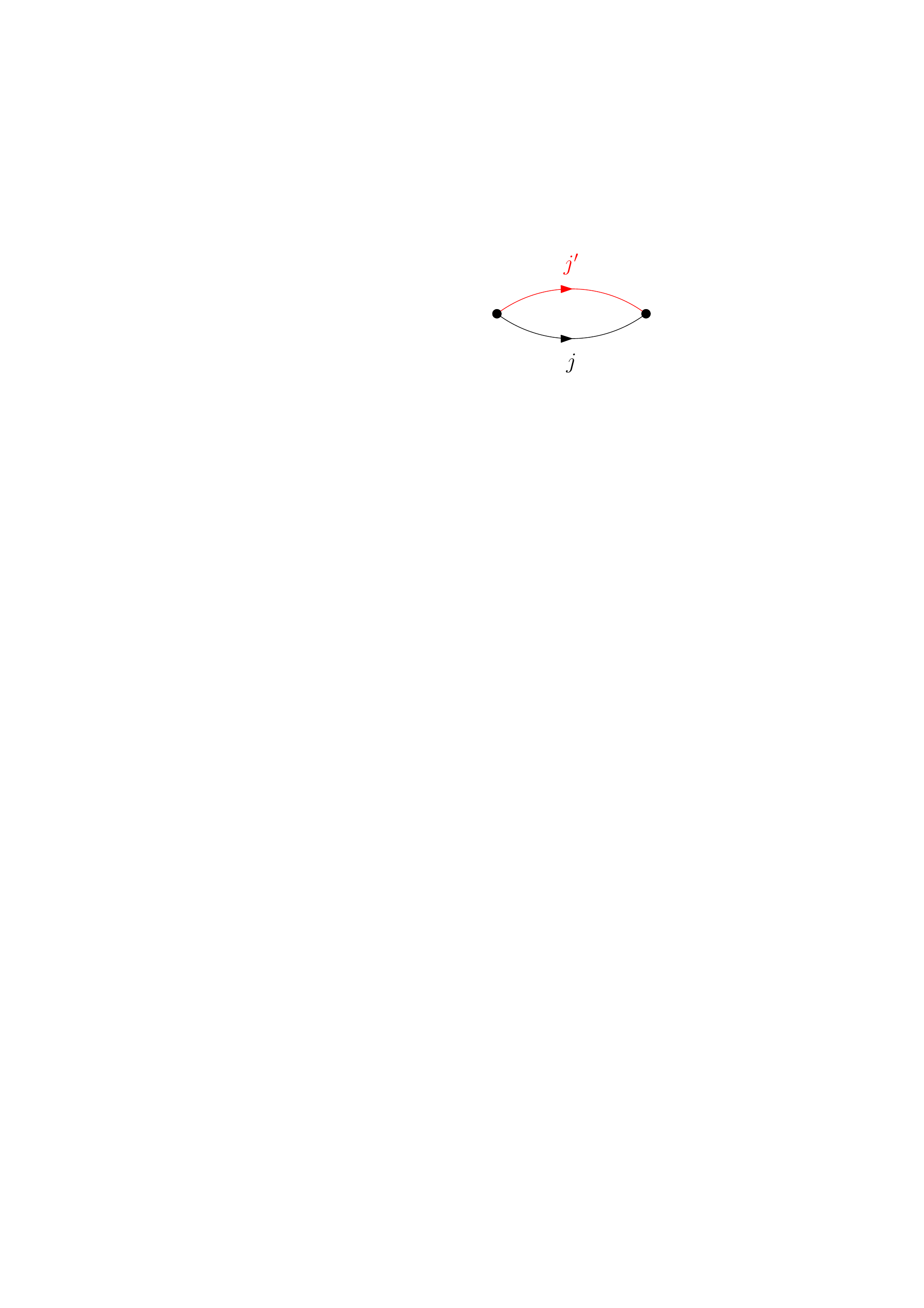}} \right)}V\left( \raisebox{-.5\height}{ \includegraphics[scale = .3]{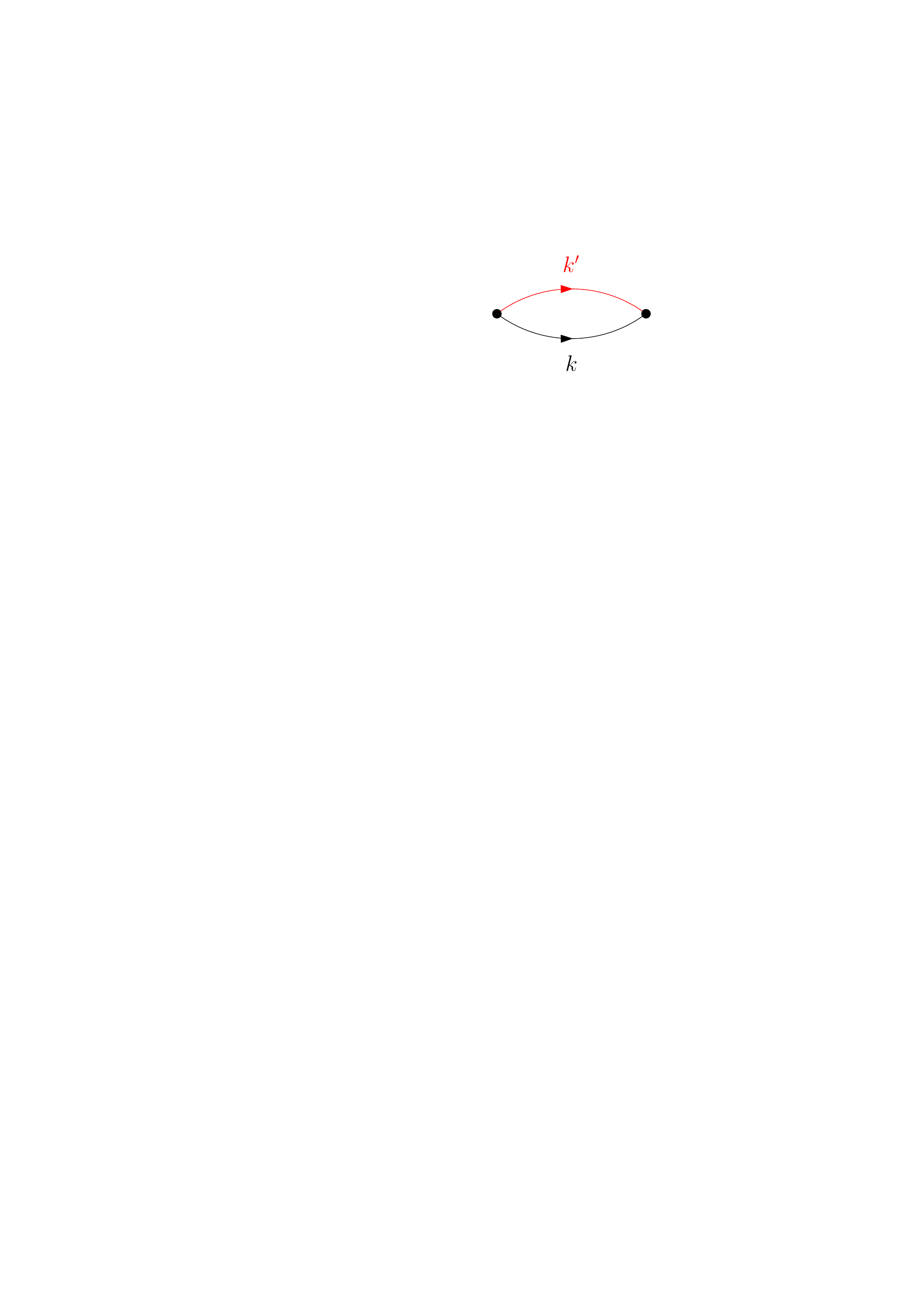}} \right)V\left( \raisebox{-.5\height}{ \includegraphics[scale = .3]{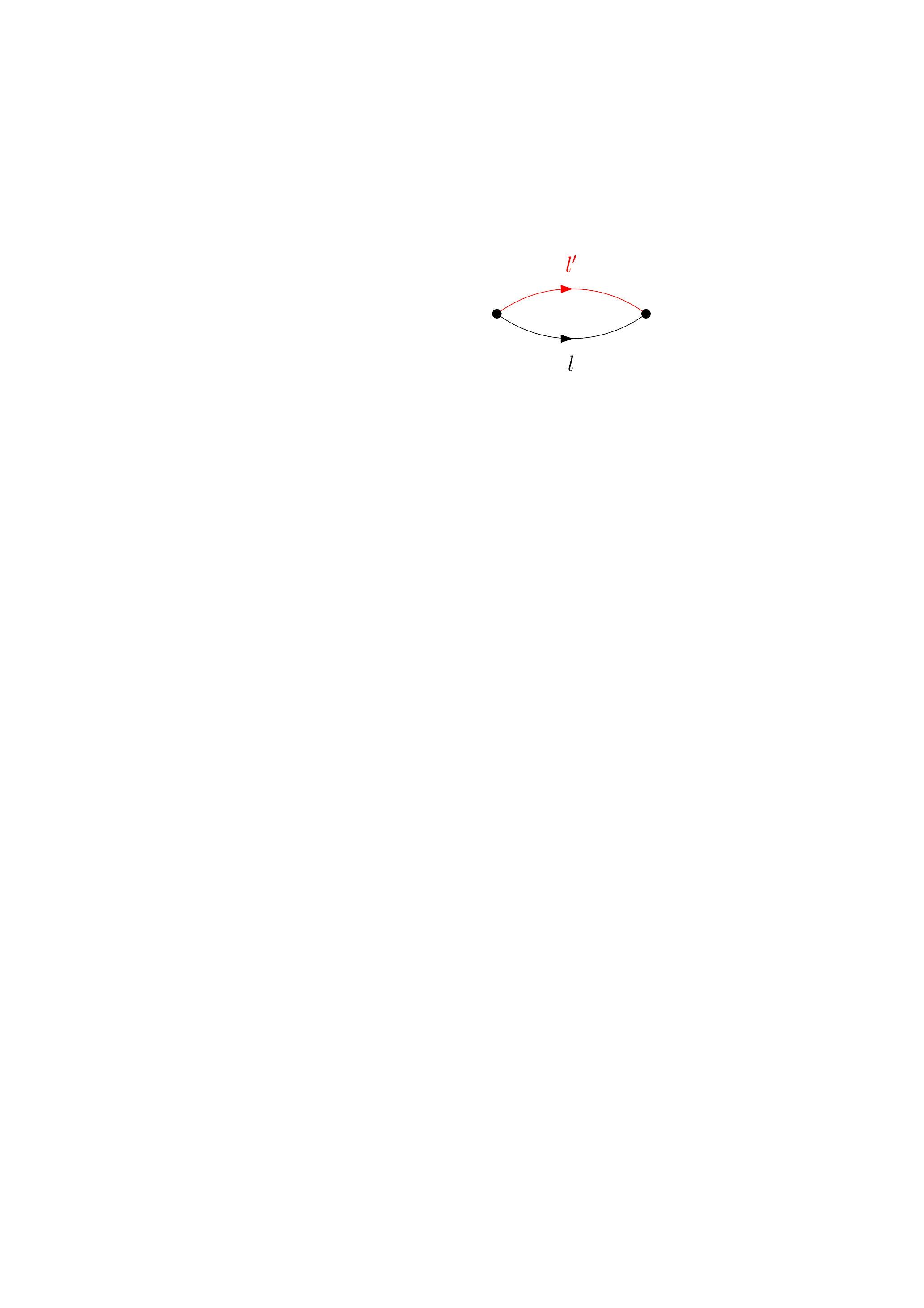}} \right)     \]
\[  KW^1\left( \raisebox{-.5\height}{ \includegraphics[scale = .3]{nbox.pdf}}   \right) = \sum_{i_1',\cdots , i_N'}  KW^2\left( \raisebox{-.5\height}{ \includegraphics[scale = .3]{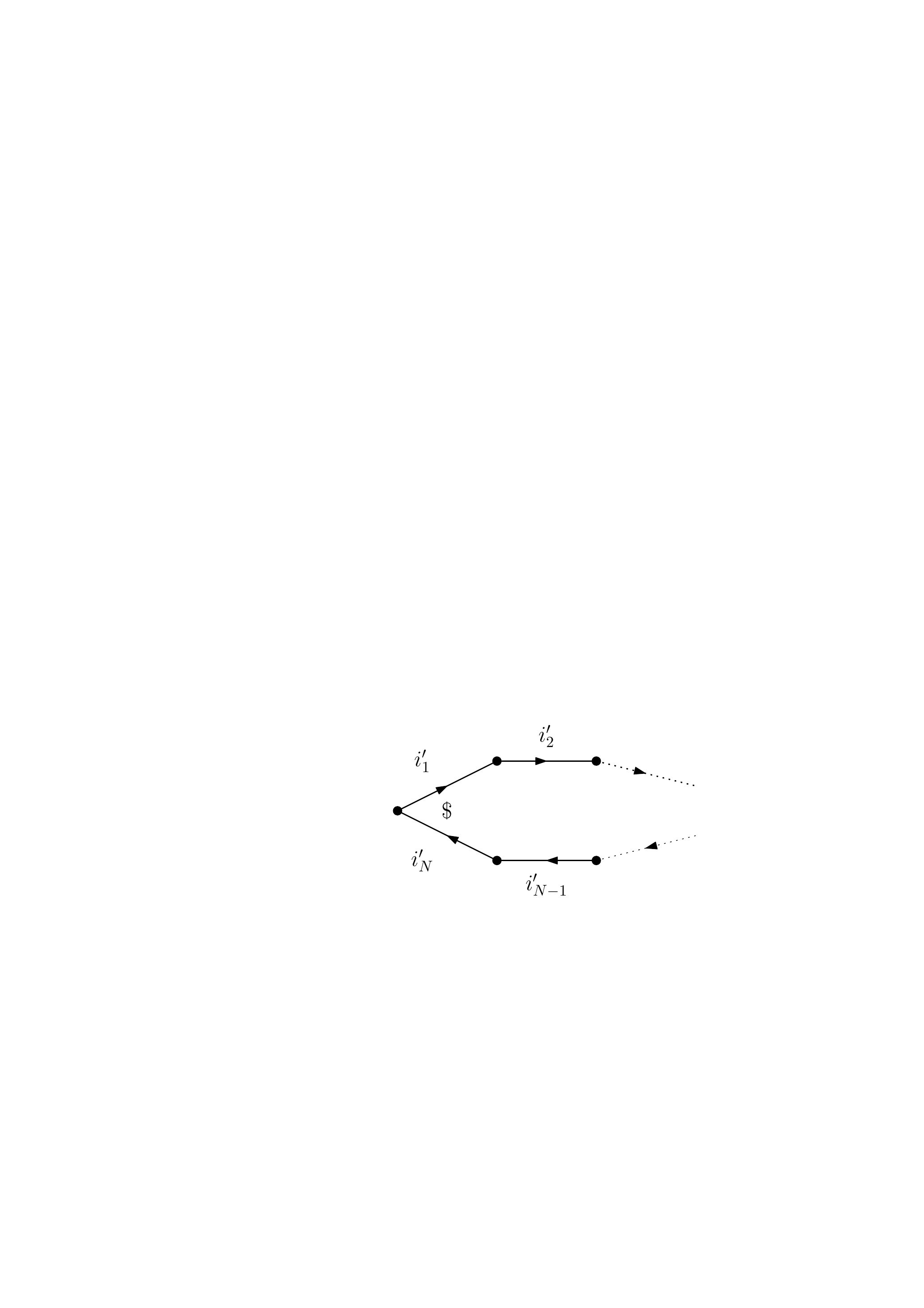}}\right)V\left( \raisebox{-.5\height}{ \includegraphics[scale = .3]{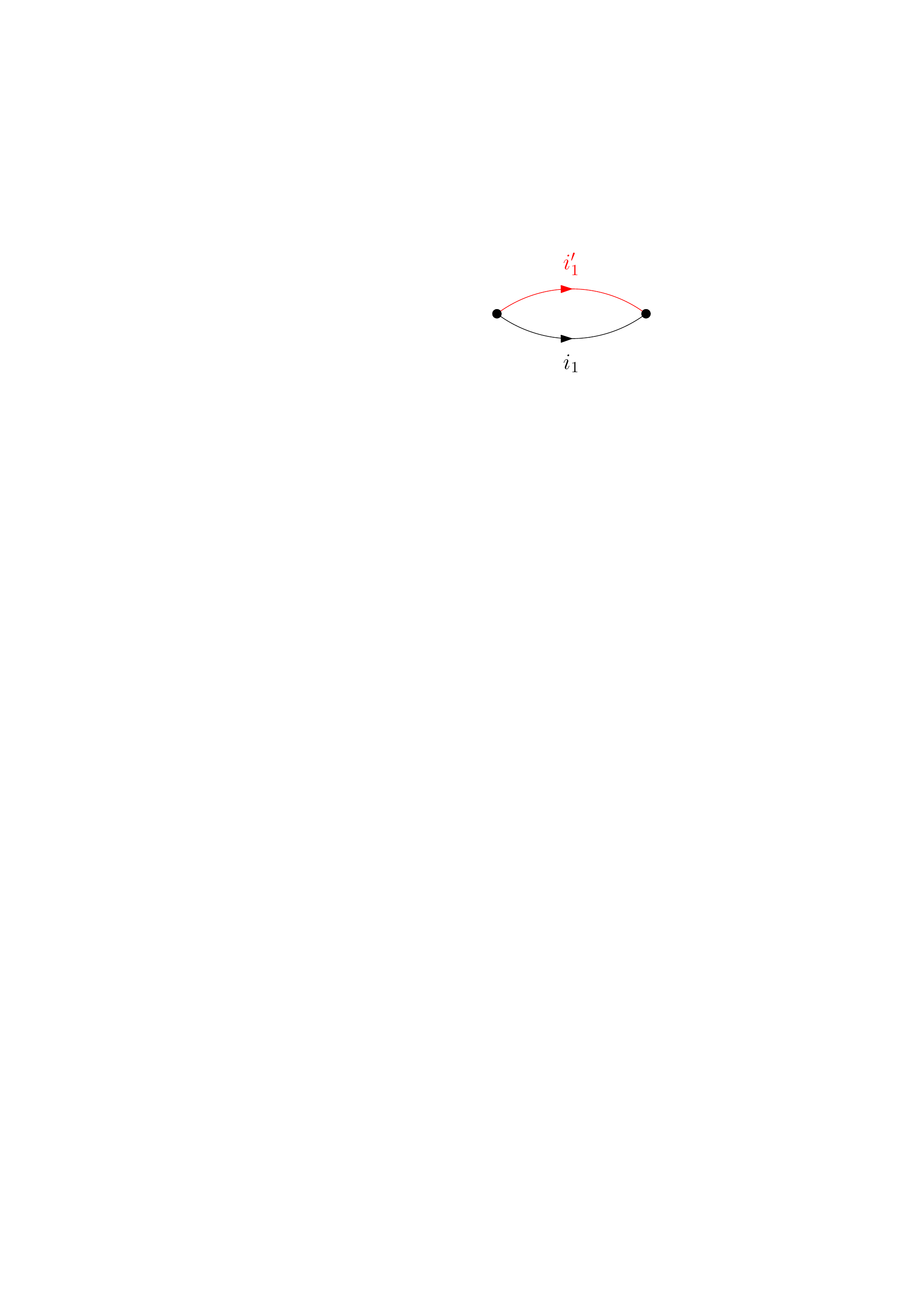}} \right)V\left( \raisebox{-.5\height}{ \includegraphics[scale = .3]{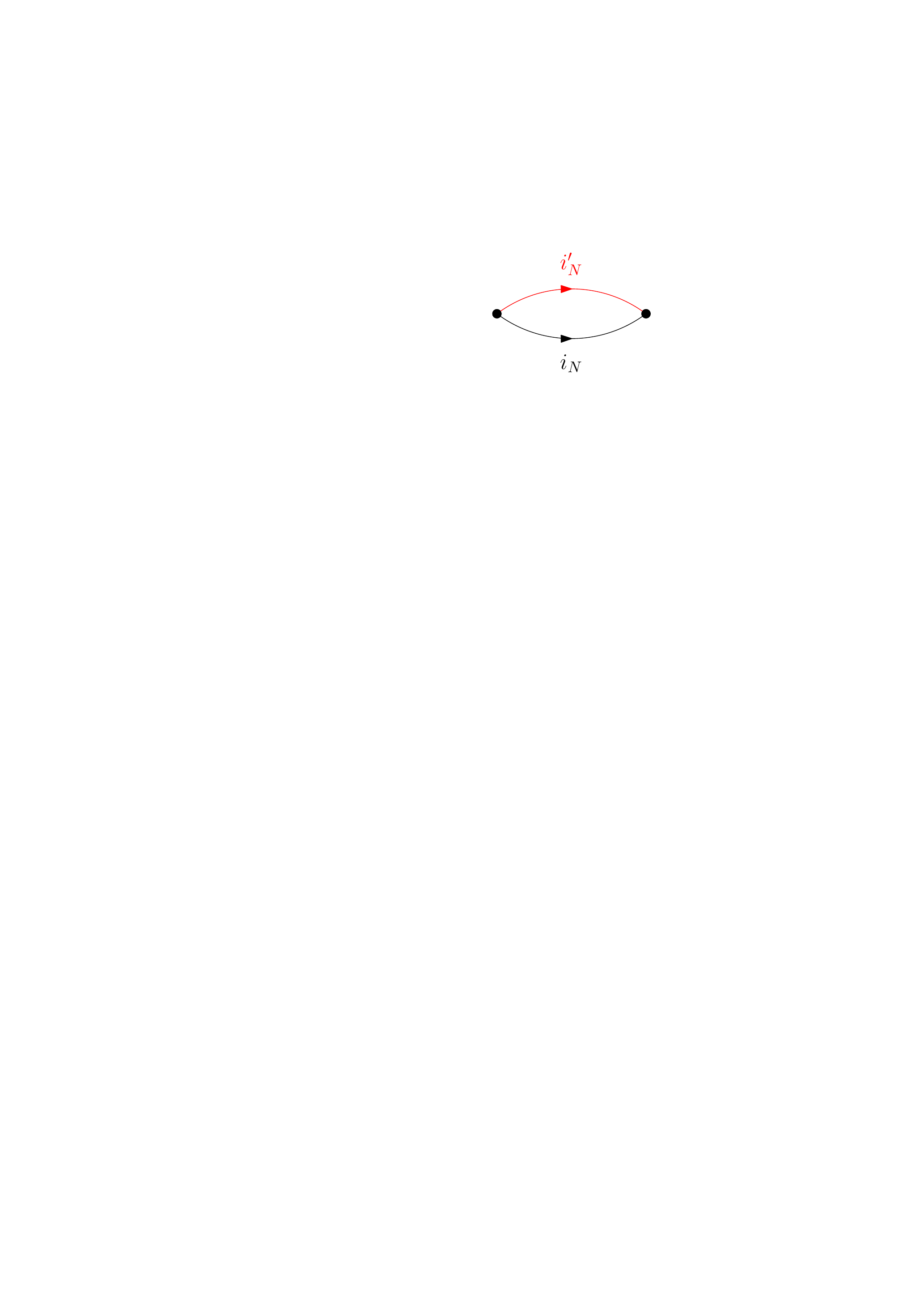}} \right) .     \]
\end{defn}
In our matrix interpretation, $V$ can be interpreted as a square matrix indexed by pairs of vertices connected by an edge. The rows and columns of this matrix are indexed by the paths between these vertices. The first relation above simply state that this matrix is a unitary matrix. The other two relations don't appear to have a nice matrix interpretation.

Now that we have defined KW cell systems up to equivalence, we can prove the main theorem of this paper. 
\begin{proof}[Proof of Theorem~\ref{thm:main}]
Throughout this proof, $N\geq 2$ will be an integer, $q = e^{2 \pi i \frac{1}{2(N+k)}}$ for some positive integer $k$, and $\omega$ is an $N$-th root of unity.

Suppose $\mathcal{M}$ is a module category over $\overline{\operatorname{Rep}(U_q(\mathfrak{sl}_N))^\omega}$ whose module fusion graph for $\Lambda_1$ is $\Gamma$. Then the data of this module  is equivalent to a pivotal monoidal functor
\[  \overline{\operatorname{Rep}(U_q(\mathfrak{sl}_N))^\omega} \to \operatorname{End}(\mathcal{M}),\]
such that the image of $\Lambda$ is graph $\Gamma$. Furthermore, by \cite{Whale}, we can assume this functor is a $\dag$-functor. By Theorem~\ref{thm:oGPA}, this gives an embedding of oriented unitary planar algebras
\[  \phi: \mathcal{P}_{\overline{\operatorname{Rep}(U_q(\mathfrak{sl}_N))^\omega};\Lambda_1}\to oGPA(\Gamma).  \]
In particular, the image of $\raisebox{-.5\height}{ \includegraphics[scale = .3]{UU.pdf}}$ and $\raisebox{-.5\height}{ \includegraphics[scale = .3]{triv.pdf}}$ give distinguished elements of $oGPA(\Gamma)$ satisfying relations (R1), (R2), (R3), (Hecke), (Braid Absorption), (Rotational Invariance), and (Norm). We define a KW cell system on $\Gamma$ by setting \begin{align*}
 KW\left( \raisebox{-.5\height}{ \includegraphics[scale = .4]{KWcell4.pdf}}\right) &:= \phi\left[ \raisebox{-.5\height}{ \includegraphics[scale = .3]{UU.pdf}}     \right]\left(\raisebox{-.5\height}{ \includegraphics[scale = .4]{KWcell4.pdf}} \right)\\
KW\left( \raisebox{-.5\height}{ \includegraphics[scale = .4]{nbox.pdf}}\right) &:= \phi\left[ \raisebox{-.5\height}{ \includegraphics[scale = .3]{triv.pdf}}     \right]\left(\raisebox{-.5\height}{ \includegraphics[scale = .4]{nbox.pdf}} \right).
\end{align*}
By the definition of the planar algebra $oGPA(\Gamma)$, the map KW satisfies all of the relations required to be a KW cell system on $\Gamma$ with parameters $(N,q,\omega)$. 

Let $\phi_1, \phi_2$ be two embeddings $\mathcal{P}_{\overline{\operatorname{Rep}(U_q(\mathfrak{sl}_N))^\omega};\Lambda_1}\to oGPA(\Gamma)$ corresponding to equivalent module categories. Then by definition, there exists a unitary element $V\in oGPA(\Gamma)_{+\to +}$ such that
\begin{align*}
\raisebox{-.5\height}{ \includegraphics[scale = .4]{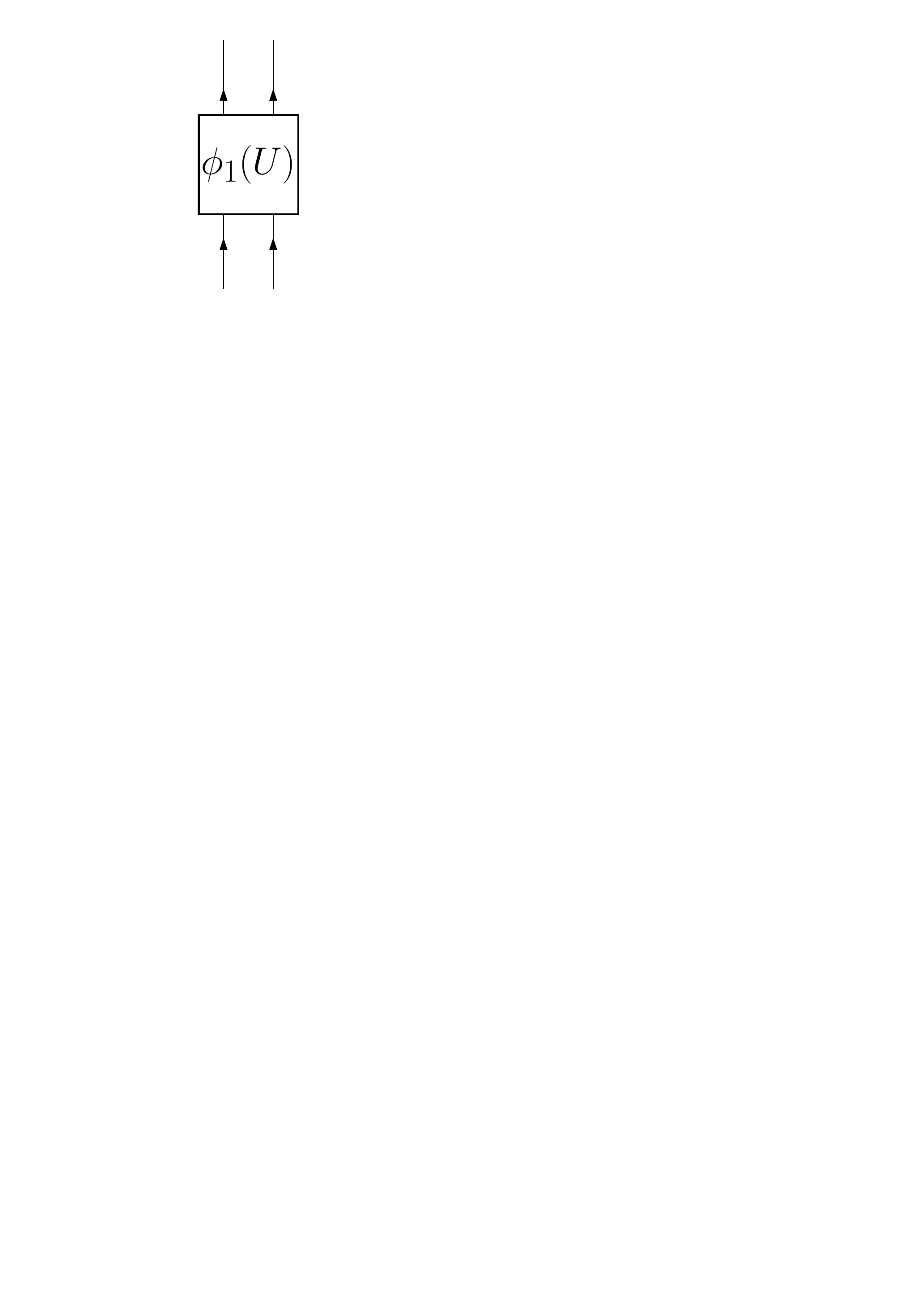}} & = \raisebox{-.5\height}{ \includegraphics[scale = .4]{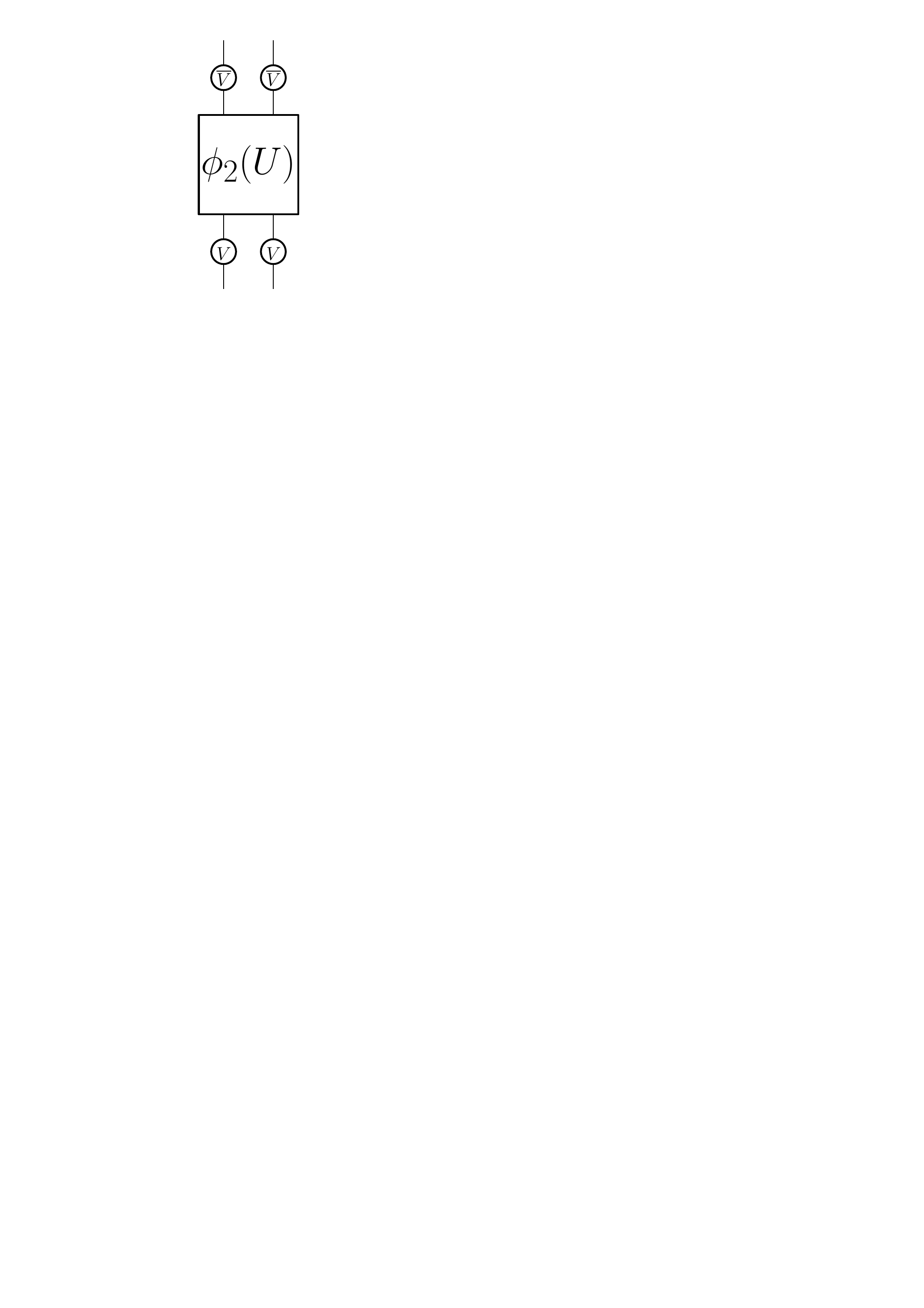}}\\
\raisebox{-.5\height}{ \includegraphics[scale = .4]{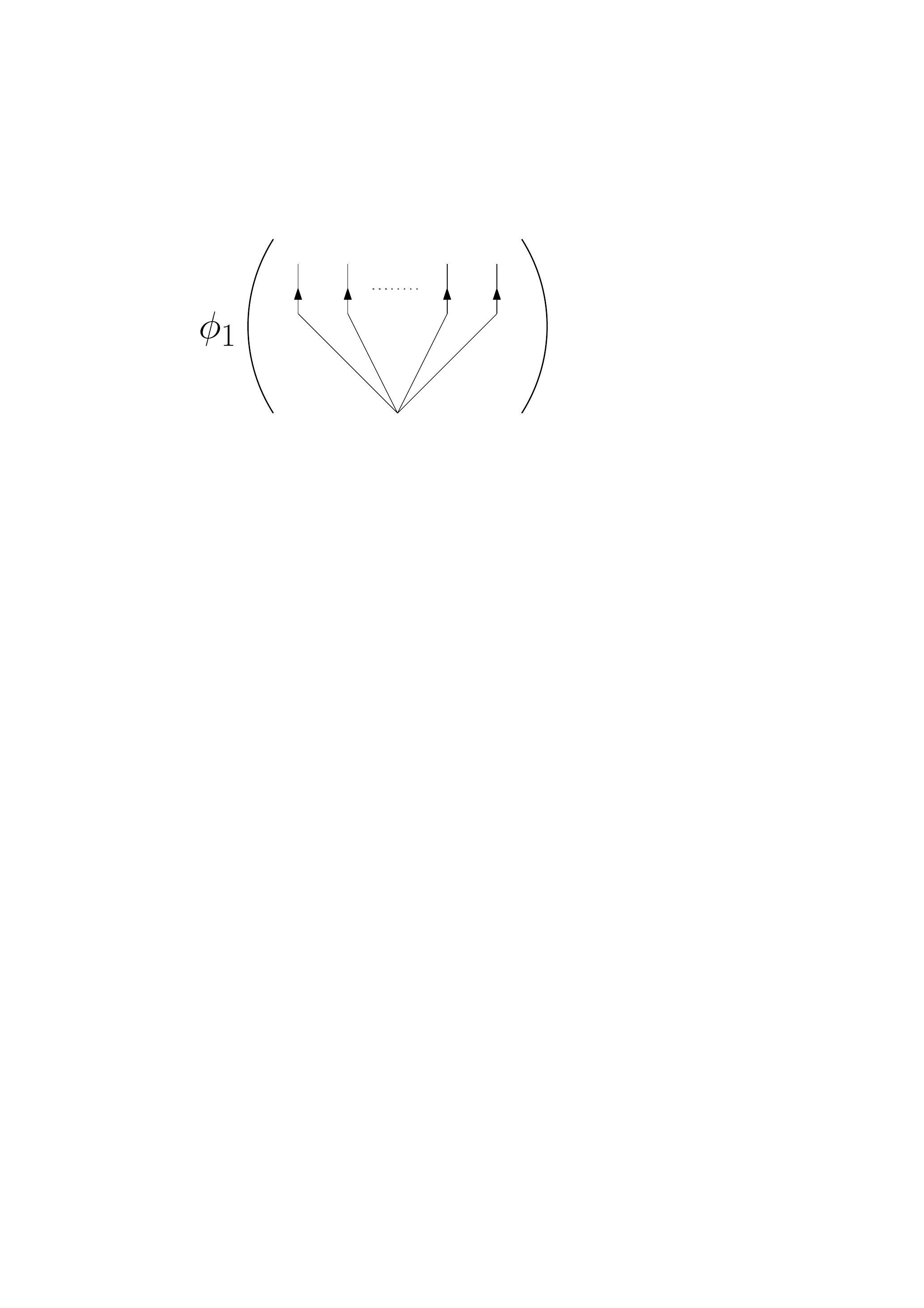}} & = \raisebox{-.5\height}{ \includegraphics[scale = .4]{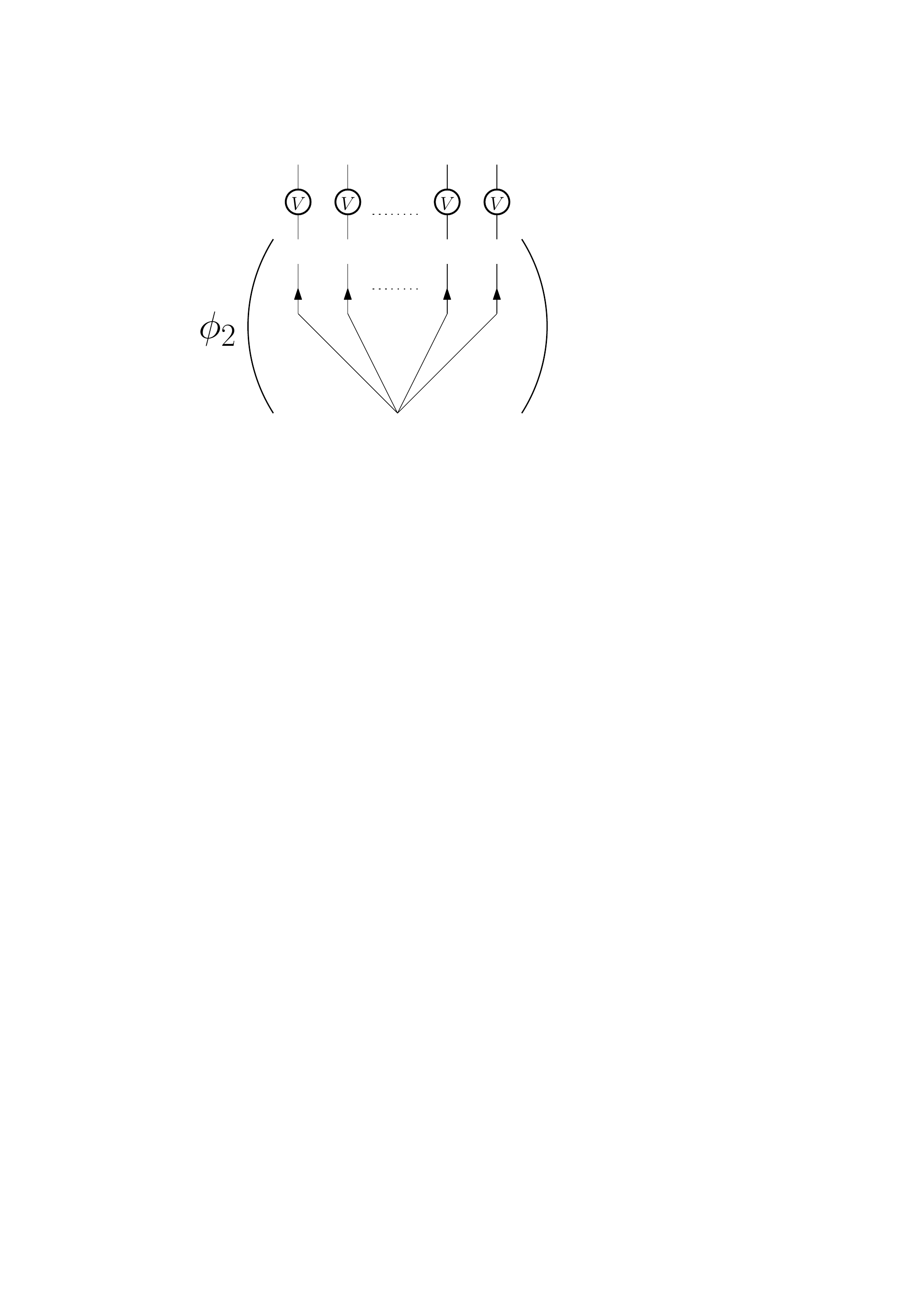}}.
\end{align*}
Unpacking these equations using the definition of $oGPA(\Gamma)$ gives exactly Definition~\ref{defn:KWequiv}. Thus equivalent module categories give rise to equivalent KW cell systems under the above construction.

Conversely, a KW cell system with parameters $(N,q,\omega)$ on a graph $\Gamma$ is by definition a pair of elements in $oGPA(\Gamma)$ satisfying the relations (R1), (R2), (R3), (Hecke), (Rotational Invariance), (Braid Absorption), and (Norm) of Section~\ref{sec:KW}. As the $\dag$-structure on $oGPA(\Gamma)$ is unitary, it restricts to a unitary $\dag$ structure on the $\dag$-subcategory generated by the two elements specified by the KW cell system solution. We then apply Theorem~\ref{thm:pres} to see this subcategory is isomorphic to $\mathcal{P}_{\overline{\operatorname{Rep}(U_q(\mathfrak{sl}_N))^\omega};\Lambda_1}$. We thus have an embedding 
\[  \mathcal{P}_{\overline{\operatorname{Rep}(U_q(\mathfrak{sl}_N))^\omega};\Lambda_1}\to oGPA(\Gamma).\]
Hence Theorem~\ref{thm:oGPA} gives us a module category over $\overline{\operatorname{Rep}(U_q(\mathfrak{sl}_N))^\omega}$ such that the module fusion graph for $\Lambda_1$ is $\Gamma$. 

The same argument as in the converse case (in reverse), shows that equivalent KW cell systems give rise to equivalent module categories over $\overline{\operatorname{Rep}(U_q(\mathfrak{sl}_N))^\omega}$. 
\end{proof}

In order to find a solution to a KW cell system on a given graph, we often need some addition equations. In the situation where we know the action graphs for $\Lambda_2$ and $\Lambda_3$, we have the following additional equations. These equations first appeared in \cite{Stat} as a conjecture. With our technical machinery, we can easily prove the equations always hold.
\begin{lem}\label{lem:L2equations}
Suppose $\mathcal{M}$ is a module category for $\overline{\operatorname{Rep}(U_q(\mathfrak{sl}_N))^\omega}$, with fusion graph for the action of $X$ given by $\Gamma_{X}$. Then any cell system corresponding to the module $\mathcal{M}$ satisfies the following relations:  
\begin{align*}
\text{(Tr$(U_1)$)}:\quad \sum_{i,j}KW\left(\raisebox{-.5\height}{ \includegraphics[scale = .4]{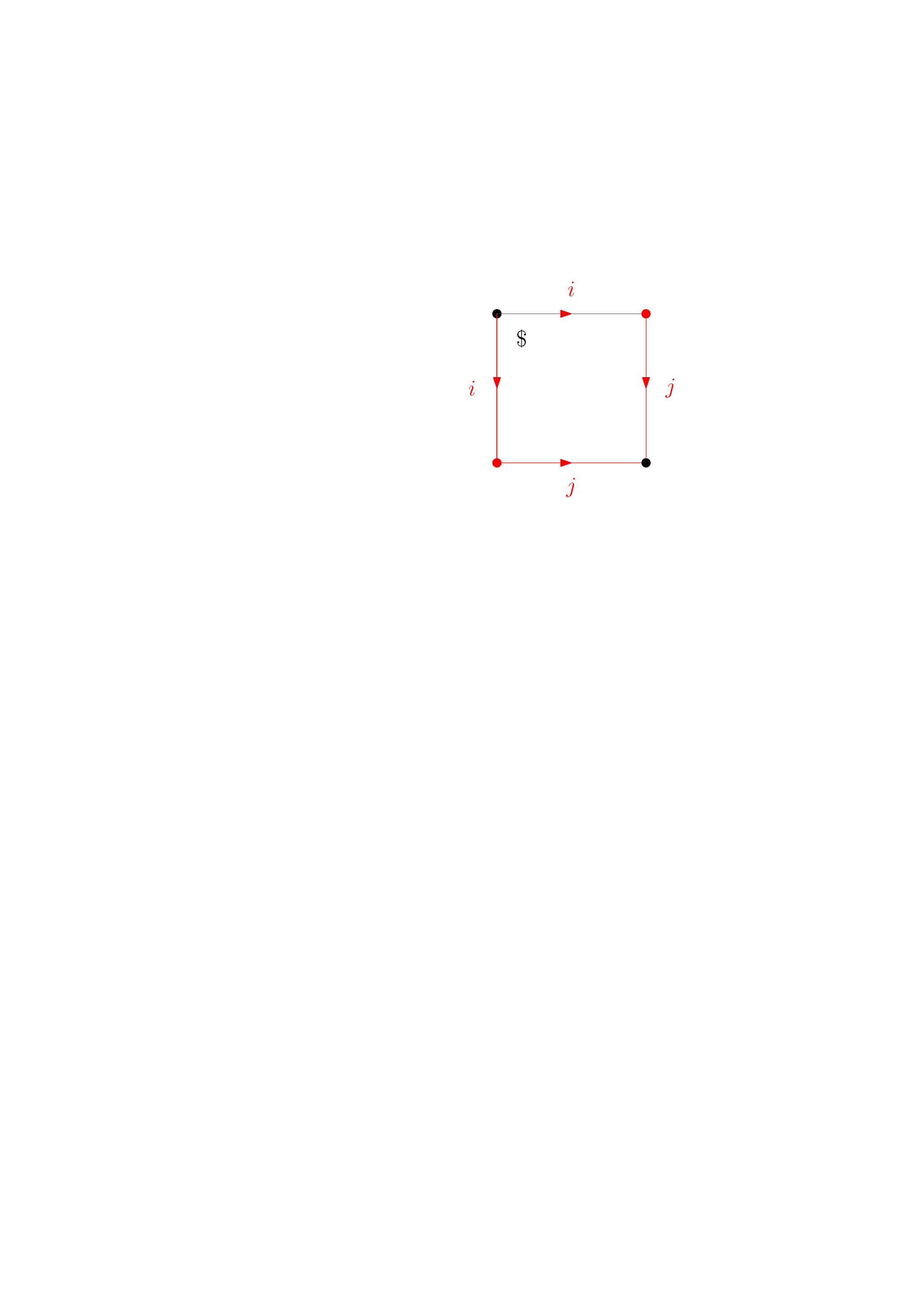}}\right)&=   \left(\Gamma_{\Lambda_2}\right)_{s(i), r(j)} \cdot  [2]_q\\
\text{(Tr$(U_1U_2)$)}:\quad \sum_{i,j,k}KW\left(\raisebox{-.5\height}{ \includegraphics[scale = .4]{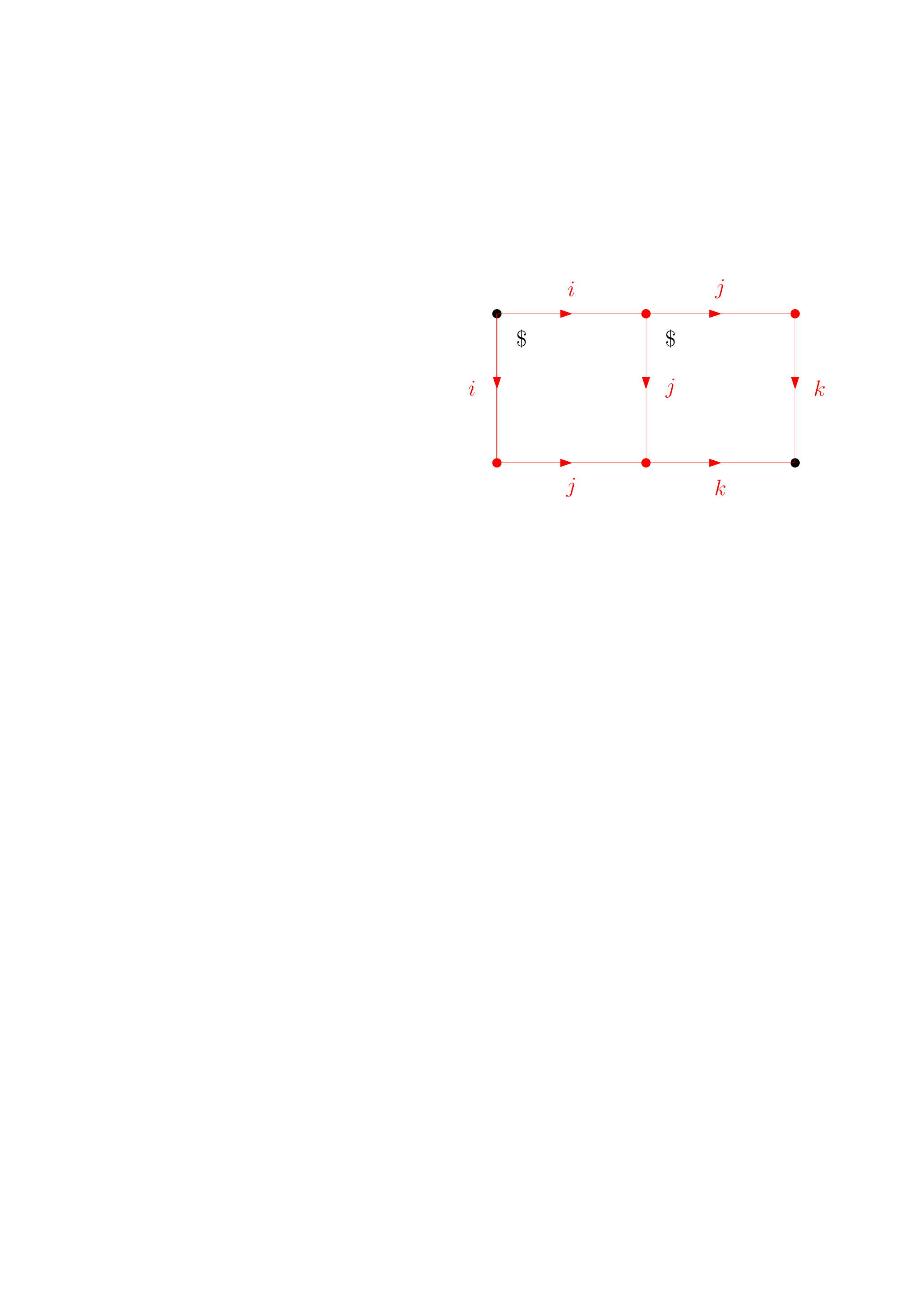}}\right)&=  \left( \Gamma_{\Lambda_3}\right)_{s(i), r(k)} \cdot  [2]^2_q + \left( \Gamma_{\Lambda_1}\cdot\Gamma_{\Lambda_2}   - \Gamma_{\Lambda_3}   \right)_{s(i), r(k)}.
\end{align*}
\end{lem}
\begin{proof}
These are a consequence of Lemma~\ref{lem:Hans}, which determines the trace of the embedding of an idempotent in terms of the module fusion rules. The first equation holds as we have 
\[ \raisebox{-.5\height}{ \includegraphics[scale = .3]{UU.pdf}}=[2]_q\raisebox{-.5\height}{ \includegraphics[scale = .3]{pl2.pdf}} .   \]
The second holds as 
\[ \raisebox{-.5\height}{ \includegraphics[scale = .3]{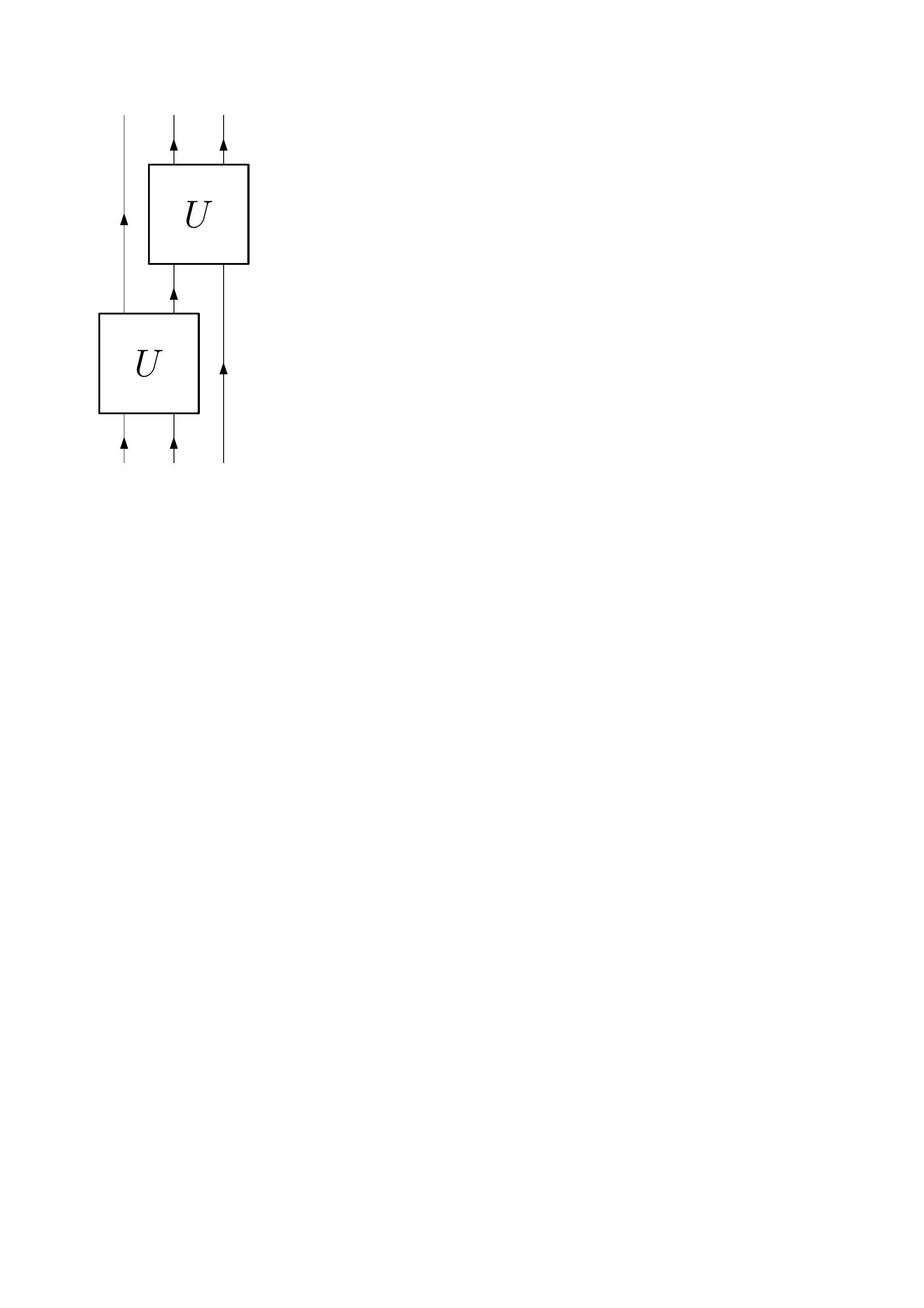}}=[2]^2_q\raisebox{-.5\height}{ \includegraphics[scale = .3]{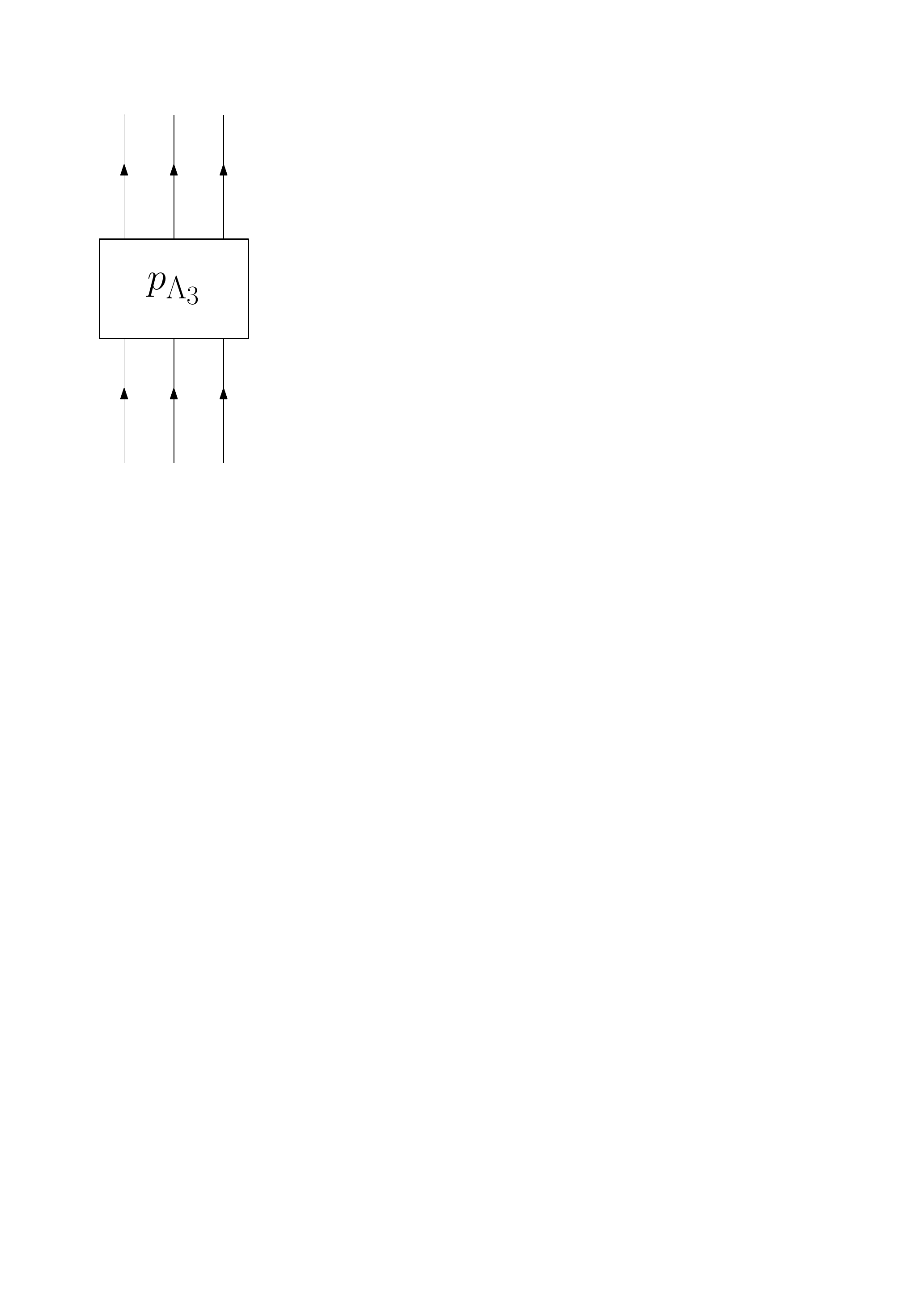}} +  \raisebox{-.5\height}{ \includegraphics[scale = .3]{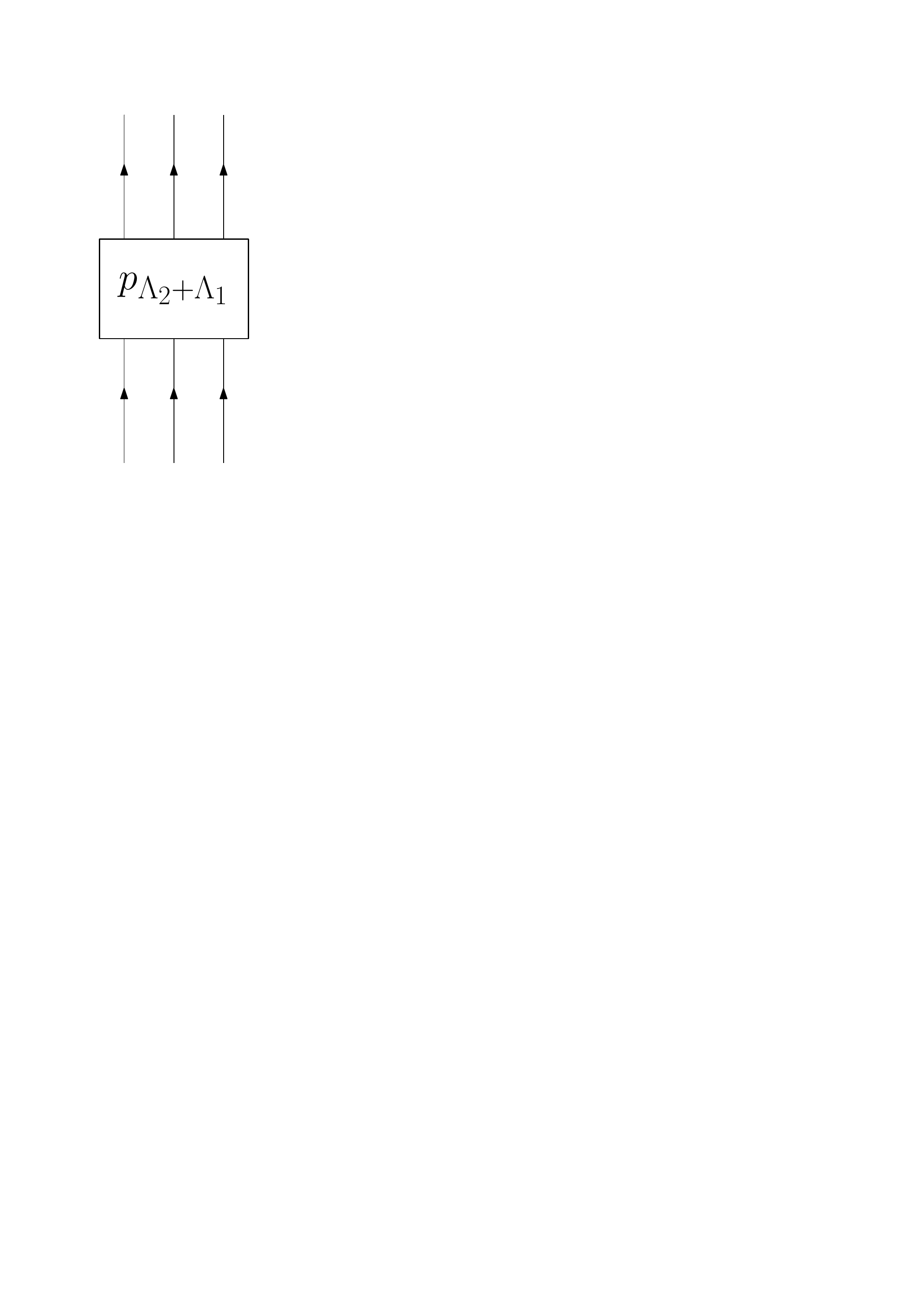}}  \]
along with the fusion rule $\Lambda_1 \otimes \Lambda_2 \cong \Lambda_{3} \oplus (\Lambda_{2}+\Lambda_1 )$.
\end{proof}
Certainly many more equation of this form can be derived using Lemma~\ref{lem:Hans}. For the examples considered in Section~\ref{sec:examples}, these two equations were sufficient (and incredibly useful).

\section{Examples}\label{sec:examples}
In this section we compute several solutions to a KW cell system on a variety of graphs. We restrict our attention to the $\mathfrak{sl}_4$ case. As this is the first case which hasn't been solved before. Solutions for the $\mathfrak{sl}_3$ case can be found in \cite{SU3}. 

The graph in the $\mathfrak{sl}_4$ case we take from the work of Ocneanu \cite{Ocneanu}. We can also find the graphs for action by $\Lambda_2$ in this work, allowing us to apply the equations from Lemma~\ref{lem:L2equations}. Note that the results of this section are not dependent on the correctness of \cite{Ocneanu}\footnote{No proofs were given in this paper.}. However our results in this section verify that Ocneanu's claims were correct.

We assume that Ocneanu found these graphs by solving the modular splitting equation \cite{Ocneanu,Xu} for the $SU(4)$ modular invariants. If one wanted to extend the results of this section to higher $SU(N)$, the modular splitting equation should allow one to obtain the graphs corresponding to the higher $SU(N)$ modular invariants. Discussion on this problem can be found in \cite[Section 6]{ModInv1} and \cite[Section 8]{ModInv2}.

 From \cite{ModulesPt2}, we have Theorem~\ref{thm:absClassIntro} which abstractly classifies irreducible module categories over $\mathcal{C}(\mathfrak{sl}_4, k)$. To remind the reader, we have
\begin{center}
\begin{tabular}{ |c|c c c c c c c| } 
 \hline
$k$ & 1 & 2 & 4 & 6 & 8& $k> 1$ odd & $k >8$ even \\\hline
$\#$ of Modules & 2&3&7&8&9&4&6 \\ 
 \hline
\end{tabular}
\end{center}
irreducible module categories over $\mathcal{C}(\mathfrak{sl}_4, k)$ up to equivalence.

In this section we construct all of these abstractly classified module categories of $\mathcal{C}(\mathfrak{sl}_4, k)$. Hence we upgrade the abstract classification to a concrete classification.

\begin{remark}We will use the matrix notation from Remark~\ref{rmk:matrix} to express our solutions. Recall that we will refer to the entries of the $U$ matrices as $U$-cells, and the entries of the $B$ matrices as $B$-cells. In the interest of space we do not include the solution to the B-cells for the exceptional solutions. These are easily obtained by solving the linear system (RI) + (BA) once the U-cells have been determined. The full solutions to the KW cell systems on the exceptional graphs can be found in Mathematica notebooks attached to the arXiv submission of this paper. We also include the Mathematica files ``CellLibrary.nb'' and ``CellLibrary-MultiEdge.nb'' which contain general functions that takes as input a graph (or multi-edged graph) and returns the polynomial equations for a KW system on that graph.
\end{remark}

For the 6 exceptional modules, we also construct KW cell system solutions with $\omega \in \{-1,\mathbf{i}, -\mathbf{i}\}$. That is, exceptional module categories for $\overline{\operatorname{Rep}(U_q(\mathfrak{sl}_4))^\omega}$ at the appropriate $q$ values. As far as we know, these modules over the twisted categories cannot be constructed by conformal inclusions, and this paper is the first time they have been constructed in any capacity. \textit{A-priori} we should expect to have to find a new solution to both the U and B cells for each value of $\omega$. We are fortunate in the sense that for each exceptional graph, the four solutions for each value of $\omega$ share the same U-cell solution. Hence once the U-cell solution is found, the four B-cell solutions can be found by simply solving the linear system (RI) + (BA). In slightly different language, this means that all four $\overline{\operatorname{Rep}(U_q(\mathfrak{sl}_4))^\omega}$ modules restrict to the same $\overline{\operatorname{Rep}(U_q(\mathfrak{gl}_4))}$ module.

For several of our solutions, we use the notion of \textit{orbifolding} to help us obtain a solution. We expect this to be equivariantisation for module categories (see \cite{Emily}). The concept of orbifolding has been explored in the physics context \cite{Orbo} and in the mathematical context \cite{MR1301620}. In the later paper, the orbifold procedure is made rigorous for relations occurring in the endomorphism algebras of the graph planar algebra. For us this encompasses relations (R2), (R3), and (H). For relations occurring in more general spaces, to the best of our knowledge, the orbifold procedure remains un-rigorous. Still, we can use this un-rigorous procedure to help determine a solution to a KW cell system. To make everything rigorous again, we simply explicitly verify all the relations for the solution. We use this orbifolding procedure to help find solutions in Subsections~\ref{sub:k4excep}, \ref{sub:k6excep2}, and \ref{sub:k8excep2}.




\subsection{An Exceptional Module for $SU(4)$ at Level $4$}\label{sub:k4excep}
In this section we construct a cell system with parameters $q = e^{2\pi i \frac{1}{16}}$ on the following graph
\[\Gamma_{\Lambda_1}^{4,4,\subset} =\raisebox{-.5\height}{ \includegraphics[scale = .6]{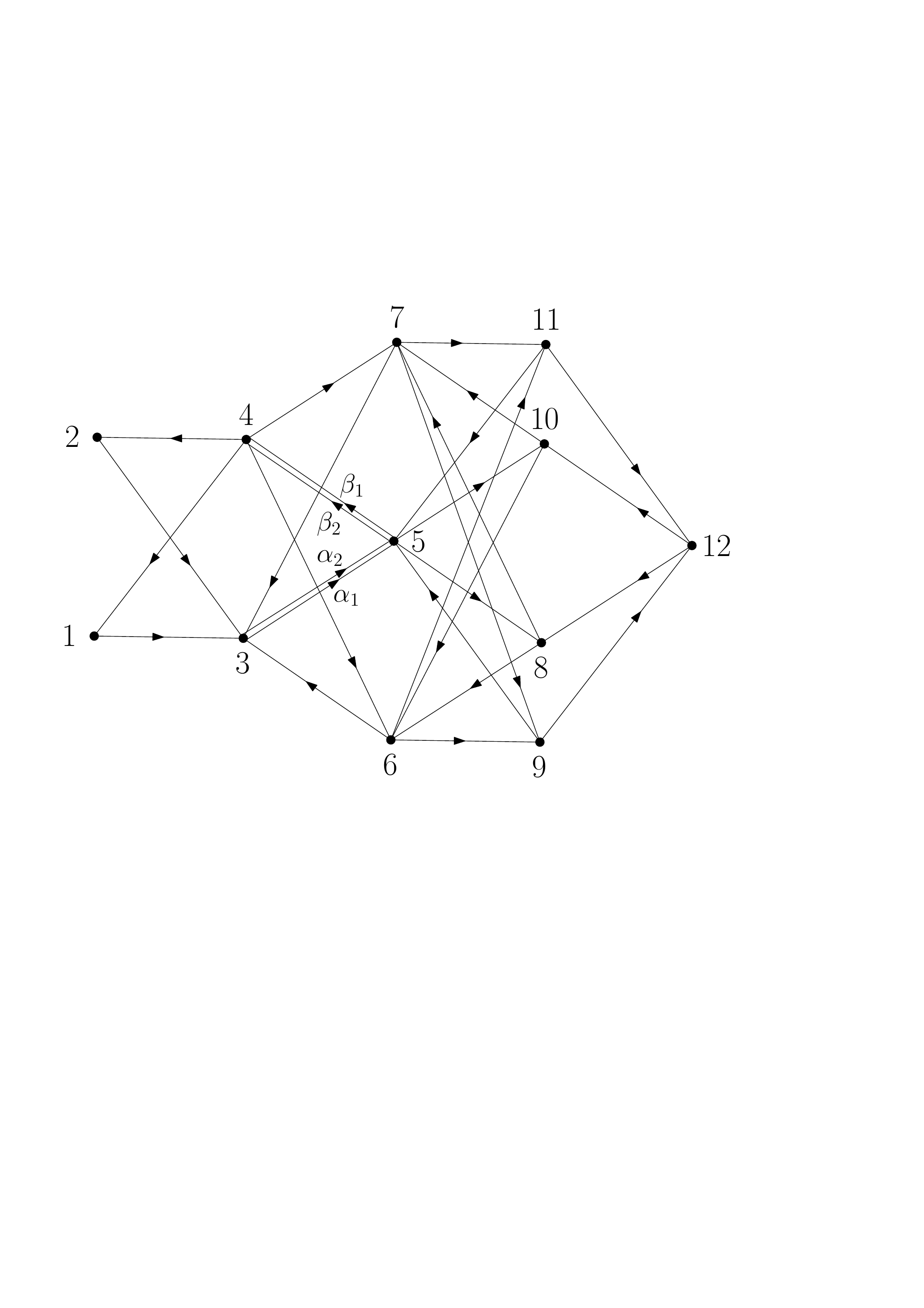}} \]
The unique positive eigenvector is
\[\lambda = \left\{1,1, \q{4},\q{4}, \frac{\q{3}\q{4}}{\q{2}}, \q{3}, \q{3}, \q{2}, \q{2}, \q{2},\q{2},\frac{\q{4}}{\q{2}}\right\}.\]
We assume that graph for action by $\Lambda_2$ is
\[\Gamma_{\Lambda_2}^{4,4,\subset} =\raisebox{-.5\height}{ \includegraphics[scale = .6]{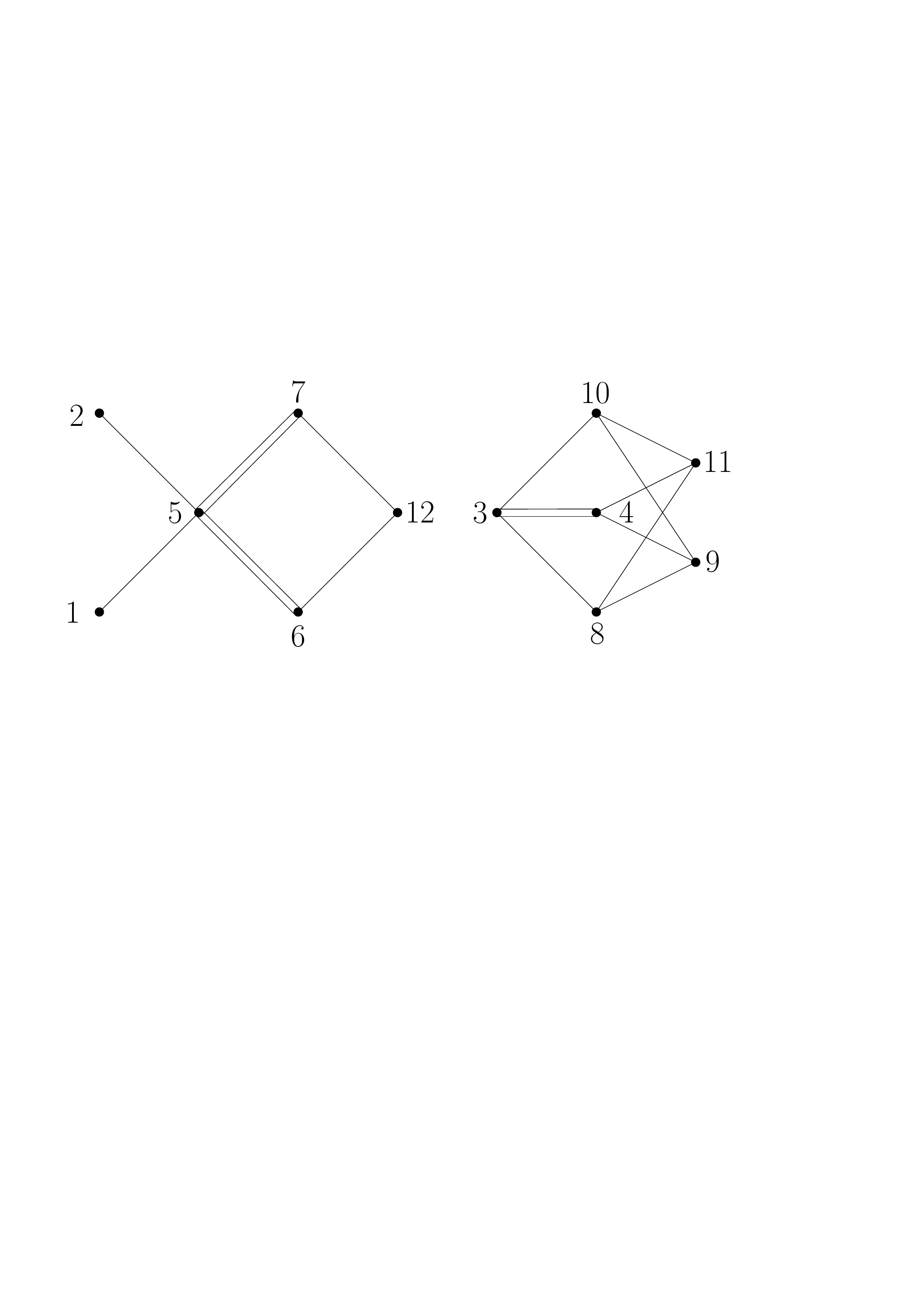}} \]
The above data for this graph can also be found in the Mathematica file ``k=4/Conformal Inclusion/Data.nb".

To solve for the U-cells on this graph, we first observe the following $\mathbb{Z}_2$ symmetry on $\Gamma_{\Lambda_1}^{4,4,\subset}$:
\[ 1\longleftrightarrow 2\qquad 6\longleftrightarrow 7 \qquad 8\longleftrightarrow 10\qquad 9\longleftrightarrow 11.\]
The orbifold graph of $\Gamma_{\Lambda_1}^{4,4,\subset}$ with respect to this symmetry, is again $\Gamma_{\Lambda_1}^{4,4,\subset}$. This suggests that there is a U-cell solution on $\Gamma_{\Lambda_1}^{4,4,\subset}$ which comes from orbifolding a U-cell solution invariant under the $\mathbb{Z}_2$ symmetry. Hence we have two avenues of attack. We can either try assume that our solution looks like an orbifold solution (and hence contains many 0's), or that the solution is invariant under the $\mathbb{Z}_2$ symmetry. 

We first attempted the approach of assuming the $\mathbb{Z}_2$ symmetry. However we were unable to solve the system in this setting. By assuming our solution comes from an orbifold, we see that the $4\times 4$ block $U^{3}_{\quad 4}$ is of the form
\[ U^{3}_{\quad 4} = \begin{blockarray}{cccc}
{}^{\alpha_1}5^{\beta_1} & {}^{\alpha_1}5^{\beta_2} & {}^{\alpha_2}5^{\beta_1} &{}^{\alpha_2}5^{\beta_2} \\
\begin{block}{[cccc]}
 x &0&0&y\\
 0 & z&w&0\\
 0 &\overline{w}&\q{2}-z\\
 \overline{y} & 0 & 0 &\q{2}-x\\
\end{block}
\end{blockarray}\]
for some complex scalars $x,y,z,w\in  \mathbb{C}$. To determine these scalars, we hand-pick a collection of equations from Tr($U_1U_2$) and (R3) which contain these four variables (along with several other coefficients), and numerically approximate a solution. From this numerical approximation, we can guess that (up to the graph automorphisms exchanging $\alpha_1 \leftrightarrow \alpha_2$ and $\beta_1 \leftrightarrow \beta_2$) that $x =\frac{\q{3}}{\q{4}} +\frac{\sqrt{\q{3}}}{\q{2}} $ and $y =  \frac{\q{3}}{\q{4}}$. We then use (Hecke) to pin down $w$ and $y$ up to phases.

With this seed information, we can then solve Tr($U_1U_2$) completely. This gives the diagonal elements of all of our matrices. We can then use (Hecke) to solve the $2\times 2$ blocks up to phases. At this point, there are enough linear and quadratic equations in (R3) to pin down the five remaining $4\times 4$ blocks (making a couple of arbitary choices). After choosing a concrete gauge choice, we arrive at the following solution\footnote{We should point out that the initial solution we found for the U-cells did not admit a solution for the B-cells. This is because we had actually found the embedding of $p_{2\Lambda_1}$, instead of $p_{\Lambda_2}$. Due to level rank duality these morphisms are indistinguishable with respect to the relations (R1), (R2), (R3), and (Hecke). To obtain the correct solution, we used the formula $p_{\Lambda_2} = \id_{\Lambda_1\otimes \Lambda_1} - p_{2\Lambda_1}$.}:
\begin{align*} U^{3}_{\quad 4} &= \begin{blockarray}{cccc}
{}^{\alpha_1}5^{\beta_1} & {}^{\alpha_1}5^{\beta_2} & {}^{\alpha_2}5^{\beta_1} &{}^{\alpha_2}5^{\beta_2} \\
\begin{block}{[cccc]}
 \frac{\q{3}}{\q{4}} -\frac{\sqrt{\q{3}}}{\q{2}} &0&0&-\frac{1}{\q{4}}\\
 0 & \frac{\q{3}}{\q{4}}&\frac{\q{3}}{\q{4}}&0\\
 0 & \frac{\q{3}}{\q{4}}&\frac{\q{3}}{\q{4}}&0\\
 -\frac{1}{\q{4}} & 0 & 0 &\frac{\q{3}}{\q{4}} +\frac{\sqrt{\q{3}}}{\q{2}} \\
\end{block}
\end{blockarray}\quad U^{4}_{\quad 3} = \begin{blockarray}{cccc}
1& 2 &6 &7 \\
\begin{block}{[cccc]}
\frac{\q{3}}{\q{4}} &-\frac{1}{\q{4}}&-\frac{\sqrt{\q{3}}}{\q{4}}&-\frac{\sqrt{\q{3}}}{\q{4}}\\
 -\frac{1}{\q{4}} &\frac{\q{3}}{\q{4}}&\mathbf{i}\frac{\sqrt{\q{3}}}{\q{4}}&-\mathbf{i}\frac{\sqrt{\q{3}}}{\q{4}}\\
-\frac{\sqrt{\q{3}}}{\q{4}} &- \mathbf{i}\frac{\sqrt{\q{3}}}{\q{4}}&\frac{\q{3}}{\q{4}}&\mathbf{i}\frac{1}{\q{4}}\\
 -\frac{\sqrt{\q{3}}}{\q{4}} & \mathbf{i}\frac{\sqrt{\q{3}}}{\q{4}} &- \mathbf{i}\frac{1}{\q{4}} &\frac{\q{3}}{\q{4}}\\
\end{block}
\end{blockarray}\\
U^{5}_{\quad 6} &= \begin{blockarray}{cccc}
{}^{\beta_1}4 & {}^{\beta_2}4 & 8 &10 \\
\begin{block}{[cccc]}
 \frac{\q{3}}{\q{4}} +\frac{1}{\sqrt{\q{3}\q{4}^2}} &-\frac{1}{\sqrt{\q{2}\q{4}}}&0&-\frac{\sqrt{\q{4}}}{\sqrt{\q{2}\q{3}}}\\
 -\frac{1}{\sqrt{\q{2}\q{4}}} & \frac{\q{3}}{\q{4}} -\frac{1}{\sqrt{\q{3}\q{4}^2}} &-\frac{\sqrt{\q{4}}}{\sqrt{\q{2}\q{3}}}&0\\
 0 &-\frac{\sqrt{\q{4}}}{\sqrt{\q{2}\q{3}}}&\frac{\q{3}}{\q{4}} +\frac{1}{\sqrt{\q{3}\q{4}^2}} &\frac{1}{\sqrt{\q{2}\q{4}}}\\
 -\frac{\sqrt{\q{4}}}{\sqrt{\q{2}\q{3}}} & 0 & \frac{1}{\sqrt{\q{2}\q{4}}} &\frac{\q{3}}{\q{4}} -\frac{1}{\sqrt{\q{3}\q{4}^2}} \\
\end{block}
\end{blockarray}= U^{6}_{\quad 5} = U^{5}_{\quad 7}\\
 U^{7}_{\quad 5}  &= \begin{blockarray}{cccc}
3^{\alpha_1} & 3^{\alpha_2} & 9 &11 \\
\begin{block}{[cccc]}
 \frac{\q{3}}{\q{4}} +\frac{1}{\sqrt{\q{3}\q{4}^2}} &\frac{1}{\sqrt{\q{2}\q{4}}}&-\frac{\sqrt{\q{4}}}{\sqrt{\q{2}\q{3}}}&0\\
\frac{1}{\sqrt{\q{2}\q{4}}} & \frac{\q{3}}{\q{4}} -\frac{1}{\sqrt{\q{3}\q{4}^2}} &0&-\mathbf{i}\frac{\sqrt{\q{4}}}{\sqrt{\q{2}\q{3}}}\\
- \frac{\sqrt{\q{4}}}{\sqrt{\q{2}\q{3}}} &0&\frac{\q{3}}{\q{4}} -\frac{1}{\sqrt{\q{3}\q{4}^2}} &- \mathbf{i}\frac{1}{\sqrt{\q{2}\q{4}}}\\
 0 & \mathbf{i}\frac{\sqrt{\q{4}}}{\sqrt{\q{2}\q{3}}} & \mathbf{i}\frac{1}{\sqrt{\q{2}\q{4}}} &\frac{\q{3}}{\q{4}} +\frac{1}{\sqrt{\q{3}\q{4}^2}}\\ 
\end{block}
\end{blockarray}\quad U^{10}_{\quad 11}  = \begin{blockarray}{cc}
6 & 7 \\
\begin{block}{[cc]}
 \frac{\q{3}}{\q{4}} + \frac{\sqrt{\q{3}}}{\q{2}} &-\frac{1}{\q{4}}\\
-\frac{1}{\q{4}}&   \frac{\q{3}}{\q{4}} - \frac{\sqrt{\q{3}}}{\q{2}} \\
\end{block}
\end{blockarray}  \\
U^{1}_{\quad 5}  &= \begin{blockarray}{cc}
3^{\alpha_1} & 3^{\alpha_2}  \\
\begin{block}{[cc]}
 \frac{\q{3} - \sqrt{\q{3}}}{\q{4}} &   \mathbf{i}\frac{\sqrt{\q{3}}}{\sqrt{\q{2}\q{4}}}\\
- \mathbf{i}\frac{\sqrt{\q{3}}}{\sqrt{\q{2}\q{4}}}&  \frac{\q{3} + \sqrt{\q{3}}}{\q{4}} \\
\end{block}
\end{blockarray}   = U^{4}_{\quad 9}= U^{5}_{\quad 1}= U^{7}_{\quad 12}= U^{8}_{\quad 3}=U^{10}_{\quad 3}\qquad U^{8}_{\quad 9}  = \begin{blockarray}{cc}
6 & 7 \\
\begin{block}{[cc]}
 \frac{\q{3}}{\q{4}} -\frac{\sqrt{\q{3}}}{\q{2}} &\mathbf{i}\frac{1}{\q{4}}\\
-\mathbf{i}\frac{1}{\q{4}}&   \frac{\q{3}}{\q{4}} + \frac{\sqrt{\q{3}}}{\q{2}} \\
\end{block}
\end{blockarray} \\
U^{3}_{\quad 8}  &= \begin{blockarray}{cc}
{}^{\alpha_1}5 & {}^{\alpha_2}5 \\
\begin{block}{[cc]}
 \frac{\q{3} + \sqrt{\q{3}}}{\q{4}} &-\frac{\sqrt{\q{3}}}{\sqrt{\q{2}\q{4}}}\\
 -\frac{\sqrt{\q{3}}}{\sqrt{\q{2}\q{4}}}&  \frac{\q{3} - \sqrt{\q{3}}}{\q{4}} \\
\end{block}
\end{blockarray}   = U^{4}_{\quad 11}= U^{6}_{\quad 12}= U^{9}_{\quad 4}= U^{12}_{\quad 6}=U^{12}_{\quad 7}\qquad U^{8}_{\quad 11}  = \begin{blockarray}{cc}
6 & 7 \\
\begin{block}{[cc]}
 \frac{\q{3}}{\q{4}} & -\frac{\q{3}}{\q{4}}\\
 -\frac{\q{3}}{\q{4}}&   \frac{\q{3}}{\q{4}}\\
\end{block}
\end{blockarray}  \\
U^{2}_{\quad 5}  &= \begin{blockarray}{cc}
{3}^{\alpha_1} & {3}^{\alpha_2} \\
\begin{block}{[cc]}
 \frac{\q{3} - \sqrt{\q{3}}}{\q{4}} &-\mathbf{i}\frac{\sqrt{\q{3}}}{\sqrt{\q{2}\q{4}}}\\
\mathbf{i} \frac{\sqrt{\q{3}}}{\sqrt{\q{2}\q{4}}}&  \frac{\q{3} + \sqrt{\q{3}}}{\q{4}} \\
\end{block}
\end{blockarray}   = U^{5}_{\quad 2}\qquad
U^{9}_{\quad 8}  = \begin{blockarray}{cc}
5 & 12 \\
\begin{block}{[cc]}
 \frac{1}{\q{2}} &-\frac{\sqrt{\q{3}}}{\q{2}}\\
-\frac{\sqrt{\q{3}}}{\q{2}}&  \frac{\q{3}}{\q{2}} \\
\end{block}
\end{blockarray} = U^{9}_{\quad 10} = U^{11}_{\quad 8}\\
U^{3}_{\quad 10}  &= \begin{blockarray}{cc}
{}^{\alpha_1}5 & {}^{\alpha_2}5 \\
\begin{block}{[cc]}
 \frac{\q{3} + \sqrt{\q{3}}}{\q{4}} &\frac{\sqrt{\q{3}}}{\sqrt{\q{2}\q{4}}}\\
 \frac{\sqrt{\q{3}}}{\sqrt{\q{2}\q{4}}}&  \frac{\q{3} - \sqrt{\q{3}}}{\q{4}} \\
\end{block}
\end{blockarray}   = U^{11}_{\quad 4}\qquad U^{11}_{\quad 10}  = \begin{blockarray}{cc}
5 & 12 \\
\begin{block}{[cc]}
 \frac{1}{\q{2}} &\frac{\sqrt{\q{3}}}{\q{2}}\\
\frac{\sqrt{\q{3}}}{\q{2}}&  \frac{\q{3}}{\q{2}} \\
\end{block}
\end{blockarray} \qquad U^{10}_{\quad 9}  = \begin{blockarray}{cc}
6 & 7 \\
\begin{block}{[cc]}
 \frac{\q{3}}{\q{4}} & \mathbf{i}\frac{\q{3}}{\q{4}}\\
-\mathbf{i} \frac{\q{3}}{\q{4}}&   \frac{\q{3}}{\q{4}}\\
\end{block}
\end{blockarray} 
\quad 
\end{align*}
The unlabeled row/column orderings are
\begin{align*}
&&U^{6}_{\quad 5} &: \{3^{\alpha_1},3^{\alpha_2},9,11\}
&&U^{5}_{\quad 7} &&: \{10,8, 5^{\beta_1},5^{\beta_2}\}
&&U^{5}_{\quad 1} &: \{{}^{\beta_1}4,{}^{\beta_2}4\}
&&U^{10}_{\quad 3} &&: \{6,7\}\\
&&U^{8}_{\quad 3} &: \{6,7\}
&&U^{4}_{\quad 9} &&: \{7,6\}
&&U^{7}_{\quad 12} &: \{11,9\}
&&U^{4}_{\quad 11} &&: \{7,6\}\\
&&U^{6}_{\quad 12} &: \{11,9\}
&&U^{9}_{\quad 4} &&: \{5^{\beta_1},5^{\beta_2}\}
&&U^{12}_{\quad 6} &: \{10,8\}
&&U^{12}_{\quad 7} &&: \{8,10\}\\
&&U^{5}_{\quad 2} &: \{{}^{\beta_1}4,{}^{\beta_2}4\}
&&U^{9}_{\quad 10} &&: \{12,5\}
&&U^{11}_{\quad 8} &: \{12,5\}
&&U^{11}_{\quad 4} &&: \{5^{\beta_1},5^{\beta_2}\}
\end{align*}
This solution can be found in the Mathematica file "k=4/Conformal Inclusion/Solutions/w=1/Solution.nb". A computer verifies relations (R1), (R2), (R3), and (Hecke) in just under 3 minutes. A record of this verification can be found in "k=4/Conformal Inclusion/Solutions/w=1/Verification.nb".

Solving the linear equations (RI) and (BA) give a 1-dimensional solution space for the B-cells, which we normalise to satisfy (N). This solution for the B-cells can also be found in the Mathematica file "k=4/Conformal Inclusion/Solutions/w=1/Solution/w=1.nb". A computer verifies relations (RI), (BA), and (N) for this solution in 15 seconds. A record of this can be found in "k=4/Conformal Inclusion/Solutions/w=1/Verification.nb". As we have a solution to the KW cell system on $\Gamma^{4,4,\subset}_{\Lambda_1}$, we have the following theorem.

\begin{thm}
There exists a rank 12 module category $\mathcal{M}$ for $\mathcal{C}(\mathfrak{sl}_4, 4)$ such that the fusion graph for action by $\Lambda_1$ is $\Gamma^{4,4,\subset}_{\Lambda_1}$.
\end{thm}

We also find KW cell system solutions on the graph $\Gamma^{4,4,\subset}_{\Lambda_1}$ when $\omega\in \{-1,\mathbf{i}, -\mathbf{i}\}$. The solutions and verification of these solutions can be found in the folder ``k=4/Conformal Inclusion/Solutions".

\begin{thm}
For each $\omega \in \{-1, \mathbf{i}, -\mathbf{i}\}$ there exists a rank 12 module category $\mathcal{M}$ for $\overline{\operatorname{Rep}(U_{e^{2\pi i \frac{1}{16}}}(\mathfrak{sl}_4))^\omega}$ such that the fusion graph for action by $\Lambda_1$ is $\Gamma^{4,4,\subset}_{\Lambda_1}$.
\end{thm}

\subsection{An Exceptional Module for $SU(4)$ at Level $6$}\label{subsec:6I}
We will construct a cell system on the following graph for $N=4$ and $q = e^{2\pi i \frac{1}{20}}$ (i.e. a module category for $\mathcal{C}(\mathfrak{sl}_4, 6)$).
\[\Gamma_{\Lambda_1}^{4,6,\subset}= \raisebox{-.5\height}{ \includegraphics[scale = .6]{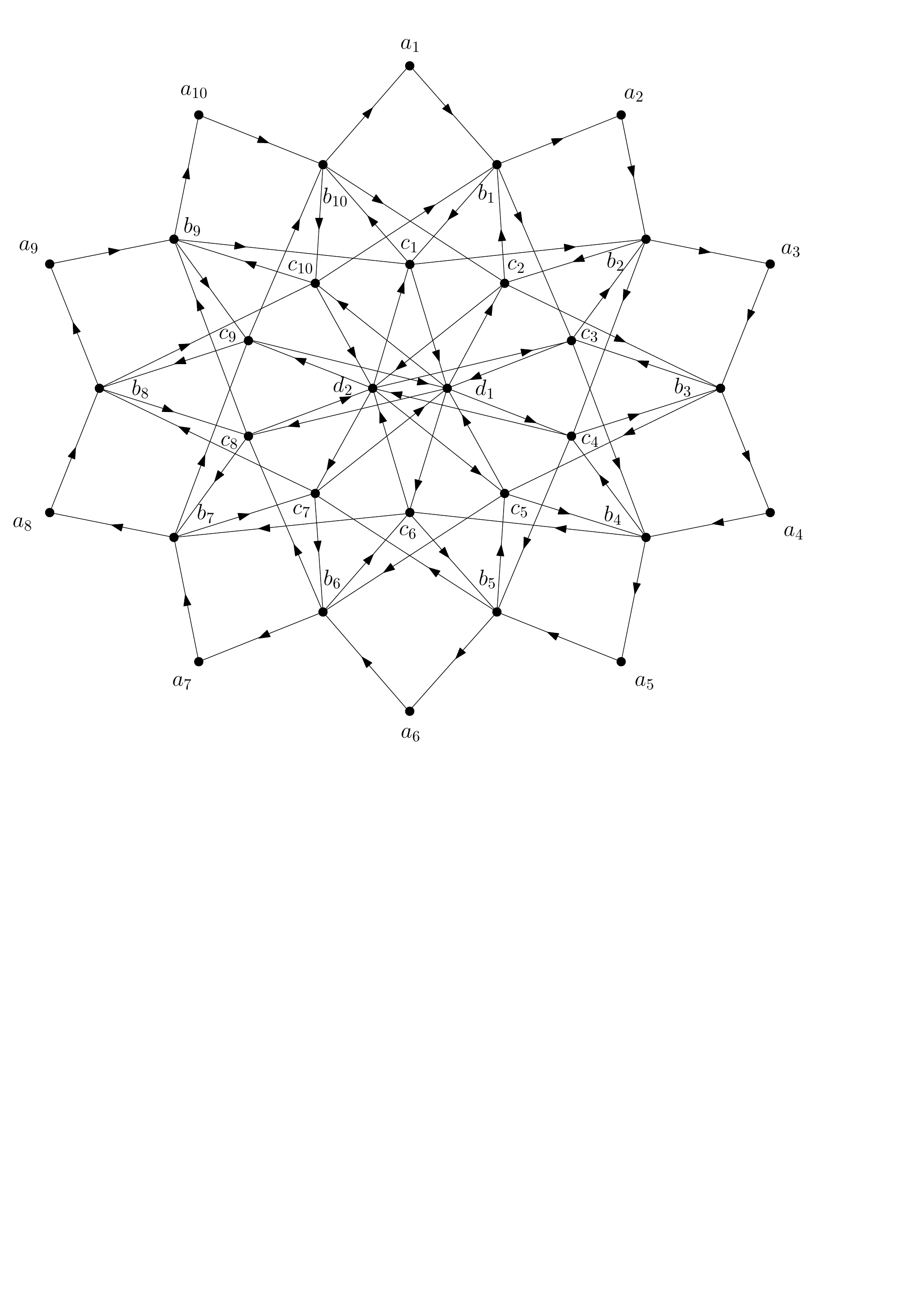}}\]
\begin{remark}
For ease of notation, we will assume the subscripts of the $a,b$, and $c$ vertices are taken mod $10$ (so that $a_{11} = a_{1}$ ect.), and the subscripts of the $d$ vertices are taken mod 2 (so that $d_{3} = d_{1}$).
\end{remark}
This graph has the positive eigenvector $\lambda$ (with eigenvalue $[4]_q$):
\[ \lambda_{a_i} = 1, \qquad \lambda_{b_i} = [4]_q, \qquad \lambda_{c_i} = \frac{[3]_q[4]_q}{[2]_q}, \qquad \lambda_{d_i} = \frac{[2]_q[4]^2_q}{[3]_q}.   \]

The graph for action by $\Lambda_2$ we assume to be 
\[\Gamma_{\Lambda_2}^{4,6,\subset}=\raisebox{-.5\height}{ \includegraphics[scale = .6]{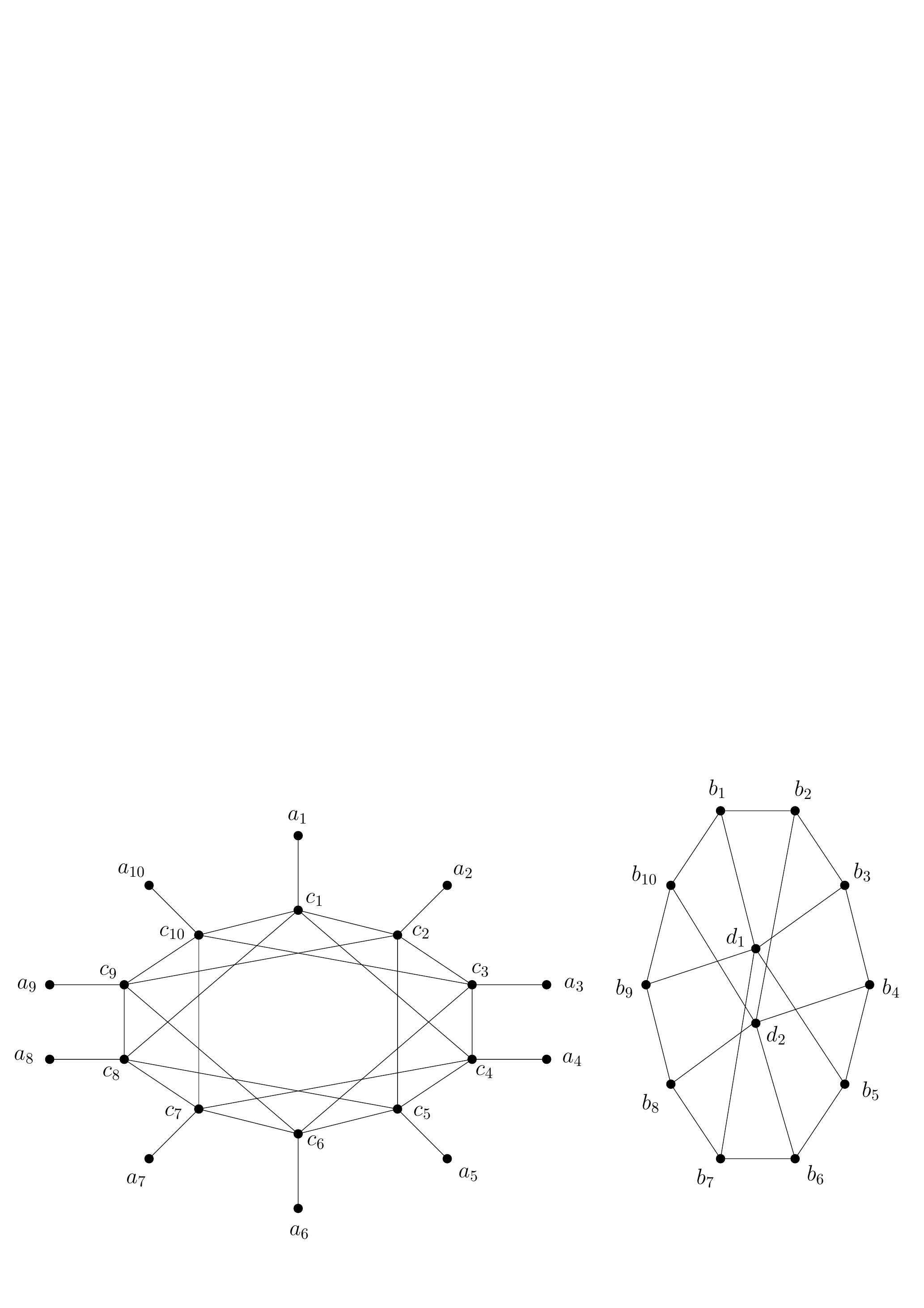}}\]

The above data for this graph can also be found in the Mathematica file ``k=6/Conformal Inclusion/Data.nb".
 
To try find a cell system, we begin by assuming that the $U$ cells are invariant under the $\mathbb{Z}_{10}$ symmetry of the graph, and that the coefficients of the $U$ cells are all real. These assumptions reduce the complexity to a level that a computer can quickly find the following solution (hence justifying our assumptions):
\begin{align*}
     U^{a_i}_{\quad a_{i+1}} &= U^{a_i}_{\quad c_{i+2}}= 0,      & U^{a_i}_{\quad c_{i}} &= [2]_q \\
     U^{b_i}_{\quad b_{i+1}} &=   \frac{1}{[2]^{\frac{3}{2}}_q}\begin{blockarray}{ccc}
a_{i+1} & c_i & c_{i+1}  \\
\begin{block}{[ccc]}
  \sqrt{[2]_q[3]_q}  & \sqrt{[4]_q}& \sqrt{[4]_q}\\
  \sqrt{[4]_q} &\sqrt{[2]_q} &\sqrt{[2]_q}\\
 \sqrt{[4]_q}&\sqrt{[2]_q} &\sqrt{[2]_q}\\
\end{block}
\end{blockarray}
&  U^{b_i}_{\quad b_{i+3}} &=0\\
 U^{b_i}_{\quad d_{i}} &= \frac{1}{[2]_q}\begin{blockarray}{cc}
  c_i & c_{i+2}  \\
\begin{block}{[cc]}
  1  & \sqrt{[3]_q}\\
  \sqrt{[3]_q} &[3]_q\\
\end{block}
\end{blockarray} &  U^{b_i}_{\quad b_{i-1}} &=[2]_q \qquad U^{c_i}_{\quad a_{i}} =
[2]_q\\
  U^{c_i}_{\quad c_{i+1}} &= \frac{1}{[3]^{\frac{3}{2}}_q}\begin{blockarray}{ccc}
b_{i-1} & b_{i+1} & d_{i}  \\
\begin{block}{[ccc]}
 [4]_q  & -[4]_q &-\sqrt{[2]_q [3]_q }\\
 -[4]_q  & [4]_q  &\sqrt{[2]_q [3]_q }\\
-\sqrt{[2]_q [3]_q }&\sqrt{[2]_q [3]_q } &[2]_q\\
\end{block}
\end{blockarray}& U^{c_i}_{\quad a_{i+2}}&=U^{c_i}_{\quad c_{i+5}}=  U^{c_i}_{\quad a_{i-3}} = 0 \\
U^{c_i}_{\quad c_{i+3}} &= \frac{1}{[3]_q}\begin{blockarray}{cc}
b_{i+1}  & d_{i}  \\
\begin{block}{[cc]}
 [4]_q  & \sqrt{[2]_q[4]_q}\\
 \sqrt{[2]_q[4]_q}  & [2]_q  \\
\end{block}
\end{blockarray} & U^{c_i}_{\quad c_{i-1}} &= \frac{1}{[3]_q}\begin{blockarray}{cc}
b_{i-1}  & d_{i}  \\
\begin{block}{[cc]}
 [2]_q  & \sqrt{[2]_q[4]_q}\\
 \sqrt{[2]_q[4]_q}  & [4]_q  \\
\end{block}
\end{blockarray} & & \\
U^{d_i}_{\quad d_{i+1}} &= \frac{1}{[2]_q^2[4]_q}\begin{blockarray}{ccccc}
c_{i+1}  & c_{i+3}& c_{i+5}&c_{i+7}&c_{i+9}  \\
\begin{block}{[ccccc]}
 [3]_q[5]_q  & [3]_q & -[3]^2_q & -[3]_q^2 & [3]_q\\
 \hspace{.01em}[3]_q & [3]_q[5]_q  & [3]_q & -[3]^2_q & -[3]_q^2 \\
 -[3]_q^2 & [3]_q & [3]_q[5]_q  & [3]_q & -[3]^2_q  \\
  -[3]^2_q & -[3]_q^2 & [3]_q & [3]_q[5]_q  & [3]_q   \\
    \hspace{.01em} [3]_q  &   -[3]^2_q & -[3]_q^2 & [3]_q & [3]_q[5]_q \\
\end{block}
\end{blockarray}  &  U^{d_i}_{\quad b_{j}} &= \frac{1}{[2]_q}\begin{blockarray}{cc}
c_{j-1}  & c_{j+1}  \\
\begin{block}{[cc]}
 [3]_q  & -\sqrt{[3]_q}\\
 -\sqrt{[3]_q}  & 1  \\
\end{block}
\end{blockarray}  &   &  \\
\end{align*}
This solution can be found in the Mathematica file "k=6/Conformal Inclusion/Solutions/w=1/Solution.nb". A computer verifies relations (R1), (R2), (R3), and (Hecke) in just over 3 minutes. A record of this verification can be found in "k=6/Conformal Inclusion/Solutions/w=1/Verification.nb".

We can now solve the linear system given by equations (RI) and (BA) to determine the $B$ cells up to a single scalar. Solving relation (N) determines the norm of this scalar. Hence we have a solution for the $B$ cells, up to a single phase. Fixing a natural choice for this phase gives a concrete solution to a KW cell system on $\Gamma_{\Lambda_1}^{4,6,\subset}$. This solution can be found in ``k=6/Conformal Inclusion/Solutions/w=1/Solution.nb".
\begin{remark}
Note that these $B$ coefficients have projective $\mathbb{Z}_{10}$ symmetry, with factor $-1$ for a one-click rotation.
\end{remark}


A computer verifies relations (RI), (BA), and (N) for our solution in under half a minute. As a consequence of our computations, we have the following result.
\begin{thm}
There exists a rank 32 module category $\mathcal{M}$ for $\mathcal{C}(\mathfrak{sl}_4, 6)$ such that the fusion graph for action by $\Lambda_1$ is $\Gamma_{\Lambda_1}^{4,6,\subset}$.
\end{thm}
From \cite{ModulesPt2, LevelTerry}, there is a unique rank 32 module category over $\mathcal{C}(\mathfrak{sl}_4, 6)$, and it has the additional structure of a fusion category. Hence the rank 32 module category we have constructed must be this fusion category. In particular, the adjoint subcategory of this fusion category is an interesting $3^{\mathbb{Z}_5}$ quadratic category.

We also find KW cell system solutions on the graph $\Gamma^{4,6,\subset}_{\Lambda_1}$ when $\omega\in \{-1,\mathbf{i}, -\mathbf{i}\}$. The solutions and verification of these solutions can be found in the folder``k=6/Conformal Inclusion/Solutions".

\begin{thm}
For each $\omega \in \{-1, \mathbf{i}, -\mathbf{i}\}$ there exists a rank 32 module category $\mathcal{M}$ for $\overline{\operatorname{Rep}(U_{e^{2\pi i \frac{1}{20}}}(\mathfrak{sl}_4))^\omega}$ such that the fusion graph for action by $\Lambda_1$ is $\Gamma^{4,6,\subset}_{\Lambda_1}$.
\end{thm}

\subsection{A Second Exceptional Module for $SU(4)$ at Level $6$}\label{sub:k6excep2}
We will construct a cell system on the following graph where $N=4$ and $q = e^{2\pi i \frac{1}{20}}$.
\[\Gamma_{\Lambda_1}^{4,6,\subset_{\mathbb{Z}_5}}=\raisebox{-.5\height}{ \includegraphics[scale = .6]{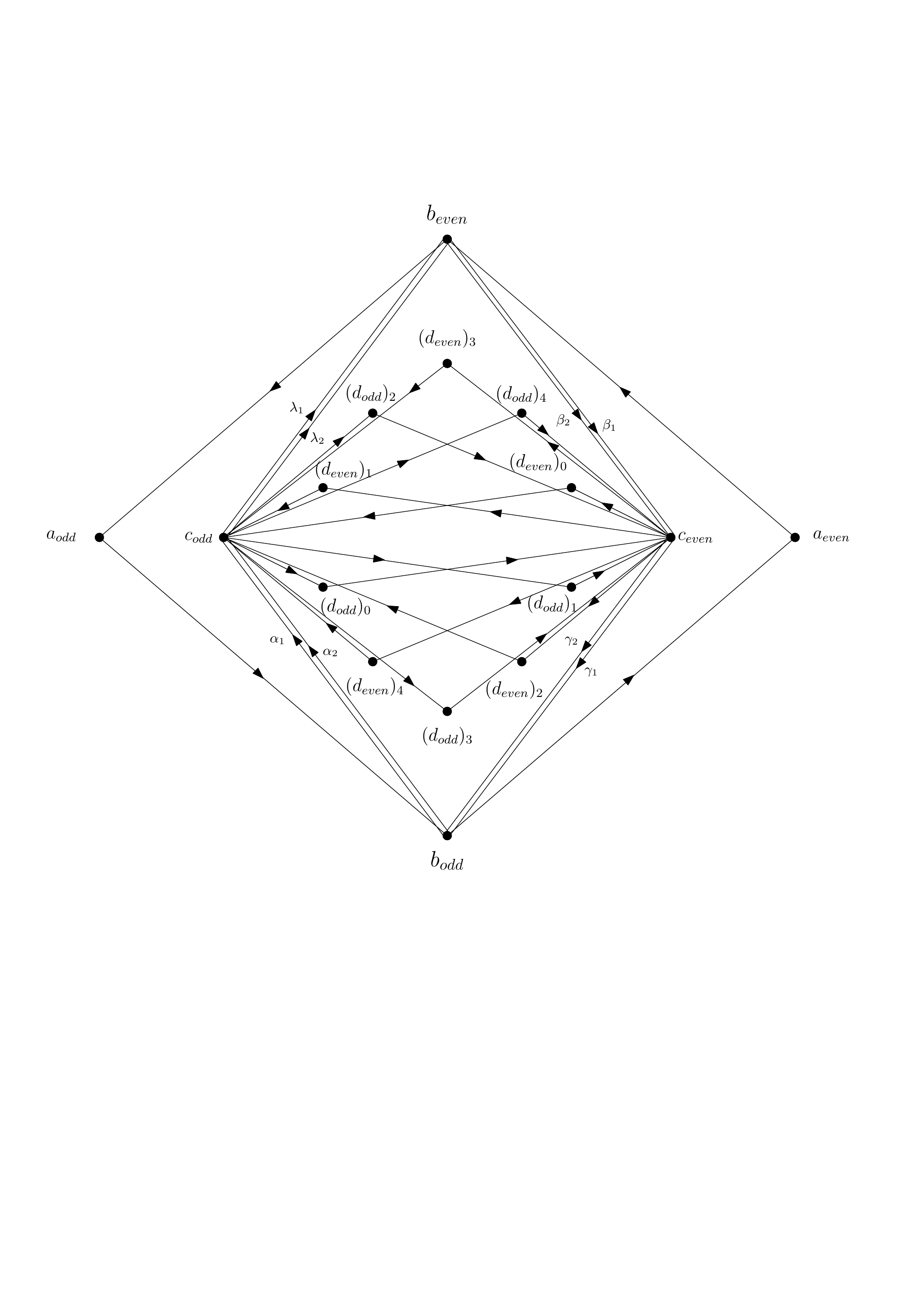}}\]

This graph has the positive eigenvector $\lambda$ (with eigenvalue $[4]_q$):
\[\lambda_{a_{odd}} = \lambda_{a_{even}} = 1, \quad  \lambda_{b_{odd}} = \lambda_{b_{even}} = \q{4},\quad \lambda_{c_{odd}} = \lambda_{c_{even}} =\frac{\q{3}\q{4}}{\q{2}},\quad \lambda_{(d_{odd})_i} = \lambda_{(d_{even})_i} =\frac{\q{3}}{\q{2}}. \]

The graph for action by $\Lambda_2$ we assume to be
\[  \Gamma_{\Lambda_2}^{4,6,\subset_{\mathbb{Z}_5}} =\raisebox{-.5\height}{ \includegraphics[scale = .6]{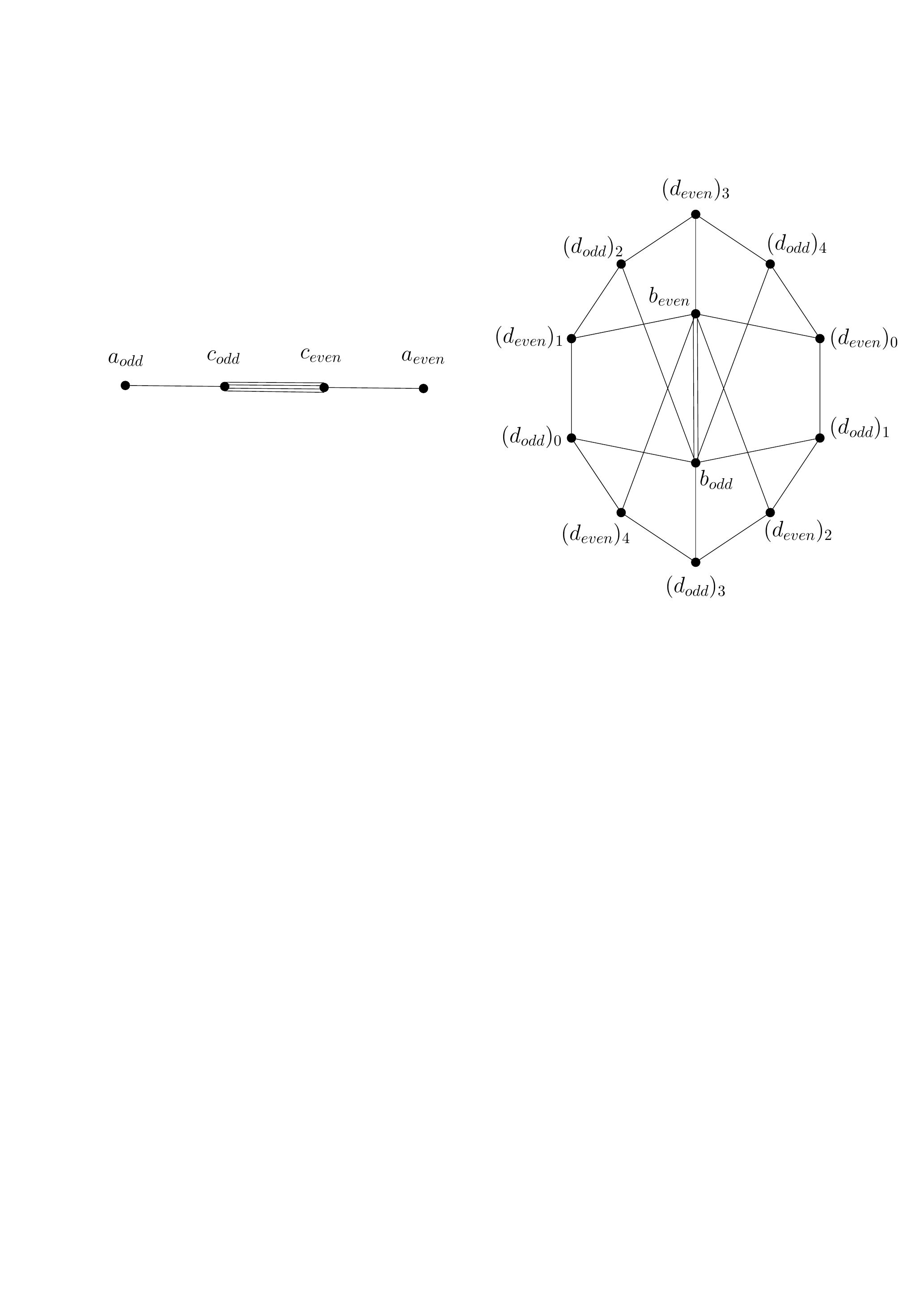}} \]

The above data for this graph can also be found in the Mathematica file ``k=6/Conjugate/Data.nb".

Recall the $\mathbb{Z}_5$ symmetry on $\Gamma_{\Lambda_1}^{4,6,\subset}$ from Subsection~\ref{subsec:6I}. By making the natural orbifold identifications of $a_{odd} \leftrightarrow \{a_i \mid \text{ i odd }\}$ ect. with the $\mathbb{Z}_5$ orbifold of $\Gamma_{\Lambda_1}^{4,6,\subset}$, along with 
\begin{align*}
    \alpha_1 &\leftrightarrow \{b_i \to c_i \mid i \text{ odd}\}\quad
    \alpha_2 \leftrightarrow \{b_i \to c_{ i+2} \mid i\text{ odd}\}\quad
    \beta_1 \leftrightarrow \{b_i \to c_i \mid i\mid i\text{ even}\}\quad
    \beta_2 \leftrightarrow \{b_i \to c_{i \mid i+2} \mid i\text{ even}\}\\
    \gamma_1 &\leftrightarrow \{c_i \to b_{i-1} \mid i\text{ even}\}\quad
    \gamma_2 \leftrightarrow \{c_i \to b_{i+1} \mid i\text{ even}\}\quad
    \lambda_1 \leftrightarrow \{c_i \to b_{i-1} \mid i\text{ odd}\}\quad
    \lambda_2 \leftrightarrow \{c_i \to b_{i+1}\mid i\text{ odd}\}\\
\end{align*} 
we can determine the U-cells for any pair of paths not passing through either $(d_{odd})_i$ or $(d_{even})_i$ by taking representatives. For example, the cell $U^{a_{odd}, b_{odd}^{\alpha_1}}_{b_{odd}^{\alpha_1}, c_{odd}}$ is equal to $U^{a_1, b_1}_{b_1, c_1} = [2]_q$, where as $U^{a_{odd}, b_{odd}^{\alpha_2}}_{b_{odd}^{\alpha_2}, c_{odd}}$ is equal to $U^{a_1, b_1}_{b_1, c_3}=0$. Note that the representatives of $\alpha_1$ and $\alpha_2$ do not connect in $\Gamma_{\Lambda_1}^{4,6,\subset}$, which implies $U^{a_{odd}, b_{odd}^{\alpha_1}}_{b_{odd}^{\alpha_2}, c_{odd}}=0$. Together we deduce
\[  U^{a_{odd}}_{\quad c_{odd}} =  \begin{blockarray}{cc}
b_{odd}^{\alpha_1}  & b_{odd}^{\alpha_2}  \\
\begin{block}{[cc]}
 [2]_q  & 0\\
0   & 0\\
\end{block}
\end{blockarray}.    \]

This non-rigorous procedure gives us the U-cells for any paths which do not pass through either $(d_{odd})_i$ or $(d_{even})_i$. Recall the U-cells for the graph $\Gamma_{\Lambda_1}^{4,6,\subset}$ satisfied $\mathbb{Z}_{10}$ symmetry. As we only used $\mathbb{Z}_5$ symmetry for the orbifold, we naivly assume that the U-cells for $\Gamma_{\Lambda_1}^{4,6,\mathbb{Z}_5}$ have $\mathbb{Z}_2$ symmetry under the following graph automorphism:
\[ a_{odd}\leftrightarrow a_{even}, \quad b_{odd}\leftrightarrow b_{even} , \quad c_{odd}\leftrightarrow c_{even}, \quad (d_{odd})_i\leftrightarrow (d_{even})_i, \quad \alpha_i \leftrightarrow \beta_i ,\quad \gamma_i \leftrightarrow \lambda_i .  \]

This gives us enough of a seed to solve the remaining U-cells, with the help of the additional equation Tr($U_1$) (for this case, the equation Tr($U_1U_2$) gives no additional information). The remaining cells (up to the above symmetry) are:


\[\resizebox{\hsize}{!}{$U^{b_{odd}}_{\quad (d_{odd})_i} = \frac{1}{[2]_q}\begin{blockarray}{cc}
{}^{\alpha_1} c_{odd}  & {}^{\alpha_2} c_{odd}  \\
\begin{block}{[cc]}
1  & \zeta_5^i\sqrt{[3]_q}\\
\zeta_5^{-i} \sqrt{[3]_q}  & [3]_q  \\
\end{block}
\end{blockarray}\quad 
U^{(d_{odd})_i}_{\quad b_{odd}} = \frac{1}{[2]_q}\begin{blockarray}{cc}
c_{even}^{\gamma_1}  &c_{even}^{ \gamma_2  }\\
\begin{block}{[cc]}
1  & -\zeta_5^i\sqrt{[3]_q}\\
-\zeta_5^{-i} \sqrt{[3]_q}  & [3]_q  \\
\end{block}
\end{blockarray}\quad U^{(d_{odd})_i}_{\quad (d_{even})_j}=\begin{cases}
[2]_q & i =  j \pm 1 \pmod 5\\
0  &\text{otherwise}
\end{cases}$}\]
\[  \resizebox{\hsize}{!}{$U^{c_{odd}}_{\quad c_{even}}=\begin{blockarray}{ccccccccc}
(d_{odd})_0  & (d_{odd})_1 & (d_{odd})_2& (d_{odd})_3& (d_{odd})_4  &  {}^{\lambda_1}b_{even}^{\beta_1}&  {}^{\lambda_1}b_{even}^{\beta_2}&  {}^{\lambda_2}b_{even}^{\beta_1}&  {}^{\lambda_2}b_{even}^{\beta_2}\\
\begin{block}{[ccccccccc]}
\frac{\sqrt{[3]_q}}{[2]_q}&\frac{-1}{[2]_q\sqrt{[3]_q}}&\frac{-\zeta_5}{[2]_q\sqrt{[3]_q}}&\frac{-\zeta_5^{-1}}{[2]_q\sqrt{[3]_q}}&\frac{1}{[2]_q\sqrt{[3]_q}} & \frac{1}{\sqrt{[2]_q[4]_q}} &\frac{-1}{\sqrt{[2]_q[3]_q[4]_q}}& \frac{1}{\sqrt{[2]_q[3]_q[4]_q}}& \frac{1}{\sqrt{[2]_q[4]_q}} \\
\frac{-1}{[2]_q\sqrt{[3]_q}}&\frac{\sqrt{[3]_q}}{[2]_q}&\frac{-1}{[2]_q\sqrt{[3]_q}}&\frac{-\zeta_5}{[2]_q\sqrt{[3]_q}}&\frac{-\zeta_5^{-1}}{[2]_q\sqrt{[3]_q}} & \frac{\zeta_5^{4}}{\sqrt{[2]_q[4]_q}} &\frac{-\zeta_5}{-\sqrt{[2]_q[3]_q[4]_q}}& \frac{\zeta_5}{\sqrt{[2]_q[3]_q[4]_q}}& \frac{\zeta_5^{3}}{\sqrt{[2]_q[4]_q}} \\
\frac{-\zeta_5^{-1}}{[2]_q\sqrt{[3]_q}}&\frac{-1}{[2]_q\sqrt{[3]_q}}&\frac{\sqrt{[3]_q}}{[2]_q}&\frac{-1}{[2]_q\sqrt{[3]_q}}&\frac{-\zeta_5}{[2]_q\sqrt{[3]_q}} & \frac{\zeta_5^{3}}{\sqrt{[2]_q[4]_q}} &\frac{-\zeta_5^{2}}{\sqrt{[2]_q[3]_q[4]_q}}& \frac{\zeta_5^{2}}{\sqrt{[2]_q[3]_q[4]_q}}& \frac{\zeta_5}{\sqrt{[2]_q[4]_q}} \\
\frac{-\zeta_5}{[2]_q\sqrt{[3]_q}}&\frac{-\zeta_5^{-1}}{[2]_q\sqrt{[3]_q}}&\frac{-1}{[2]_q\sqrt{[3]_q}}&\frac{\sqrt{[3]_q}}{[2]_q}&\frac{-1}{[2]_q\sqrt{[3]_q}}& \frac{\zeta_5^{2}}{\sqrt{[2]_q[4]_q}} &\frac{-\zeta_5^{3}}{\sqrt{[2]_q[3]_q[4]_q}}& \frac{\zeta_5^{3}}{\sqrt{[2]_q[3]_q[4]_q}}& \frac{\zeta_5^{4}}{\sqrt{[2]_q[4]_q}} \\
\frac{-1}{[2]_q\sqrt{[3]_q}}&\frac{-\zeta_5}{[2]_q\sqrt{[3]_q}}&\frac{-\zeta_5^{-1}}{[2]_q\sqrt{[3]_q}}&\frac{-1}{[2]_q\sqrt{[3]_q}}&\frac{\sqrt{[3]_q}}{[2]_q} & \frac{\zeta_5}{\sqrt{[2]_q[4]_q}} &\frac{-\zeta_5^{4}}{\sqrt{[2]_q[3]_q[4]_q}}& \frac{\zeta_5^{4}}{\sqrt{[2]_q[3]_q[4]_q}}& \frac{\zeta_5^{2}}{\sqrt{[2]_q[4]_q}} \\
\frac{1}{\sqrt{[2]_q[4]_q}}&\frac{\zeta_5}{\sqrt{[2]_q[4]_q}}&\frac{\zeta_5^{2}}{\sqrt{[2]_q[4]_q}}&\frac{\zeta_5^{3}}{\sqrt{[2]_q[4]_q}}&\frac{\zeta_5^{4}}{\sqrt{[2]_q[4]_q}}& \frac{[2]_q}{[3]_q}&0 &0 &0  \\ 
\frac{-1}{\sqrt{[2]_q\q{3}[4]_q}}&\frac{-\zeta_5^{4}}{\sqrt{[2]_q\q{3}[4]_q}}&\frac{-\zeta_5^{3}}{\sqrt{[2]_q\q{3}[4]_q}}&\frac{-\zeta_5^{2}}{\sqrt{[2]_q\q{3}[4]_q}}&\frac{-\zeta_5}{\sqrt{[2]_q\q{3}[4]_q}}&0 &\frac{[4]_q}{[3]_q^{\frac{3}{2}}} &-\frac{[4]_q}{[3]_q^{\frac{3}{2}}} & 0 \\ 
\frac{1}{\sqrt{[2]_q\q{3}[4]_q}}&\frac{\zeta_5^{4}}{\sqrt{[2]_q\q{3}[4]_q}}&\frac{\zeta_5^{3}}{\sqrt{[2]_q\q{3}[4]_q}}&\frac{\zeta_5^{2}}{\sqrt{[2]_q\q{3}[4]_q}}&\frac{\zeta_5}{\sqrt{[2]_q\q{3}[4]_q}}&0 &-\frac{[4]_q}{[3]_q^{\frac{3}{2}}} & \frac{[4]_q}{[3]_q^{\frac{3}{2}}}& 0 \\ 
\frac{1}{\sqrt{[2]_q[4]_q}}&\frac{\zeta_5^{2}}{\sqrt{[2]_q[4]_q}}&\frac{\zeta_5^{4}}{\sqrt{[2]_q[4]_q}}&\frac{\zeta_5}{\sqrt{[2]_q[4]_q}}&\frac{\zeta_5^{3}}{\sqrt{[2]_q[4]_q}}&0 &0 &0 & \frac{[4]_q}{[3]_q}  \\ 
\end{block}
\end{blockarray} $}  \]

This is all fairly straightforward to solve, apart from the $9\times 9$ block which requires a little bit of lucky guesswork. The full solution to the U-cells can be found in ``k=6/Orbifold/Solutions/w=1/Solution.nb". The ordering of our vertices in this file is the following:
\[\resizebox{\hsize}{!}{$\left(a_{odd}, b_{odd}, a_{even}, b_{even}, c_{odd}, c_{even}, (d_{even})_{4} ,(d_{odd})_{0},(d_{even})_{1} ,(d_{odd})_{2},(d_{even})_{3} ,(d_{odd})_{4} ,(d_{even})_{0} ,(d_{odd})_{1},(d_{even})_{2} ,(d_{odd})_{3}\right)  $}  \]

We use a computer to verify relations (R1), (R2), (R3), and (H) in under 30 minutes. A record of this verification can be found in ``k=6/Orbifold/Solutions/w=1/Verification.nb".

Solving the linear systems (RI) and (BA) yields (as always) a 1-dimensional solution space for the B-cells. Using (N) we pin our solution down to a free phase, which we make a natural choice for. The B-cells satisfy projective $\mathbb{Z}_2$ symmetry with respect to the earlier graph automorphism, with factor $-1$. The solution for the B-cells can also be found in ``k=6/Orbifold/Solutions/w=1/Solution.nb". A computer verifies relations (RI), (BA), and (N) in under 15 seconds. A record of this verification can also be found in ``k=6/Orbifold/Solutions/w=1/Verification.nb".

\begin{thm}\label{thm:SU6C}
There exists a rank 16 module category $\mathcal{M}$ for $\mathcal{C}(\mathfrak{sl}_4, 6)$ such that the fusion graph for action by $\Lambda_1$ is $\Gamma_{\Lambda_1}^{4,6,\subset_{\mathbb{Z}_5}}$ 
\end{thm}

We also find KW cell system solutions on the graph $\Gamma_{\Lambda_1}^{4,6,\subset_{\mathbb{Z}_5}}$ when $\omega\in \{-1,\mathbf{i}, -\mathbf{i}\}$. The solutions and verification of these solutions can be found in the folder ``k=6/Orbifold/Solutions".

\begin{thm}
For each $\omega \in \{-1, \mathbf{i}, -\mathbf{i}\}$ there exists a rank 16 module category $\mathcal{M}$ for $\overline{\operatorname{Rep}(U_{e^{2\pi i \frac{1}{20}}}(\mathfrak{sl}_4))^\omega}$ such that the fusion graph for action by $\Lambda_1$ is $\Gamma_{\Lambda_1}^{4,6,\subset_{\mathbb{Z}_5}}$.
\end{thm}

Note that the U-cells for these KW cell systems solutions respect the $\mathbb{Z}_2$ symmetry from the start of this subsection. However the B-cells only respect the symmetry when $\omega = \pm \mathbf{i}$. This suggests that when $\omega = \pm \mathbf{i}$ we can orbifold the KW cell system solutions to obtain KW cell system solutions on the following $\mathbb{Z}_2$ orbifold graph of $\Gamma_{\Lambda_1}^{4,6,\subset_{\mathbb{Z}_5}}$.
\[  \Gamma_{\Lambda_1}^{4,6,\subset_{\mathbb{Z}_{10}}}:= \raisebox{-.5\height}{ \includegraphics[scale = .6]{su4610L1.pdf}}   \]
Note that by classification \cite{ModulesPt2} there can't be a module associated to this graph when $\omega = 1$. This example is interesting as it suggests that the classification of exceptional modules over $\overline{\operatorname{Rep}(U_{q}(\mathfrak{sl}_N))^\omega}$ is richer when $\omega \neq 1$.

As expected, we find solutions to the KW cell system on $\Gamma_{\Lambda_1}^{4,6,\subset_{\mathbb{Z}_{10}}}$ when $\omega = \pm \mathbf{i}$. These solutions and their verifications can be found in ``k=6/Orbifold2/Solutions". There appears to be no solution when $\omega = -1$.

\begin{thm}
For each $\omega \in \{\mathbf{i}, -\mathbf{i}\}$ there exists a rank 8 module category $\mathcal{M}$ for $\overline{\operatorname{Rep}(U_{e^{2\pi i \frac{1}{20}}}(\mathfrak{sl}_4))^\omega}$ such that the fusion graph for action by $\Lambda_1$ is $\Gamma_{\Lambda_1}^{4,6,\subset_{\mathbb{Z}_{10}}}$.
\end{thm}

\subsection{An Exceptional Module for $SU(4)$ at Level $8$}\label{subsec:8C}
We will construct a cell system on the following graph for $N=4$ and $q = e^{2\pi i \frac{1}{24}}$ (i.e. a module category for $\mathcal{C}(\mathfrak{sl}_4, 8)$).
\[\Gamma^{4,8,\subset_{\mathbb{Z}_2}}_{\Lambda_1}=\raisebox{-.5\height}{ \includegraphics[scale = .6]{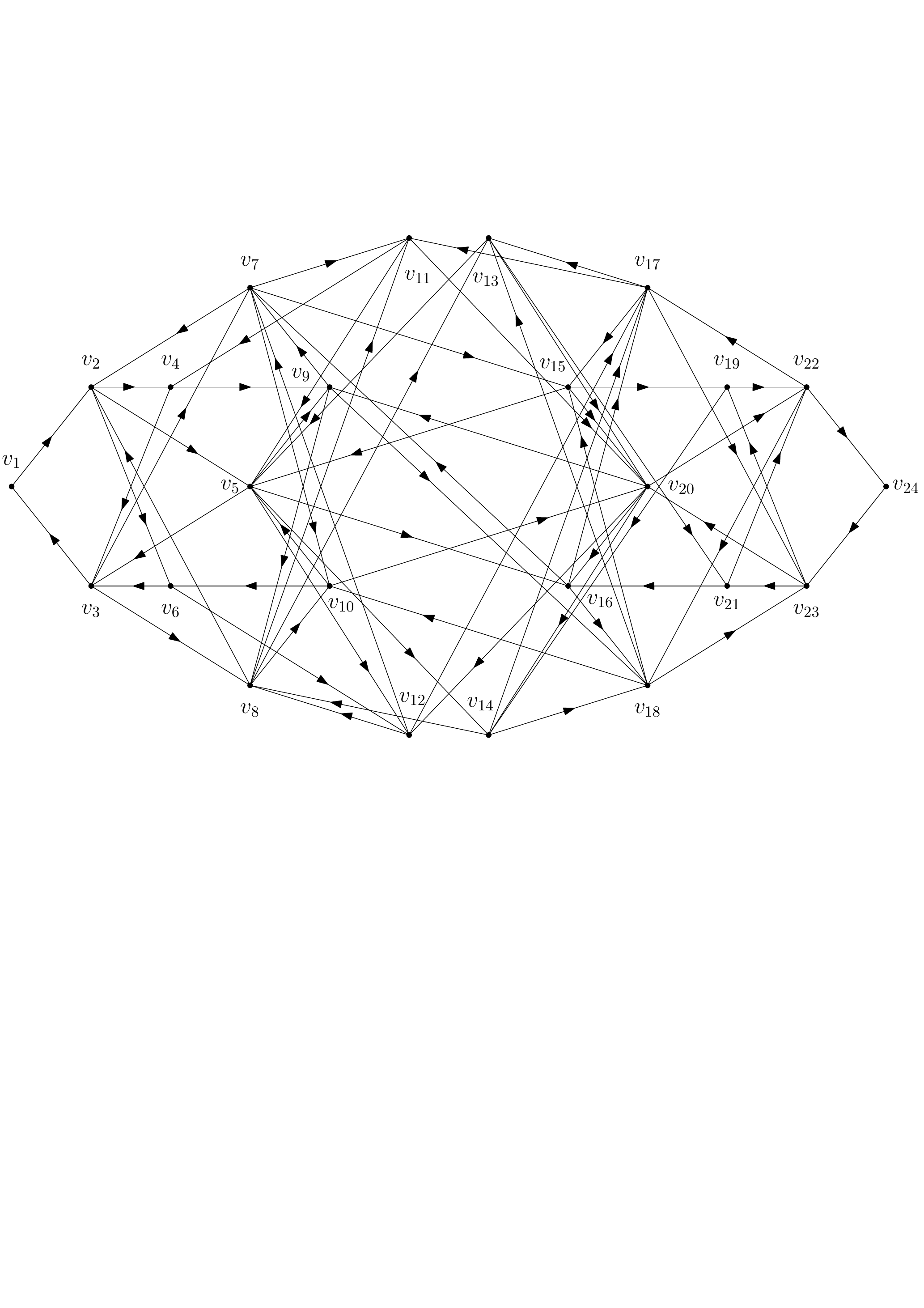}}\]

This graph has the positive eigenvector $\lambda$ (with eigenvalue $[4]_q$):
\begin{align*}
\lambda_{v_1} = \lambda_{v_{24}}& = 1,\quad\hspace{17em}
\lambda_{v_{2}}= \lambda_{v_{3}}= \lambda_{v_{22}}= \lambda_{v_{23} } &&=\q{4},\\
\lambda_{v_{4}}= \lambda_{v_{6}}= \lambda_{v_{19}}= \lambda_{v_{21} } &=\frac{ [4]_q [5]_q}{[2]_q  [3]_q},\quad\hspace{20em}
\lambda_{v_{5}}= \lambda_{v_{20}} &&=\frac{ [4]_q [5]_q}{ [2]_q}\\
\lambda_{v_{7}}= \lambda_{v_{8}}= \lambda_{v_{17}}= \lambda_{v_{18} } &=\frac{[2]_q [3]_q [5]_q}{[6]_q}\qquad
\lambda_{v_{9}}= \lambda_{v_{10}}= \lambda_{v_{11}}= \lambda_{v_{12} } =\lambda_{v_{13}}= \lambda_{v_{14}}= \lambda_{v_{15}}= \lambda_{v_{16} }&&=\frac{[3]_q  [4]_q [5]_q}{ [2]_q[6]_q}\\
\end{align*}

The graph for action by $\Lambda_2$ we assume to be:
\[\Gamma^{4,8,\subset_{\mathbb{Z}_2}}_{\Lambda_2}=\raisebox{-.5\height}{ \includegraphics[scale = .6]{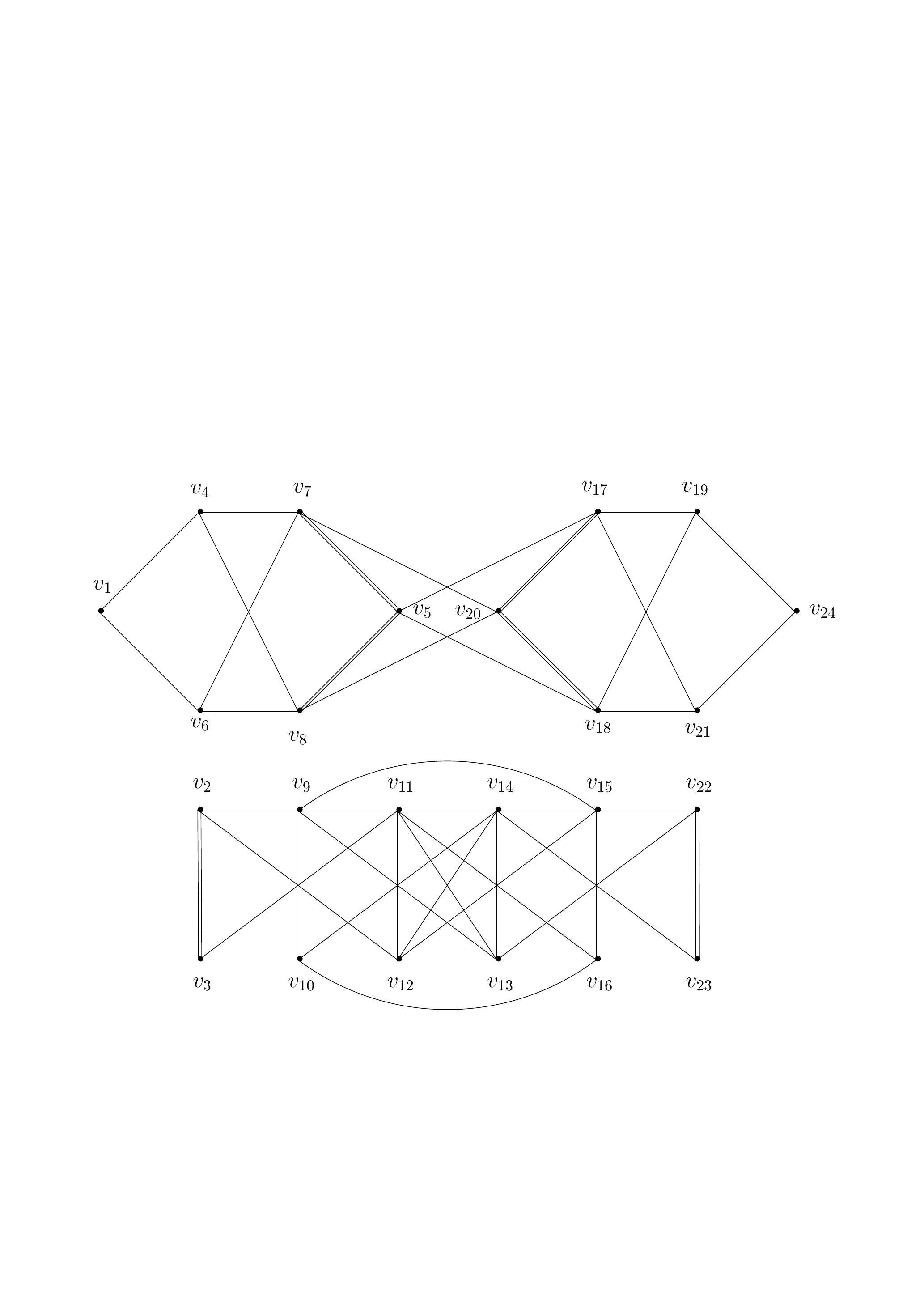}}\]

To try find a cell system on $\Gamma^{4,8,\subset_{\mathbb{Z}_2}}_{\Lambda_1}$, we begin by assuming that the coefficients of the $U$ cells are all real. We initially tried to find a solution invariant under the rotational 180 degree symmetry. However it appears that a solution with such symmetry does not exist.

To obtain a solution for the $U$ cells, we solve the linear systems (R1), (R2), and (Tr($U_1$)), and then numerically approximate (Tr($U_1U_2$)). This yields two solutions to a subset of the $U$ cells. We pick one of these solutions, and guess exact values for the coefficients (as products of half-integer powers of quantum integers). We then solve (Hecke) to determine nearly all of the remaining coefficients up to sign. Taking a subset of the equations from (R3) allows us to pin down the remaining coefficients, and to choose signs for our coefficients. The solution we obtain is too large (568 coefficients) to include here, so we include it in the Mathematica notebook ``k=8/Orbifold/Solutions/w=1/Solution.nb'', attached to the arXiv submission of this article.

A computer verifies relations (R1), (R2), (Hecke), and (R3) in just under 5 hours for our solution. A record of this verification can be found in ``k=8/Orbifold/Solutions/w=1/Verification.nb''

Solving the linear systems (RI) and (BA) gives a unique solution for the $B$ cells up to scalar, and we solve (N) to pin down the norm of this scalar. Fixing a natural gauge gives us a solution, which can be found in ``k=8/Orbifold/Solutions/w=1/Solution.nb''.

A computer verifies (RI), (BA), and (N) for this solution in under 30 seconds. A record of this verification can be found in ``k=8/Orbifold/Solutions/w=1/Verification.nb''. 

\begin{thm}
There exists a rank 24 module category $\mathcal{M}$ for $\mathcal{C}(\mathfrak{sl}_4, 8)$ such that the fusion graph for action by $\Lambda_1$ is $\Gamma^{4,8,\subset_{\mathbb{Z}_2}}_{\Lambda_1}$.
\end{thm}

We also find KW cell system solutions on the graph $\Gamma^{4,8,\subset_{\mathbb{Z}_2}}_{\Lambda_1}$ when $\omega\in \{-1,\mathbf{i}, -\mathbf{i}\}$. The solutions and verification of these solutions can be found in the folder``k=8/Orbifold/Solutions".

\begin{thm}
For each $\omega \in \{-1, \mathbf{i}, -\mathbf{i}\}$ there exists a rank 24 module category $\mathcal{M}$ for $\overline{\operatorname{Rep}(U_{e^{2\pi i \frac{1}{24}}}(\mathfrak{sl}_4))^\omega}$ such that the fusion graph for action by $\Lambda_1$ is $\Gamma^{4,8,\subset_{\mathbb{Z}_2}}_{\Lambda_1}$.
\end{thm}

\subsection{A Second Exceptional Module for $SU(4)$ at Level $8$}\label{sub:k8excep2}
In this subsection we construct a cell system with parameters $N=4$ and $q = e^{2\pi i \frac{1}{24}}$ on the following graph:
\[\Gamma_{\Lambda_1}^{4,8,\subset} =\raisebox{-.5\height}{ \includegraphics[scale = .6]{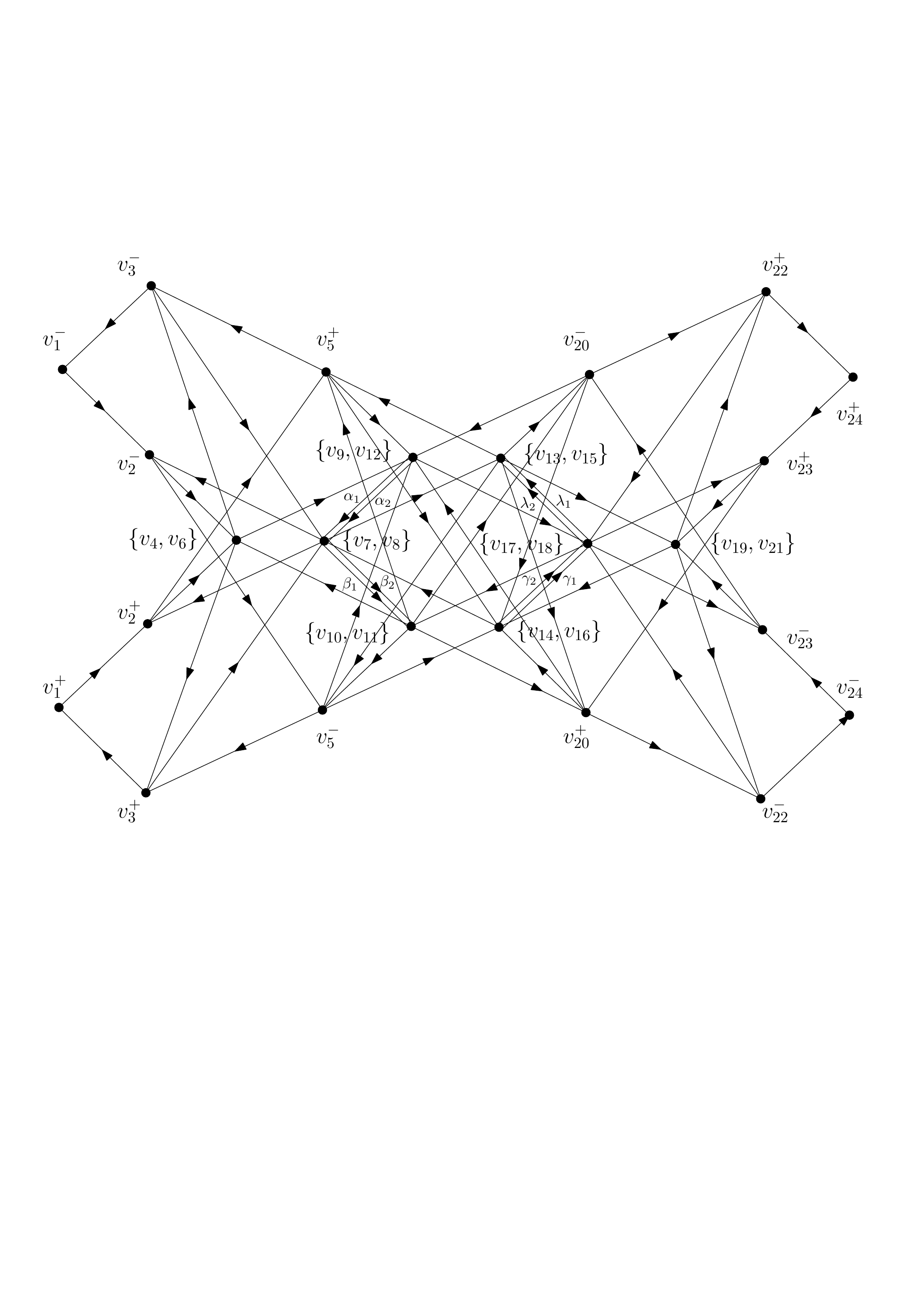}} \]
and hence an exceptional module category over $\overline{\operatorname{Rep}\left(U_{e^{2\pi i \frac{1}{24}}}(\mathfrak{sl}_4)\right)}$. This module will correspond the the conformal inclusion $(SU(4))_8 \subset (SO(20))_1$.

The positive eigenvector for $\Gamma_{\Lambda_1}^{4,8,\subset}$ is
\begin{align*}
&\lambda_{v_1^\pm} = \lambda_{v_{24}^\pm} &&=1 \qquad &&\lambda_{v_2^\pm} = \lambda_{v_3^\pm}=\lambda_{v_{22}^\pm} = \lambda_{v_{23}^\pm}&&=[4]_q \\
&\lambda_{v_5^\pm} = \lambda_{v_{20}^\pm} &&= \frac{\q{4}\q{5}}{\q{2}} \qquad &&\lambda_{\{v_{9}, v_{12}\}} = \lambda_{\{v_{10}, v_{11}\}}=\lambda_{\{v_{13}, v_{15}\}} = \lambda_{\{v_{14}, v_{16}\}}&&=\frac{\q{4}\q{5}\q{6}}{\q{2}\q{3}}\\
&\lambda_{\{v_{4}, v_{6}\}}=\lambda_{\{v_{19}, v_{21}\}} &&=\frac{\q{4}\q{6}}{\q{3}} \qquad &&\lambda_{\{v_{7}, v_{8}\}}=\lambda_{\{v_{17}, v_{18}\}} &&=\q{3}\q{5}
\end{align*} 

We assume the graph for action by $\Lambda_2$ is 
\[\Gamma_{\Lambda_2}^{4,8,\subset} =\raisebox{-.5\height}{ \includegraphics[scale = .6]{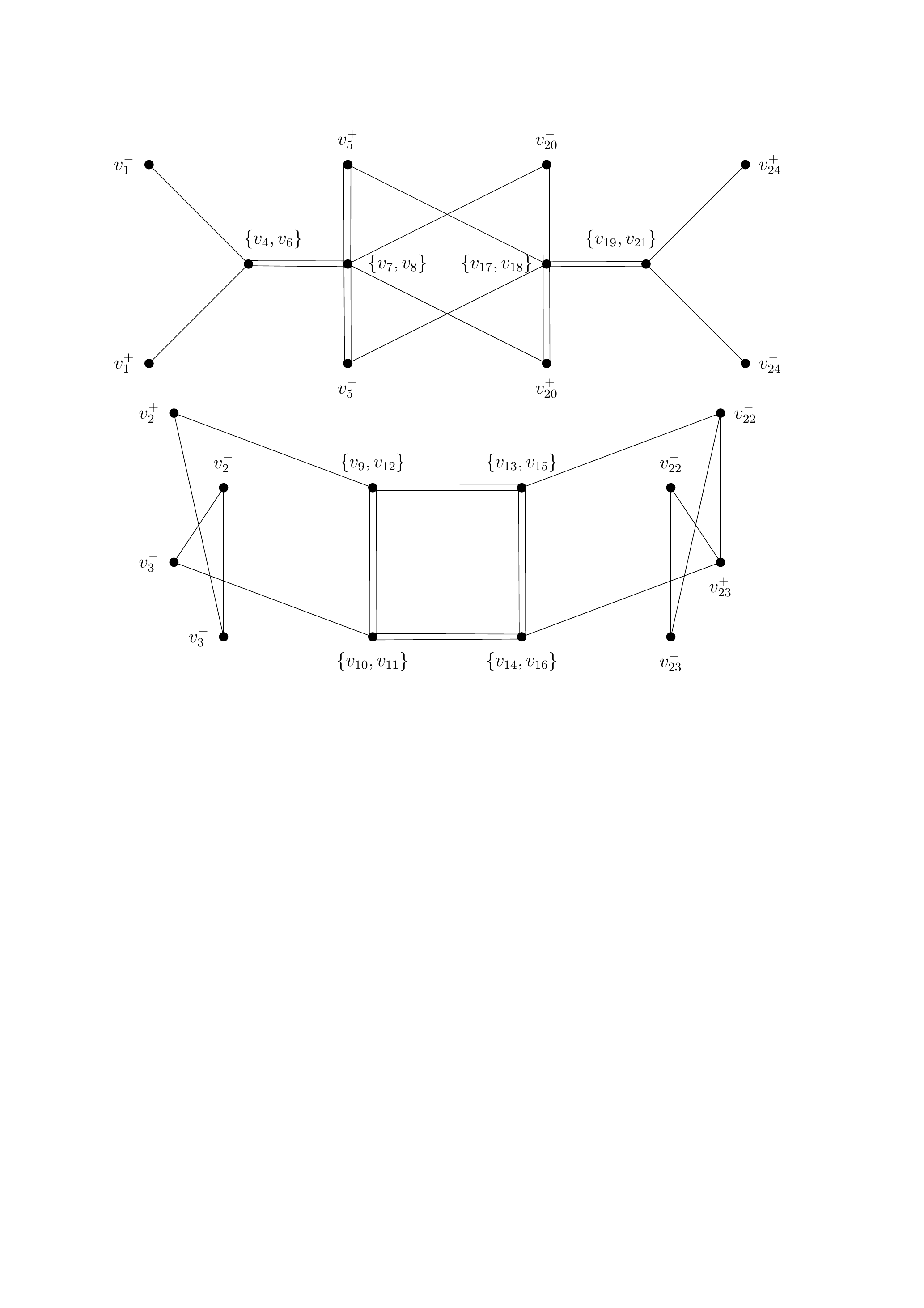}} \]

To assist with constructing a cell system on this graph, we observe that the KW-cell solution on $\Gamma_{\Lambda_1}^{4,8,\subset_{\mathbb{Z}_2}}$ of Subsection~\ref{subsec:8C} is gauge equivalent to a solution which is invariant under the following symmetry\footnote{We failed to find this symmetric solution when initially solving the system, as it requires making several coefficients non-real.}:
\[v_4 \leftrightarrow v_6 \quad v_7 \leftrightarrow v_8 \quad v_{9} \leftrightarrow v_{12} \quad v_{10} \leftrightarrow v_{11}\quad v_{13} \leftrightarrow v_{15} \quad v_{14} \leftrightarrow v_{16} \quad v_{17} \leftrightarrow v_{18}\quad v_{19} \leftrightarrow v_{21}.\]

We can then identify $\Gamma_{\Lambda_1}^{4,8,\subset}$ with the orbifold of $\Gamma_{\Lambda_1}^{4,8,\subset_{\mathbb{Z}_2}}$ under this symmetry (hence the suggestive vertex labels of $\Gamma_{\Lambda_1}^{4,8,\subset}$). Namely we have the natural identifications suggested by the labelling of the vertices. We have the following identifications for the edges with multiplicity:
\begin{align*}
    \alpha_1 &\leftrightarrow \{v_9 \to v_7  , v_{12}\to v_8\} \qquad &&\alpha_2 \leftrightarrow  \{v_9 \to v_8  , v_{12}\to v_7\}\\
    \beta_1 &\leftrightarrow \{v_7 \to v_{10}  , v_{8}\to v_{11}\} \qquad &&\beta_2 \leftrightarrow  \{v_7 \to v_{11}  , v_{8}\to v_{10}\}\\
    \gamma_1 &\leftrightarrow \{v_{14} \to v_{17}  , v_{16}\to v_{18}\} \qquad &&\gamma_2 \leftrightarrow \{v_{14} \to v_{18}  , v_{16}\to v_{17}\}\\
    \lambda_1 &\leftrightarrow \{v_{17} \to v_{13}  , v_{18}\to v_{15}\} \qquad &&\gamma_2 \leftrightarrow\{v_{17} \to v_{15}  , v_{18}\to v_{13}\}.
\end{align*} 
We can then use the (non-rigorous) orbifold procedure to deduce all the cells which only pass through the vertices
\[ \{v_4, v_6\},\quad \{v_7,v_8\},\quad\{v_9, v_{12}\},\quad\{v_{10}, v_{11}\},\quad \{v_{13}, v_{15}\},\quad \{v_{14}, v_{16}\},\quad \{v_{17}, v_{18}\},\quad \{v_{19}, v_{20}\}.  \]
In particular this determines the following two $5\times 5$ blocks:
\[U^{\{v_9,v_{12}\}}_{\qquad \quad\{v_{10},v_{11}\}}=\begin{blockarray}{ccccc}
{}^{\alpha_1}\{v_{7},v_{8}\}^{\beta_1} &{}^{\alpha_1}\{v_{7},v_{8}\}^{\beta_2} &{}^{\alpha_2}\{v_{7},v_{8}\}^{\beta_1}& {}^{\alpha_2}\{v_{7},v_{8}\}^{\beta_2}& \{v_{17},v_{18}\}    \\
\begin{block}{[ccccc]}
\frac{\q{3}\q{5}}{\q{2}\q{4}\q{6}} &0 & 0& \frac{\q{3}\q{5}}{\q{2}\q{4}\q{6}} & -\frac{\q{3}\q{5}^2}{\sqrt{\q{2}\q{4}^2\q{6}}}\\
0 & \frac{\q{5}}{\q{6}} - \frac{\sqrt{\q{4}\q{5}^2}}{\sqrt{\q{2}^2\q{3}^2\q{6}}}& -\frac{\q{5}}{\q{2}\q{3}} &0& 0\\
0&-\frac{\q{5}}{\q{2}\q{3}}& \sqrt{   \frac{\q{3}\q{5}}{\q{2}\q{6}} + \frac{\sqrt{\q{4}\q{5}^2}}{\q{2}\q{6}^2}} & 0& 0\\
\frac{\q{3}\q{5}}{\q{2}\q{4}\q{6}} &0 &0 & \frac{\q{3}\q{5}}{\q{2}\q{4}\q{6}} & -\frac{\q{3}\q{5}^2}{\sqrt{\q{2}\q{4}^2\q{6}}}\\
 -\frac{\q{3}\q{5}^2}{\sqrt{\q{2}\q{4}^2\q{6}}} & 0 & 0 & -\frac{\q{3}\q{5}^2}{\sqrt{\q{2}\q{4}^2\q{6}}} & \frac{\q{3}\q{5}}{\sqrt{\q{2}\q{4}^2\q{6}}}\\
\end{block}
\end{blockarray} \]
\[U^{\{v_{14},v_{16}\}}_{\qquad \quad\{v_{13},v_{15}\}}=\begin{blockarray}{ccccc}
{}^{\gamma_1}\{v_{17},v_{18}\}^{\lambda_1} &{}^{\gamma_1}\{v_{17},v_{18}\}^{\lambda_2} &{}^{\gamma_2}\{v_{17},v_{18}\}^{\lambda_1}& {}^{\gamma_2}\{v_{17},v_{18}\}^{\lambda_2}& \{v_{7},v_{8}\}    \\
\begin{block}{[ccccc]}
\frac{\q{3}\q{5}}{\q{2}\q{4}\q{6}} &0 & 0& \frac{\q{3}\q{5}}{\q{2}\q{4}\q{6}} & \frac{\q{3}\q{5}^2}{\sqrt{\q{2}\q{4}^2\q{6}}}\\
0 & \frac{\q{5}}{\q{6}} - \frac{\sqrt{\q{4}\q{5}^2}}{\sqrt{\q{2}^2\q{3}^2\q{6}}}& \frac{\q{5}}{\q{2}\q{3}} &0& 0\\
0&\frac{\q{5}}{\q{2}\q{3}}& \sqrt{   \frac{\q{3}\q{5}}{\q{2}\q{6}} + \frac{\sqrt{\q{4}\q{5}^2}}{\q{2}\q{6}^2}} & 0& 0\\
\frac{\q{3}\q{5}}{\q{2}\q{4}\q{6}} &0 &0 & \frac{\q{3}\q{5}}{\q{2}\q{4}\q{6}} & \frac{\q{3}\q{5}^2}{\sqrt{\q{2}\q{4}^2\q{6}}}\\
 \frac{\q{3}\q{5}^2}{\sqrt{\q{2}\q{4}^2\q{6}}} & 0 & 0 & \frac{\q{3}\q{5}^2}{\sqrt{\q{2}\q{4}^2\q{6}}} & \frac{\q{3}\q{5}}{\sqrt{\q{2}\q{4}^2\q{6}}}\\
\end{block}
\end{blockarray} \]

With this seed information, we can solve to find the remaining U-cells. Solving the linear relations (R1), (R2), and (Tr($U_1$)) determines all but $\approx 100$ coefficients. We then can fully solve (Tr($U_1U_2$)) which gives the diagonal entries of every $U$ matrix. From (Hecke), we can determine all of the $2\times 2$ and $3\times 3$ blocks up to some phase choices. Finally at this point (R3) contains enough linear equations to pin down the remaining coefficients, and pin down the earlier phases.

In the interest of space, we only present the remaining two $5\times 5$ blocks.
\[U^{\{v_{10},v_{11}\}}_{\qquad \quad\{v_{9},v_{12}\}}=\begin{blockarray}{ccccc}
\{v_4,v_6\} &v_5^+&v_5^-&v_{20}^+&v_{20}^-\\
\begin{block}{[ccccc]}
\frac{\q{6}}{\q{5}} &-\frac{1}{\sqrt{\q{3}\q{5}}} & -\frac{1}{\sqrt{\q{3}\q{5}}}& \frac{-\q{3} + \mathbf{i}\sqrt{\q{4}\q{5}\q{6}}}{\sqrt{\q{2}^2\q{3}\q{6}^2}} &  \frac{-\q{3} + \mathbf{i}\sqrt{\q{4}\q{5}\q{6}}}{\sqrt{\q{2}^2\q{3}\q{6}^2}}\\
-\frac{1}{\sqrt{\q{3}\q{5}}} & \frac{\q{4}}{\q{2}\q{6}}& -\frac{\q{5}}{\q{2}\q{6}} &-\frac{1+\mathbf{i}\frac{\sqrt{\q{4}}}{\sqrt{\q{6}}}    }{\q{3}}& \frac{\q{5}}{\q{2}\q{6}}\\
-\frac{1}{\sqrt{\q{3}\q{5}}}&-\frac{\q{5}}{\q{2}\q{6}}&\frac{\q{4}}{\q{2}\q{6}} & \frac{\q{5}}{\q{2}\q{6}}&-\frac{1+\mathbf{i}\frac{\sqrt{\q{4}}}{\sqrt{\q{6}}}    }{\q{3}}\\
 \frac{-\q{3} - \mathbf{i}\sqrt{\q{4}\q{5}\q{6}}}{\sqrt{\q{2}^2\q{3}\q{6}^2}} &-\frac{1-\mathbf{i}\frac{\sqrt{\q{4}}}{\sqrt{\q{6}}}    }{\q{3}} &\frac{\q{5}}{\q{2}\q{6}} &\frac{\q{5}}{\q{6}} & -\frac{1}{\q{6}}\\
  \frac{-\q{3} - \mathbf{i}\sqrt{\q{4}\q{5}\q{6}}}{\sqrt{\q{2}^2\q{3}\q{6}^2}} & \frac{\q{5}}{\q{2}\q{6}} & -\frac{1-\mathbf{i}\frac{\sqrt{\q{4}}}{\sqrt{\q{6}}}    }{\q{3}} & -\frac{1}{\q{6}} & \frac{\q{5}}{\q{6}}\\
\end{block}
\end{blockarray} \]
\[U^{\{v_{13},v_{15}\}}_{\qquad \quad\{v_{14},v_{16}\}}=\begin{blockarray}{ccccc}
v_5^+&v_5^-&\{v_{19},v_{21}\}&v_{20}^+&v_{20}^-\\
\begin{block}{[ccccc]}
\frac{\q{5}}{\q{6}} &-\frac{1}{\q{6}}& \frac{-\q{3} - \mathbf{i}\sqrt{\q{4}\q{5}\q{6}}}{\sqrt{\q{2}^2\q{3}\q{6}^2}}& \frac{\q{5}}{\q{2}\q{6}} &  -\frac{1-\mathbf{i}\frac{\sqrt{\q{4}}}{\sqrt{\q{6}}}    }{\q{3}}\\
-\frac{1}{\q{6}} &\frac{\q{5}}{\q{6}}& \frac{-\q{3} - \mathbf{i}\sqrt{\q{4}\q{5}\q{6}}}{\sqrt{\q{2}^2\q{3}\q{6}^2}}& -\frac{1-\mathbf{i}\frac{\sqrt{\q{4}}}{\sqrt{\q{6}}}    }{\q{3}} &  \frac{\q{5}}{\q{2}\q{6}}\\
 \frac{-\q{3} + \mathbf{i}\sqrt{\q{4}\q{5}\q{6}}}{\sqrt{\q{2}^2\q{3}\q{6}^2}} & \frac{-\q{3} +\mathbf{i}\sqrt{\q{4}\q{5}\q{6}}}{\sqrt{\q{2}^2\q{3}\q{6}^2}}&\frac{\q{6}}{\q{5}}&-\frac{1}{\sqrt{\q{3}\q{5}}} & -\frac{1}{\sqrt{\q{3}\q{5}}}\\
 \frac{\q{5}}{\q{2}\q{6}} & -\frac{1+\mathbf{i}\frac{\sqrt{\q{4}}}{\sqrt{\q{6}}}    }{\q{3}}&-\frac{1}{\sqrt{\q{3}\q{5}}}&\frac{\q{4}}{\q{2}\q{6}} &  -\frac{\q{5}}{\q{2}\q{6}}\\
 -\frac{1+\mathbf{i}\frac{\sqrt{\q{4}}}{\sqrt{\q{6}}}    }{\q{3}} & \frac{\q{5}}{\q{2}\q{6}}&-\frac{1}{\sqrt{\q{3}\q{5}}}& -\frac{\q{5}}{\q{2}\q{6}} & \frac{\q{4}}{\q{2}\q{6}}\\
\end{block}
\end{blockarray} \]

The full solution for the U-cells can be found in the Mathematica file ``k=8/Conformal Inclusion/Solution.nb". In this file we use the ordering:
\[\resizebox{\hsize}{!}{$\{v_{1}^+, v_{1}^-,v_{2}^+,v_{2}^-,v_{3}^+,v_{3}^-, \{v_{4},v_{6}\},v_{5}^+,v_{5}^-, \{v_{7},v_{8}\}, \{v_{9},v_{12}\}, \{v_{10},v_{11}\}, \{v_{13},v_{15}\}, \{v_{14},v_{14}\}, \{v_{17},v_{18}\}, \{v_{19},v_{21}\},v_{20}^+,v_{20}^-,v_{22}^+,v_{22}^-,v_{23}^+,v_{23}^-,v_{24}^+,v_{24}^-\}$.}\]

A computer verifies relations (R1), (R2), (R3), and (Hecke) in 2 hours. A record of this verification can be found in ``k=8/Conformal Inclusion/Verification.nb". Note that this verification removes any dependency we had on using the non-rigorous orbifold procedure to obtain certain values for our U-cells.

We now solve (RI) and (BA) to obtain a 1-dimensional solution space for our B-cells. The equations (N) determine this solution up to a choice of phase, which we pick a natural choice for. The solution for the B-cells can be found in ``k=8/Conformal Inclusion/Solution.nb". A computer verifies relations (RI), (BA), and (N) in just over 3 minutes for this solution. A record of this verification can be found in ``k=8/Conformal Inclusion/Verification.nb".

\begin{thm}
There exists a rank 24 module category $\mathcal{M}$ for $\mathcal{C}(\mathfrak{sl}_4, 8)$ such that the fusion graph for action by $\Lambda_1$ is $\Gamma_{\Lambda_1}^{4,8,\subset}$.
\end{thm}
We also find KW cell system solutions on the graph $\Gamma_{\Lambda_1}^{4,8,\subset}$ when $\omega\in \{-1,\mathbf{i}, -\mathbf{i}\}$. The solutions and verification of these solutions can be found in the folder ``k=8/Conformal Inclusions/Solutions".

\begin{thm}
For each $\omega \in \{-1, \mathbf{i}, -\mathbf{i}\}$ there exists a rank 24 module category $\mathcal{M}$ for $\overline{\operatorname{Rep}(U_{e^{2\pi i \frac{1}{24}}}(\mathfrak{sl}_4))^\omega}$ such that the fusion graph for action by $\Lambda_1$ is $\Gamma_{\Lambda_1}^{4,8,\subset}$.
\end{thm}

\subsection{A Third Exceptional Module for $SU(4)$ at Level $8$}
In this section we construct a cell system with parameters $N=4$ and $q = e^{2\pi i \frac{1}{24}}$ on the following graph
\[\Gamma_{\Lambda_1}^{4,8,\textrm{Twist}} =\raisebox{-.5\height}{ \includegraphics[scale = .4]{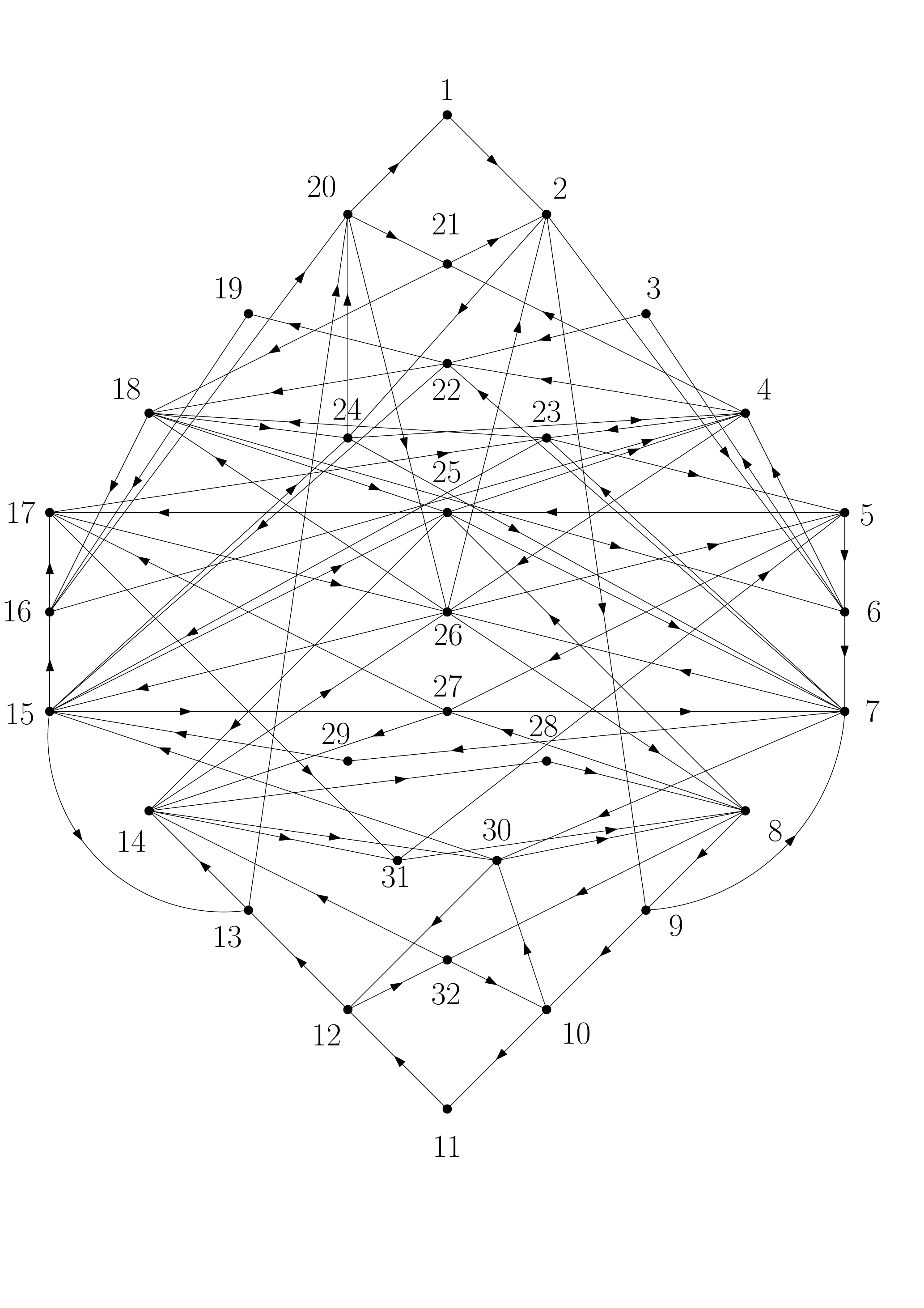}} \]
and hence an exceptional module category over $\overline{\operatorname{Rep}\left(U_{e^{2\pi i \frac{1}{24}}}(\mathfrak{sl}_4)\right)}$. This module corresponds to the exceptional triple found in \cite{ModulesPt1} constructed from the exceptional braided auto-equivalence of $\mathcal{C}(\mathfrak{sl}_4, 8)^0_{\operatorname{Rep}(\mathbb{Z}_4)}$

The Frobenius-Perron eigenvector of this graph is
\begin{align*}
\lambda = \{&1,\q{4},\frac{\nothing{[4]_q} }{\nothing{[3]_q}},\frac{\nothing{[4]_q}^2  \nothing{[6]_q}}{ \nothing{[3]_q}^2},\frac{\nothing{[3]_q} \nothing{[4]_q} \nothing{[5]_q}}{\nothing{[2]_q} \nothing{[6]_q}},\nothing{[4]_q}^2  \nothing{[3]_q},\frac{\nothing{[4]_q} \nothing{[6]_q}}{\nothing{[2]_q}},\frac{\nothing{[3]_q} \nothing{[4]_q} \nothing{[5]_q}}{\nothing{[2]_q} \nothing{[6]_q}},\frac{\nothing{[4]_q} \nothing{[5]_q}}{\nothing{[2]_q} \nothing{[3]_q}},\frac{\nothing{[4]_q}}{ \nothing{[3]_q}},\frac{1}{ \nothing{[3]_q}},\\
&\frac{\nothing{[4]_q} }{\nothing{[3]_q}},\frac{\nothing{[4]_q} \nothing{[5]_q}}{\nothing{[2]_q} \nothing{[3]_q}},\frac{\nothing{[3]_q} \nothing{[4]_q} \nothing{[5]_q}}{\nothing{[2]_q} \nothing{[6]_q}},\frac{\nothing{[4]_q} \nothing{[6]_q}}{\nothing{[2]_q}},\frac{\nothing{[4]_q}^2 }{ \nothing{[3]_q}},\frac{\nothing{[3]_q} \nothing{[4]_q} \nothing{[5]_q}}{\nothing{[2]_q} \nothing{[6]_q}},\frac{[4]_q^2 [6]_q}{[3]_q^2},\frac{\nothing{[4]_q} }{\nothing{[3]_q}},\nothing{[4]_q} ,\frac{\nothing{[5]_q} \nothing{[6]_q}}{\nothing{[2]_q} \nothing{[3]_q}},\frac{\nothing{[4]_q}^2 }{ \nothing{[3]_q}},\\
&\frac{\nothing{[2]_q} \nothing{[3]_q} \nothing{[5]_q}}{\nothing{[6]_q}},\frac{\nothing{[4]_q} \nothing{[6]_q}}{\nothing{[3]_q}},\frac{\nothing{[4]_q} \nothing{[5]_q}}{\nothing{[2]_q}},\frac{\nothing{[5]_q} \nothing{[6]_q}}{\nothing{[2]_q}},\frac{\nothing{[4]_q} \nothing{[6]_q}}{ \nothing{[3]_q}},\frac{\nothing{[3]_q} \nothing{[5]_q}}{\nothing{[2]_q} \nothing{[6]_q}},\frac{\nothing{[6]_q}}{\nothing{[2]_q}},\frac{\q{3}\q{5}}{\q{2}\q{6}},\frac{\nothing{[5]_q} \nothing{[6]_q}}{\nothing{[2]_q} \nothing{[3]_q}},\frac{\nothing{[4]_q}}{\nothing{[2]_q}}\}
\end{align*}
We assume the graph for action by $\Lambda_2$ is 
\[\Gamma_{\Lambda_2}^{4,8,\textrm{Twist}} =\raisebox{-.5\height}{ \includegraphics[scale = .4]{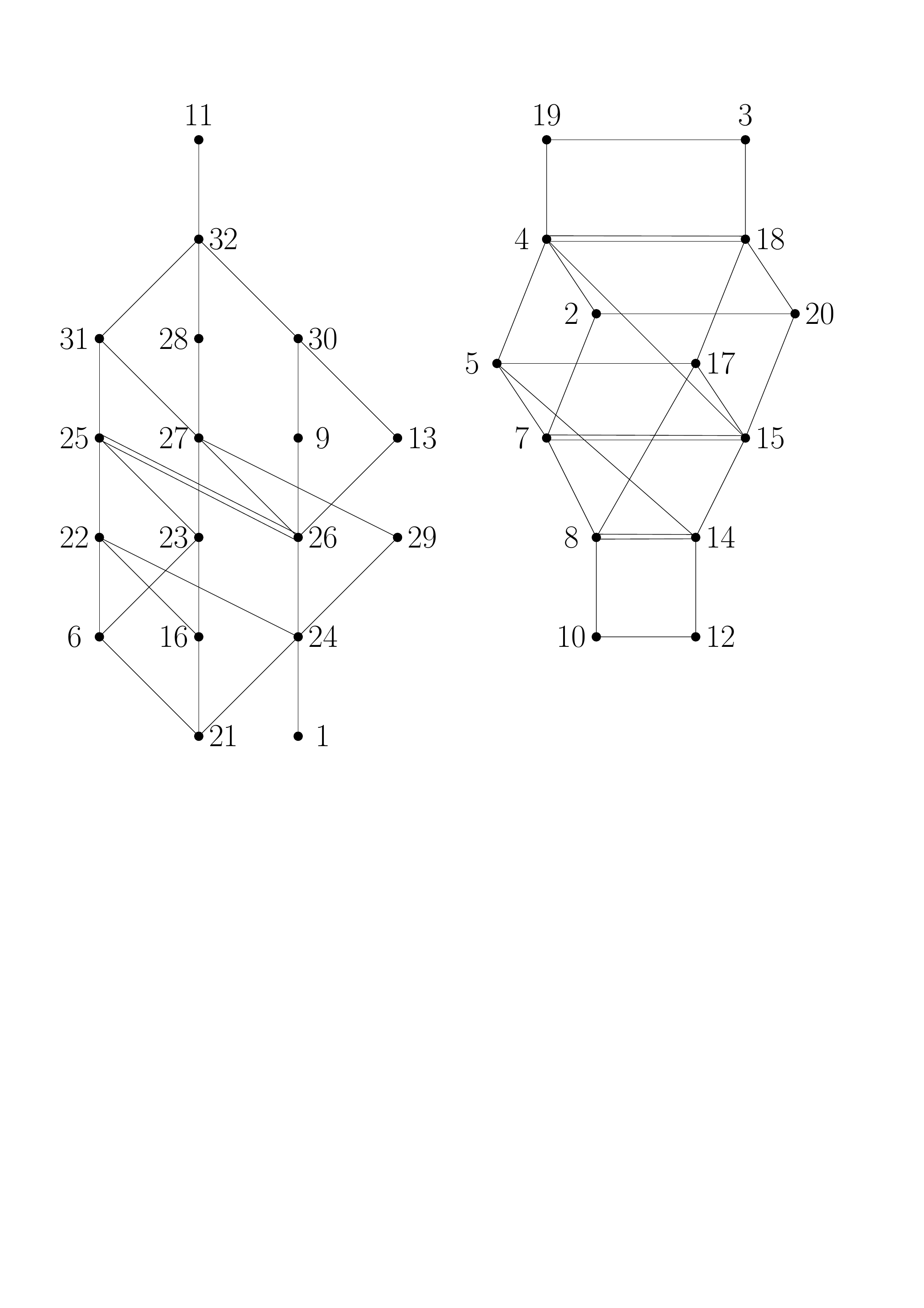}} \]
The data of these graphs can be found in ``k=8/AmbichiralTwist/Data.nb''.

To find a solution to the U-cells on $\Gamma_{\Lambda_1}^{4,8,\textrm{Twist}}$, we make the naive assumption that all our coefficients are real, and hence our $U$ matrices are symmetric by (R2). Solving the linear equations (R1) and (Tr($U_1$)) determines 417 of the 616 coefficients. 

The quadratic equation (Tr($U_1U_2$)) is especially useful for this example. Several of the equations coming from (Tr($U_1U_2$)) are in fact linear. Solving these linear equations, and plugging back in the values to (Tr($U_1U_2$)) then turns other equations into linear equations. This cascades, and allows us to completely solve (Tr($U_1U_2$)). This determines the diagonals of all the blocks in our solution, which gives 35 of the remaining coefficients.

We then use (Hecke) to determine the remaining 2x2 and 3x3 blocks, up to some sign ambiguities which we will resolve later. This leaves one 5x5 block, and five 4x4 blocks to determine. i.e. 40 coefficients. At this point, many of the equations from (R3) are now linear. Solving these determines the 4x4 and 5x5 blocks. We also use (R3) to resolve the sign ambiguities from earlier. Finally we choose an arbitrary real gauge to get a concrete solution.

The solution for the U-cells consists of 616 complex numbers. We just present the 5x5 block here in the interest of space, as this is the largest (and hence most difficult to determine) block in our solution. The rest of the solution can be found in the Mathematica notebook ``k=8/AmbichiralTwist/Solutions/w=1/Solution.nb".

\[U^{7}_{\quad 15}=\begin{blockarray}{ccccc}
22  & 23 &26& 29& 30    \\
\begin{block}{[ccccc]}
\frac{[4]_q^2[5]_q}{[2]_q^2[3]_q[6]_q} &-\frac{\q{5}}{\q{2}\q{3}\q{6}} & -\frac{\sqrt{\q{3}}}{\sqrt{\q{2}\q{6}}}& \frac{\sqrt{\q{3}\q{5}}}{\q{2}\q{6}} & \frac{\q{5}}{\sqrt{\q{3}\q{6}^2}}\\
-\frac{\q{5}}{\q{2}\q{3}\q{6}} & \frac{\q{5}}{\q{2}\q{3}} - \frac{\q{2}\q{5}}{\q{3}\q{4}\q{6}}& -\frac{\sqrt{\q{5}}}{\sqrt{\q{2}^2\q{3}\q{4}^2}} & \frac{\sqrt{\q{3}\q{5}^2}}{\q{4}\q{6}}& -\frac{\sqrt{\q{2}\q{3}\q{5}^3}}{\sqrt{\q{4}^2\q{6}^3}}\\
 -\frac{\sqrt{\q{3}}}{\sqrt{\q{2}\q{6}}}& -\frac{\sqrt{\q{5}}}{\sqrt{\q{2}^2\q{3}\q{4}^2}}& \frac{1}{\q{2}} & -\frac{\q{5}}{\q{2}\q{4}}& 0\\
 \frac{\sqrt{\q{3}\q{5}}}{\q{2}\q{6}} & -\frac{\sqrt{\q{5}}}{\sqrt{\q{2}^2\q{3}\q{4}^2}} &  -\frac{\q{5}}{\q{2}\q{4}} & \frac{\q{5}\q{6}}{\q{2}\q{3}\q{4}} & -\frac{\q{3}\q{5}}{\q{2}\q{4}\q{6}}\\
  \frac{\q{5}}{\sqrt{\q{3}\q{6}^2}} & -\frac{\sqrt{\q{2}\q{3}\q{5}^3}}{\sqrt{\q{4}^2\q{6}^3}} & 0 & -\frac{\q{3}\q{5}}{\q{2}\q{4}\q{6}} & \frac{\q{5}}{\q{6}}\\
\end{block}
\end{blockarray} \]

A computer verifies (R1), (R2), (R3), and (Hecke) for our solution in under two minutes. A record of this solution can be found in ``k=8/AmbichiralTwist/Solutions/w=1/Verification.nb"

Solving the linear equations (RI) and (BA) solves the B-cells up to a single scalar, and (N) determines this scalar up to a phase. We pick a natural choice for this phase to obtain a concrete solution for our B-cells. This solution consists of 536 complex scalars. This solution can also be found in ``k=8/AmbichiralTwist/Solutions/w=1/Solution.nb". A computer verifies relations (BA), (RI), and (N) in under 30 seconds for our solution. A record of this solution can also be found in ``k=8/AmbichiralTwist/Solutions/w=1/Verification.nb".

\begin{thm}
There exists a rank 32 module category $\mathcal{M}$ for $\mathcal{C}(\mathfrak{sl}_4, 8)$ such that the fusion graph for action by $\Lambda_1$ is $\Gamma_{\Lambda_1}^{4,8,\textrm{Twist}}$.
\end{thm}

Note that this theorem gives a construction of the exceptional braided auto-equivalence of $\mathcal{C}(\mathfrak{sl}_4, 8)^0_{\operatorname{Rep}(\mathbb{Z}_4)}$, independent from the construction in \cite{ModulesPt1}.

We also find KW cell system solutions on the graph $\Gamma_{\Lambda_1}^{4,8,\textrm{Twist}}$ when $\omega\in \{-1,\mathbf{i}, -\mathbf{i}\}$. The solutions and verification of these solutions can be found in the folder``k=8/AmbichiralTwist/Solutions".

\begin{thm}
For each $\omega \in \{-1, \mathbf{i}, -\mathbf{i}\}$ there exists a rank 32 module category $\mathcal{M}$ for $\overline{\operatorname{Rep}(U_{e^{2\pi i \frac{1}{24}}}(\mathfrak{sl}_4))^\omega}$ such that the fusion graph for action by $\Lambda_1$ is $\Gamma_{\Lambda_1}^{4,8,\textrm{Twist}}$.
\end{thm}
\subsection{Charge-Conjugation Modules for $SU(4)$ at Level $k$}\label{sec:charge}
In this subsection, we will construct a family of KW cell systems on the graphs $\Gamma^{4,k,*}_{\Lambda_1}$, where $N=4$ and $q = e^{2\pi i \frac{1}{2(4+k)}}$ for all $k\in \mathbb{N}$. These will construct the \textit{charge conjugation} module categories over $\mathcal{C}(\mathfrak{sl}_4, k)$ for all $k$.

Following \cite{MR2021644} we define the graph $\Gamma^{4,k,*}_{\Lambda_1}$ as follows. The vertices of $\Gamma^{4,k,*}_{\Lambda_1}$ are $a = (i,j)$ such that $i + 2j \leq k+3$. There is an edge in $\Gamma^{4,k,*}_{\Lambda_1}$ from $a\to b$ if 
\[a-b \in \{\varepsilon_{-2}:= (1,-1),\quad \varepsilon_{-1}:= (-1,0), \quad \varepsilon_{1}:= (1,0),\quad  \varepsilon_{2}:= (-1,1)   \}.\]

When $k$ is odd, the graph $\Gamma^{4,k,*}_{\Lambda_1}$ is of the form:
\[\raisebox{-.5\height}{ \includegraphics[scale = .6]{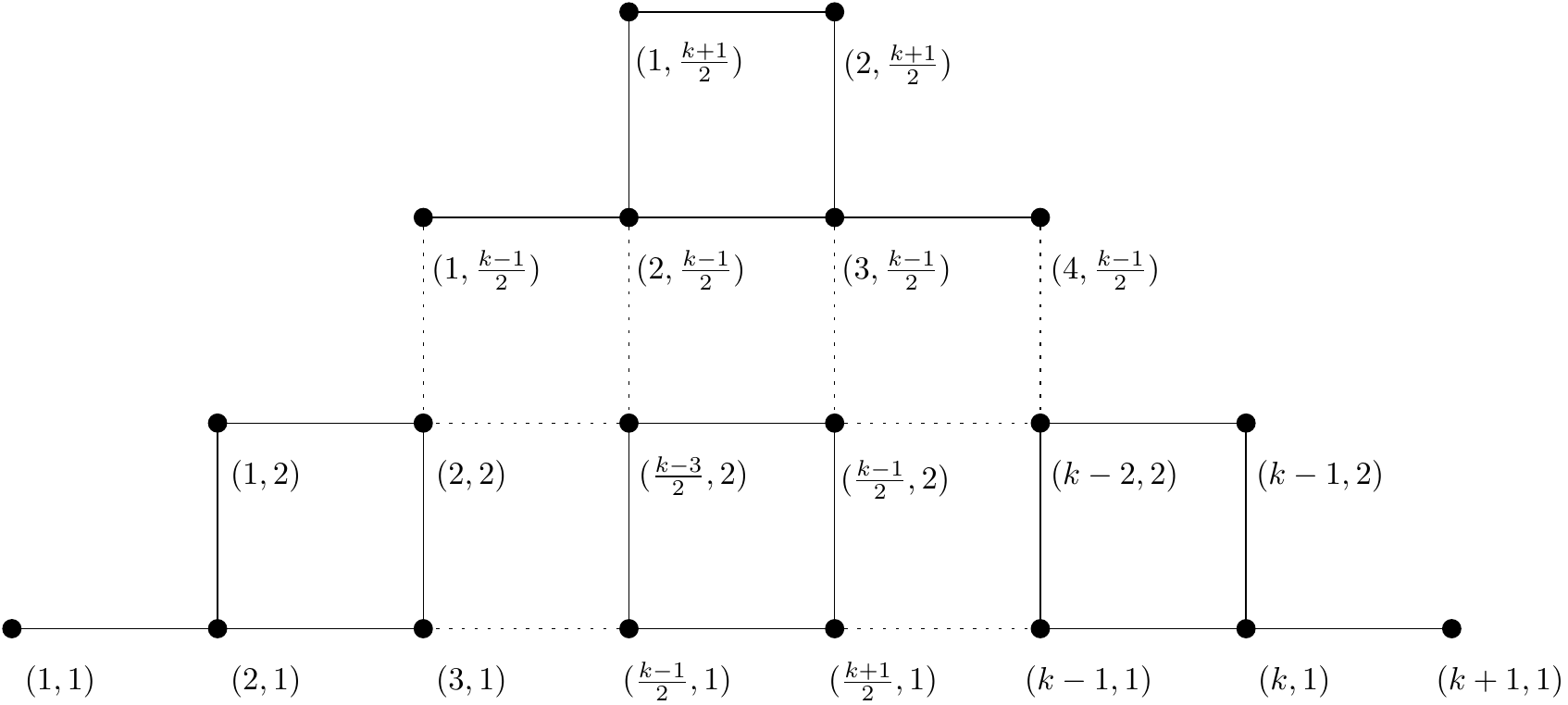}} \]
When $k$ is even, the graph $\Gamma^{4,k,*}_{\Lambda_1}$ is of the form:
\[\raisebox{-.5\height}{ \includegraphics[scale = .6]{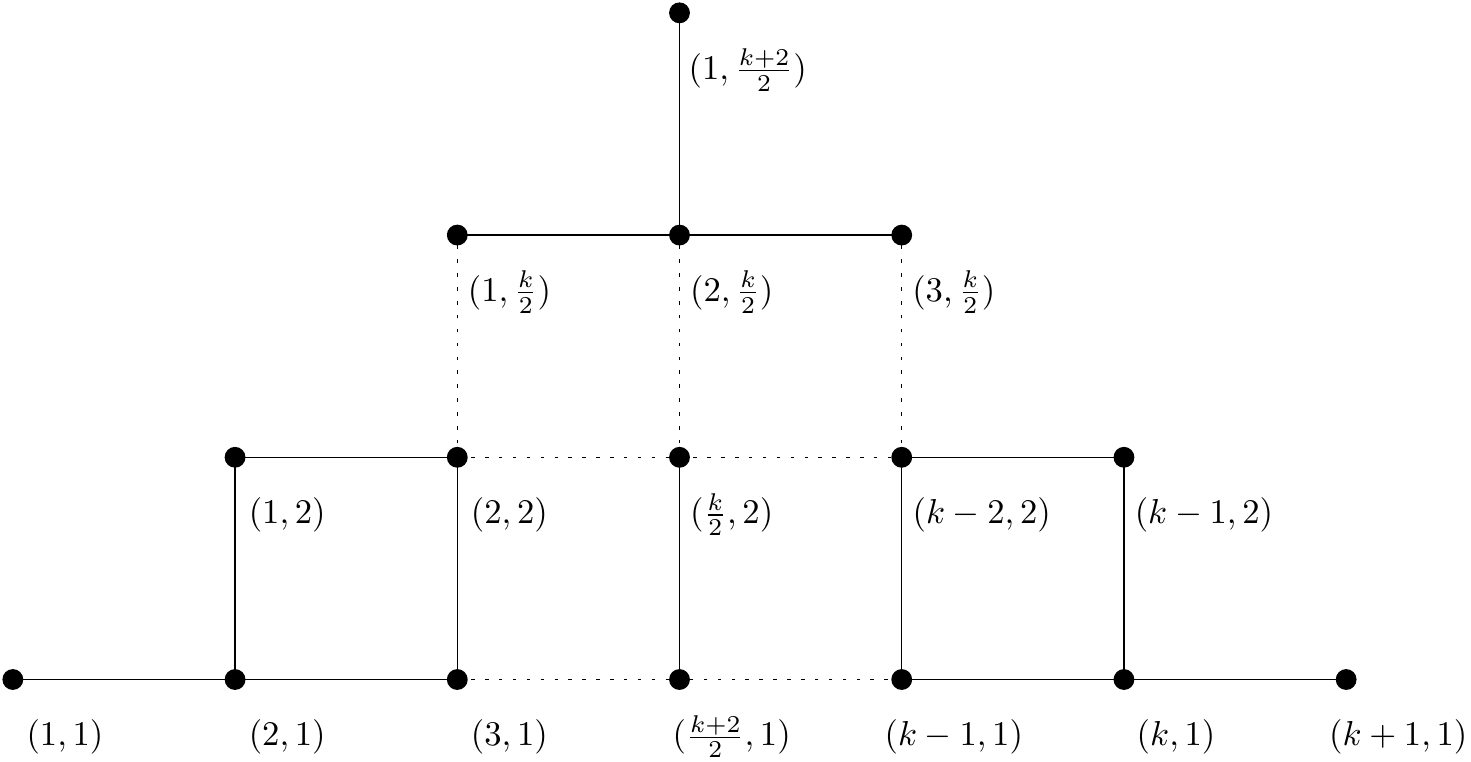}} \]
For $a\in \Gamma^{4,k,*}_{\Lambda_1}$ and $j\in \{1,2\}$, define $\overline{a}_j := \sum_{i=j}^2 a_i$, and $\overline{a}_{-j} = -\overline{a}_j$. The positive eigenvector $\lambda$ is given by the following formula:
\[  \lambda_a = [\overline{a}_1][\overline{a}_2] [\overline{a}_1-\overline{a}_2] [\overline{a}_1+\overline{a}_2]. \]
The following values will be useful
\begin{align*}
\lambda_{a \pm \epsilon_1} & = [\overline{a}_1\pm 1][\overline{a}_2] [\overline{a}_1-\overline{a}_2\pm 1] [\overline{a}_1+\overline{a}_2 \pm 1]\\
\lambda_{a \pm \epsilon_2} & = [\overline{a}_1][\overline{a}_2 \pm 1] [\overline{a}_1-\overline{a}_2\mp 1] [\overline{a}_1+\overline{a}_2 \pm 1]\\
\lambda_{a \pm (\epsilon_1+ \epsilon_2)} & = [\overline{a}_1\pm 1][\overline{a}_2 \pm 1] [\overline{a}_1-\overline{a}_2] [\overline{a}_1+\overline{a}_2 \pm 2]\\
\lambda_{a \pm (\epsilon_1- \epsilon_2)} & = [\overline{a}_1\pm 1][\overline{a}_2 \mp 1] [\overline{a}_1-\overline{a}_2\pm 2] [\overline{a}_1+\overline{a}_2 ].
\end{align*}


A solution for the U-cells on $\Gamma^{4,k,*}_{\Lambda_1}$ is computed in \cite{MR2021644}. They find
\begin{align*}
   U^{a}_{\quad a+2\varepsilon_i} &=  \begin{blockarray}{c}
 a+ \varepsilon_i  \\
\begin{block}{[c]}
  0\\
\end{block}
\end{blockarray}\\
    U^{a}_{\quad a+\varepsilon_i+\varepsilon_j} &=\frac{1}{ [\overline{a}_i -\overline{a}_j]} \begin{blockarray}{cc}
a+  \varepsilon_i & a+\varepsilon_j  \\
\begin{block}{[cc]}
  [\overline{a}_i -\overline{a}_j+1]   & \sqrt{[\overline{a}_i -\overline{a}_j+1][\overline{a}_i -\overline{a}_j-1]}\\
  \sqrt{[\overline{a}_i -\overline{a}_j+1][\overline{a}_i -\overline{a}_j-1]} &[\overline{a}_i -\overline{a}_j-1]\\
\end{block}
\end{blockarray}\quad \text{if $i\neq \pm j$}\\
U^{a}_{\quad a} &=\frac{1}{\lambda_a}\scalemath{.8}{\begin{blockarray}{cccc}
a+  \varepsilon_{-2} & a+  \varepsilon_{-1} &a+  \varepsilon_{1}&a+  \varepsilon_{2} \\
\begin{block}{[cccc]}
w_{a,-2}&\sqrt{\lambda_{a+\varepsilon_{-1}}\lambda_{a+\varepsilon_{-2}}}\frac{1}{[\overline{a}_{-1} +\overline{a}_{-2}  +1]}&-\sqrt{\lambda_{a+\varepsilon_{1}}\lambda_{a+\varepsilon_{-2}}}\frac{1}{[\overline{a}_{1} +\overline{a}_{-2}  +1]}&-\sqrt{\lambda_{a+\varepsilon_{2}}\lambda_{a+\varepsilon_{-2}}}\frac{1}{[\overline{a}_{2} +\overline{a}_{-2}  +1]}\\
 \sqrt{\lambda_{a+\varepsilon_{-1}}\lambda_{a+\varepsilon_{-2}}}\frac{1}{[\overline{a}_{-1} +\overline{a}_{-2}  +1]}&w_{a,-1}&-\sqrt{\lambda_{a+\varepsilon_{-1}}\lambda_{a+\varepsilon_{1}}}\frac{1}{[\overline{a}_{-1} +\overline{a}_{1}  +1]}&-\sqrt{\lambda_{a+\varepsilon_{-1}}\lambda_{a+\varepsilon_{2}}}\frac{1}{[\overline{a}_{-1} +\overline{a}_{2}  +1]}\\
 -\sqrt{\lambda_{a+\varepsilon_{1}}\lambda_{a+\varepsilon_{-2}}}\frac{1}{[\overline{a}_{1} +\overline{a}_{-2}  +1]}&-\sqrt{\lambda_{a+\varepsilon_{1}}\lambda_{a+\varepsilon_{-1}}}\frac{1}{[\overline{a}_{1} +\overline{a}_{-1}  +1]}&w_{a,1}&\sqrt{\lambda_{a+\varepsilon_{1}}\lambda_{a+\varepsilon_{2}}}\frac{1}{[\overline{a}_{1} +\overline{a}_{2}  +1]}\\
 -\sqrt{\lambda_{a+\varepsilon_{2}}\lambda_{a+\varepsilon_{-2}}}\frac{1}{[\overline{a}_{2} +\overline{a}_{-2}  +1]}&-\sqrt{\lambda_{a+\varepsilon_{2}}\lambda_{a+\varepsilon_{-1}}}\frac{1}{[\overline{a}_{2} +\overline{a}_{-1}  +1]}&\sqrt{\lambda_{a+\varepsilon_{2}}\lambda_{a+\varepsilon_{1}}}\frac{1}{[\overline{a}_{2} +\overline{a}_{1}  +1]}&w_{a,2}\\
\end{block}
\end{blockarray}}
\end{align*}
where
\[w_{a,i} := \begin{cases}
\frac{\lambda_a [2\overline{a}_i+2] + \lambda_{a+\epsilon_i}}{[2\overline{a}_i+1]}&\text{ if } [2\overline{a}_i+1]\neq 0\\
\frac{\lambda_a [2\overline{a}_i-2] -\sum_{j\neq i} \frac{[\overline{a}_i + \overline{a}_j-3]}{[\overline{a}_i + \overline{a}_j+1]}\lambda_{a+\epsilon_j}  }{[2\overline{a}_i-3]}&\text{ if } [2\overline{a}_i+1]= 0
\end{cases}\]
satisfies (R2), (R3), and (Hecke). We can directly verify that this solution also satisfies (R1).

\begin{lem}\label{lem:R1Conj}
The above solution for the U-cells on $\Gamma^{4,k,*}_{\Lambda_1}$ satisfies (R1).
\end{lem}
\begin{proof}
Let $a$ be any vertex in $\Gamma^{4,k,*}_{\Lambda_1}$. We have to show for all $i \in \{-2,-1,1,2\}$ that
\[ \frac{\omega_{a,i}}{\lambda_a} + \sum_{j\neq \pm i} \frac{[\overline{a}_i - \overline{a}_j+1]}{[\overline{a}_i - \overline{a}_j]} \frac{\lambda_{a+\epsilon_i + \epsilon_j}}{\lambda_{a+\epsilon_i}} = [3].   \]
In all four cases, this can equality be verified for generic $q$ by a computer\footnote{This could be done by hand, but a computer is faster, and more accurate.} for both forms of $\omega_{a,i}$ by expanding out the expression in terms of the variable $q$. For the first form of $\omega_{a,i}$, we have to simplify $\frac{[2\overline{a}_i+1]}{[2\overline{a}_i+1]}$, and for the second for we have to simplify $\frac{[2\overline{a}_i-3]}{[2\overline{a}_i-3]}$. Hence this equality holds for all $q$.
\end{proof}

To obtain a solution for the B-cells, we solve the linear system from (BA) and (RI), and normalise with (N). This gives the following: 

\begin{align*}
B_{a,\underline{\hspace{.5em}}, a+2\epsilon_i, \underline{\hspace{.5em}}} &= \begin{blockarray}{cc}
a+  \varepsilon_{i}  &\\
\begin{block}{[c]c}
 0 &a+  \varepsilon_{i} \\
\end{block}
\end{blockarray}\\
B_{a,\underline{\hspace{.5em}}, a+\epsilon_i+\epsilon_j, \underline{\hspace{.5em}}} &= \frac{1}{\sqrt{[4]!}}\operatorname{sign}(i)\operatorname{sign}(j)\begin{blockarray}{ccc}
a+  \varepsilon_{i}  &a+  \varepsilon_{j}&\\
\begin{block}{[cc]c}
 \frac{[\overline{a}_{j}-\overline{a}_{i}-1]}{[\overline{a}_{j}-\overline{a}_{i}]}\sqrt{\frac{\lambda_{a+\epsilon_i+\epsilon_j}}{\lambda_a}} &  \sqrt{\frac{[\overline{a}_{i}+\overline{a}_{j}][\overline{a}_{i}+\overline{a}_{j}+2]}{[\overline{a}_{i}+\overline{a}_{j}+1]^2}}\sqrt{\frac{\lambda_{a+\epsilon_i}\lambda_{a+\epsilon_j}}{\lambda_a^2}} & a + \epsilon_i\\
  \sqrt{\frac{[\overline{a}_{i}+\overline{a}_{j}][\overline{a}_{i}+\overline{a}_{j}+2]}{[\overline{a}_{i}+\overline{a}_{j}+1]^2}}\sqrt{\frac{\lambda_{a+\epsilon_i}\lambda_{a+\epsilon_j}}{\lambda_a^2}} &\frac{[\overline{a}_{i}-\overline{a}_{j}-1]}{[\overline{a}_{i}-\overline{a}_{j}]}\sqrt{\frac{\lambda_{a+\epsilon_i+\epsilon_j}}{\lambda_a}}&a+  \varepsilon_{j} \\
\end{block}
\end{blockarray}\\
B_{a,\underline{\hspace{.5em}}, a, \underline{\hspace{.5em}}} &= \scalemath{.5}{\frac{1}{\lambda_a\sqrt{[4]!}}\begin{blockarray}{ccccc}
a+  \varepsilon_{-2} & a+  \varepsilon_{-1} &a+  \varepsilon_{1}&a+  \varepsilon_{2} &\\
\begin{block}{[cccc]c}
 [\overline{a}_{-2}]\left(   [\overline{a}_{1}+1][\overline{a}_{1}-\overline{a}_{2}] - [\overline{a}_{1}-1][\overline{a}_{1}+\overline{a}_{2}]   \right)&\sqrt{\lambda_{a+\epsilon_{-1}}\lambda_{a+\epsilon_{-2}}}\frac{[\overline{a}_{-1}+\overline{a}_{-2}]}{[\overline{a}_{-1}+\overline{a}_{-2}+1]}&-\sqrt{\lambda_{a+\epsilon_{1}}\lambda_{a+\epsilon_{-2}}}\frac{[\overline{a}_{1}+\overline{a}_{-2}]}{[\overline{a}_{1}+\overline{a}_{-2}+1]}&0&a+  \varepsilon_{-2}\\
  \sqrt{\lambda_{a+\epsilon_{-2}}\lambda_{a+\epsilon_{-1}}}\frac{[\overline{a}_{-2}+\overline{a}_{-1}]}{[\overline{a}_{-2}+\overline{a}_{-1}+1]}& [\overline{a}_{-1}]\left(   [\overline{a}_{2}+1][\overline{a}_{1}-\overline{a}_{2}] + [\overline{a}_{2}-1][\overline{a}_{1}+\overline{a}_{2}]   \right)&0&-\sqrt{\lambda_{a+\epsilon_{2}}\lambda_{a+\epsilon_{-1}}}\frac{[\overline{a}_{2}+\overline{a}_{-1}]}{[\overline{a}_{2}+\overline{a}_{-1}+1]}&a+  \varepsilon_{-1}\\
     -\sqrt{\lambda_{a+\epsilon_{-2}}\lambda_{a+\epsilon_{1}}}\frac{[\overline{a}_{-2}+\overline{a}_{1}]}{[\overline{a}_{-2}+\overline{a}_{1}+1]}&0&[\overline{a}_{1}]\left(   [\overline{a}_{2}-1][\overline{a}_{1}-\overline{a}_{2}] + [\overline{a}_{2}+1][\overline{a}_{1}+\overline{a}_{2}]   \right)&\sqrt{\lambda_{a+\epsilon_{2}}\lambda_{a+\epsilon_{1}}}\frac{[\overline{a}_{2}+\overline{a}_{1}]}{[\overline{a}_{2}+\overline{a}_{1}+1]}&a+  \varepsilon_{1}\\
    0&-\sqrt{\lambda_{a+\epsilon_{-1}}\lambda_{a+\epsilon_{2}}}\frac{[\overline{a}_{-1}+\overline{a}_{2}]}{[\overline{a}_{-1}+\overline{a}_{2}+1]}&\sqrt{\lambda_{a+\epsilon_{1}}\lambda_{a+\epsilon_{2}}}\frac{[\overline{a}_{1}+\overline{a}_{2}]}{[\overline{a}_{1}+\overline{a}_{2}+1]}& [\overline{a}_{2}]\left(   [\overline{a}_{1}-1][\overline{a}_{1}-\overline{a}_{2}] - [\overline{a}_{1}+1][\overline{a}_{1}+\overline{a}_{2}]   \right)&a+  \varepsilon_{2}\\
\end{block}
\end{blockarray}}
\end{align*}

\begin{lem}\label{lem:conjB}
The solutions for the U and B cells above satisfy (BA) and (RI) when $\omega = 1$.
\end{lem}
\begin{proof}
This is a direct computation.  

To show (BA), we have to show that each of the $B_{x,\underline{\hspace{.5em}}, y, \underline{\hspace{.5em}}}$ is an eigenmatrix for $U^{x}_{\quad y}$ with eigenvalue $[2]$. Clearly $B_{a,\underline{\hspace{.5em}}, a+2\epsilon_i, \underline{\hspace{.5em}}}$ satisfies this. For the matrix $B_{a,\underline{\hspace{.5em}}, a+\epsilon_i+\epsilon_j, \underline{\hspace{.5em}}}$ we compute the upper right corner of $U^{a}_{\quad a+\epsilon_i+\epsilon_j}\cdot B_{a,\underline{\hspace{.5em}}, a+\epsilon_i+\epsilon_j, \underline{\hspace{.5em}}} $ as follows (we assume $i = 1$ and $j = -2$ to ease notation):
\begin{align*}  
&\frac{-1}{\sqrt{[4]!}}\frac{[\overline{a}_1 -\overline{a}_{-2}+1][\overline{a}_{-2} -\overline{a}_{1}-1]}{[\overline{a}_1 -\overline{a}_{-2}][\overline{a}_{-2} -\overline{a}_1]}\sqrt{ \frac{[\overline{a}_1 + 1][\overline{a}_2 -1] [\overline{a}_1-\overline{a}_2+2] [\overline{a}_1+\overline{a}_2]}{[\overline{a}_1 ][\overline{a}_2 ] [\overline{a}_1-\overline{a}_2] [\overline{a}_1+\overline{a}_2]}  }-\frac{1}{\sqrt{[4]!}}\frac{\sqrt{[\overline{a}_1 -\overline{a}_{-2}+1][\overline{a}_1 -\overline{a}_{-2}-1]}}{[\overline{a}_1 -\overline{a}_{-2}]}\\
&\cdot \sqrt{\frac{[\overline{a}_{1}+\overline{a}_{-2}][\overline{a}_{1}+\overline{a}_{-2}+2]}{[\overline{a}_{1}+\overline{a}_{-2}+1]^2}}\sqrt{\frac{[\overline{a}_1+1][\overline{a}_2] [\overline{a}_1-\overline{a}_2+1] [\overline{a}_1+\overline{a}_2 +1] [\overline{a}_1][\overline{a}_2 -1] [\overline{a}_1-\overline{a}_2+1] [\overline{a}_1+\overline{a}_2 -1]}{[\overline{a}_1 ]^2[\overline{a}_2 ]^2 [\overline{a}_1-\overline{a}_2]^2 [\overline{a}_1+\overline{a}_2]^2}}
   \\
  &= \frac{-1}{\sqrt{[4]!}}\left(  [\overline{a}_1+\overline{a}_2+1] + [\overline{a}_1+\overline{a}_2-1]   \right)\frac{[\overline{a}_1+\overline{a}_2+1]}{[\overline{a}_1+\overline{a}_2]^2}\sqrt{  \frac{[\overline{a}_1+1][\overline{a}_2-1][\overline{a}_1-\overline{a}_2+2]}{[\overline{a}_1][\overline{a}_2][\overline{a}_1-\overline{a}_2]}  }\\
   &= [2]\frac{-1}{\sqrt{[4]!}}\frac{[\overline{a}_1+\overline{a}_2+1]}{[\overline{a}_1+\overline{a}_2]}\sqrt{  \frac{[\overline{a}_1+1][\overline{a}_2-1][\overline{a}_1-\overline{a}_2+2]}{[\overline{a}_1][\overline{a}_2][\overline{a}_1-\overline{a}_2]}  }
\end{align*}
which is exactly $[2]$ times the upper left corner of $B_{a,\underline{\hspace{.5em}}, a+\epsilon_i+\epsilon_j, \underline{\hspace{.5em}}} $. The other entries (and $i,j$ values) follow in a similar fashion.

To verify that $B_{a,\underline{\hspace{.5em}}, a, \underline{\hspace{.5em}}}$ is an eigenmatrix for $U^{a}_{\quad a}$ in the we enlist the help of a computer (as in Lemma~\ref{lem:R1Conj}) to express both $U^{a}_{\quad a}\cdot B_{a,\underline{\hspace{.5em}}, a, \underline{\hspace{.5em}}}$ and $[2]B_{a,\underline{\hspace{.5em}}, a, \underline{\hspace{.5em}}}$ in terms of a formal variable $q$. We find equality by considering terms. As our expression for $U^{a}_{\quad a}$ (in particular the $w_{a,i}$ term) changes depending on the value of $[2\overline{a}_i+1]$ we have to consider both cases here. In both cases we perform a division which is non-zero only if $[2\overline{a}_1 +1]\neq 0$ in the first case, and  $[2\overline{a}_1 -3]\neq 0$ in the second case.

The relation (RI) can easily be verified by hand. We compute case as an example. We let $b := a + \epsilon_2$, so that $\overline{b}_1 = \overline{a_1}$ and $\overline{b_2} = \overline{a_2}+1$.
\begin{align*}
\frac{\lambda_{a+\epsilon_2}}{\lambda_a}B_{a+\epsilon_2, a , a+\epsilon_1, a} &=\frac{\lambda_{a+\epsilon_2}}{\lambda_a} B_{b, b + \epsilon_{-2}, b + \epsilon_1 + \epsilon_{-2}, b + \epsilon_{-2}}\\
&=\frac{-1}{\sqrt{[4]!}} \frac{\lambda_{a+\epsilon_2}}{\lambda_a}\frac{[\overline{b}_1-\overline{b}_{-2}-1]}{[\overline{b}_1-\overline{b}_{-2}]}\sqrt{\frac{\lambda_{b + \epsilon_{1} + \epsilon_{-2}}}{\lambda_{b}}}\\
&= \frac{-1}{\sqrt{[4]!}}\frac{\lambda_{a+\epsilon_2}}{\lambda_a}\frac{[\overline{a}_1+\overline{a}_2]}{[\overline{a}_1+\overline{a}_2+1]}\sqrt{\frac{\lambda_{a + \epsilon_{1}}}{\lambda_{a+\epsilon_2}}}\\
&= \frac{-1}{\sqrt{[4]!}}\frac{1}{\lambda_a}\frac{[\overline{a}_1+\overline{a}_2]}{[\overline{a}_1+\overline{a}_2+1]}\sqrt{\lambda_{a + \epsilon_{1}}\lambda_{a + \epsilon_{2}}}\\
&= (-1)^{4+1}\cdot 1\cdot B_{a,a+\epsilon_2, a , a+\epsilon_1}.
\end{align*}
The remaining cases all follow the same procedure.
\end{proof}
We thus have a KW-cell system on $\Gamma^{4,k,*}_{\Lambda_1}$ for all $k\geq 1$. This gives the following theorem.

\begin{thm}
Let $k\in \mathbb{N}_{\geq 1}$. Then there exists a module category $\mathcal{M}$ for $\mathcal{C}(\mathfrak{sl}_4, k)$ such that the fusion graph for action by $\Lambda_1$ is $\Gamma^{4,k,*}_{\Lambda_1}$.
\end{thm}

\subsection{Unfolded Charge-Conjugation Modules for $SU(4)$ at Level $k$}
In this subsection, we will construct a family of KW cell systems on the graphs $\Gamma^{4,k,\mathbb{Z}_2^*}_{\Lambda_1}$, where $N=4$ and $q = e^{2\pi i \frac{1}{2(4+k)}}$ for all $k\in \mathbb{N}$. These will construct the \textit{unfolded charge conjugation} module categories over $\mathcal{C}(\mathfrak{sl}_4, k)$ for all $k$.

We define the graph $\Gamma^{4,k,\mathbb{Z}_2^*}$ as follows. The vertices of $\Gamma^{4,k,\mathbb{Z}_2^*}_{\Lambda_1}$ are $a = (i,j,\delta)$ such that $i + 2j \leq k+3$, and $\delta = \pm$. There is an edge in $\Gamma^{4,k,\mathbb{Z}_2^*}_{\Lambda_1}$ from $(a,\delta_1)\to (b,\delta_2)$ if 
\[a-b \in \{\varepsilon_{-2}:= (1,-1),\quad \varepsilon_{-1}:= (-1,0), \quad \varepsilon_{1}:= (1,0),\quad  \varepsilon_{2}:= (-1,1)   \},\]
and
\[ \delta_1\delta_2 = \begin{cases}
    + & \text{ if } \overline{a}_1 - \overline{a}_2 \text{ is odd}\\
    - & \text{ if } \overline{a}_1 - \overline{a}_2 \text{ is even}\\
    \end{cases}\]

When $k$ is odd, the graph $\Gamma^{4,k,\mathbb{Z}_2^*}_{\Lambda_1}$ is of the form:
\[\raisebox{-.5\height}{ \includegraphics[scale = .6]{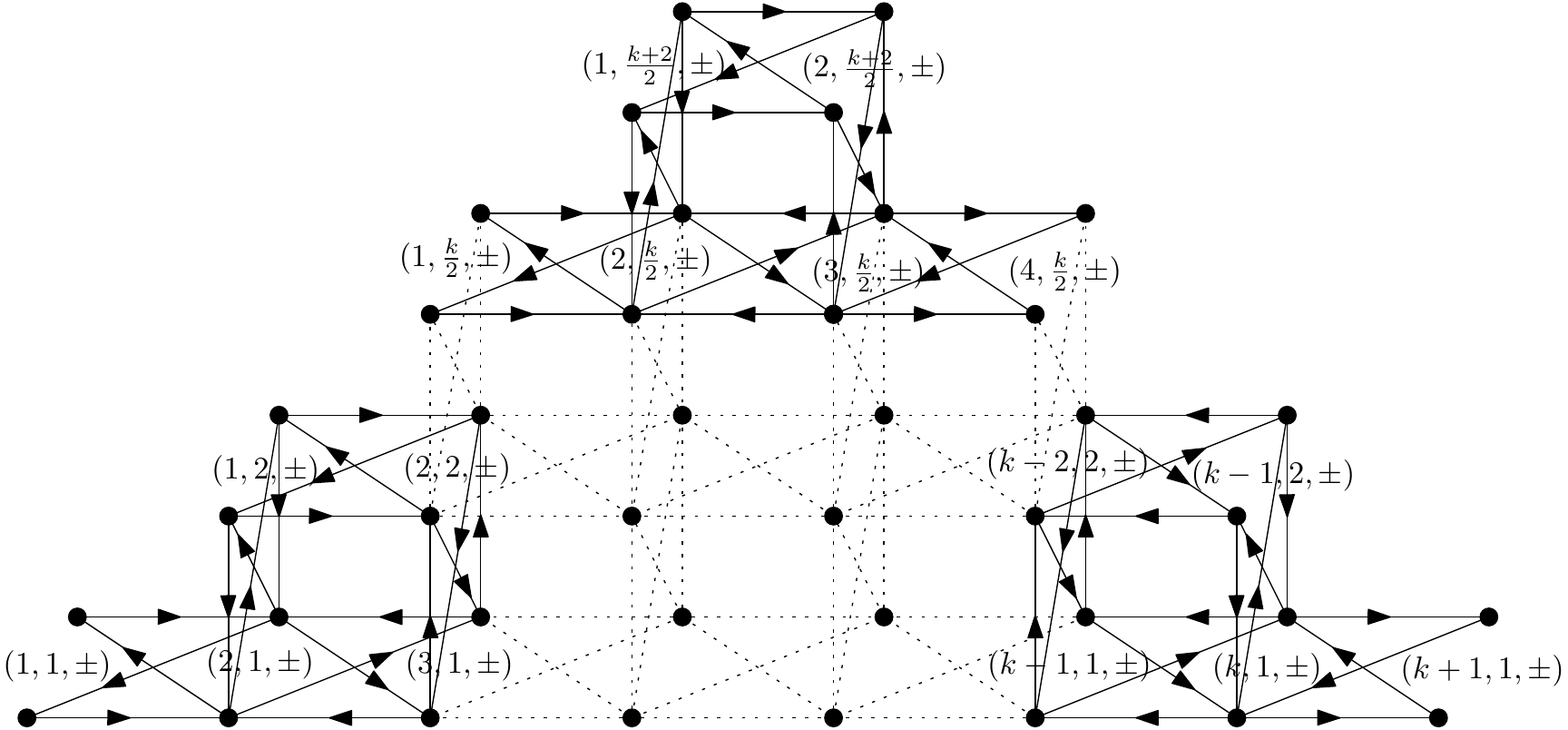}} \]
When $k$ is even, the graph $\Gamma^{4,k,\mathbb{Z}_2^*}_{\Lambda_1}$ is of the form:
\[\raisebox{-.5\height}{ \includegraphics[scale = .6]{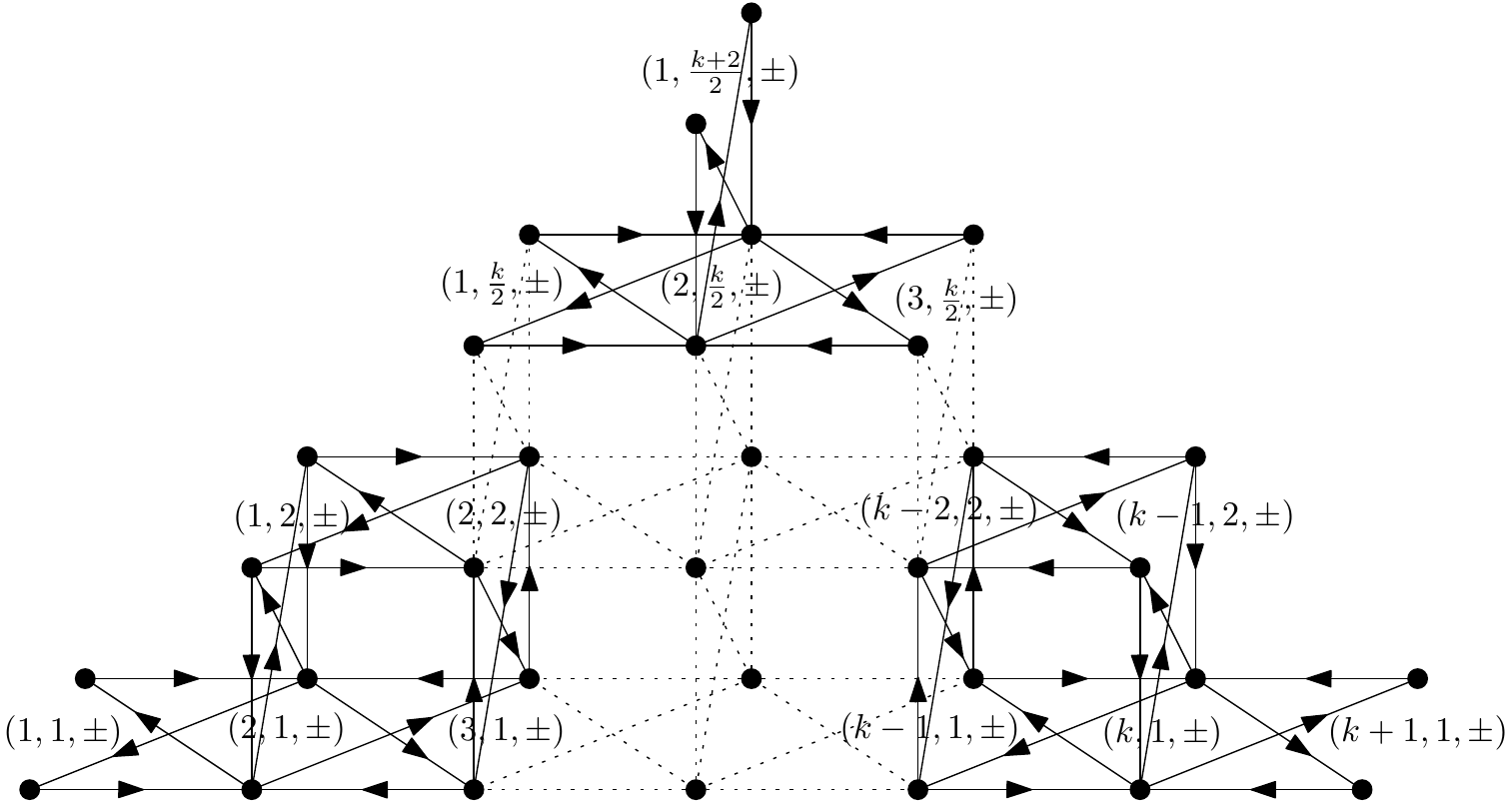}}. \]

From the definition of the graph $\Gamma^{4,k,\mathbb{Z}_2^*}_{\Lambda_1}$ we have that 
\[ \begin{tikzcd}
(a,\delta_1)\arrow[r] \arrow[d]
&  (b, \delta_2)  \arrow[d]\\
 (c,\delta_3)\arrow[r]&(d, \delta_4)
\end{tikzcd}  \]
is a valid path in $\Gamma^{4,k,\mathbb{Z}_2^*}_{\Lambda_1}$ if and only if 
\[ \begin{tikzcd}
a \arrow[r] \arrow[d]
& b \arrow[d]\\
c\arrow[r]&d
\end{tikzcd}  \]
is a valid path in $\Gamma^{4,k,*}_{\Lambda_1}$. Furthermore, for each valid path in $\Gamma^{4,k,*}_{\Lambda_1}$ of the above form, there are exactly two valid paths in $\Gamma^{4,k,\mathbb{Z}_2^*}_{\Lambda_1}$ of the above form (corresponding to the choice of $\delta_1 = \pm$).

The positive eigenvector $\lambda$ for $\Gamma^{4,k,\mathbb{Z}_2^*}_{\Lambda_1}$ is essentially the same as for the graph $\Gamma^{4,k,*}_{\Lambda_1}$.
\[   \lambda_{a,\delta} = [\overline{a}_1][\overline{a}_2] [\overline{a}_1-\overline{a}_2] [\overline{a}_1+\overline{a}_2].   \]

The graph $\Gamma^{4,k,\mathbb{Z}_2^*}_{\Lambda_1}$ inherits the cell system from the graph $\Gamma^{4,k,*}_{\Lambda_1}$. More precisely, we have
\[
U^{(a,\delta_1),(b, \delta_2) }_{(c,\delta_3),(d, \delta_4) }:= U^{a,b}_{c,d}\qquad B_{(e,\delta_5), (f, \delta_6), (g,\delta_7), (h, \delta_8)} := B_{e,f,g,h}
\]
where $U^{a,b}_{c,d}$ and $B_{e,f,g,h}$ are the cell system solution on $\Gamma^{4,k,*}_{\Lambda_1}$ from Subsection~\ref{sec:charge}.

\begin{lem}
The KW cell system on $\Gamma^{4,k,\mathbb{Z}_2^*}_{\Lambda_1}$ above satisfies (R1), (R2), (R3), (Hecke), (BA) and (RI) with $\omega = 1$.
\end{lem}
\begin{proof}
This follows by construction of $\Gamma^{4,k,\mathbb{Z}_2^*}_{\Lambda_1}$, along with the fact that the positive eigenvector for $\Gamma^{4,k,\mathbb{Z}_2^*}_{\Lambda_1}$ is essentially the same as for $\Gamma^{4,k,*}_{\Lambda_1}$. For an example, we will show (R1) holds. The other relations follow in the same manner.

Let $(a,\delta_1), (b, \delta_2)\in \Gamma^{4,k,\mathbb{Z}_2^*}_{\Lambda_1}$, we compute
\begin{align*} \sum_{(p,\delta_3): (b, \delta_2)\to(p,\delta_3) }  \frac{\lambda_{(p, \delta_3)}}{\lambda_{(b, \delta_2)}}KW\left(\raisebox{-.5\height}{ \includegraphics[scale = .4]{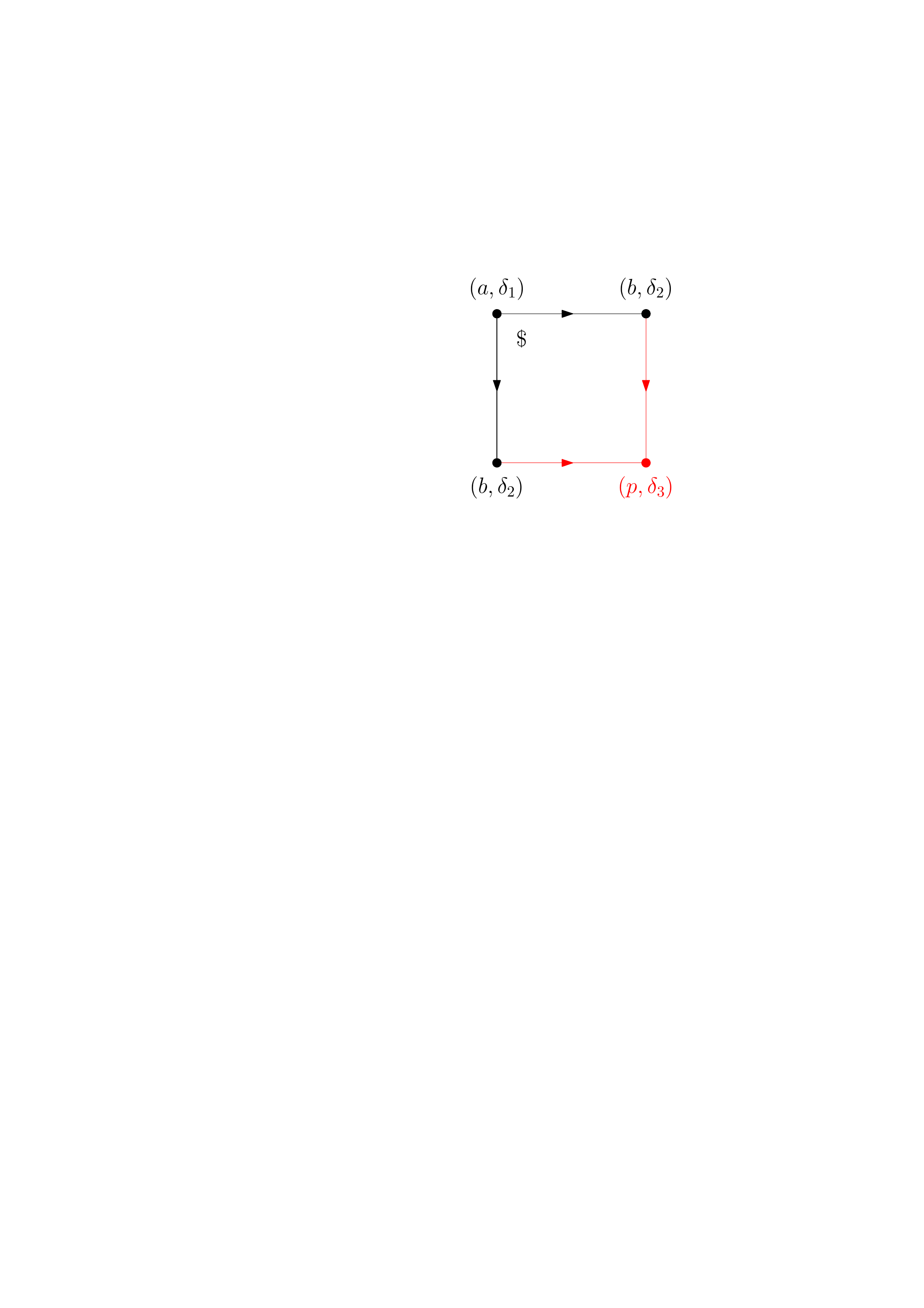}}\right)&=  \sum_{(p,\delta_3): (b, \delta_2)\to(p,\delta_3) }  \frac{\lambda_{p}}{\lambda_{b}}KW\left(\raisebox{-.5\height}{ \includegraphics[scale = .4]{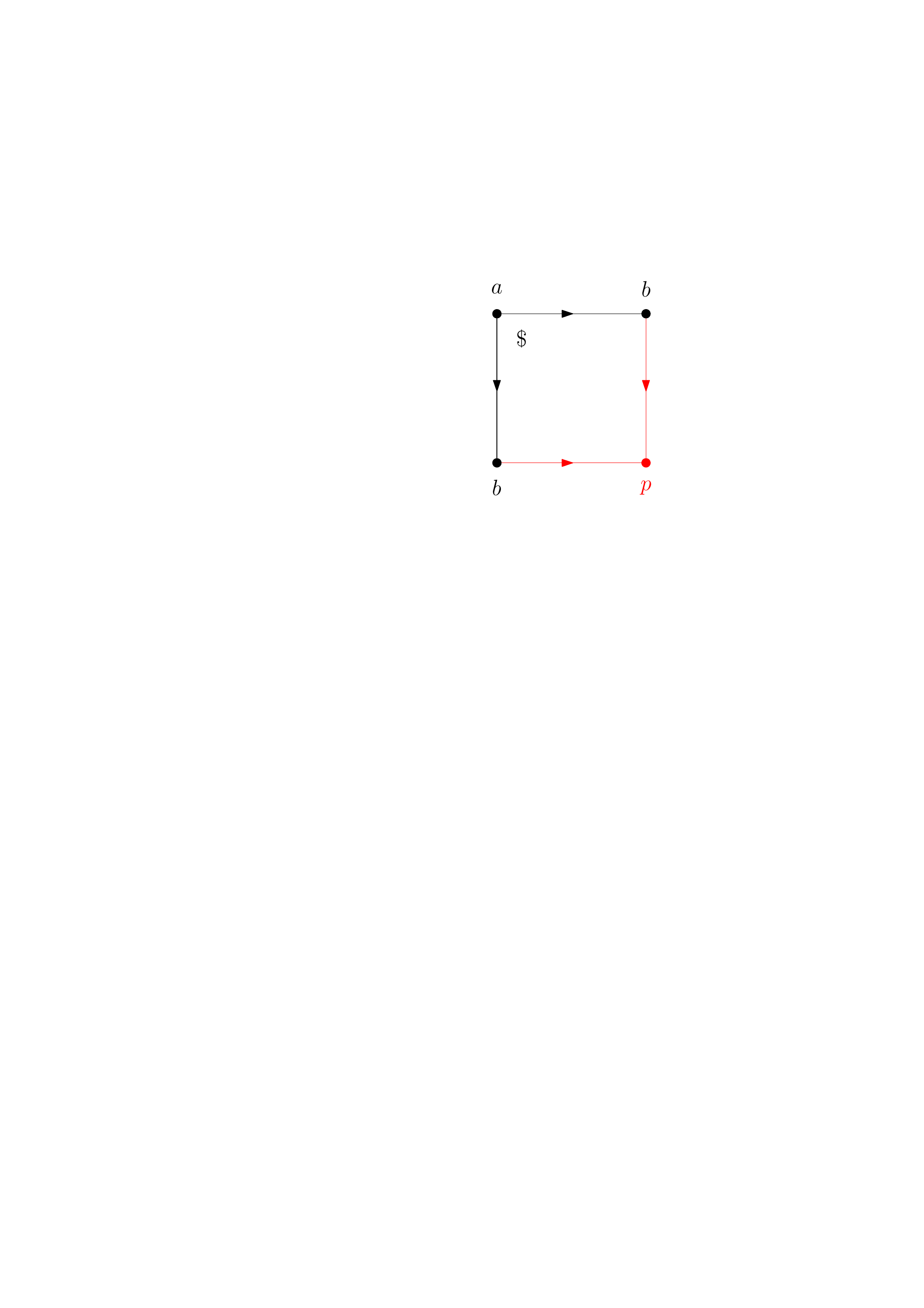}}\right) \\
&= \sum_{p: b\to p }  \frac{\lambda_{p}}{\lambda_{b}}KW\left(\raisebox{-.5\height}{ \includegraphics[scale = .4]{Ucharge2.pdf}}\right)\\
&= [3].
\end{align*}
Here we overload the symbols $\lambda$ and $KW$ for the positive eigenvector and KW cell systems of both $\Gamma^{4,k,\mathbb{Z}_2^*}_{\Lambda_1}$ and $\Gamma^{4,k,*}_{\Lambda_1}$. The indexing resolves this overloading of notation. The first equality holds by definition of our cell system on $\Gamma^{4,k,\mathbb{Z}_2^*}_{\Lambda_1}$, and the fact that the eigenvectors for the two graphs are essentially the same. The second equality holds as there is a path in $\Gamma^{4,k,\mathbb{Z}_2^*}_{\Lambda_1}$ from $(b, \delta_2)\to(p,\delta_3)$ if and only if there is a path in $\Gamma^{4,k,*}_{\Lambda_1}$ from $b\to p$. The final equality holds as we have shown our KW cell system on $\Gamma^{4,k,*}_{\Lambda_1}$ satisfies (R1) in Subsection~\ref{sec:charge}.
\end{proof}
In consequence we construct the unfolded charge conjugation modules.
\begin{thm}
Let $k\in \mathbb{N}_{\geq 1}$. Then there exists a module category $\mathcal{M}$ for $\mathcal{C}(\mathfrak{sl}_4, k)$ such that the fusion graph for action by $\Lambda_1$ is $\Gamma^{4,k,\mathbb{Z}_2^*}_{\Lambda_1}$.
\end{thm}
\subsection*{Unfolded Orbifold Charge-Conjugation Modules for $SU(4)$ at Even Levels}

Let $k$ be an even integer. We will build a cell system with $N=4$ and $q = e^{2\pi i \frac{1}{2(N+k)}}$ on the graph $\Gamma^{4,k,{\mathbb{Z}_4^*}}_{\Lambda_1}$. The graph $\Gamma^{4,k,{\mathbb{Z}_4^*}}_{\Lambda_1}$ is defined as an orbifold of the graph $\Gamma^{4,k,{\mathbb{Z}_2^*}}_{\Lambda_1}$ with respect to the horizontal flip symmetry.

Explicitly, the vertices of $\Gamma^{4,k,{\mathbb{Z}_4^*}}_{\Lambda_1}$ are of the form
\[    \left\{  ((i,j),\pm)   : i+j < \frac{k+4}{2} \right\}  \bigcup \left\{  ((i,j),\pm,
\chi_{\pm 1})   : i+j = \frac{k+4}{2}  \right\}. \]
We will refer to the second group as the split vertices. 

There is an edge $(a, \delta_1) \to (b, \delta_2)$, or $(a, \delta_1,\chi_\ell) \to (b, \delta_2)$, or $(a, \delta_1) \to (b, \delta_2,\chi_\ell)$ if the edge  $(a, \delta_1) \to (b, \delta_2)$ exists in $\Gamma^{4,k,{\mathbb{Z}_2^*}}_{\Lambda_1}$. There is an edge $(a, \delta_1,\chi_{\ell_1}) \to (b, \delta_2,\chi_{\ell_2})$ if there is an edge $(a, \delta_1) \to (b, \delta_2)$ in $\Gamma^{4,k,{\mathbb{Z}_2^*}}_{\Lambda_1}$, and 
\[ \chi_{\ell_1}\cdot \chi_{\ell_2} = \begin{cases}
-1 &\quad \text{if } \delta_1  = -1 \quad\text{and}\quad  \overline{a}_1 - \overline{a}_2 \text{ is even}    \\
1 &\quad \text{otherwise}.
\end{cases}      \]
The graph $\Gamma^{4,k,{\mathbb{Z}_4^*}}_{\Lambda_1}$ is of the form:
\[\raisebox{-.5\height}{ \includegraphics[scale = .6]{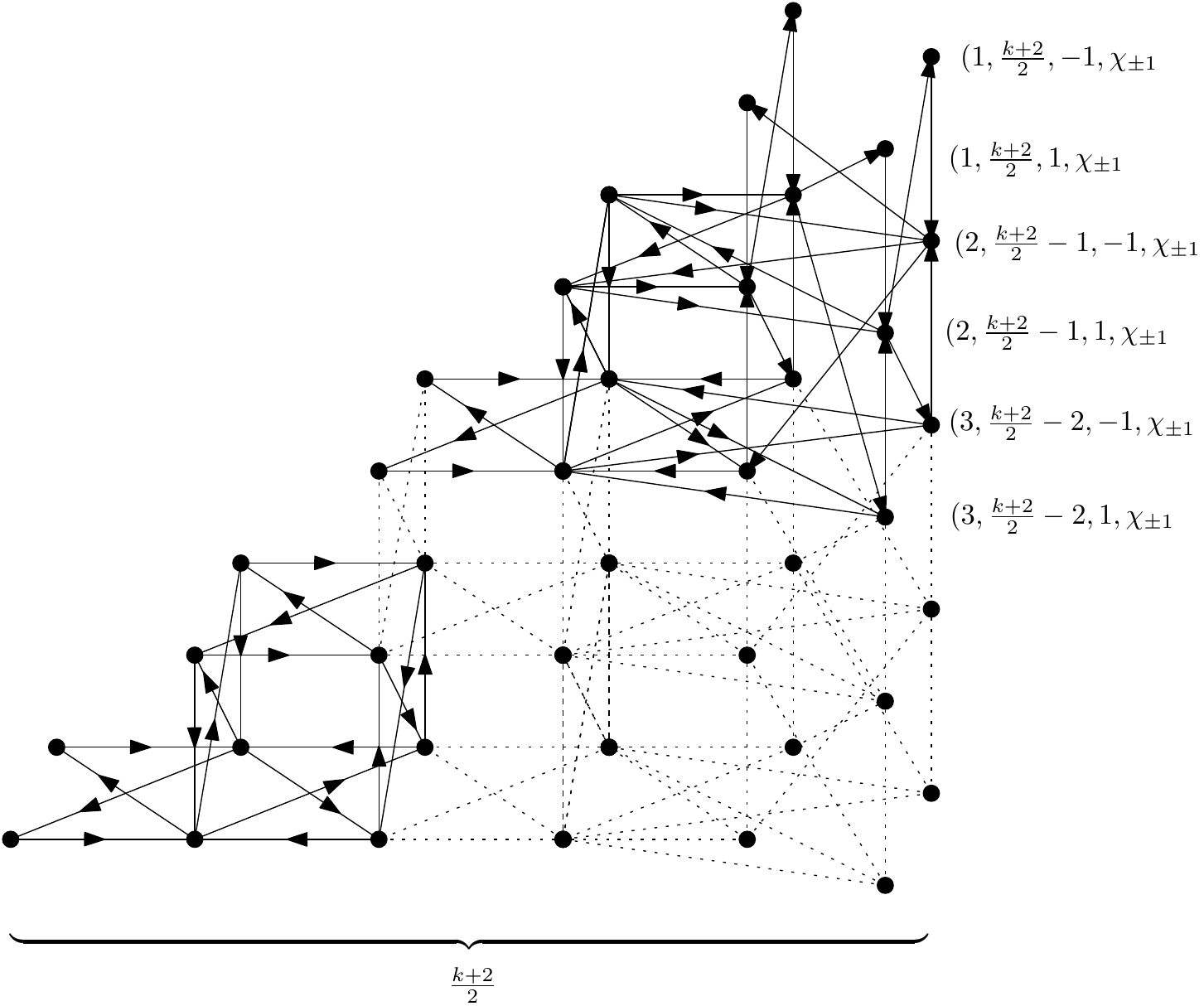}}. \]
The Frobenius-Perron eigenvector for $\Gamma^{4,k,{\mathbb{Z}_4^*}}_{\Lambda_1}$ is easily described in terms of the eigenvector for eigenvector for $\Gamma^{4,k,{*}}_{\Lambda_1}$. Note that we will overload the symbol $\lambda$ for both these eigenvectors, as the indexing resolves the ambiguity. We have
\begin{align*}
\lambda_{(a, \delta)} &= \lambda_{a} = [\overline{a}_1][\overline{a}_2] [\overline{a}_1-\overline{a}_2] [\overline{a}_1+\overline{a}_2]\\
\lambda_{(a, \delta, \chi_\ell)} &= \frac{\lambda_{a}}{2} = \frac{[\overline{a}_1][\overline{a}_2] [\overline{a}_1-\overline{a}_2] [\overline{a}_1+\overline{a}_2]}{2}.\\
\end{align*}

To build a cell system solution on $\Gamma^{4,k,{\mathbb{Z}_4^*}}_{\Lambda_1}$, we observe that the U-cells for our cell system solution on $\Gamma^{4,k,{\mathbb{Z}_2^*}}_{\Lambda_1}$ respects the horizontal flip symmetry (however the B-cells do not). This means we can orbifold the U-cell solution on $\Gamma^{4,k,{\mathbb{Z}_2^*}}_{\Lambda_1}$ to get a potential U-cell solution on $\Gamma^{4,k,{\mathbb{Z}_4^*}}_{\Lambda_1}$. From the results of \cite{MR1301620} we get that this solution will satisfy (R2), (R3), and (Hecke). Following their construction gives the following.

For any path that doesn't pass through a split vertex, the corresponding U-cell is directly inherited from the solution on $\Gamma^{4,k,{\mathbb{Z}_2^*}}_{\Lambda_1}$. Let $a = (i,j)$. For the remaining U-cells we obtain:

If $\overline{a}_1 = \frac{k}{2}$, then
\begin{align*}
   U^{(a ,\delta)}_{\quad (a+2\varepsilon_1,-\delta, \chi_{ l})} &=  \begin{blockarray}{c}
(a+\varepsilon_1,\delta')  \\
\begin{block}{[c]}
  0\\
\end{block}
\end{blockarray}\\
\end{align*}
If $\overline{a}_1 = \frac{k+2}{2}$, then
\begin{align*}
U^{( a,\delta)}_{\quad (a+\epsilon_1 \pm \epsilon_2,-\delta,\chi_{ l })} &= \frac{1}{ [\overline{a}_1 -\overline{a}_2]} \begin{blockarray}{cc}
(a+  \varepsilon_1, \delta', \chi_{l'}) & (a\pm\varepsilon_2, \delta')  \\
\begin{block}{[cc]} 
  [\overline{a}_1 \mp \overline{a}_2+1]   & \sqrt{[\overline{a}_1 \mp\overline{a}_2+1][\overline{a}_1 \mp \overline{a}_2-1]}\\
  \sqrt{[\overline{a}_1 \mp \overline{a}_2+1][\overline{a}_1 \mp \overline{a}_2-1]} &[\overline{a}_1 \mp \overline{a}_2-1]\\
\end{block}
\end{blockarray}\\
U^{( a,\delta)}_{\quad ( a,-\delta)} &=\frac{1}{\lambda_a}\scalemath{.6}{\begin{blockarray}{ccccc}
(a+  \varepsilon_{-2}, \delta') & (a+  \varepsilon_{-1},\delta') &(a+  \varepsilon_{1},\delta', \chi_1)&(a+  \varepsilon_{1},\delta', \chi_{-1})&(a+  \varepsilon_{2},\delta')  \\
\begin{block}{[ccccc]}
w_{a,-2}&\sqrt{\lambda_{a+\varepsilon_{-1}}\lambda_{a+\varepsilon_{-2}}}\frac{1}{[\overline{a}_{-1} +\overline{a}_{-2}  +1]}&-\frac{1}{\sqrt{2}}\sqrt{\lambda_{a+\varepsilon_{1}}\lambda_{a+\varepsilon_{-2}}}\frac{1}{[\overline{a}_{1} +\overline{a}_{-2}  +1]}&-\frac{1}{\sqrt{2}}\sqrt{\lambda_{a+\varepsilon_{1}}\lambda_{a+\varepsilon_{-2}}}\frac{1}{[\overline{a}_{1} +\overline{a}_{-2}  +1]}&-\sqrt{\lambda_{a+\varepsilon_{2}}\lambda_{a+\varepsilon_{-2}}}\frac{1}{[\overline{a}_{2} +\overline{a}_{-2}  +1]}\\
 \sqrt{\lambda_{a+\varepsilon_{-1}}\lambda_{a+\varepsilon_{-2}}}\frac{1}{[\overline{a}_{-1} +\overline{a}_{-2}  +1]}&w_{a,-1}&-\frac{1}{\sqrt{2}}\sqrt{\lambda_{a+\varepsilon_{-1}}\lambda_{a+\varepsilon_{1}}}\frac{1}{[\overline{a}_{-1} +\overline{a}_{1}  +1]}&-\frac{1}{\sqrt{2}}\sqrt{\lambda_{a+\varepsilon_{-1}}\lambda_{a+\varepsilon_{1}}}\frac{1}{[\overline{a}_{-1} +\overline{a}_{1}  +1]}&-\sqrt{\lambda_{a+\varepsilon_{-1}}\lambda_{a+\varepsilon_{2}}}\frac{1}{[\overline{a}_{-1} +\overline{a}_{2}  +1]}\\
 -\frac{1}{\sqrt{2}}\sqrt{\lambda_{a+\varepsilon_{1}}\lambda_{a+\varepsilon_{-2}}}\frac{1}{[\overline{a}_{1} +\overline{a}_{-2}  +1]}&-\frac{1}{\sqrt{2}}\sqrt{\lambda_{a+\varepsilon_{1}}\lambda_{a+\varepsilon_{-1}}}\frac{1}{[\overline{a}_{1} +\overline{a}_{-1}  +1]}&\frac{1}{2} w_{a,1}&\frac{1}{2} w_{a,1}&\frac{1}{\sqrt{2}}\sqrt{\lambda_{a+\varepsilon_{1}}\lambda_{a+\varepsilon_{2}}}\frac{1}{[\overline{a}_{1} +\overline{a}_{2}  +1]}\\
  -\frac{1}{\sqrt{2}}\sqrt{\lambda_{a+\varepsilon_{1}}\lambda_{a+\varepsilon_{-2}}}\frac{1}{[\overline{a}_{1} +\overline{a}_{-2}  +1]}&-\frac{1}{\sqrt{2}}\sqrt{\lambda_{a+\varepsilon_{1}}\lambda_{a+\varepsilon_{-1}}}\frac{1}{[\overline{a}_{1} +\overline{a}_{-1}  +1]}&\frac{1}{2} w_{a,1}&\frac{1}{2} w_{a,1}&\frac{1}{\sqrt{2}}\sqrt{\lambda_{a+\varepsilon_{1}}\lambda_{a+\varepsilon_{2}}}\frac{1}{[\overline{a}_{1} +\overline{a}_{2}  +1]}\\
 -\sqrt{\lambda_{a+\varepsilon_{2}}\lambda_{a+\varepsilon_{-2}}}\frac{1}{[\overline{a}_{2} +\overline{a}_{-2}  +1]}&-\sqrt{\lambda_{a+\varepsilon_{2}}\lambda_{a+\varepsilon_{-1}}}\frac{1}{[\overline{a}_{2} +\overline{a}_{-1}  +1]}&\frac{1}{\sqrt{2}}\sqrt{\lambda_{a+\varepsilon_{2}}\lambda_{a+\varepsilon_{1}}}\frac{1}{[\overline{a}_{2} +\overline{a}_{1}  +1]}&\frac{1}{\sqrt{2}}\sqrt{\lambda_{a+\varepsilon_{2}}\lambda_{a+\varepsilon_{1}}}\frac{1}{[\overline{a}_{2} +\overline{a}_{1}  +1]}&w_{a,2}\\
\end{block}
\end{blockarray}}
\end{align*}
If $\overline{a}_1 = \frac{k+4}{2}$, then
\begin{align*}
 U^{(a ,\delta,\chi_l)}_{\quad (a-2\varepsilon_1,-\delta, \chi_{ l'})} &=  \begin{blockarray}{c}
(a-\varepsilon_1,\delta'' )  \\
\begin{block}{[c]}
  0\\
\end{block}
\end{blockarray}\\
U^{(a,\delta,\chi_{ l })}_{\quad (a \pm 2\epsilon_2,-\delta,\chi_{ l' })} &=  \begin{blockarray}{c}
(a\pm \epsilon_2,\delta'',\chi_{ l'' })  \\
\begin{block}{[c]}
  0\\
\end{block}
\end{blockarray}\\
U^{(a,\delta,\chi_{ l })}_{\quad (a ,-\delta,\chi_{ l' })} &=  \begin{blockarray}{c}
(a -  \epsilon_1,\delta')  \\
\begin{block}{[c]}
  [2]\\
\end{block}
\end{blockarray}\quad &&\text{ if $l\cdot l' = -\delta$}\\
U^{(a,\delta,\chi_{ l })}_{\quad (a ,-\delta,\chi_{ l' })} &=  \begin{blockarray}{ccc}
(a + \varepsilon_{-2},\delta'',\chi_{l'}) & (a + \varepsilon_{-1},\delta'')  &(a + \varepsilon_{2},\delta'',\chi_{l''}) \\
\begin{block}{[ccc]}
 w_{a,-2} & \sqrt{2}\frac{\sqrt{\lambda_{a + \varepsilon_{-2}}\lambda_{a + \varepsilon_{-1}}}}{[\overline{a}_{-2} + \overline{a}_{-1}+1]} &-\sqrt{\lambda_{a + \varepsilon_{-2}}\lambda_{a + \varepsilon_{2}}} \\
  \sqrt{2}\frac{\sqrt{\lambda_{a + \varepsilon_{-2}}\lambda_{a + \varepsilon_{-1}}}}{[\overline{a}_{-2} + \overline{a}_{-1}+1]} & w_{a,-1} - \sqrt{\lambda_{a + \varepsilon_{-1}}\lambda_{a + \varepsilon_{1}}}& -\sqrt{2}\frac{\sqrt{\lambda_{a + \varepsilon_{-1}}\lambda_{a + \varepsilon_{2}}}}{[\overline{a}_{-1} + \overline{a}_{2}+1]}\\
  -\sqrt{\lambda_{a + \varepsilon_{-2}}\lambda_{a + \varepsilon_{2}}} & -\sqrt{2}\frac{\sqrt{\lambda_{a + \varepsilon_{-1}}\lambda_{a + \varepsilon_{2}}}}{[\overline{a}_{-1} + \overline{a}_{2}+1]} & w_{a,2}\\
\end{block}
\end{blockarray}\quad &&\text{ if $l\cdot l' = \delta$}\\
U^{(a,\delta,\chi_{ l })}_{\quad (a-\varepsilon_1 \pm \varepsilon_2 ,-\delta)}&= \frac{1}{ [\overline{a}_{-1} \mp \overline{a}_2]} \begin{blockarray}{cc}
(a+  \varepsilon_{-1}, \delta'') & (a\pm \varepsilon_2,\delta'', \chi_{l'})  \\
\begin{block}{[cc]}
  [\overline{a}_{-1} \mp\overline{a}_2+1]   & \sqrt{[\overline{a}_{-1} \mp \overline{a}_2+1][\overline{a}_{-1} \mp\overline{a}_2-1]}\\
  \sqrt{[\overline{a}_{-1} \mp\overline{a}_2+1][\overline{a}_{-1} \mp\overline{a}_2-1]} &[\overline{a}_{-1} \mp\overline{a}_2-1]\\
\end{block}
\end{blockarray}
\end{align*}
As the results of \cite{MR1301620} only establish that their construction gives an embedding of path algebras (and not of the entire graph planar algebra), we still have to verify relation (R1), which occurs outside of the path algebra.

\begin{lem}
The above U-cells for $\Gamma^{4,k,{\mathbb{Z}_4^*}}_{\Lambda_1}$ satisfy (R1).
\end{lem}
\begin{proof}
This is a direct computation as done in Lemma~\ref{lem:R1Conj}. As the U-cell solution is the same for paths that do not pass through a split vertex, we only have to consider vertices at most distance 2 away from a split vertex. This has to be done in several cases. Let us demonstrate the case where $s(i) = (a,1,\chi_{ 1 })$, and $r(i) =  (a+\varepsilon_2,1,\chi_{1 })$. Note that $\overline{a}_1- \overline{a}_2$ is odd for this edge to exist. We also have that $\overline{a}_1 = \frac{k+4}{2}$ . We have to show that
\[   \scalemath{.9}{ \frac{ \lambda_{(a, -1, \chi_{1})}}{\lambda_{(a+\varepsilon_2, 1, \chi_{1})} }  U^{(a,1,\chi_{1}),(a+\varepsilon_2,1,\chi_{1}) }_{(a+\varepsilon_2,1,\chi_{1}), (a ,-1,\chi_{ 1 })} + \frac{ \lambda_{(a+2\varepsilon_2, -1, \chi_{1})}}{\lambda_{(a+\varepsilon_2, 1, \chi_{1})} }  U^{(a,1,\chi_{1}),(a+\varepsilon_2,1,\chi_{1}) }_{(a+\varepsilon_2,1,\chi_{1}), (a+2\varepsilon_2 ,-1,\chi_{ 1 })}  + \frac{ \lambda_{(a-\varepsilon_1+\varepsilon_2, -1)}}{\lambda_{(a+\varepsilon_2, 1, \chi_{1})} }  U^{(a,1,\chi_{1}),(a+\varepsilon_2,1,\chi_{1}) }_{(a+\varepsilon_2,1,\chi_{1}), (a-\varepsilon_1+\varepsilon_2 ,-1)}= [3]. }    \]
Expanding out the left hand side gives
\[  \frac{[\overline{a}_1][\overline{a}_2][\overline{a}_1- \overline{a}_2][\overline{a}_1+ \overline{a}_2]}{[\overline{a}_1][\overline{a}_2+1][\overline{a}_1- \overline{a}_2-1][\overline{a}_1+ \overline{a}_2+1] }\frac{w_{a,2}}{\lambda_a} +  \frac{2 [\overline{a}_1-1][\overline{a}_2+1][\overline{a}_1-\overline{a}_2-2][\overline{a}_1+\overline{a}_2]}{[\overline{a}_1][\overline{a}_2+1][\overline{a}_1- \overline{a}_2-1][\overline{a}_1+ \overline{a}_2+1] }\cdot \frac{[\overline{a}_{-1} - \overline{a}_2-1]}{[\overline{a}_{-1} - \overline{a}_2]}.   \]
Note that $w_{a,2}$ must be of the first form in this situation, as $[2\overline{a}_2 + 1] \neq 0$. We can then use a computer to expand this in the formal variable $q$ (and using that $\overline{a}_1 = \frac{k+4}{2}$) to obtain the uninspiring
\[   \frac{\frac{\left(q^2-1\right) \left(q^{k+2}-1\right) q^{k+4}}{q^{2 {\overline{a}_2}}-q^{k+2}}-\frac{\left(q^2-1\right) \left(q^{k+6}-1\right)}{q^{k+2 {\overline{a}_2}+6}-1}+q^{k+4}+2 q^{k+6}-2 q^4-1}{q^2 \left(q^{k+4}-1\right)}.\]
However recalling that $q = e^{2 \pi i \frac{1}{2(4+k)}}$, we have that $q^{4+k} = -1$. This simplifies the above expression to give $1 + q^2 + q^{-2} = [3]$ as desired.

All the remaining cases are resolved in the same manner.
\end{proof}

Now that we have a solution for our U-cells, we can solve the linear system (RI) + (BA) to obtain the following solution for the B-cells. For any path not passing through a split vertex, the B-cell value is directly inherited from the corresponding B-cell value on $\Gamma^{4,k,{\mathbb{Z}_4^*}}_{\Lambda_1}$. For the remaining cells, we have:

If $\overline{a}_1 = \frac{k}{2}$, then
\begin{align*}
B_{(a, \delta),\underline{\hspace{.5em}}, (a+2\epsilon_1, -\delta, \chi_{\pm 1}), \underline{\hspace{.5em}}} &= \begin{blockarray}{cc}
(a+  \varepsilon_{1}, \delta')  &\\
\begin{block}{[c]c}
 0 &(a+  \varepsilon_{1}, -\delta')  \\
\end{block}
\end{blockarray}\\
\end{align*}
If $\overline{a}_1 = \frac{k+2}{2}$, then
\begin{align*}
B_{(a,\delta),\underline{\hspace{.5em}}, (a+\epsilon_1 \pm \epsilon_2,-\delta,\chi_{l}), \underline{\hspace{.5em}}} &= \scalemath{.9}{\pm \frac{1}{\sqrt{2}}\frac{1}{\sqrt{[4]!}}\begin{blockarray}{ccc}
(a+  \varepsilon_{1}, \delta',\chi_{l'})  &  ( a \pm   \varepsilon_{2},\delta')&\\
\begin{block}{[cc]c}
 \frac{[\overline{a}_{\pm 2}-\overline{a}_{1}-1]}{[\overline{a}_{\pm 2}-\overline{a}_{1}]}\sqrt{\frac{\lambda_{a+\epsilon_1+\epsilon_{\pm 2}}}{\lambda_a}} &  \sqrt{\frac{[\overline{a}_{1}+\overline{a}_{\pm 2}][\overline{a}_{1}+\overline{a}_{\pm 2}+2]}{[\overline{a}_{1}+\overline{a}_{\pm 2}+1]^2}}\sqrt{\frac{\lambda_{a+\epsilon_1}\lambda_{a+\epsilon_{\pm 2}}}{\lambda_a^2}} & (a+  \varepsilon_{1}, \delta',\chi_{l'}) \\
  \sqrt{\frac{[\overline{a}_{1}+\overline{a}_{\pm 2}][\overline{a}_{1}+\overline{a}_{\pm 2}+2]}{[\overline{a}_{1}+\overline{a}_{\pm 2}+1]^2}}\sqrt{\frac{\lambda_{a+\epsilon_1}\lambda_{a+\epsilon_{\pm 2}}}{\lambda_a^2}} &\frac{[\overline{a}_{1}-\overline{a}_{\pm 2}-1]}{[\overline{a}_{1}-\overline{a}_{\pm 2}]}\sqrt{\frac{\lambda_{a+\epsilon_1+\epsilon_{\pm 2}}}{\lambda_a}}& ( a \pm   \varepsilon_{2},\delta') \\
\end{block}
\end{blockarray}}\\
B_{(a,\delta),\underline{\hspace{.5em}}, (a, -\delta), \underline{\hspace{.5em}}} &= \scalemath{.35}{\frac{1}{\lambda_a\sqrt{[4]!}}\begin{blockarray}{cccccc}
(a+  \varepsilon_{-2},\delta') & (a+  \varepsilon_{-1},\delta') &(a+  \varepsilon_{1}, \delta', \chi_1) &(a+  \varepsilon_{1}, \delta', \chi_2)&(a+  \varepsilon_{2},\delta') &\\
\begin{block}{[ccccc]c}
 [\overline{a}_{-2}]\left(   [\overline{a}_{1}+1][\overline{a}_{1}-\overline{a}_{2}] - [\overline{a}_{1}-1][\overline{a}_{1}+\overline{a}_{2}]   \right)&\sqrt{\lambda_{a+\epsilon_{-1}}\lambda_{a+\epsilon_{-2}}}\frac{[\overline{a}_{-1}+\overline{a}_{-2}]}{[\overline{a}_{-1}+\overline{a}_{-2}+1]}&-\frac{1}{\sqrt{2}}\sqrt{\lambda_{a+\epsilon_{1}}\lambda_{a+\epsilon_{-2}}}\frac{[\overline{a}_{1}+\overline{a}_{-2}]}{[\overline{a}_{1}+\overline{a}_{-2}+1]}&-\frac{1}{\sqrt{2}}\sqrt{\lambda_{a+\epsilon_{1}}\lambda_{a+\epsilon_{-2}}}\frac{[\overline{a}_{1}+\overline{a}_{-2}]}{[\overline{a}_{1}+\overline{a}_{-2}+1]}&0&(a+  \varepsilon_{-2},\delta')\\
  \sqrt{\lambda_{a+\epsilon_{-2}}\lambda_{a+\epsilon_{-1}}}\frac{[\overline{a}_{-2}+\overline{a}_{-1}]}{[\overline{a}_{-2}+\overline{a}_{-1}+1]}& [\overline{a}_{-1}]\left(   [\overline{a}_{2}+1][\overline{a}_{1}-\overline{a}_{2}] + [\overline{a}_{2}-1][\overline{a}_{1}+\overline{a}_{2}]   \right)&0&0&-\sqrt{\lambda_{a+\epsilon_{2}}\lambda_{a+\epsilon_{-1}}}\frac{[\overline{a}_{2}+\overline{a}_{-1}]}{[\overline{a}_{2}+\overline{a}_{-1}+1]}&(a+  \varepsilon_{-1},\delta')\\
     -\frac{1}{\sqrt{2}}\sqrt{\lambda_{a+\epsilon_{-2}}\lambda_{a+\epsilon_{1}}}\frac{[\overline{a}_{-2}+\overline{a}_{1}]}{[\overline{a}_{-2}+\overline{a}_{1}+1]}&0&\frac{1}{2}[\overline{a}_{1}]\left(   [\overline{a}_{2}-1][\overline{a}_{1}-\overline{a}_{2}] + [\overline{a}_{2}+1][\overline{a}_{1}+\overline{a}_{2}]   \right)&\frac{1}{2}[\overline{a}_{1}]\left(   [\overline{a}_{2}-1][\overline{a}_{1}-\overline{a}_{2}] + [\overline{a}_{2}+1][\overline{a}_{1}+\overline{a}_{2}]   \right)&\frac{1}{\sqrt{2}}\sqrt{\lambda_{a+\epsilon_{2}}\lambda_{a+\epsilon_{1}}}\frac{[\overline{a}_{2}+\overline{a}_{1}]}{[\overline{a}_{2}+\overline{a}_{1}+1]}&(a+  \varepsilon_{1}, \delta', \chi_1)\\
      -\frac{1}{\sqrt{2}}\sqrt{\lambda_{a+\epsilon_{-2}}\lambda_{a+\epsilon_{1}}}\frac{[\overline{a}_{-2}+\overline{a}_{1}]}{[\overline{a}_{-2}+\overline{a}_{1}+1]}&0&\frac{1}{2}[\overline{a}_{1}]\left(   [\overline{a}_{2}-1][\overline{a}_{1}-\overline{a}_{2}] + [\overline{a}_{2}+1][\overline{a}_{1}+\overline{a}_{2}]   \right)&\frac{1}{2}[\overline{a}_{1}]\left(   [\overline{a}_{2}-1][\overline{a}_{1}-\overline{a}_{2}] + [\overline{a}_{2}+1][\overline{a}_{1}+\overline{a}_{2}]   \right)&\frac{1}{\sqrt{2}}\sqrt{\lambda_{a+\epsilon_{2}}\lambda_{a+\epsilon_{1}}}\frac{[\overline{a}_{2}+\overline{a}_{1}]}{[\overline{a}_{2}+\overline{a}_{1}+1]}&(a+  \varepsilon_{1}, \delta', \chi_2)\\
    0&-\sqrt{\lambda_{a+\epsilon_{-1}}\lambda_{a+\epsilon_{2}}}\frac{[\overline{a}_{-1}+\overline{a}_{2}]}{[\overline{a}_{-1}+\overline{a}_{2}+1]}&\frac{1}{\sqrt{2}}\sqrt{\lambda_{a+\epsilon_{1}}\lambda_{a+\epsilon_{2}}}\frac{[\overline{a}_{1}+\overline{a}_{2}]}{[\overline{a}_{1}+\overline{a}_{2}+1]}&\frac{1}{\sqrt{2}}\sqrt{\lambda_{a+\epsilon_{1}}\lambda_{a+\epsilon_{2}}}\frac{[\overline{a}_{1}+\overline{a}_{2}]}{[\overline{a}_{1}+\overline{a}_{2}+1]}& [\overline{a}_{2}]\left(   [\overline{a}_{1}-1][\overline{a}_{1}-\overline{a}_{2}] - [\overline{a}_{1}+1][\overline{a}_{1}+\overline{a}_{2}]   \right)&(a+  \varepsilon_{2},\delta')\\
\end{block}
\end{blockarray}}
\end{align*}
If $\overline{a}_1 = \frac{k+4}{2}$, then
\begin{align*}
B_{(a,\delta,\chi_{ l}),\underline{\hspace{.5em}}, (a,-\delta,\chi_{l'}), \underline{\hspace{.5em}}} &= \scalemath{1}{ \frac{1}{\lambda_a}\frac{1}{\sqrt{[4]!}}\begin{blockarray}{cc}
(a-  \varepsilon_{1}, -\delta') &\\
\begin{block}{[c]c}
\sqrt{2}\sqrt{\lambda_{a+\varepsilon_{-1}}\lambda_{a+\varepsilon_{-2}}}\frac{[\overline{a}_{-1}+\overline{a}_{-2}  ]}{[\overline{a}_{-1}+\overline{a}_{-2} +1 ]} &  (a+  \varepsilon_{-2}, \delta',\chi_{l''}) \\
\hspace{.01em}[\overline{a}_{-1}]([\overline{a}_{2}+1][\overline{a}_{1}-\overline{a}_{2}] +[\overline{a}_{2}-1][\overline{a}_{1}+\overline{a}_{2}]) &  (a+  \varepsilon_{-1}, \delta') \\
-\sqrt{2}\sqrt{\lambda_{a+\varepsilon_{-1}}\lambda_{a+\varepsilon_{2}}}\frac{[\overline{a}_{-1}+\overline{a}_{2}  ]}{[\overline{a}_{-1}+\overline{a}_{2} +1 ]} &  (a+  \varepsilon_{2}, \delta',\chi_{l''}) \\
\end{block}
\end{blockarray}}\quad\text{ if $l\cdot l' = \delta$}\\
B_{(a,\delta,\chi_{ l}),\underline{\hspace{.5em}}, (a,-\delta,\chi_{l'}), \underline{\hspace{.5em}}} &= \scalemath{.55}{ \frac{1}{\lambda_a}\frac{1}{\sqrt{[4]!}}\begin{blockarray}{cccc}
(a+  \varepsilon_{-2}, \delta', \chi_{l''}) &(a+  \varepsilon_{-1}, \delta') & (a+  \varepsilon_{2}, \delta', \chi_{l''}) &\\
\begin{block}{[ccc]c}
\sqrt{2}\sqrt{\lambda_{a+\varepsilon_{-1}}\lambda_{a+\varepsilon_{-2}}}\frac{[\overline{a}_{-1}+\overline{a}_{-2}  ]}{[\overline{a}_{-1}+\overline{a}_{-2} +1 ]}&
\hspace{.01em}[\overline{a}_{-1}]([\overline{a}_{2}+1][\overline{a}_{1}-\overline{a}_{2}] +[\overline{a}_{2}-1][\overline{a}_{1}+\overline{a}_{2}])&
-\sqrt{2}\sqrt{\lambda_{a+\varepsilon_{-1}}\lambda_{a+\varepsilon_{2}}}\frac{[\overline{a}_{-1}+\overline{a}_{2}  ]}{[\overline{a}_{-1}+\overline{a}_{2} +1 ]} &  (a+  \varepsilon_{2}, \delta',\chi_{l''}) \\
\end{block}
\end{blockarray}}\quad\text{ if $l\cdot l' = -\delta$}\\
B_{(a, \delta,\chi_l),\underline{\hspace{.5em}}, (a+2\epsilon_{-1}, -\delta), \underline{\hspace{.5em}}} &= \begin{blockarray}{cc}
(a+  \varepsilon_{-1}, \delta')  &\\
\begin{block}{[c]c}
 0 &(a+  \varepsilon_{-1}, -\delta')  \\
\end{block}
\end{blockarray}\\
B_{(a,\delta,\chi_l),\underline{\hspace{.5em}}, (a-\epsilon_1 \pm \epsilon_2,-\delta), \underline{\hspace{.5em}}} &= \scalemath{.85}{\pm \sqrt{2}\frac{1}{\sqrt{[4]!}}\begin{blockarray}{ccc}
(a+  \varepsilon_{-1}, \delta')  &  ( a \pm \varepsilon_2 , \delta', \chi_{l'}) &\\
\begin{block}{[cc]c}
 \frac{[\overline{a}_{\pm 2}+\overline{a}_{1}-1]}{[\overline{a}_{\pm 2}+\overline{a}_{1}]}\sqrt{\frac{\lambda_{a+\epsilon_{-1}+\epsilon_{\pm 2}}}{\lambda_a}} &  \sqrt{\frac{[\overline{a}_{-1}+\overline{a}_{\pm 2}][\overline{a}_{-1}+\overline{a}_{\pm 2}+2]}{[\overline{a}_{-1}+\overline{a}_{\pm 2}+1]^2}}\sqrt{\frac{\lambda_{a+\epsilon_{-1}}\lambda_{a+\epsilon_{\pm 2}}}{\lambda_a^2}} & (a+  \varepsilon_{-1},-\delta') \\
  \sqrt{\frac{[\overline{a}_{-1}+\overline{a}_{\pm 2}][\overline{a}_{-1}+\overline{a}_{\pm 2}+2]}{[\overline{a}_{-1}+\overline{a}_{\pm 2}+1]^2}}\sqrt{\frac{\lambda_{a+\epsilon_{-1}}\lambda_{a+\epsilon_{\pm 2}}}{\lambda_a^2}} &\frac{[\overline{a}_{-1}-\overline{a}_{\pm 2}-1]}{[\overline{a}_{-1}-\overline{a}_{\pm 2}]}\sqrt{\frac{\lambda_{a+\epsilon_{-1}+\epsilon_{\pm 2}}}{\lambda_a}}& ( a \pm \varepsilon_2 , -\delta', \chi_{l'}) \\
\end{block}
\end{blockarray}}\\
\end{align*}
Despite the seemingly complicated form of these cells, we can re-use many of the computations for $\Gamma_{\Lambda_1}^{4,4,*}$ to show the above solution satisfies the equations to be a KW-cell system on $\Gamma_{\Lambda_1}^{4,4,\mathbb{Z}_4*}$. 

\begin{lem}
The above solution for the U and B cells satisfy (BA) and (RI).
\end{lem}
\begin{proof}
We begin by showing that (BA) always holds. Recall this is exactly showing that $B_{X,\underline{\hspace{.5em}}, Y, \underline{\hspace{.5em}}}$ is an eigenmatrix for $U^{X}_{\hspace{1em} Y}$ with eigenvalue $[2]$. For many cases this immediately follows from the computations done in Subsection~\ref{sec:charge}. For example the matrix $B_{(a,\delta,\chi_l),\underline{\hspace{.5em}}, (a-\epsilon_1 \pm \epsilon_2,-\delta), \underline{\hspace{.5em}}}$ is a scalar multiple of the matrix $B_{a,\underline{\hspace{.5em}}, a-\epsilon_1 \pm \epsilon_2, \underline{\hspace{.5em}}}$ from Subsection~\ref{sec:charge}. We also have that $U^{(a,\delta,\chi_{ l })}_{\quad (a-\varepsilon_1 \pm \varepsilon_2 ,-\delta)}$ is exactly equal to the matrix $U^{a}_{\quad a-\varepsilon_1 \pm \varepsilon_2}$.  The result that $B_{(a,\delta,\chi_l),\underline{\hspace{.5em}}, (a-\epsilon_1 \pm \epsilon_2,-\delta), \underline{\hspace{.5em}}}$ is an eigenmatrix with eigenvalue $[2]$ for $U^{(a,\delta,\chi_{ l })}_{\quad (a-\varepsilon_1 \pm \varepsilon_2 ,-\delta)}$ then immediately follows from Lemma~\ref{lem:conjB}.

The other cases follow in the same manner, with a little more work. For example when $\overline{a}_1 = \frac{k+2}{2}$, to show the eigenmatrix condition for $B_{(a,\delta),\underline{\hspace{.5em}}, (a, -\delta), \underline{\hspace{.5em}}}$, we have to show (among other computations) that 
\begin{align*}
    &\frac{1}{\lambda_a}( -\frac{1}{\sqrt{2}}\sqrt{\lambda_{a+\epsilon_{1}}\lambda_{a+\epsilon_{-2}}}\frac{[\overline{a}_{1}+\overline{a}_{-2}]}{[\overline{a}_{1}+\overline{a}_{-2}+1]}\cdot\left(-\frac{1}{\sqrt{2}}\sqrt{\lambda_{a+\varepsilon_{1}}\lambda_{a+\varepsilon_{-2}}}\frac{1}{[\overline{a}_{1} +\overline{a}_{-2}  +1]}\right) \\ +&\frac{1}{2}[\overline{a}_{1}]\left(   [\overline{a}_{2}-1][\overline{a}_{1}-\overline{a}_{2}] + [\overline{a}_{2}+1][\overline{a}_{1}+\overline{a}_{2}]   \right)\cdot\left(\frac{1}{2} w_{a,1}\right)
    +\frac{1}{2}[\overline{a}_{1}]\left(   [\overline{a}_{2}-1][\overline{a}_{1}-\overline{a}_{2}] + [\overline{a}_{2}+1][\overline{a}_{1}+\overline{a}_{2}]   \right)\cdot\left(\frac{1}{2} w_{a,1}\right)\\
    +&\frac{1}{\sqrt{2}}\sqrt{\lambda_{a+\epsilon_{1}}\lambda_{a+\epsilon_{2}}}\frac{[\overline{a}_{1}+\overline{a}_{2}]}{[\overline{a}_{1}+\overline{a}_{2}+1]}\cdot\left(\frac{1}{\sqrt{2}}\sqrt{\lambda_{a+\varepsilon_{1}}\lambda_{a+\varepsilon_{2}}}\frac{1}{[\overline{a}_{1} +\overline{a}_{2}  +1]}\right))  = [2]\cdot\frac{1}{2}[\overline{a}_{1}]\left(   [\overline{a}_{2}-1][\overline{a}_{1}-\overline{a}_{2}] + [\overline{a}_{2}+1][\overline{a}_{1}+\overline{a}_{2}]   \right).
\end{align*}   
To verify this, note that the left hand side simplifies to 
\begin{align*}
    &\frac{1}{2}\frac{1}{\lambda_a}(\sqrt{\lambda_{a+\epsilon_{1}}\lambda_{a+\epsilon_{-2}}}\frac{[\overline{a}_{1}+\overline{a}_{-2}]}{[\overline{a}_{1}+\overline{a}_{-2}+1]}\cdot\left(\sqrt{\lambda_{a+\varepsilon_{1}}\lambda_{a+\varepsilon_{-2}}}\frac{1}{[\overline{a}_{1} +\overline{a}_{-2}  +1]}\right) \\ +&[\overline{a}_{1}]\left(   [\overline{a}_{2}-1][\overline{a}_{1}-\overline{a}_{2}] + [\overline{a}_{2}+1][\overline{a}_{1}+\overline{a}_{2}]   \right)\cdot( w_{a,1})
+\sqrt{\lambda_{a+\epsilon_{1}}\lambda_{a+\epsilon_{2}}}\frac{[\overline{a}_{1}+\overline{a}_{2}]}{[\overline{a}_{1}+\overline{a}_{2}+1]}\cdot\left(\sqrt{\lambda_{a+\varepsilon_{1}}\lambda_{a+\varepsilon_{2}}}\frac{1}{[\overline{a}_{1} +\overline{a}_{2}  +1]}\right)).
\end{align*}
We then have from Lemma~\ref{lem:conjB} that this simplifies to 
\[  [2]\cdot\frac{1}{2}[\overline{a}_{1}]\left(   [\overline{a}_{2}-1][\overline{a}_{1}-\overline{a}_{2}] + [\overline{a}_{2}+1][\overline{a}_{1}+\overline{a}_{2}]  \right)
\]
as desired. The other cases follow in a similar manner.

Finally verifying (RI) is done in an easy, but tedious, case-by-case computation. The details here are routine, so we neglect to write them down.
\end{proof}

We now have a KW-cell system on the graph $\Gamma_{\Lambda_1}^{4,4,\mathbb{Z}_4^*}$. In consequence we have the following.
\begin{thm}
Let $k\in \mathbb{N}_{\geq 1}$ be even. Then there exists a module category $\mathcal{M}$ for $\mathcal{C}(\mathfrak{sl}_4, k)$ such that the fusion graph for action by $\Lambda_1$ is $\Gamma^{4,k,\mathbb{Z}_4^*}_{\Lambda_1}$.
\end{thm}

\section{Subfactors}
One of the main goals of this paper is to explicitly construct subfactors from the interesting module categories of $\overline{\operatorname{Rep}(U_q(\mathfrak{sl}_N))^\omega}$. Assuming $q = e^{2 \pi i \frac{1}{2(N+k)}}$ for some $k\geq 1$, then $\overline{\operatorname{Rep}(U_q(\mathfrak{sl}_N))^\omega}$ is unitary, and a solution to a KW cell system on a graph gives a unitary module category by \cite{JamsHans,Whale}. It then follows from \cite{BigPopa} that we get a subfactor of the hyperfinite type $\textrm{II}_1$ factor for each simple object of the module category.

The first class of subfactors are constructed from $\overline{\operatorname{Rep}(U_q(\mathfrak{sl}_N))^\omega}$ acting on $\mathcal{M}$. We get an irreducible subfactor of the hyperfinite type $\textrm{II}_1$ factor for each simple object $M\in \mathcal{M}$. We label this subfactor $\rho_{M}$.

Note that for this subfactor, the category $\overline{\operatorname{Rep}(U_q(\mathfrak{sl}_N))^\omega}$ is the even part of the subfactor. This consists of certain $N-N$ bimodules. The odd part, which consists of certain $M-M$ bimodules is the dual module category. See \cite{Bisch} for more details.

The principal graph $\Gamma^M$ for $\rho_{M}$ has even vertices the simple objects of $\overline{\operatorname{Rep}(U_q(\mathfrak{sl}_N))^\omega}$, and odd vertices the simple objects of $\mathcal{M}$. The number of edges from $V \in \overline{\operatorname{Rep}(U_q(\mathfrak{sl}_N))^\omega}$ to $M' \in \mathcal{M}$ is given by 
\[   \Gamma^{M}_{V\to M'} =  \dim\Hom_\mathcal{M}(V \otimes M\to M')  .     \]
We can get an explicit formula for these multiplicities from the KW-cell solutions.

\begin{lem}\label{lem:edge}
Let $p_V$ be a projection onto $V$ in the planar algebra $\overline{\mathcal{P}_{
\operatorname{Rep}(U_q(\mathfrak{sl}_N))^\omega,\Lambda_1} }
$, and $KW(p_V)$ the image of this projection in the associated graph planar algebra. Then 
\[\dim\Hom_\mathcal{M}(V \otimes M\to M') =  \operatorname{Tr}\left( KW(p_{V})|_{{M}\to{ M'}}   
 \right)  \]
 where $\operatorname{Tr}\left( KW(p_{V})|_{{M}\to{ M'}}  \right)$ is the trace of the linear operator $KW(p_{V})\in oGPA(\Gamma)$ restricted to loops beginning at $M$ and ending at $M'$.
\end{lem}
\begin{proof}
This is precisely Lemma~\ref{lem:Hans}.
\end{proof}

In practice, one just needs to determine the multiplicities $\dim\Hom_\mathcal{M}(V \otimes M\to M')$ where $V$ runs over the fundamental representations of $\overline{\operatorname{Rep}(U_q(\mathfrak{sl}_N))^\omega}$. From this information, fusion ring arguments can determine the remaining multiplicities. Explicitly, let $G_{\Lambda_i}$ be the matrix for action of $\Lambda_i$ on $\mathcal{M}$, and let $V_\lambda$ be a simple object of $\overline{\operatorname{Rep}(U_q(\mathfrak{sl}_N))^\omega}$ indexed by a partition $\lambda$. Then the matrix for the action by $V_\lambda$ on $\mathcal{M}$ is given by the formula \cite[Equation 3.5]{Mac}:
\[     G_{V_{\lambda}} = \det \left[ G_{\Lambda_{\left(\lambda^{\textrm{tr}}_i - i + j\right)}} \right]_{1\leq i,j \leq l(\lambda^{\textrm{tr}})}      \]

Formulas for the projections onto $p_{\Lambda_i}$ are given in Subsection~\ref{sec:preKW}, and hence the matrices $G_{\Lambda_i}$ can be explicitly computed in all examples using Lemma~\ref{lem:edge}. 

In practice, these subfactors are huge. We can obtain smaller subfactors with the following observation. Consider the graded subcategory $\overline{\operatorname{Rep}(U_q(\mathfrak{sl}_N))^\omega}^{ad} \subset \overline{\operatorname{Rep}(U_q(\mathfrak{sl}_N))^\omega}$. Over this subcategory, the module $\mathcal{M}$ may no longer be simple. Let us write $\bigoplus_i \mathcal{M}_i$ for the simple decomposition of $\mathcal{M}$ as a $\overline{\operatorname{Rep}(U_q(\mathfrak{sl}_N))^\omega}^{ad}$ module. We then get a subfactor of $\mathcal{R}$ for each $i$ and simple object $M\in \mathcal{M}'$. We label this subfactor $\rho_{\mathcal{M}_i, M} $.

The principal graph has even vertices the simple objects of $\overline{\operatorname{Rep}(U_q(\mathfrak{sl}_N))^\omega}^{ad}$, and odd vertices the simple objects of $\mathcal{M}_i$. The number of edges from $V$ to $M'$ is then equal to 
\[\dim\Hom_{\mathcal{M}_i}(V \otimes M\to M') = \dim\Hom_{\mathcal{M}}(V \otimes M\to M').\]
Hence we can determine the principal graph using Lemma~\ref{lem:edge}.

\begin{remark}
We wish to point out that the explicit structure of this second class of subfactors has been worked out for the charge-conjugation modules by Wenzl \cite{HansMod}.
\end{remark}

As an example, we determine the structure of one of the subfactors corresponding to the second exceptional module of $\mathcal{C}(\mathfrak{sl}_4,6)$.

\begin{ex}
Let $\mathcal{M}$ be the module constructed in Theorem~\ref{thm:SU6C}. The object $M$ is chosen as $a_{odd}$. The above construction yields a subfactor with the following principal graph:
\[\raisebox{-.5\height}{ \includegraphics[scale = .4]{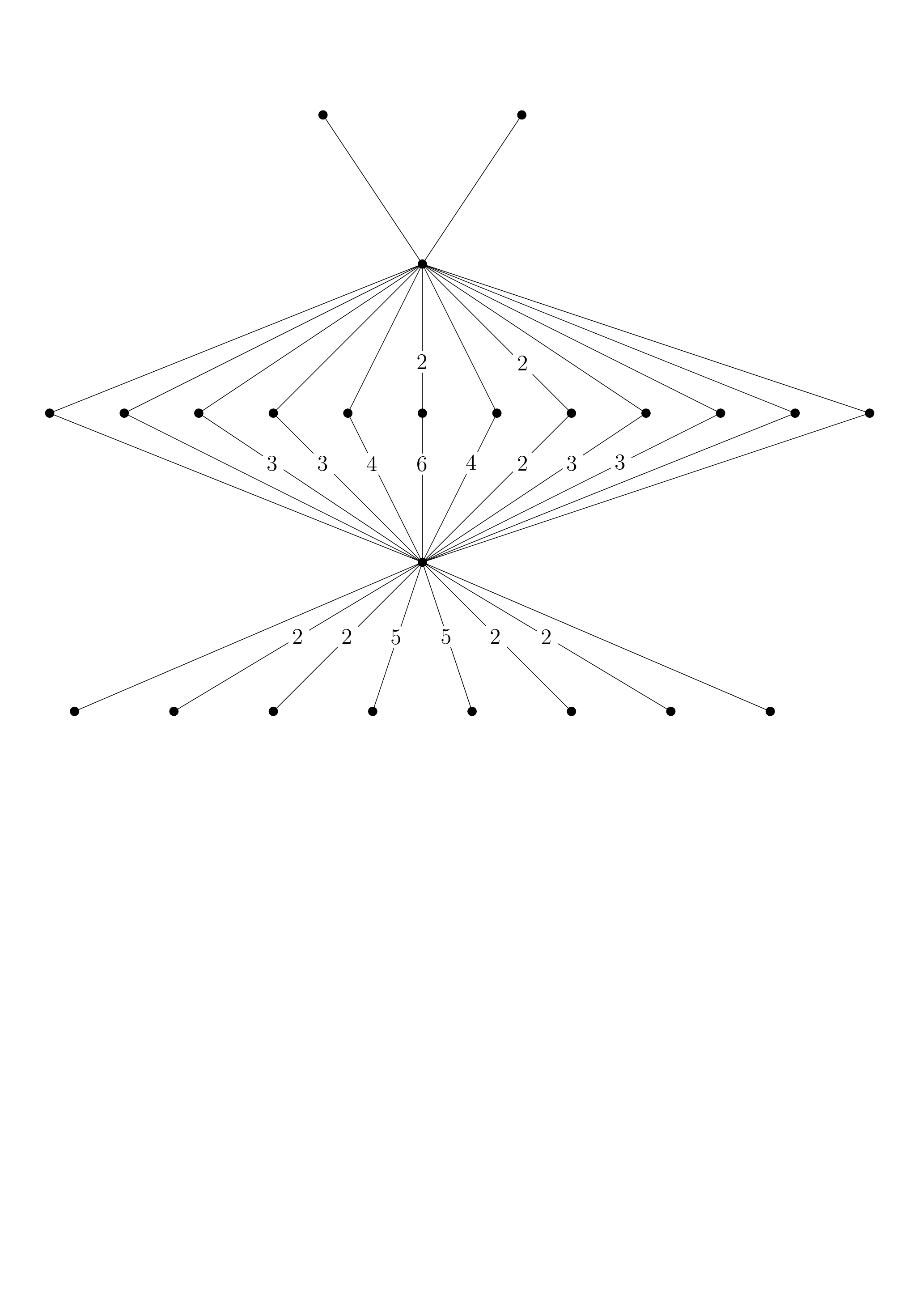}}\]
Note that directly building a flat connection on this graph would be a computational nightmare due to the high multiplicity.
\end{ex}
\bibliography{Cells}
\bibliographystyle{alpha}
\end{document}